\documentclass[12pt,reqno]{amsart}
\usepackage{amsfonts,amsmath,amssymb,amsthm,mathrsfs}
\usepackage{enumerate,hyperref,indentfirst,url}
\usepackage{graphicx}
\usepackage{cite}
\usepackage{xcolor}

\numberwithin{equation}{section}
\newtheorem{definition}{Definition}

\newtheorem{theorem}{Theorem}[section]
\newtheorem{lemma}[theorem]{Lemma}
\newtheorem{proposition}{Proposition}[section]

\newtheorem{remark}{Remark}

\newcommand{\td}{{\tilde \chi_{I^0}}}
\newcommand{\tdd}{{\tilde \chi_{R_{00}}}}
\newcommand{\dst}{\text{dist}}

\newcommand{\N}{\mathbb{R}^n}
\newcommand{\n}{\mathbb{R}}
\newcommand{\nn}{\mathbb{R}^2}
\newcommand{\nnn}{\mathbb{R}^3}
\newcommand{\one}{\uppercase\expandafter{\romannumeral1}}
\newcommand{\two}{\uppercase\expandafter{\romannumeral2}}
\newcommand{\three}{\uppercase\expandafter{\romannumeral3}}
\newcommand{\four}{\uppercase\expandafter{\romannumeral4}}
\newcommand{\spt}{\text{supp} \, }
\newcommand{\sch}{\mathcal{S}(\n)}
\newcommand{\z}{\mathbb{Z}}
\frenchspacing
\raggedbottom
\textwidth=15cm \oddsidemargin 0.33cm
\evensidemargin 0.33cm
\allowdisplaybreaks
\begin{document}

\title[Bi-parameter trilinear Fourier multipliers  with flag symbol]{Bi-parameter trilinear Fourier multipliers and pseudo-differential operators  with flag symbols }

\author{Guozhen Lu  }
\address{Department of Mathematics\\
  University of Connecticut\\
  Storrs, CT 06269}
\email[G. Lu]{guozhen.lu@uconn.edu }
\author{Jill Pipher}
 \address{Department of Mathematics\\
  Brown University\\
  Providence, RI 02912}
\email[J. Pipher]{jpipher@math.brown.edu}
\author{Lu Zhang}
\address{Department of Mathematical Sciences\\
  Binghamton University\\
  Binghamton, NY 13902}
\email[L. Zhang]{lzh@math.binghamton.edu}

\keywords{Bi-parameter, trilinear operator, flag paraproducts, Fourier multipliers, pseudo-differential operators}

\thanks{The first author's research was partly supported by  a collaboration grant from the Simons Foundation.}

\begin{abstract}
The main purpose of this paper is to study $L^r$ H\"older type estimates for a bi-parameter trilinear Fourier multiplier with flag singularity, and the analogous pseudo-differential operator, when the symbols are in a certain product form. More precisely,
 for  $f,g,h\in \mathcal{S}(\n^{2})$, the bi-parameter trilinear flag Fourier multiplier operators we consider are defined by
\begin{equation*}
T_{m_1,m_2}(f,g,h)(x):=\int_{\n^{6}}m_1(\xi,\eta,\zeta)m_2(\eta,\zeta)\hat f(\xi) \hat g(\eta)\hat h(\zeta)e^{2\pi i(\xi+\eta+\zeta)\cdot x}d\xi d\eta d\zeta,
\end{equation*}
 when $m_1,m_2$ are two bi-parameter symbols. We study H\"older type estimates:  $L^{p_1}\times L^{p_2}\times L^{p_3} \to L^r$ for $1<p_1,p_2,p_3< \infty$ with $1/p_1+1/p_2+1/p_3=1/r$, and $0<r<\infty$. We will show that our problem can be reduced  to establish the $L^r$ estimate for the special multiplier $m_1(\xi_1, \eta_1, \zeta_1) m_2(\eta_2, \zeta_2)$ (see Theorem 1.7).

We also study these $L^r$ estimates for
the corresponding bi-parameter trilinear pseudo-differential operators defined by
\begin{equation*}
T_{ab}(f,g,h)(x):=\int_{\n^6}a(x,\xi,\eta,\zeta)b(x,\eta,\zeta)\hat f(\xi)\hat g(\eta)\hat h(\zeta)e^{2\pi i x(\xi+\eta+\zeta)}d\xi d\eta d\zeta,
\end{equation*}
 where  the smooth symbols $a,b$ satisfy certain  bi-parameter H\"ormander conditions. We will also show that the $L^r$ estimate holds for $T_{ab}$ as long as the $L^r$ estimate for the flag multiplier operator holds when the multiplier has the special form $m_1(\xi_1, \eta_1, \zeta_1) m_2(\eta_2, \zeta_2)$ (see Theorem 1.10). Using our reduction of the flag multiplier, we also provide an alternative proof of some of the mixed norm estimates recently established by Muscalu and Zhai \cite{muscaluZhai} when the functions g and h are of tensor product forms (Theorem 1.8).
 Moreover, our method also allow us to establish the weighted mixed norm estimates (Theorem 1.9).

 The bi-parameter and trilinear flag Fourier multipliers considered in this paper do not satisfy the conditions of the classical bi-parameter trilinear Fourier multipliers considered by Muscalu,  Tao, Thiele and the second author \cite{muscalu2004bi, muscalu2004multi}. They may also be viewed as the
 bi-parameter trilinear variants of estimates obtained for the one-parameter flag paraproducts by  Muscalu  \cite{muscalu2007paraproducts}.

\end{abstract}
\maketitle

\section{Introduction}

For $n\geq 1$ we denote by $\mathcal{M}(\N)$ the set of all bounded symbols $m\in L^\infty(\N)$, smooth away from the origin and satisfying the classical Marcinkiewcz-Mikhlin-H\"ormander condition
\begin{equation}
\label{coifmansymbol}
|\partial^\alpha m(\xi)|\lesssim \frac{1}{|\xi|^\alpha}
\end{equation}
for every $\xi\in \N \backslash\{0\}$ and sufficiently many multi-indices $\alpha$. Denote by $T_m$  the n-linear operator
\begin{equation}
\label{coifmanop}
T_m(f_1,\dots,f_n)(x):=\int_{\n^n}m(\xi)\hat f_1(\xi_1)\cdots \hat f_n(\xi_n)e^{2\pi i(\xi_1+\cdots+\xi_n)\cdot x}d\xi,
\end{equation}
where $\xi=(\xi_1,\dots,\xi_n)\in \N$ and $f_1,\dots,f_n$ are Schwartz functions on $\n$, denoted by $\mathcal{S}(\n)$. From the classical Coifman-Meyer theorem we know $T$ extends to a bounded n-linear operator from $L^{p_1}(\n)\times \cdots \times L^{p_n}(\n)$ to $L^{r}(\n)$ for $1<p_1,\dots,p_n\leq \infty$ and $1/p_1+\cdots+1/p_n=1/r>0$. In fact this property holds in higher dimensions when $f_i\in L^{p_i}(\mathbb{R}^d),\,i=1,\dots,n$ and $m\in \mathcal{M}{(\n^{nd})}$, see \cite{coifman1975commutators,grafakos2002multilinear,kenig1999multilinear}. The case $p\geq 1$ was proved by Coifman and Meyer \cite{coifman1975commutators} and was extended to $p<1$ by Grafakos and Torres \cite{grafakos2002multilinear}, and Kenig and Stein \cite{kenig1999multilinear}.

For the corresponding pseudo-differential variant of the classical Coifman-Meyer theorem, suppose that the symbol $\sigma(x,\xi)$ belongs to the  H\"ormander symbol class $S^0_{1,0}(\mathbb{R}\times \mathbb{R}^n)$; that is, $\sigma$ satisfies the condition
\begin{equation}
\label{bscd}|
 \partial_x^l\partial_\xi^\alpha\sigma(x,\xi)|\lesssim {1\over (1+|\xi|)^{|\alpha|}}
  \end{equation}
  for any $x \in\n$, $\xi=(\xi_1,\dots,\xi_n)\in \mathbb{R}^n$ and all indices $l$, $\alpha$ . For these symbols,
  the following multi-linear, single parameter case has been studied.
\begin{theorem}[\cite{muscalu2013classical,grafakos2002multilinear}]
\label{pseth}
The operator
\begin{equation*}
T_\sigma(f_1,\dots,f_n)(x):=\int_{\N}\sigma(x,\xi)\hat f_1(\xi_1)\cdots f_n(\xi_n)e^{2\pi i(\xi_1+\cdots+\xi_n)\cdot x}d\xi
\end{equation*}
is bounded from $L^{p_1}(\n)\times \cdots \times L^{p_n}(\n)$ to $L^{r}(\n)$ for $1<p_1,\dots,p_n\leq \infty$ and $1/p_1+\cdots+1/p_n=1/r>0$, where $f_1,\dots,f_n \in \mathcal{S}(\n)$ and $\sigma$ satisfies $\eqref{bscd}.$ Again, this result still hold if the functions are defined on $\mathbb{R}^d$.
 \end{theorem}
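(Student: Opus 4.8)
The plan is to deduce this from the classical Coifman--Meyer multilinear multiplier theorem recalled above, by localizing in the spatial variable $x$ and expanding the symbol in a Fourier series there. Fix a smooth partition of unity $1=\sum_{\nu\in\mathbb{Z}^d}\phi(x-\nu)$, with $\phi$ supported in a fixed cube $Q_0$ and $Q_\nu$ the translate about $\nu$, and pick $\tilde\phi\equiv 1$ on $\operatorname{supp}\phi$ with $\operatorname{supp}\tilde\phi$ inside a slightly larger cube $\widetilde Q_0$. Since $\sigma$ obeys no decay in $x$, for each $\nu$ one first replaces $\sigma(x,\xi)$ by $\tilde\phi(x-\nu)\sigma(x,\xi)$ --- which changes nothing on $\operatorname{supp}\phi(\cdot-\nu)$ --- and then, on a period cube containing $\operatorname{supp}\tilde\phi(\cdot-\nu)$, writes
\begin{equation*}
\tilde\phi(x-\nu)\sigma(x,\xi)=\sum_{l\in\mathbb{Z}^d}b_{\nu,l}(\xi)\,e^{2\pi i l\cdot(x-\nu)/L},\qquad b_{\nu,l}(\xi)=\frac{1}{|\widetilde Q_0|}\int_{\widetilde Q_0}\tilde\phi(u)\,\sigma(u+\nu,\xi)\,e^{-2\pi i l\cdot u/L}\,du ,
\end{equation*}
for a suitable side length $L$.

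The next step records the properties of the coefficients $b_{\nu,l}$. Differentiating under the integral in $\xi$ and invoking the symbol bound $|\partial_\xi^\alpha\sigma|\lesssim(1+|\xi|)^{-|\alpha|}$ shows $b_{\nu,l}\in\mathcal{M}(\mathbb{R}^{nd})$ uniformly in $\nu,l$; integrating by parts $N$ times in $u$ against $e^{-2\pi i l\cdot u/L}$ and using the bounds on the $x$-derivatives of $\sigma$ upgrades this to $\|b_{\nu,l}\|_{\mathcal{M}(\mathbb{R}^{nd})}\lesssim_N (1+|l|)^{-N}$ for every $N$. Hence, by the Coifman--Meyer theorem recalled above, $T_{b_{\nu,l}}$ maps $L^{p_1}\times\cdots\times L^{p_n}\to L^r$ for all $1<p_1,\dots,p_n\le\infty$ with $1/p_1+\cdots+1/p_n=1/r>0$, with operator norm $\lesssim_N (1+|l|)^{-N}$, uniformly in $\nu$.

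It then remains to reassemble $T_\sigma(f_1,\dots,f_n)(x)=\sum_\nu \phi(x-\nu)\sum_l e^{2\pi i l\cdot(x-\nu)/L}\,T_{b_{\nu,l}}(f_1,\dots,f_n)(x)$. Since the supports $\operatorname{supp}\phi(\cdot-\nu)$ have bounded overlap, $\|T_\sigma(f_1,\dots,f_n)\|_{L^r}^r$ is dominated, up to a constant, by $\sum_\nu\sum_l\|\phi(\cdot-\nu)T_{b_{\nu,l}}(f_1,\dots,f_n)\|_{L^r}^r$. The genuine obstacle is precisely here: the multiplier $b_{\nu,l}$ depends on the cube $Q_\nu$, so the pieces cannot be glued into a single Coifman--Meyer estimate, and the uniform bound above used crudely produces a divergent sum over $\nu$. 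I would resolve this by splitting each input (for the factors with $p_j<\infty$; the $L^\infty$ factors are left untouched and require only trivial changes) as $f_j=f_j\chi_{CQ_\nu}+f_j(1-\chi_{CQ_\nu})$ with $C$ a large fixed constant: for the localized pieces the uniform Coifman--Meyer bound combines with
\begin{equation*}
\sum_\nu\prod_{j=1}^n\|f_j\chi_{CQ_\nu}\|_{L^{p_j}}^{r}\le\prod_{j=1}^n\Big(\sum_\nu\|f_j\chi_{CQ_\nu}\|_{L^{p_j}}^{p_j}\Big)^{r/p_j}\lesssim\prod_{j=1}^n\|f_j\|_{L^{p_j}}^{r},
\end{equation*}
which is H\"older's inequality in $\nu$ with exponents $p_j/r\ge 1$ (legitimate since $\sum_j r/p_j=1$) followed by the bounded overlap of the cubes; the tails are controlled by the size and regularity of the Coifman--Meyer kernels through the standard multilinear Calder\'on--Zygmund tail estimates. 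Summing the resulting bounds against $(1+|l|)^{-Nr}$ over $l$ closes the argument. Alternatively, one may absorb the cube dependence in a single step via a maximal multilinear multiplier estimate, or --- following \cite{grafakos2002multilinear} --- verify directly that the Schwartz kernel of $T_\sigma$ satisfies the multilinear Calder\'on--Zygmund size and regularity bounds and then deduce the full range of exponents from a single estimate $L^{q_1}\times\cdots\times L^{q_n}\to L^q$ by multilinear Calder\'on--Zygmund theory. In all cases the main difficulty is the spatial uniformity/gluing step; everything else is bookkeeping on top of Coifman--Meyer.
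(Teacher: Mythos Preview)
The paper does not actually supply a proof of this statement: Theorem~\ref{pseth} is quoted from \cite{muscalu2013classical,grafakos2002multilinear} as background. That said, the reduction you outline is precisely the mechanism the paper itself deploys in Section~\ref{rtl} to handle the bi-parameter analogue (Theorem~\ref{bithp}): localize in $x$ by a smooth partition of unity, expand the compactly-supported-in-$x$ symbol in a Fourier series, use the $x$-derivative bounds on $\sigma$ to get arbitrary polynomial decay in the Fourier index $l$, and thereby reduce to $x$-independent Coifman--Meyer multipliers. So your strategy matches the paper's in spirit and in the main technical step.

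The one place where your write-up differs from (and is rougher than) the paper's execution is the gluing in $\nu$. You split each $f_j$ into $f_j\chi_{CQ_\nu}$ plus a tail and invoke ``standard multilinear Calder\'on--Zygmund tail estimates'' for the far part. This works, but the bookkeeping is more delicate than you indicate: with a Mikhlin-type bound alone the kernel decays only like $(\sum_k|x-y_k|)^{-nd}$, which is not summable in $\nu$ by itself; one must exploit that $b_{\nu,l}$ actually satisfies $|\partial^\alpha b_{\nu,l}(\xi)|\lesssim(1+|\xi|)^{-|\alpha|}$ for \emph{all} $\alpha$ (inherited from \eqref{bscd}) to get arbitrary extra decay in $|x-y_j|$ via integration by parts. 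The paper sidesteps this by proving a single localized estimate of the form $\|T^{\nu}(f_1,\dots,f_n)\|_{L^r}\lesssim\prod_j\|f_j\,\tilde\chi_{Q_\nu}\|_{L^{p_j}}$ with the smooth weights $\tilde\chi_{Q_\nu}$ of \eqref{apc}, after which the sum in $\nu$ closes exactly by the H\"older-in-$\nu$ step you wrote down (see the display just after \eqref{lcdomp}). This packaging is cleaner and avoids the near/far splitting altogether; otherwise the two arguments are the same.
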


We now consider the multi-parameter setting of the above operators, introduced and studied via time-frequency analysis in \cite{muscalu2004bi,muscalu2013classical,muscalu2004multi}. For simplicity, we just state the bi-linear, bi-parameter case when $f,g$ are defined on $\mathbb{R}^2$. The results extend to the $n$-linear, d-parameter case where $f_1,\dots, f_n$ are defined on $\mathbb{R}^d$. We denote by $m\in \mathcal{BM}(\mathbb{R}^4)$ the set of smooth bi-parameter symbols satisfying
   \begin{equation}
     \label{bim2}
 |\partial_{\xi_1,\xi_2}^{\alpha_1,\alpha_2}\partial_{\eta_1,\eta_2}^{\beta_1,\beta_2}  m(\xi,\eta)|\lesssim \prod_{i=1}^2{1\over (|\xi_i|+|\eta_i|)^{\alpha_i+\beta_i}},
  \end{equation}
   for any  $\xi=(\xi_1,\xi_2),\eta=(\eta_1,\eta_2)\in \n^{2}\setminus \{0\}$ and sufficiently many multi-indices $\alpha=(\alpha_1,\alpha_2),\beta=(\beta_1,\beta_2)$

 \begin{theorem}[\cite{muscalu2004bi,muscalu2013classical,muscalu2004multi}]
 \label{bipcoifman}
   Let $1<p,q\leq \infty$, $1/p+1/q=1/r$, $0<r<\infty$ and $m\in \mathcal{BM}(\mathbb{R}^4)$, then the operator
   \begin{equation}
   \label{bicofop}
   T_m(f,g)(x)=\int_{\n^{4}} m(\xi_1,\xi_2,\eta_1,\eta_2) e^{2\pi i x(\xi+\eta)} \hat f(\xi_1,\xi_2)\hat g(\eta_1,\eta_2)d\xi d\eta
   \end{equation}
   is bounded from $L^p(\n^{2})\times L^q(\n^{2}) \to L^r(\n^{2}).$
 \end{theorem}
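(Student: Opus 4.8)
The plan is to prove Theorem~\ref{bipcoifman} by a time--frequency discretization performed \emph{separately} in each of the two parameters, reducing $T_m$ to a finite list of model bi-parameter paraproducts and then estimating a single such paraproduct by a bi-parameter stopping-time argument together with a multilinear interpolation valid in the quasi-Banach range.

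First I would decompose the symbol. Using a smooth partition of unity in each pair of frequency variables, write $m=\sum_{k_1,k_2\in\mathbb{Z}}m_{k_1,k_2}$ with $m_{k_1,k_2}$ supported in $\{|\xi_1|+|\eta_1|\sim 2^{k_1},\ |\xi_2|+|\eta_2|\sim 2^{k_2}\}$. The hypothesis \eqref{bim2} says that after rescaling each parameter to unit frequency the functions $m_{k_1,k_2}$ form a bounded family of smooth bumps supported in a fixed annulus in each parameter; since on each dyadic annulus one of the regimes $|\xi_i|\ll|\eta_i|$, $|\xi_i|\sim|\eta_i|$, $|\xi_i|\gg|\eta_i|$ occurs in the $i$-th parameter, a double Fourier series expansion of each bump exhibits $T_m$, up to rapidly decaying coefficients, as an average of finitely many (essentially $3\times 3$) model operators of the form
\[
\Pi(f,g)=\sum_{R=I_1\times I_2}\frac{1}{|I_1|^{1/2}|I_2|^{1/2}}\,\langle f,\phi^{1}_{R}\rangle\,\langle g,\phi^{2}_{R}\rangle\,\phi^{3}_{R},
\]
the sum running over dyadic rectangles $R\subset\mathbb{R}^2$, where each family $\{\phi^{j}_{R}\}$ consists of $L^2$-normalized wave packets adapted to $R$ whose frequency supports, in each parameter, are lacunary for the two slots that are high in that parameter's regime and comparable to $2^{-k}$ for the remaining slot.

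Next I would estimate a single model operator. By duality it suffices to bound the trilinear form $\Lambda(f,g,h)=\sum_{R}|R|^{-1/2}\langle f,\phi^1_R\rangle\langle g,\phi^2_R\rangle\langle h,\phi^3_R\rangle$, and by the multilinear restricted-weak-type interpolation that remains valid for $r<1$ (the ``good tuple'' scheme of Thiele; cf.\ Muscalu--Schlag) it is enough to prove, for $f,g,h$ supported on sets $E_1,E_2,E_3$ of finite measure and a suitable major subset $E_3'\subset E_3$ chosen according to which slot is singled out, the estimate $|\Lambda(f,g,h)|\lesssim|E_1|^{1/p_1}|E_2|^{1/p_2}|E_3|^{1-1/r}$ for exponents in a full-dimensional neighborhood of the target. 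To obtain this I would organize the rectangles $R$ into bi-parameter trees and bound each tree's contribution by the product of the bi-parameter sizes and energies of the three functions: for each parameter, the size and energy of a slot that is lacunary in that parameter are controlled by a shifted discrete square function, and those of the non-lacunary slot by a shifted bi-parameter strong maximal function, so that each function is acted on by a \emph{hybrid} operator, a square function in one variable composed with a maximal function in the other; the measure constraints on $E_1,E_2$ and the choice of $E_3'$ then let the tree contributions be summed geometrically.

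The hard part is precisely this last step, because, unlike in the one-parameter Coifman--Meyer theorem, there is no linear order on dyadic rectangles, so the one-parameter stopping-time selection of trees cannot be used directly: one must iterate the selection, stopping first in one parameter and then in the other, and the crucial technical inputs become bi-parameter analogues of the John--Nirenberg/Bessel inequality and uniform $L^p$ bounds for the hybrid square-and-maximal operators, which is where bi-parameter Fefferman--Stein inequalities and Journ\'e-type covering lemmas enter. Since every estimate above is scale invariant and the frequency supports of the $m_{k_1,k_2}$ have bounded overlap, once the model operator is bounded with constants independent of $(k_1,k_2)$ one sums over $k_1,k_2$ and over the finitely many models; the full range $1<p,q\le\infty$, $0<r<\infty$ then follows by interpolation and duality, the $L^\infty$ endpoint being handled separately, e.g.\ via an $L^2\times L^\infty\to L^2$ estimate. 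Alternatively, one could derive Theorem~\ref{bipcoifman} by iterating an $\ell^2$-valued extension of the one-parameter Coifman--Meyer theorem across the two parameters and recombining via the bi-parameter Littlewood--Paley square function, but the quasi-Banach case $r<1$ makes that route delicate, so the time--frequency argument above is the more robust one.
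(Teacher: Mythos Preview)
The paper does not prove Theorem~\ref{bipcoifman}; it is stated with a citation to \cite{muscalu2004bi,muscalu2013classical,muscalu2004multi} and used as a black box throughout. So there is no ``paper's own proof'' to compare against---your sketch is in effect a summary of the approach in those cited references, and at that level it is broadly faithful: the reduction to bi-parameter model paraproducts via Littlewood--Paley plus Fourier series on each dyadic annulus, the restricted-weak-type interpolation scheme, and the central role of hybrid square-maximal operators and Journ\'e-type covering arguments are exactly what drive the proof in \cite{muscalu2004bi,muscalu2004multi}.

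One caution on terminology: the language of ``organizing rectangles into bi-parameter trees'' and bounding each tree by ``sizes and energies'' imports the one-parameter time--frequency vocabulary a bit too literally. In the actual bi-parameter argument of \cite{muscalu2004bi} there is no tree structure on rectangles (precisely because, as you note, there is no nesting); instead one runs a stopping-time decomposition on level sets of the hybrid operators and controls the resulting collections via Journ\'e's lemma and a bi-parameter John--Nirenberg inequality. Your paragraph acknowledging this difficulty is accurate, but the preceding sentence suggesting a tree/size/energy organization should be read as heuristic rather than as the actual mechanism.
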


 A H\"{o}rmander type multiplier theorem with limited smoothness on the multi-parameter and multilinear multipliers was obtained in \cite{chen2014hormander}.

 \begin{theorem}[\cite{chen2014hormander}]
   Let  $m\in C^{2d+1} (\n^{2d}\setminus \{0\}\times \n^{2d}\setminus \{0\})$ satisfy $\eqref{bim2}$ for all $|\alpha_1|+|\beta_1|\leq d+1$, $|\alpha_2|+|\beta_2|\leq d+1$ and $(\xi_1,\xi_2,\eta_1,\eta_2)\in (\n^{2d}\setminus\{0\}\times \n^{2d}\setminus\{0\})$. Then $T_m$ defined in $\eqref{bicofop}$    is bounded from $L^p(\n^{2d})\times L^q(\n^{2d}) \to L^r(\n^{2d})$ for $1<p,q<\infty$, $1/p+1/q=1/r$, $0<r<\infty$.
 \end{theorem}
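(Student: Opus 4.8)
The plan is to adapt the proof of the (limited-smoothness) multilinear H\"ormander multiplier theorem to the bi-parameter setting. What makes $d+1$ derivatives enough in each of the two frequency blocks $(\xi_1,\eta_1),(\xi_2,\eta_2)\in\mathbb{R}^{2d}$ is that $d+1$ strictly exceeds the critical $L^2$-Sobolev exponent $(2d)/2=d$ of $\mathbb{R}^{2d}$, so that $L^2$-based (Plancherel) estimates, rather than $L^\infty$-based kernel estimates, can be used throughout.

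First I would perform a bi-parameter Littlewood--Paley decomposition $m=\sum_{(j_1,j_2)\in\mathbb{Z}^2}m_{j_1,j_2}$, with $m_{j_1,j_2}$ supported where $|\xi_1|+|\eta_1|\sim 2^{j_1}$ and $|\xi_2|+|\eta_2|\sim 2^{j_2}$, and then split each $m_{j_1,j_2}$, independently in the two parameters, according to whether $|\xi_i|\sim|\eta_i|$ or one of $|\xi_i|,|\eta_i|$ is much larger --- the usual paraproduct splitting. This reduces the problem to a bounded number of ``model'' sums: in a parameter where one input frequency dominates, the output frequency in that parameter is $\sim 2^{j_i}$, so the partial sums of that model are frequency-lacunary in that parameter, while in a parameter where $|\xi_i|\sim|\eta_i|$ the output frequency there is unrestricted.

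The analytic core is a single-frequency estimate. After rescaling one has $m_{j_1,j_2}(\xi,\eta)=\sigma_{j_1,j_2}(2^{-j_1}\xi_1,2^{-j_2}\xi_2,2^{-j_1}\eta_1,2^{-j_2}\eta_2)$ with $\sigma_{j_1,j_2}$ supported in a fixed compact set, and by \eqref{bim2} with $|\alpha_i|+|\beta_i|\le d+1$ all of its mixed derivatives of order at most $d+1$ in $(\xi_1,\eta_1)$ and at most $d+1$ in $(\xi_2,\eta_2)$ are bounded uniformly in $(j_1,j_2)$. Writing $K_{j_1,j_2}=\mathcal{F}^{-1}\sigma_{j_1,j_2}$ for the convolution kernel of the rescaled bilinear operator and grouping the kernel variables into the two blocks $w_i$ conjugate to $(\xi_i,\eta_i)$, the Cauchy--Schwarz inequality together with Plancherel gives
\[
\|K_{j_1,j_2}\|_{L^1}\le\big\|\langle w_1\rangle^{-(d+1)}\langle w_2\rangle^{-(d+1)}\big\|_{L^2}\,\big\|\langle w_1\rangle^{d+1}\langle w_2\rangle^{d+1}K_{j_1,j_2}\big\|_{L^2}\lesssim1 ,
\]
the first factor being finite precisely because $d+1>d=(2d)/2$ on each block $\mathbb{R}^{2d}$, and the second being controlled by the product-type Sobolev norm of $\sigma_{j_1,j_2}$, which is exactly what \eqref{bim2} supplies. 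Since $\|K_{j_1,j_2}\|_{L^1}$ is scale-invariant, inserting the Littlewood--Paley projections $\widetilde\Delta_{j_1,j_2}$ built into $m_{j_1,j_2}$ yields $\|T_{m_{j_1,j_2}}(f,g)\|_{L^1}\lesssim\|\widetilde\Delta_{j_1,j_2}f\|_{L^2}\|\widetilde\Delta_{j_1,j_2}g\|_{L^2}$ uniformly in $(j_1,j_2)$. For the fully non-lacunary model sum one then sums over $(j_1,j_2)$ by the triangle inequality in $L^1$, Cauchy--Schwarz in $(j_1,j_2)$, and Plancherel, obtaining $\big\|\sum_{j_1,j_2}T_{m_{j_1,j_2}}(f,g)\big\|_{L^1}\lesssim\|f\|_{L^2}\|g\|_{L^2}$. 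For a model sum lacunary in one or both parameters, one first dominates the $L^p$-norm of the sum by the $L^p$-norm of the corresponding (bi-)parameter square function in the lacunary directions, and then estimates it using the single-frequency bound together with bi-parameter Littlewood--Paley theory and the strong maximal function --- both of which are bounded on $L^p(\mathbb{R}^{2d})$ exactly when $1<p<\infty$, which is where the hypotheses $1<p,q<\infty$ are used.

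Finally I would upgrade the central $L^2\times L^2\to L^1$ bound, and analogous $L^\infty\times L^2\to L^2$, $L^2\times L^\infty\to L^2$ and endpoint (weak-type, resp.\ product-$BMO$) bounds proved the same way from the single-frequency estimate, to the full range $L^p\times L^q\to L^r$ ($1<p,q<\infty$, $1/p+1/q=1/r$, $0<r<\infty$) by bi-parameter multilinear interpolation. I expect this last step to be the main obstacle: setting up the bi-parameter weak-type and product Hardy space ($H^1$--$BMO$, Chang--Fefferman) machinery for these operators and running a genuinely bi-parameter multilinear interpolation into the range $r<1$, all while keeping the bookkeeping of the two independent dyadic scales and the several paraproduct cases in each parameter under control. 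The product-type kernel bound above, with the smoothness index exactly $d+1$ saturating the critical value $(2d)/2$ on each block, is the other delicate ingredient, but it is essentially the bi-parameter analog of the decisive step in the one-parameter theory and should go through as indicated.
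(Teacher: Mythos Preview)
The paper does not prove this theorem itself --- it is quoted from \cite{chen2014hormander} --- but the same method is reproduced in Appendix~B (proof of Proposition~\ref{ppt3}) for the trilinear version, so one can compare your plan against that.

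Your decomposition into bi-parameter Littlewood--Paley pieces and the paraproduct case-splitting according to which of $|\xi_i|,|\eta_i|$ dominates is exactly what Chen--Lu do. The divergence is in how the single-scale information is exploited. You extract the scalar bound $\|K_{j_1,j_2}\|_{L^1}\lesssim 1$, deduce $L^2\times L^2\to L^1$ (and a few other endpoints) for the model sums, and then propose to \emph{interpolate} in the bi-parameter multilinear setting to reach the full range, including $r<1$. The paper instead uses the same Sobolev control on $\sigma_{j_1,j_2}$ to obtain a \emph{pointwise} domination
\[
|\psi_1(D/2^{j_1})\psi_2(D/2^{j_2})T_m(f,g)(x)|
\ \lesssim\ \|m_{j_1,j_2}\|_{H^{s,s}}\,
\bigl(M_s(|\widetilde\Delta_{j_1,j_2}f|^{t})(x)\bigr)^{1/t}
\bigl(M_s(|\widetilde\Delta_{j_1,j_2}g|^{t})(x)\bigr)^{1/t},
\]
valid for any $t>2d/s$ (here $s=d+1$, so $t$ can be taken arbitrarily close to $2d/(d+1)<2$). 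This is strictly stronger than $\|K\|_{L^1}\lesssim1$: it uses the \emph{weighted} $L^2$ bound $\|\langle w_1\rangle^{s}\langle w_2\rangle^{s}K\|_{L^2}\lesssim1$ together with H\"older in $t'$, not Cauchy--Schwarz, and retains spatial localisation of the kernel. From this pointwise bound, H\"older's inequality and the Fefferman--Stein vector-valued inequality for $M_s$ (applied to $|\cdot|^t$ with $p/t,q/t>1$) give $L^p\times L^q\to L^r$ directly for every $1<p,q<\infty$, $0<r<\infty$, with no interpolation at all.

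The obstacle you yourself flag --- bi-parameter multilinear interpolation into $r<1$, requiring product Hardy/BMO endpoints and Chang--Fefferman machinery --- is therefore not a technicality but the genuine gap in your plan: it is precisely the step that the Chen--Lu argument is designed to bypass. Your route is closer in spirit to the original Muscalu--Pipher--Tao--Thiele proofs \cite{muscalu2004bi,muscalu2004multi}, which do use delicate stopping-time and size/energy arguments; the point of \cite{chen2014hormander} is that under a pure H\"ormander/Sobolev hypothesis on $m$ one can avoid all of that via the maximal-function domination above.
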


\medskip

The corresponding bi-parameter pseudo-differential operator was studied in  \cite{dai2013p}.

 \begin{theorem}[\cite{dai2013p}]
 \label{multipseudoth}
 Define
  $$T_a (f,g)(x):=\int_{\n^{4}} a(x,\xi,\eta)\hat f(\xi_1,\xi_2) \hat g(\eta_1,\eta_2)e^{2\pi i x\cdot(\xi+\eta)}d\xi d\eta$$
  where
  $$|\partial_{x_1}^{l_1} \partial_{x_2}^{l_2}\partial_{\xi_1}^{\alpha_1} \partial_{\xi_2}^{\alpha_2}\partial_{\eta_1}^{\beta_1} \partial_{\eta_2}^{\beta_2}a(x,\xi,\eta)|\lesssim {1\over (1+|\xi_1|+|\eta_1|)^{\alpha_1+\beta_1}} {1\over (1+|\xi_2|+|\eta_2|)^{\alpha_2+\beta_2}}.$$
  Then $T_a$ is bounded on $L^{p_1}\times L^{p_2}\to L^r$provided that $1<p_1,p_2\leq \infty$ and ${1\over r}={1\over p_1}+{1\over p_2}>0$.
 \end{theorem}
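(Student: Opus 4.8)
The plan is to reduce this to the bi-parameter \emph{Fourier multiplier} theorem, Theorem~\ref{bipcoifman}, by a Coifman--Meyer type expansion of the symbol that factors its frequency behaviour away from its dependence on $x$, which for symbols in this H\"ormander class is mild. First I would fix a bi-parameter Littlewood--Paley partition of unity $\{\psi_{k_1}(\xi_1,\eta_1)\psi_{k_2}(\xi_2,\eta_2)\}_{k_1,k_2\ge 0}$, with $\psi_{k_i}$ supported where $|\xi_i|+|\eta_i|\sim 2^{k_i}$ for $k_i\ge 1$ (and in a ball for $k_i=0$), and decompose $a=\sum_{k_1,k_2} a_{k_1,k_2}$, $a_{k_1,k_2}:=a\,\psi_{k_1}(\xi_1,\eta_1)\psi_{k_2}(\xi_2,\eta_2)$. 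For $k_1,k_2\ge 1$ the point of the bi-parameter H\"ormander condition is that, after rescaling the supporting box (of dimensions $\sim 2^{k_1}\!\times 2^{k_1}\!\times 2^{k_2}\!\times 2^{k_2}$) to a fixed annulus in each parameter, the rescaled symbols have \emph{all} derivatives in $(x,\xi,\eta)$ bounded uniformly in $(k_1,k_2)$.

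Next I would expand each rescaled block symbol in a Fourier series in the frequency variables over a fixed box, multiply back by a fattened cut-off $\widetilde\psi_{k_1}\widetilde\psi_{k_2}$ equal to $1$ on $\spt(\psi_{k_1}\psi_{k_2})$, and undo the rescaling, obtaining a representation
\[
T_a(f,g)(x)=\sum_{\nu\in\z^{4}}\ \sum_{k_1,k_2\ge 0}c^{\,\nu}_{k_1,k_2}(x)\,\widetilde T_{k_1,k_2}\big(f_{k,\nu},\,g_{k,\nu}\big)(x),
\]
where $\widetilde T_{k_1,k_2}$ is the bilinear multiplier with symbol $\widetilde\psi_{k_1}(\xi_1,\eta_1)\widetilde\psi_{k_2}(\xi_2,\eta_2)$, the functions $f_{k,\nu},g_{k,\nu}$ are translates of $f,g$ (produced by the frequency exponentials, and harmless since $\|f_{k,\nu}\|_{p_1}=\|f\|_{p_1}$, etc.), and — by repeated integration by parts in the formula for the Fourier coefficients, using the uniform derivative bounds above — one gets $|\partial_x^{l}c^{\,\nu}_{k_1,k_2}(x)|\lesssim_{N,l}(1+|\nu|)^{-N}$ for every $N$, uniformly in $k_1,k_2$ and $x$.

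For each fixed $\nu$ the inner sum is, apart from the bounded multiplicative factors $c^{\nu}_{k_1,k_2}(x)$ and the translations, the bilinear operator with symbol $\sum_{k_1,k_2}\widetilde\psi_{k_1}\widetilde\psi_{k_2}\in\mathcal{BM}(\mathbb{R}^{4})$, so I would invoke Theorem~\ref{bipcoifman} — more precisely, the bi-parameter Littlewood--Paley/square function estimates behind it in \cite{muscalu2004bi} — to get $\big\|\sum_{k_1,k_2}c^{\nu}_{k_1,k_2}(x)\widetilde T_{k_1,k_2}(f_{k,\nu},g_{k,\nu})\big\|_{r}\lesssim (1+|\nu|)^{-N}\|f\|_{p_1}\|g\|_{p_2}$ with $N$ as large as we please. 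Taking $N$ large and summing over $\nu\in\z^4$ by $\|\sum_\nu F_\nu\|_r^{\min(1,r)}\le\sum_\nu\|F_\nu\|_r^{\min(1,r)}$ (this handles the full range $1/2<r<\infty$, in particular $r<1$) then gives the theorem; the purely low-frequency blocks, where $k_1=0$ or $k_2=0$, are treated separately by pointwise domination by products of Hardy--Littlewood maximal functions, which uses $p_1,p_2>1$.

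The main obstacle is precisely the step in which the coefficients $c^{\nu}_{k_1,k_2}(x)$ are absorbed into the sum over frequency blocks. Because $a$ does not decay in $x$, one cannot simply take a partial Fourier transform in $x$ and reduce to a $\tau$-averaged family of multipliers; and the operator $\sum_{k_1,k_2}c^{\nu}_{k_1,k_2}(x)\widetilde T_{k_1,k_2}$ remains $x$-dependent (hence genuinely pseudo-differential), so Theorem~\ref{bipcoifman} cannot be used purely as a black box. What makes it go through is that the $c^{\nu}_{k_1,k_2}(x)$ are bounded \emph{together with all their $x$-derivatives, uniformly in the block index}, so they act as slowly varying coefficients that can be carried through the bi-parameter square-function/paraproduct argument proving Theorem~\ref{bipcoifman}. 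An alternative I would keep in reserve is to bypass the reduction altogether: verify directly that the distributional kernel of $T_a$ is a bi-parameter bilinear Calder\'on--Zygmund kernel (immediate from the symbol estimates), and then invoke the multi-parameter multilinear $T1$ theory, the weak-boundedness hypothesis being checked by hand. I expect establishing this robustness of the bi-parameter multiplier bound to be the real work.
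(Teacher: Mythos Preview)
The paper does not reprove this theorem (it is quoted from \cite{dai2013p}), but the method of \cite{dai2013p} is exhibited in Sections~\ref{rtl}--\ref{rtp} for the flag analogue, and it differs from yours in a crucial way: the Fourier expansion is taken in the \emph{spatial} variable $x$, not in the frequency variables. Concretely, one first writes $T_a=\sum_{n,m}T_a^{n,m}$ with $T_a^{n,m}(f,g)=T_a(f,g)\varphi_n(x_1)\varphi'_m(x_2)$ for a smooth partition of unity $\{\varphi_n\otimes\varphi'_m\}$ with supports of unit size; on each piece the symbol $a(x,\xi,\eta)\widetilde\varphi_n(x_1)\widetilde\varphi'_m(x_2)$ is compactly supported in $x$ and can be expanded as $\sum_{l_1,l_2}a_{l_1,l_2}(\xi,\eta)e^{2\pi i(l_1x_1+l_2x_2)}$, where now the Fourier coefficients $a_{l_1,l_2}$ are \emph{genuine $x$-independent bi-parameter multipliers} satisfying the hypotheses of Theorem~\ref{bipcoifman} with an extra factor $(1+|l|)^{-M}$. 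One then proves the localized bound $\|T_a^{n,m}(f,g)\|_r\lesssim\|f\widetilde\chi_{R_{nm}}\|_{p_1}\|g\widetilde\chi_{R_{nm}}\|_{p_2}$ and sums over $(n,m)$ by H\"older.

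Your route---Fourier series in $(\xi,\eta)$ after a dyadic frequency decomposition---leaves you with block-dependent coefficients $c^{\nu}_{k_1,k_2}(x)$ that cannot be pulled outside $\sum_{k_1,k_2}$, and you correctly flag this as the main obstacle. The assertion that these ``slowly varying'' coefficients can simply be carried through the bi-parameter paraproduct machinery is exactly the point that requires work, and is not a consequence of Theorem~\ref{bipcoifman} as a black box; in effect you would have to reprove a bi-parameter bilinear pseudo-differential theorem inside your argument. The spatial-localization approach sidesteps this entirely: after expanding in $x$ the coefficients are constants times exponentials, and Theorem~\ref{bipcoifman} applies directly. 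Your fallback via bi-parameter bilinear $T1$ theory is in principle viable but is a much heavier hammer than what is needed here.
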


In particular, in the proof of trilinear bi-parameter version of Theorem $\ref{multipseudoth}$ above, the following localized $L^r$ estimates  hold and these estimates will also play a role in our current paper.

\begin{theorem}[\cite{dai2013p}]
\label{trilocalth}
  Let $m(\xi,\eta,\zeta)$ be a smooth symbol satisfying
  \begin{equation*}
 |\partial_{\xi_1,\xi_2}^{\alpha_1,\alpha_2}\partial_{\eta_1,\eta_2}^{\beta_1,\beta_2} \partial_{\zeta_1,\zeta_2}^{\gamma_1,\gamma_2} m(\xi,\eta,\zeta)|\lesssim \prod_{i=1}^2{1\over (1+|\xi_i|+|\eta_i|+|\zeta_i|)^{\alpha_i+\beta_i+\gamma_i}}
  \end{equation*}
  for any  $\xi=(\xi_1,\xi_2),\eta=(\eta_1,\eta_2),\zeta=(\zeta_1,\zeta_2)\in \nn$ and sufficiently many multi-indices $\alpha=(\alpha_1,\alpha_2),\beta=(\beta_1,\beta_2),\gamma=(\gamma_1,\gamma_2)$.
  And define the operator
  \begin{eqnarray*}
    T_m^0(f,g,h)(x_1,x_2):=\left(\int_{\n^6} m(\xi,\eta,\zeta) e^{2\pi i x(\xi+\eta+\zeta)} \hat f(\xi)\hat g(\eta)\hat h(\zeta)d\xi d\eta d\zeta\right)\varphi_0(x_1)\varphi_0'(x_2),
  \end{eqnarray*}
  where $\varphi_0,\varphi_0'\in \mathcal{S}(\n)$ are supported on $I^0=[-1,1]$. Then for $1<p_1,p_2,p_3\leq \infty$ and $1/p_1+1/p_2+1/p_3=1/r>0$  there holds
 \begin{eqnarray*}
   \|T_m^0(f,g,h)(x_1,x_2)\|_{L^r}\lesssim \|f\tilde \chi_0\otimes \tilde \chi_0 \|_{p_1} \|g \tilde \chi_0 \otimes\tilde \chi_0\|_{p_2} \|h \tilde \chi_0 \otimes \tilde \chi_0\|_{p_3},
 \end{eqnarray*}
  where $\tilde \chi_0:=\tilde \chi_{I^0}$ is defined as $\eqref{apc}$.
\end{theorem}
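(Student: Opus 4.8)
\noindent\emph{Proof proposal.}
The plan is to strip off a spatial decomposition into unit cubes, reduce to a single per‑cube‑triple estimate carrying a rapidly decaying factor, and sum. First, I would write $Q_0=I^0\times I^0$ and note that
\[
T_m^0(f,g,h)=(\varphi_0\otimes\varphi_0')\cdot T_m(f,g,h),\qquad \spt T_m^0(f,g,h)\subseteq Q_0,
\]
where $T_m$ is the bi‑parameter trilinear Fourier multiplier with symbol $m$. Since $m$ obeys the stated bounds with the factor $(1+|\xi_i|+|\eta_i|+|\zeta_i|)^{-1}$ rather than $(|\xi_i|+|\eta_i|+|\zeta_i|)^{-1}$, it is a bona fide bi‑parameter trilinear Coifman--Meyer symbol, so $T_m\colon L^{p_1}\times L^{p_2}\times L^{p_3}\to L^r$ is bounded on the full range $1<p_1,p_2,p_3\le\infty$, $0<r<\infty$, $1/p_1+1/p_2+1/p_3=1/r$ (the trilinear, bi‑parameter analogue of Theorem~\ref{bipcoifman}; see \cite{muscalu2004multi}, and \cite{grafakos2002multilinear,kenig1999multilinear} for $r<1$). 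The extra ``$1+$'' is also the structural point I would exploit: it forces $m$ to be smooth and symbol‑bounded near the frequency origin, so that after a Littlewood--Paley decomposition in each of the two parameters the convolution kernel $K$ of $T_m$ satisfies
\[
|K(z_1,z_2,z_3)|\lesssim\prod_{i=1}^{2}\psi\big(z_{1,i},z_{2,i},z_{3,i}\big),\qquad
\psi(w)\lesssim |w|^{-3}\mathbf 1_{\{|w|\le1\}}+|w|^{-L}\mathbf 1_{\{|w|\ge1\}},
\]
for any prescribed $L$ (with analogous estimates for derivatives of $K$ in the second block): $K$ is rapidly decaying away from the diagonal $\{y_1=y_2=y_3=x\}$, carrying only the usual trilinear Calder\'on--Zygmund singularity on it.

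Next, decompose $\n=\bigsqcup_{k\in\z}I_k$ into unit intervals, write $f=\sum_{\nu\in\z^2}f^\nu$ with $f^\nu=f\,\mathbf 1_{Q_\nu}$, $Q_\nu=I_{\nu_1}\times I_{\nu_2}$, and similarly $g=\sum_\mu g^\mu$, $h=\sum_\rho h^\rho$. Then $T_m^0(f,g,h)=\sum_{\nu,\mu,\rho}T_m^0(f^\nu,g^\mu,h^\rho)$, and it suffices to prove the per‑term bound
\[
\|T_m^0(f^\nu,g^\mu,h^\rho)\|_{L^r}\lesssim (1+D_1)^{-L}(1+D_2)^{-L}\,\|f^\nu\|_{p_1}\|g^\mu\|_{p_2}\|h^\rho\|_{p_3},\qquad D_i:=\max\{|\nu_i|,|\mu_i|,|\rho_i|\},
\]
with $L$ arbitrarily large. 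Granting this, $(1+D_1)^{-L}(1+D_2)^{-L}\le w(\nu)w(\mu)w(\rho)$ with $w(\kappa)=\prod_{i=1}^2(1+|\kappa_i|)^{-L/3}$, so the triple sum factorises; and for each factor, using $\td(y)\sim(1+\dst(y,I^0))^{-N}$ --- hence $\|f^\nu\|_{p_1}\lesssim(1+|\nu_1|)^N(1+|\nu_2|)^N\|(f\,\td\otimes\td)\mathbf 1_{Q_\nu}\|_{p_1}$ --- H\"older's inequality in $\nu$ together with $\sum_\nu\|(f\,\td\otimes\td)\mathbf 1_{Q_\nu}\|_{p_1}^{p_1}=\|f\,\td\otimes\td\|_{p_1}^{p_1}$ (the $Q_\nu$ tiling $\nn$) yields $\sum_\nu w(\nu)\|f^\nu\|_{p_1}\lesssim\|f\,\td\otimes\td\|_{p_1}$ once $L$ is large relative to $N$, and similarly for $g,h$. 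When $r<1$ one sums $r$‑th powers via $\|\sum_kF_k\|_r^r\le\sum_k\|F_k\|_r^r$; the case $p_i=\infty$ needs only the usual modifications.

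For the per‑term bound I would distinguish two cases. If $D_1,D_2$ are both $O(1)$, there are boundedly many such $(\nu,\mu,\rho)$ and the bound is immediate from $\|\varphi_0\otimes\varphi_0'\|_\infty<\infty$ and the global $L^r$ boundedness of $T_m$. Otherwise at least one parameter is far, and the gain comes from the spatial cutoffs confining the kernel away from the diagonal there. Consider
\[
\widetilde T(f_1,f_2,f_3):=\mathbf 1_{Q_0}\cdot T_m\big(\mathbf 1_{Q_\nu}f_1,\ \mathbf 1_{Q_\mu}f_2,\ \mathbf 1_{Q_\rho}f_3\big),
\]
so that $T_m^0(f^\nu,g^\mu,h^\rho)=(\varphi_0\otimes\varphi_0')\,\widetilde T(f^\nu,g^\mu,h^\rho)$, and on the support of the kernel of $\widetilde T$ the block $(z_{1,i},z_{2,i},z_{3,i})$ has size $\asymp D_i$ in each far parameter $i$. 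By the off‑diagonal decay of $K$, all the defining constants of $\widetilde T$ --- its Calder\'on--Zygmund size and smoothness constants in each parameter, and an a priori estimate at one exponent tuple with every exponent $>1$ (say $L^4\times L^4\times L^4\to L^{4/3}$, established by the one‑parameter trilinear Coifman--Meyer theorem with the far block frozen and then the $O(1)$‑size average over that block taken by Minkowski's integral inequality) --- are $\lesssim(1+D_1)^{-L}(1+D_2)^{-L}$. Invoking the trilinear Coifman--Meyer theorem in the bi‑parameter setting and in quantitative form (the bound depending controllably on these constants; valid for all $1<p_i\le\infty$, $0<r<\infty$ by \cite{grafakos2002multilinear,kenig1999multilinear,muscalu2004multi}) then gives the per‑term bound. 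For $r\ge1$ there is a more direct route: taking the partial inverse Fourier transform of $m$ in the frequency variables of a far parameter expresses $T_m(f^\nu,g^\mu,h^\rho)$ as an integral, over a set of measure $O(1)$, of one‑parameter trilinear Coifman--Meyer operators each of norm $\lesssim(1+D_i)^{-L}$ (integration by parts in those frequencies), which Minkowski's integral inequality recombines.

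The hard part is this per‑term step, in two respects: establishing the rapid off‑diagonal decay of $K$ and the claimed smallness of the constants of $\widetilde T$ (a Littlewood--Paley and integration‑by‑parts bookkeeping), and, above all, passing from that smallness to smallness of the \emph{operator norm} of $\widetilde T$ uniformly across $0<r<\infty$. The quasi‑Banach range $r<1$ is the delicate point --- neither duality nor Minkowski's integral inequality is available --- which is exactly why one should route through the quantitative multilinear Calder\'on--Zygmund theory rather than integrate the one‑parameter estimate directly. The summation step and the main‑term case are routine by comparison.
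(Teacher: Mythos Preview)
This theorem is quoted from \cite{dai2013p} and is not proved in the present paper; the argument can, however, be inferred from the parallel computations in Section~\ref{rtp} (the $T^{G,G',0}_{ab}$ and $T^{E,E',0}_{ab}$ estimates). Your high-level strategy---decompose $f,g,h$ over the unit lattice and prove a per-term bound with rapid decay in $D_i=\max\{|\nu_i|,|\mu_i|,|\rho_i|\}$---is natural and matches part of what the paper does (see \eqref{fghdecomp}). The difficulty is the mixed case ``$D_1$ large, $D_2=O(1)$'' when $r<1$, and here your proposal has a genuine gap.

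You appeal to a ``quantitative trilinear bi-parameter Calder\'on--Zygmund theorem'' that takes as input the kernel size/smoothness constants of $\widetilde T$ together with a single a~priori bound and outputs control of $\|\widetilde T\|_{L^{p_1}\times L^{p_2}\times L^{p_3}\to L^r}$ for all $0<r<\infty$. No such black box exists in the bi-parameter setting. The results of \cite{muscalu2004bi,muscalu2004multi} are proved for Fourier \emph{multipliers} via an explicit paraproduct decomposition and size/energy or stopping-time arguments; they are not stated or provable as ``$T(1)$-type'' theorems for general bi-parameter kernels, and there is no Calder\'on--Zygmund decomposition available on product spaces that would let you pass from one exponent tuple to $r<1$. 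Your $\widetilde T$ is not a multiplier (the spatial cutoffs destroy that), so you cannot feed it into those theorems. The Minkowski route you sketch for $r\ge1$ is essentially correct, but for $r<1$ you are left without an argument.

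The approach actually used in \cite{dai2013p} (and mirrored here in Section~\ref{rtp}) proceeds differently: one first reduces $T_m$ to a sum of \emph{discrete bi-parameter paraproducts} over dyadic rectangles $I\times I'$ with $|I|,|I'|\le1$ (the ``$1+$'' in the symbol is what forces this length restriction). One then splits according to whether $I,I'\subseteq 5I^0$ or not. When an interval is outside $5I^0$, the bump functions $\phi_I^3\varphi_0$ themselves supply a factor $(1+\mathrm{dist}(I,I^0)/|I|)^{-M}$, and the entire paraproduct sum is estimated term-by-term via H\"older's inequality and summed directly---this works for all $r>0$ because one never invokes an operator-norm bound, only pointwise bump decay (cf.\ the estimate \eqref{nmainest}). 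When $I,I'\subseteq5I^0$, one then decomposes $f,g,h$ as you do, extracts decay in $n_1,n_2,n_3$ from the inner products $\langle f\chi_{I_{n_1}},\phi_I^1\rangle$ etc., and for the finitely many near-origin pieces invokes the global boundedness of $T_m$. The point is that working at the paraproduct level lets one read off the decay \emph{before} summing in $I,I'$, which sidesteps the need for any quantitative bi-parameter $T(1)$ theorem.
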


\medskip

  We now return to the discussion of the classical single-parameter Coifman-Meyer type operator $\eqref{coifmanop}$ under the condition
   $\eqref{coifmansymbol}$. Note that in $\eqref{coifmansymbol}$ the only singularity for the symbol $m$ is at the origin.  In  \cite{muscalu2007paraproducts}, Muscalu considered some types of symbols having flag singularities. More precisely, in the trilinear case, the symbol $m(\xi,\eta,\zeta)$ is a product of two symbols in $\mathcal{M}(\n^3)$ and $\mathcal{M}(\n^2)$ respectively, i.e, $m(\xi,\eta,\zeta)=m_1(\xi,\eta,\zeta)m_2(\eta,\zeta)$ for $m_1\in \mathcal{M}(\nnn), m_2\in \mathcal{M}(\nn)$ satisfying
\begin{gather*}
 \nonumber |\partial_\xi^\alpha \partial_\eta^\beta \partial_\zeta^\gamma m_1(\xi,\eta,\zeta)|\lesssim {1\over (|\xi|+|\eta|+|\zeta|)^{\alpha+\beta+\gamma}}\\
 |\partial_\eta^\beta \partial_\zeta^\gamma m_2(\eta,\zeta)|\lesssim {1\over (|\eta|+|\zeta|)^{\beta+\gamma}}
\end{gather*}
for every $\xi,\eta,\zeta\in \n$ and sufficiently many indices $\alpha$, $\beta$ and $\gamma$. Define
\begin{equation}
\label{flagop}
T_{m_1,m_2}(f,g,h)(x):=\int_{\nnn}m_1(\xi,\eta,\zeta)m_2(\eta,\zeta)\hat f(\xi) \hat g(\eta)\hat h(\zeta)e^{2\pi i(\xi+\eta+\zeta)\cdot x}d\xi d\eta d\zeta,
\end{equation}
where $f,g,h\in \mathcal{S}(\n)$. Then there holds
\begin{theorem}
\emph{(\cite{muscalu2007paraproducts})}
\label{Muscalu}
The operator defined in $\eqref{flagop}$ maps $L^{p_1}\times L^{p_2}\times L^{p_3}\to L^r$ for $1<p_1,p_2,p_3\leq \infty$ with $1/p_1+1/p_2+1/p_3=1/r$, and $0<r<\infty$. In particular, the boundedness $L^{\infty}\times L^{\infty}\times L^{p_3}\to L^{p_3}$, $L^{\infty}\times L^{p_2}\times L^{\infty}\to L^{p_2}$, $L^{p_1}\times L^{\infty}\times L^{p_3}\to L^r$ and $L^{p_1}\times L^{p_2}\times L^{\infty}\to L^r$ are true.
\end{theorem}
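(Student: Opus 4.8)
The plan is to strip the classical part off the symbol by a Littlewood--Paley decomposition, isolating the genuine flag part, which carries the essential difficulty. Write $m_1=\sum_j\psi_j$ with $\psi_j$ supported where $|\xi|+|\eta|+|\zeta|\sim 2^j$, and $m_2=\sum_k\varphi_k$ with $\varphi_k$ supported where $|\eta|+|\zeta|\sim 2^k$; the Marcinkiewicz--Mikhlin--H\"ormander bounds furnish uniform bump estimates on $\psi_j$ and $\varphi_k$ at these scales. Since $\psi_j\varphi_k\equiv 0$ unless $2^k\lesssim 2^j$, one has $m_1m_2=\sum_j\psi_j\big(\sum_{k\lesssim j}\varphi_k\big)$, which splits into a \emph{diagonal} part $\sum_j\psi_j\varphi_{\sim j}$ and a \emph{separated} part $\sum_j\psi_j\varphi_{<j-C}$, where $\varphi_{<j-C}:=\sum_{k<j-C}\varphi_k$. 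The point is that the separated symbol is singular along the whole line $\{\eta=\zeta=0\}$, not only at the origin --- this is the flag singularity, and it is precisely what puts $T_{m_1,m_2}$ outside the Coifman--Meyer class.

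For the diagonal part, each $\psi_j\varphi_{\sim j}$ is a bump at the single scale $2^j$ in all of $(\xi,\eta,\zeta)$ with uniform bounds, so $\sum_j\psi_j\varphi_{\sim j}\in\mathcal{M}(\nnn)$ and the associated trilinear operator is a classical Coifman--Meyer multiplier, bounded on the entire stated range; the $O(1)$ transitional values of $k$ are absorbed here. For the separated part, on the support of $\psi_j\varphi_{<j-C}$ one has $|\xi|\sim|\xi+\eta+\zeta|\sim 2^j\gg|\eta|,|\zeta|$, so $f$ is forced to be the high--frequency input; after the usual expansion of $\psi_j$ into a rapidly convergent sum of tensor products and the insertion of the output projection $\Delta_j$, this contribution is, up to harmless support adjustments, a sum of the form $\sum_j\Delta_j\big(\Delta_j f\cdot\Pi^{<j}_{m_2}(g,h)\big)$, where $\Pi^{<j}_{m_2}(g,h)$ is the partial bilinear paraproduct built from the $\varphi_k$ with $k<j-C$, which converges to $T_{m_2}(g,h)$. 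When $1/p_2+1/p_3<1$ this is finished by composition: $T_{m_2}\colon L^{p_2}\times L^{p_3}\to L^q$ with $1/q=1/p_2+1/p_3>1$ (Coifman--Meyer), followed by the outer paraproduct $L^{p_1}\times L^q\to L^r$ (Coifman--Meyer again), the telescoping discrepancy between $\Pi^{<j}_{m_2}$ and $T_{m_2}$ contributing only further paraproduct terms summed in the same way. This already yields all the endpoint estimates singled out in the statement, since there the inner exponent $q$ coincides with one of the finite $p_i$ and hence exceeds $1$.

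The remaining --- and genuinely harder --- range is $1/p_2+1/p_3\ge 1$, where the inner exponent $q$ drops to $\le 1$ and composition breaks down, since a Calder\'on--Zygmund paraproduct maps $H^q$, but not $L^q$, to $L^q$ for $q\le 1$. Here the plan, following \cite{muscalu2007paraproducts}, is to discretize $f,g,h$ by wave packets (a Fourier--series expansion of each bump), turning the separated part into an average of \emph{discrete flag paraproducts} --- an outer paraproduct in $f$ whose coefficients are themselves bilinear paraproducts of $(g,h)$ at all coarser scales --- and to prove $L^p$ bounds for these directly by the time--frequency method of Muscalu--Tao--Thiele: establish the \emph{size} estimates (the $f$--coefficients controlled in $BMO$, and the coefficients coming from the inner bilinear paraproduct of $g,h$ controlled via its own Carleson/$BMO$ bound) together with the \emph{energy} estimates (controlled by square functions), and then run the generalized restricted--type interpolation (the John--Nirenberg / tensor--product trick) to descend below $r=1$ and reach the $L^\infty$ corners. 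The main obstacle is exactly this last stage: pushing the stopping--time analysis through the \emph{composed}, two--layer flag structure, so that the size and energy of the outer family can be bounded in terms of the already paraproduct--type data produced by $g$ and $h$.
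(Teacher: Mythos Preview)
This theorem is quoted from \cite{muscalu2007paraproducts} and is not proved in the present paper; the paper only records the statement and later (Definition~\ref{defop}, Lemma~\ref{lm}, Remark~\ref{rmk}) invokes the discrete model operators $T_1$, $T_{1,k_0}$ and the Taylor-expansion decomposition of \cite{muscalu2007paraproducts} as black boxes. So there is no ``paper's own proof'' to compare against beyond those structural pointers.

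Your outline is essentially the strategy of \cite{muscalu2007paraproducts}: split off the diagonal $k\sim j$ piece as a genuine Coifman--Meyer symbol, and reduce the separated piece $k\ll j$ to discrete flag paraproducts handled by size/energy and restricted-type interpolation. Two remarks. First, a typo: in your composition step you write ``$1/q=1/p_2+1/p_3>1$'' right after assuming $1/p_2+1/p_3<1$; you mean $q>1$. Second, your composition shortcut for the range $1/p_2+1/p_3<1$ is not how \cite{muscalu2007paraproducts} proceeds, and the sentence ``the telescoping discrepancy between $\Pi^{<j}_{m_2}$ and $T_{m_2}$ contributing only further paraproduct terms summed in the same way'' is where the argument as written is incomplete: the remainder $\sum_{k\gtrsim j}$ paired against the outer $\Delta_j$ is again a two-scale flag object of the same shape as what you started with, not a classical paraproduct, so you have not actually reduced anything. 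Muscalu does not attempt this composition; instead, the separated piece is expanded via a Taylor series in the inner bump (this is exactly the content of Lemma~\ref{lm} here), producing the single model operator $T_1$ plus corrections $T^1_{l,k_0}$, $T^1_{M,k_0}$ with geometric decay in $k_0$, and \emph{all} of these are then treated uniformly by the size/energy machinery --- there is no separate ``easy range'' argument. Your final paragraph correctly identifies that machinery as the heart of the matter, but it should be understood as covering the whole separated piece, not only the subrange $1/p_2+1/p_3\ge 1$.
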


Moreover, for the above theorem, the estimates like $L^t\times L^{\infty}\times L^{\infty}  \to L^t$ or $L^{\infty}\times L^{\infty} \times L^{\infty}\to L^{\infty}$ are false, and these can be checked  by setting one of the symbols to be identically 1. Moreover, the method in \cite{muscalu2007paraproducts} can be applied when studying the adjoints of those operators. Also, when $0<p_1,p_2,p_3\leq \infty$, Miyachi and Tomita in \cite{MT} proved the boundedness of \eqref{flagop} on Hardy and BMO spaces.

\medskip

 The main purpose of this paper is  to study  the  $L^r$ estimates for the bi-parameter trilinear Fourier multipliers  with flag singularity as defined in  $\eqref{Muscalu}$, as well as the corresponding bi-parameter trilinear  pseudo-differential variants. We consider the multipliers $m_1\in \mathcal{BM}(\mathbb{R}^6)$ and $m_2\in \mathcal{BM}(\mathbb{R}^4)$ satisfying the following conditions:
\begin{gather*}
 \nonumber |\partial_{\xi_1}^{\alpha_1} \partial_{\xi_2}^{\alpha_2}\partial_{\eta_1}^{\beta_1} \partial_{\eta_2}^{\beta_2} \partial_{\zeta_1}^{\gamma_1} \partial_{\zeta_2}^{\gamma_2} m_1(\xi,\eta,\zeta)| \nonumber \\
 \lesssim {1\over (|\xi_1|+|\eta_1|+|\zeta_1|)^{\alpha_1+\beta_1+\gamma_1}} {1\over (|\xi_2|+|\eta_2|+|\zeta_2|)^{\alpha_2+\beta_2+\gamma_2}},\\
 \label{bimultiplier}
 |\partial_{\eta_1}^{\beta_1} \partial_{\eta_2}^{\beta_2} \partial_{\zeta_1}^{\gamma_1} \partial_{\zeta_2}^{\gamma_2} m_2(\eta,\zeta)|\lesssim {1\over (|\eta_1|+|\zeta_1|)^{\beta_1+\gamma_1}} {1\over (|\eta_2|+|\zeta_2|)^{\beta_2+\gamma_2}},
\end{gather*}
for every $\xi=(\xi_1,\xi_2),\eta=(\eta_1,\eta_2),\zeta=(\zeta_1,\zeta_2)\in \n \times \n$ and  all multi-indices $\alpha=(\alpha_1,\alpha_2)$, $\beta=(\beta_1,\beta_2)$ and $\gamma=(\gamma_1,\gamma_2)$.

Our main theorems are as follows.

\begin{theorem}
\label{bithm}
For  $f,g,h\in \mathcal{S}(\n^{2})$, the bi-parameter operators
\begin{equation}
\label{biop34}
T_{m_1,m_2}(f,g,h)(x):=\int_{\n^{6}}m_1(\xi,\eta,\zeta)m_2(\eta,\zeta)\hat f(\xi) \hat g(\eta)\hat h(\zeta)e^{2\pi i(\xi+\eta+\zeta)\cdot x}d\xi d\eta d\zeta
\end{equation}
would map $L^{p_1}\times L^{p_2}\times L^{p_3}\to L^r$ for $1<p_1,p_2,p_3 < \infty$ with $1/p_1+1/p_2+1/p_3=1/r$ and $0<r<\infty$, if one assumes that the operators
\begin{equation}
\label{assumption}
\int_{\n^{6}}m'(\xi_1,\eta_1,\zeta_1)m''(\eta_2,\zeta_2)\hat f(\xi) \hat g(\eta)\hat h(\zeta)e^{2\pi i(\xi+\eta+\zeta)\cdot x}d\xi d\eta d\zeta,
\end{equation}
satisfy the same H\"older type estimates, where $m' \in \mathcal{M}(\mathbb{R}^3)$ and $m'' \in \mathcal{M}(\mathbb{R}^2)$ are two one-parameter symbols.
\end{theorem}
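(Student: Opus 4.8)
\emph{Strategy of proof.} The plan is to reduce the general bi-parameter flag symbol $m_1(\xi,\eta,\zeta)\,m_2(\eta,\zeta)$, by a double Littlewood--Paley decomposition followed by Fourier series expansions on the resulting frequency boxes, to a boundedly convergent superposition of operators whose symbol has the special tensor form $m'(\xi_1,\eta_1,\zeta_1)\,m''(\eta_2,\zeta_2)$, up to pieces that are disposed of directly by the classical one-parameter Coifman--Meyer theorem, the bi-parameter Coifman--Meyer multiplier theorem (Theorem~\ref{bipcoifman} and its trilinear extension), and the classical Mikhlin multiplier theorem.

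First I would perform a Littlewood--Paley decomposition of each symbol in each of the two parameters: write $m_1=\sum_{j_1,j_2}m_1^{j_1,j_2}$ with $m_1^{j_1,j_2}$ supported where $|\xi_1|+|\eta_1|+|\zeta_1|\sim 2^{j_1}$ and $|\xi_2|+|\eta_2|+|\zeta_2|\sim 2^{j_2}$, and likewise $m_2=\sum_{k_1,k_2}m_2^{k_1,k_2}$ supported where $|\eta_1|+|\zeta_1|\sim 2^{k_1}$ and $|\eta_2|+|\zeta_2|\sim 2^{k_2}$. The product (flag) structure forces $k_i\lesssim j_i$ on the support of $m_1^{j_1,j_2}m_2^{k_1,k_2}$, so
\[
m_1 m_2=\sum_{j_1,j_2} m_1^{j_1,j_2}\,\widetilde{m}_2^{\,<j_1,<j_2},\qquad \widetilde{m}_2^{\,<j_1,<j_2}:=\sum_{k_1\lesssim j_1,\;k_2\lesssim j_2}m_2^{k_1,k_2},
\]
with $\widetilde{m}_2^{\,<j_1,<j_2}$ a smooth ball-truncation of $m_2$ at scales $(2^{j_1},2^{j_2})$. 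On the (annulus $\times$ annulus) box supporting $m_1^{j_1,j_2}$ I would expand $m_1^{j_1,j_2}$ in a double Fourier series in the rescaled variables $2^{-j_1}(\xi_1,\eta_1,\zeta_1)$ and $2^{-j_2}(\xi_2,\eta_2,\zeta_2)$, and similarly expand $\widetilde{m}_2^{\,<j_1,<j_2}$; the Marcinkiewicz--Mikhlin--H\"ormander hypotheses give Fourier coefficients with arbitrary polynomial decay in the frequency indices, uniformly in the scales. Each exponential factor is a modulation of the symbol, i.e. a translation of one of $f,g,h$ by $O(2^{-j_i})$; since translations preserve every $L^p$ norm, these modulations are harmless once absorbed into the corresponding fixed-profile bumps, at the price of polynomial growth in the frequency index that is defeated by the decay of the Fourier coefficients. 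This reduces the problem to estimating, uniformly, $\sum_{j_1,j_2}$ of building blocks whose symbol is an honest tensor product of a first-parameter factor (a bump supported where $|\xi_1|+|\eta_1|+|\zeta_1|\sim 2^{j_1}$ multiplied by a cutoff to $|\eta_1|+|\zeta_1|\lesssim 2^{j_1}$) and a second-parameter factor of the same shape.

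The last step, where the actual work lies, is to classify these building blocks according to the position of the $(\eta_i,\zeta_i)$-frequencies relative to $\xi_i$ in each of the two parameters, and to match each regime with an operator already known to be bounded. When both parameters are ``balanced'' (the $\eta_i,\zeta_i$ comparable to $\xi_i$), summing over $(j_1,j_2)$ reassembles a plain bi-parameter trilinear Coifman--Meyer symbol, handled by Theorem~\ref{bipcoifman} and its extension. When the first parameter is balanced but the second is ``unbalanced'' (the $\eta_2,\zeta_2$ much smaller than $\xi_2$), a Taylor expansion in $(\eta_2,\zeta_2)$ about the origin splits the second-parameter factor, to leading order, into a one-dimensional Mikhlin multiplier in $\xi_2$ acting on $f$ alone times the restriction of $m_2$ to the second parameter, with a remainder carrying a summable gain $2^{-j_2}(|\eta_2|+|\zeta_2|)$; after peeling off the $\xi_2$-multiplier (a bounded operator on $L^{p_1}(\n^{2})$) and summing, the first parameter contributes an admissible $m'(\xi_1,\eta_1,\zeta_1)$ and the second an admissible $m''(\eta_2,\zeta_2)$, i.e. precisely an operator of the asserted form. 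The remaining combinations (first parameter unbalanced) are treated by the same Taylor-expansion device: the first-parameter factor then separates, to leading order, into a Mikhlin multiplier in $\xi_1$ on $f$ times a bilinear Coifman--Meyer symbol in $(\eta_1,\zeta_1)$ on $(g,h)$, after which the resulting operator is either a pointwise product controlled by H\"older's inequality together with the classical Mikhlin and Coifman--Meyer theorems, or a bi-parameter Coifman--Meyer/paraproduct-type operator again covered by Theorem~\ref{bipcoifman} and its extension. I expect the main obstacle to be exactly this classification: running the two flag constraints and the two Taylor/Fourier expansions simultaneously and independently, keeping the modulation-translations in the two parameters uncoupled from the scales being summed, and verifying --- case by case --- that every mixed remainder term is either already of the form $m'(\xi_1,\eta_1,\zeta_1)m''(\eta_2,\zeta_2)$ or reducible to it (or to a known multi-parameter estimate) after composition with bounded linear operators on the factor spaces.
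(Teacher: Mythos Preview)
Your overall architecture---Littlewood--Paley decompose both symbols in both parameters, expand in Fourier series on the dyadic boxes, Taylor-expand $m_1$ in $(\eta,\zeta)$ on the region where $|\eta_i|+|\zeta_i|\ll|\xi_i|$, then classify the pieces---matches the paper's approach. The gap is in the case analysis, and it is a real one.

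You have the roles of the ``mixed'' and ``both unbalanced'' regimes reversed. In the paper the mixed pieces (one parameter balanced, the other unbalanced) are the \emph{easier} ones: the product $m_1m_2$ then satisfies a classical trilinear Marcinkiewicz condition in the balanced parameter and a flag condition only in the other, a strictly stronger hypothesis than the full bi-parameter flag condition, hence subsumed by the both-unbalanced analysis. The special form \eqref{assumption} does not arise there.

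The hard case is ``both unbalanced'' together with the zeroth-order Taylor term ($d_1=d_2=0$), and your claim that this leading term separates into a Mikhlin multiplier in $\xi$ acting on $f$ times a bilinear Coifman--Meyer operator in $(\eta,\zeta)$ on $(g,h)$---hence controlled by H\"older---is precisely where the argument fails. The obstruction is the cutoff $\chi(\xi_1,\eta_1,\zeta_1)\chi(\xi_2,\eta_2,\zeta_2)$ enforcing $|\eta_i|+|\zeta_i|\ll|\xi_i|$: it does not factor as a function of $\xi$ times a function of $(\eta,\zeta)$. After the reduction this constraint becomes $k_i<j_i-20$, so the operator is
\[
\sum_{k_1,k_2}\bigl(\text{bilinear piece at scales }(k_1,k_2)\bigr)(g,h)\cdot\!\!\sum_{\substack{j_1>k_1+20\\ j_2>k_2+20}}\!\!\Delta_{j_1}\Delta_{j_2}f,
\]
not a pointwise product. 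The paper decouples via the one-parameter identity $\sum_{j>k+20}\widehat\psi_j=(1-\widehat\phi_k)\bigl(\sum_{j}\widehat\psi_j-\widehat{\widetilde\psi}_k\bigr)$ applied in each variable; expanding $(1-S_{k_1})(1-S_{k_2})$ yields four kinds of terms. The term with no $S$ does give your H\"older estimate, and the term $S_{k_1}S_{k_2}$ gives a classical bi-parameter trilinear symbol. But the single-$S$ cross terms $S_{k_1}$ and $S_{k_2}$ produce exactly the tensor symbols $m'(\xi_1,\eta_1,\zeta_1)m''(\eta_2,\zeta_2)$ (resp.\ with the parameters swapped), and \emph{this} is where the hypothesis \eqref{assumption} is invoked---inside the both-unbalanced regime, not in the mixed one. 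Your sketch does not contain a mechanism for generating or isolating these cross terms, so as written the both-unbalanced case would not close.
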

In fact, we will reduce \eqref{biop34} to a sum of Fourier multipliers, each with different types of symbols; for some of these operators we prove H\"older type estimates, and the remaining operators are treated under an additional assumption about symbols of form \eqref{assumption}.
While the helicoidal method of \cite{BMu2} can be used to treat certain of the operators in our reduction, it does not give the
boundedness of
\eqref{assumption}. We conjecture that H\"older-type estimates for these operators are true.
More details are in Section \ref{bithmpf} and Remark \ref{rmk1}.
%%%%%%%%%%%%%%a%%%%

The above theorem indicates that providing estimates for the operator
  \eqref{biop34} are the fundamentally new obstacles in obtaining estimates for the operator  \eqref{assumption}.
In fact, the proof of this theorem shows that the study of the bi-parameter flag multiplier \eqref{biop34} can be essentially
reduced to the study of classical bi-parameter multilinear Fourier multipliers like \eqref{bicofop} as well as the multipliers
with tensor product symbols like \eqref{assumption}.

As it turns out, estimates for the operator  \eqref{biop34} are of interest even in restricted
function spaces. Recently, Muscalu and Zhai (\cite{muscaluZhai}), and see also \cite{Zhai})  proved the following estimates for \eqref{biop34} under a certain tensor product assumption.
\begin{theorem}[\cite{muscaluZhai}]\label{muscaluZhai}
  Let $g(x)=g_1(x_1)\otimes g_2(x_2) $, $h(x)=h_1(x_1)\otimes h_2(x_2) $,  ${1\over p}+ {1\over p_2}+{1\over p_3}={1\over p}+{1\over q_2}+{1\over q_3}={1\over r}$, and $0<r<\infty$. Then \eqref{biop34} maps
  \begin{itemize}
    \item[(a)] $L^p \times L^{p_2}_{x_1}(L^{q_2}_{x_2})\times L^{p_3}_{x_1}(L^{q_3}_{x_2}) \to L^r$, for $1<p,p_2,p_3,q_2,q_3\leq\infty $ and $(p_2,p_3)\neq (\infty,\infty)$ and $(q_2,q_3) \neq (\infty,\infty)$.
    \item[(b)] $L^p \times L^\infty \times L^{p_3} \to L^r$  and $L^p  \times L^{p_2} \times L^\infty\to L^r$, for $1<p\leq \infty$ and $1<p_2,p_3<\infty$.
  \end{itemize}.
\end{theorem}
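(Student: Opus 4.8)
The plan is to run the reduction of \eqref{biop34} carried out in the proof of Theorem~\ref{bithm} and then to exploit the tensor structure of $g$ and $h$ to split every operator that appears there into iterated one-parameter operators, after which the mixed-norm bounds follow from the classical one-parameter multilinear Coifman--Meyer theorem (the $x$-independent, bilinear and trilinear cases of Theorem~\ref{pseth}) together with H\"older's inequality and Minkowski's integral inequality. I will use that, after a Littlewood--Paley decomposition of $m_1,m_2$ in each of the two parameters and a Fourier series expansion of each dyadic block, \eqref{biop34} is a rapidly convergent sum of operators whose symbols are tensor products $M^{(1)}(\xi_1,\eta_1,\zeta_1)M^{(2)}(\xi_2,\eta_2,\zeta_2)$ of one-parameter Coifman--Meyer symbols, of three kinds: (i) pieces that reassemble into a bi-parameter Coifman--Meyer symbol; (ii) \emph{semi-flag} pieces in which one of the two parameters has already collapsed to a bilinear paraproduct; and (iii) the genuinely new pieces of the form \eqref{assumption}, namely $m'(\xi_1,\eta_1,\zeta_1)\,m''(\eta_2,\zeta_2)$ with $m'\in\mathcal{M}(\mathbb{R}^3)$ and $m''\in\mathcal{M}(\mathbb{R}^2)$ independent of $\xi_2$.

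The decisive step is the treatment of a piece of kind (iii). Writing $\hat g=\hat g_1\otimes\hat g_2$, $\hat h=\hat h_1\otimes\hat h_2$ and integrating out the second-variable frequencies---here one uses that $m''$ does not depend on $\xi_2$---one obtains the exact identity
\[
\int_{\mathbb{R}^6}m'(\xi_1,\eta_1,\zeta_1)m''(\eta_2,\zeta_2)\hat f(\xi)\hat g(\eta)\hat h(\zeta)e^{2\pi i(\xi+\eta+\zeta)\cdot x}\,d\xi d\eta d\zeta=T_{m'}\bigl(f(\cdot,x_2),g_1,h_1\bigr)(x_1)\cdot B_{m''}(g_2,h_2)(x_2),
\]
where $T_{m'}$ is the one-parameter trilinear Coifman--Meyer operator with symbol $m'$ and $B_{m''}$ the one-parameter bilinear one with symbol $m''$. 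For fixed $x_2$, the trilinear Coifman--Meyer theorem gives $\|T_{m'}(f(\cdot,x_2),g_1,h_1)\|_{L^r_{x_1}}\lesssim\|f(\cdot,x_2)\|_{L^p_{x_1}}\|g_1\|_{p_2}\|h_1\|_{p_3}$, admissible because $1/p+1/p_2+1/p_3=1/r>0$, while the bilinear Coifman--Meyer theorem gives $\|B_{m''}(g_2,h_2)\|_{L^t_{x_2}}\lesssim\|g_2\|_{q_2}\|h_2\|_{q_3}$ with $1/t=1/q_2+1/q_3$, which is precisely why one must assume $(q_2,q_3)\ne(\infty,\infty)$. Raising the identity to the power $r$, integrating in $x_1$, and then applying H\"older in $x_2$ with exponents $t/r$ and $(t/r)'$---legitimate since $1/t=1/r-1/p\le 1/r$, and with $r(t/r)'=p$ exactly because $1/p+1/t=1/r$---I would arrive at the bound $\|f\|_p\,\|g_1\|_{p_2}\|g_2\|_{q_2}\,\|h_1\|_{p_3}\|h_2\|_{q_3}=\|f\|_p\,\|g\|_{L^{p_2}_{x_1}(L^{q_2}_{x_2})}\,\|h\|_{L^{p_3}_{x_1}(L^{q_3}_{x_2})}$, which is part (a) for this family. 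Part (b) is then the special case $p_2=q_2=\infty$, $p_3=q_3\in(1,\infty)$, together with its analogue obtained by interchanging the roles of $g$ and $h$.

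Pieces of kinds (i) and (ii) I would handle by the same scheme. A kind-(i) piece applied to $f$ and tensor products $g,h$ unwinds into the iterated operator $T_{M^{(1)}}\bigl(\,x_1'\mapsto T_{M^{(2)}}(f(x_1',\cdot),g_2,h_2)(x_2),\,g_1,h_1\,\bigr)(x_1)$; freezing $x_2$ in the outer operator and $x_1$ in the inner one, two applications of the one-parameter trilinear Coifman--Meyer theorem---with $f$ carrying the Lebesgue exponent $p$ in each, forced by $1/p+1/p_2+1/p_3=1/p+1/q_2+1/q_3=1/r$---together with Minkowski's integral inequality to commute the two Lebesgue norms on $f$ (permissible since $r\le p$) give the bound. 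For a kind-(ii) piece the first paraproduct has already been formed in one parameter; under the tensor hypothesis the bilinear paraproduct $\Pi_{m_2}(g,h)$ becomes a tensor product $\Pi(g_1,h_1)\otimes\Pi(g_2,h_2)$ belonging to $L^{t_1}_{x_1}(L^{t_2}_{x_2})$ with $1/t_1=1/p_2+1/p_3$ and $1/t_2=1/q_2+1/q_3$---which is exactly where the hypotheses $(p_2,p_3)\ne(\infty,\infty)$ and $(q_2,q_3)\ne(\infty,\infty)$ enter---and one then pairs it with $f$ through a kind-(i) or kind-(iii) operator. Summing the resulting estimates over the Littlewood--Paley blocks is harmless because the Fourier coefficients of the blocks decay rapidly, uniformly in the scale.

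The hard part will be organizational rather than conceptual: one has to verify that every operator produced by the reduction of Theorem~\ref{bithm} genuinely decouples once $g$ and $h$ are tensor products, that the Lebesgue exponents can always be split so as to meet the two relations $1/p+1/p_2+1/p_3=1/p+1/q_2+1/q_3=1/r$ simultaneously, and---most delicately---that none of the resulting one-parameter multilinear factors is driven onto the forbidden endpoint in which all of its inputs lie in $L^\infty$; the hypotheses $(p_2,p_3)\ne(\infty,\infty)$ and $(q_2,q_3)\ne(\infty,\infty)$ are precisely what rules this out.
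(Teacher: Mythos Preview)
Your treatment of the kind-(iii) pieces is correct and coincides with the paper's Proposition~\ref{tensor32}: the factorization into $T_{m'}(f(\cdot,x_2),g_1,h_1)(x_1)\cdot B_{m''}(g_2,h_2)(x_2)$ followed by H\"older in $x_2$ is exactly the argument given there. However, there are two genuine gaps in the rest of your plan.

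First, the reduction of Theorem~\ref{bithm} does \emph{not} produce only tensor-product symbols. The very first pieces in Section~\ref{bithmpf}, namely $m_{1,1}\cdot m_2$ and $m_1^{2,2}\cdot m_2$, are bona fide bi-parameter trilinear Coifman--Meyer symbols with no tensor structure; likewise the several invocations of ``the multilinear version of Theorem~\ref{bipcoifman}'' in Section~4 concern symbols in $\mathcal{BM}(\mathbb{R}^6)$ that are not of the form $M^{(1)}\otimes M^{(2)}$. Your assertion that a double Littlewood--Paley decomposition plus Fourier series turns such a symbol into a rapidly convergent sum of tensor products is not correct: the Fourier coefficients $c^{j,k}_{n,n'}$ of the bi-dyadic blocks couple the two scales $j,k$, so for fixed $(n,n')$ the sum $\sum_{j,k}c^{j,k}_{n,n'}\,(\cdots)$ does not split. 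Consequently your ``unwinding into iterated one-parameter operators'' fails for these pieces. The paper handles them by a completely different route (Proposition~\ref{ppt3}): one controls the localized symbol in a Sobolev norm $\sup_{j,k}\|m_{j,k}\|_{H^{s,s}}$ and bounds each bi-dyadic piece pointwise by products of $(M_s|\cdot|^t)^{1/t}$, then uses weighted Fefferman--Stein and Littlewood--Paley inequalities together with the tensor structure of $g,h$ to separate the two variables at the level of maximal functions rather than at the level of the symbol.

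Second, even after fixing the first point, this approach (both yours and the paper's) only yields the range $1<p,p_2,p_3,q_2,q_3<\infty$. The maximal-function argument of Proposition~\ref{ppt3} requires all exponents to exceed some $t>1$, and the paper states explicitly that ``to obtain the mixed norm estimates involving $L^\infty$, it seems that further ideas are required and that the arguments of \cite{muscaluZhai} are essential.'' Your claim that part~(b) drops out as the special case $p_2=q_2=\infty$ is therefore unjustified: the reduction method does not reach those endpoints, and the full statement of Theorem~\ref{muscaluZhai} requires the stopping-time methods of Muscalu--Zhai.
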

The methods of \cite{muscaluZhai} used flag paraproducts and some novel and careful stopping-time arguments. Note that when $1<p,p_2,q_2,p_3,q_3<\infty$, our reduction of \eqref{biop34} essentially into the sum of bi-parameter trilinear multipliers (namely, trilinear version of \eqref{bicofop}), and our reduced multipliers $m'(\xi_1,\eta_1,\zeta_1)m''(\eta_2,\zeta_2)$ as in \eqref{assumption} can be used to offer an alternative proof for some of these mixed norm estimates as above under the tensor product setting.   We include some details in Appendix \ref{ap2}. To obtain the mixed norm estimates involving $L^\infty$, it seems that further ideas are required and that the arguments of \cite{muscaluZhai} are essential.
%We also want to mention that the argument in \cite{Zhai} can be applied to obtain similar mixed norm (including $L^\infty$) estimates for other extensions of  flag type symbols, for instance, $m_5(\xi_1,\xi_2,\eta_1,\eta_2,\zeta_1,\zeta_2,\theta_1,\theta_2)m_6(\zeta_1,\zeta_2,\theta_1,\theta_2)$, where $m_5,m_6$ are two standard bi-parameter multipliers. For such multipliers, when $L^\infty$ appears, the reduction in \cite{MT} or the current paper can hardly be applied.
%%%%%%%%%%%%%%%%%%%%%%%%%%%%%%%%%%%%%%

\medskip

In fact, our argument, which gives an alternative  proof of the above Theorem \ref{muscaluZhai}, also allows us to establish the following  weighted mixed norm estimates whose proof will be given in Appendix \ref{ap3}. We will denote $A_p$ as the class of Muckenhoupt weights below and we refer the reader to Appendix C for definition. 

 \begin{theorem}
  \label{goalweighted1}
    Let $g(x)=g_1(x_1)\otimes g_2(x_2) $, $h(x)=h_1(x_1)\otimes h_2(x_2) $. Assume that
    \begin{gather*}
      w_1^1(x_1), w_2^1(x_2) \in A_{p}, \ w_1^2(x_1) \in A_{p_2},\  w_1^3(x_1) \in A_{p_3},\  w_2^2(x_2) \in A_{q_2}, \ w_2^3(x_2) \in A_{q_3},
   \end{gather*}
      then \eqref{biop34} maps $L^p(w_1^1\otimes w_2^1) \times L^{p_2}_{x_1}(w_1^2)(L^{q_2}_{x_2}(w_2^2))\times L^{p_3}_{x_1}(w_1^3)(L^{q_3}_{x_2}(w_2^3)) \to L^r(w_1\otimes w_2)$ for $1<p,p_2,p_3,q_2,q_3<\infty$, $0<r<\infty$  and ${1\over p}+ {1\over p_2}+{1\over p_3}={1\over p}+{1\over q_2}+{1\over q_3}={1\over r}$, where
     \begin{gather*}
        w_1(x_1)=(w_1^1)^{r/p} \cdot (w_1^2)^{r/p_2} \cdot (w_1^3)^{r/p_3},\\
        w_2(x_2)=(w_2^1)^{r/p} \cdot (w_2^2)^{r/q_2} \cdot (w_2^3)^{r/q_3}.
     \end{gather*}

    In particular, by taking $w_1^1=w_1^2=w_1^3=w_1\in A_{\min{(p,p_2,p_3)}}$, $w_2^1=w_2^2=w_2^3=w_2\in A_{\min{(p,q_2,q_3)}}$, \eqref{biop34} maps $L^p(w_1\otimes w_2) \times L^{p_2}_{x_1}(w_1)(L^{q_2}_{x_2}(w_2))\times L^{p_3}_{x_1}(w_1)(L^{q_3}_{x_2}(w_2)) \to L^r(w_1 \otimes w_2)$.
  \end{theorem}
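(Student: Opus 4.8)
The plan is to run the reduction from the proof of Theorem~\ref{bithm} (see Section~\ref{bithmpf}) and then exploit the tensor structure of $g$ and $h$. Recall that, after a bi-parameter Littlewood--Paley decomposition of $m_1$ and $m_2$ and a discussion of which frequency scales can occur (the flag condition forces the scales of $m_2$ to be dominated, in each parameter, by those of $m_1$), the operator \eqref{biop34} is a finite sum of model operators of two types: (A) bi-parameter trilinear Coifman--Meyer multipliers, i.e.\ the trilinear analogue of \eqref{bicofop}; and (B) operators whose symbol has the tensor-across-parameters form $m'(\xi_1,\eta_1,\zeta_1)\,m''(\eta_2,\zeta_2)$ with $m'\in\mathcal M(\nnn)$, $m''\in\mathcal M(\nn)$, exactly as in \eqref{assumption}. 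Since $g=g_1\otimes g_2$ and $h=h_1\otimes h_2$, it suffices to prove the asserted weighted mixed norm bound for each model operator separately. The role of the tensor hypothesis is that it turns the conjectural operator \eqref{assumption} into one that genuinely factors, so no unproven input is needed.

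For a type (B) operator, writing $\widehat f(\xi_1,\xi_2)$ and performing the $\xi_2,\eta_2,\zeta_2$ integration first, the fact that $m''$ is independent of $\xi_2$ produces the pointwise factorization
\[
T(f,g,h)(x_1,x_2)=T'\big(f(\cdot,x_2),g_1,h_1\big)(x_1)\cdot B''(g_2,h_2)(x_2),
\]
where $T'$ is the one-parameter trilinear Coifman--Meyer operator with symbol $m'(\xi_1,\eta_1,\zeta_1)$ and $B''$ the one-parameter bilinear Coifman--Meyer operator with symbol $m''(\eta_2,\zeta_2)$. I would then use Fubini, bound the $x_1$--integral by the one-parameter weighted multilinear Calder\'on--Zygmund estimate for $T'$ (with $w_1^1\in A_p$, $w_1^2\in A_{p_2}$, $w_1^3\in A_{p_3}$, which together lie in the multilinear weight class $A_{(p,p_2,p_3)}$, producing the weight $w_1=(w_1^1)^{r/p}(w_1^2)^{r/p_2}(w_1^3)^{r/p_3}$), and apply H\"older in $x_2$ with the conjugate exponents $p/r$ and $s/r$, where $1/s=1/q_2+1/q_3=1/r-1/p$, splitting $w_2=(w_2^1)^{r/p}\,(w_2')^{r/s}$ with $w_2'=(w_2^2)^{s/q_2}(w_2^3)^{s/q_3}$. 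The bilinear weighted Coifman--Meyer estimate $\|B''(g_2,h_2)\|_{L^s(w_2')}\lesssim\|g_2\|_{L^{q_2}(w_2^2)}\|h_2\|_{L^{q_3}(w_2^3)}$ (using $w_2^2\in A_{q_2}$, $w_2^3\in A_{q_3}$), together with the leftover factor $\big(\int\|f(\cdot,x_2)\|_{L^p(w_1^1)}^p\,w_2^1(x_2)\,dx_2\big)^{1/p}=\|f\|_{L^p(w_1^1\otimes w_2^1)}$, closes the estimate; recognizing $\|g_1\|_{L^{p_2}(w_1^2)}\|g_2\|_{L^{q_2}(w_2^2)}=\|g\|_{L^{p_2}_{x_1}(w_1^2)(L^{q_2}_{x_2}(w_2^2))}$ (and likewise for $h$) gives precisely the claimed bound for type (B). Note that only \emph{scalar} weighted multilinear estimates are used here.

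For a type (A) operator $S$, I would use the now standard reduction of bi-parameter multilinear operators to one-parameter ones (Littlewood--Paley in each parameter plus a Fourier series expansion on the dyadic frequency boxes, as in \cite{muscalu2004multi,muscalu2013classical}): with $g=g_1\otimes g_2$ and $h=h_1\otimes h_2$, this expresses $S(f,g,h)$ as an outer one-parameter trilinear Coifman--Meyer operator in $x_2$ applied to $g_2,h_2$ and to $U:=\Pi^{(1)}(f,g_1,h_1)$, where $\Pi^{(1)}$ is itself a one-parameter trilinear Coifman--Meyer operator in $x_1$ applied to $f,g_1,h_1$ but regarded as taking values in the $x_2$ variable. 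Peeling off the outer $x_2$--operator with the one-parameter weighted multilinear estimate (exponents $p,q_2,q_3$, weights $w_2^1\in A_p$, $w_2^2\in A_{q_2}$, $w_2^3\in A_{q_3}$, producing $w_2$) reduces matters to the weighted, $L^p(w_2^1)$--valued estimate
\[
\Pi^{(1)}:\;L^p\big(w_1^1;\,L^p(w_2^1)\big)\times L^{p_2}(w_1^2)\times L^{p_3}(w_1^3)\longrightarrow L^r\big(w_1;\,L^p(w_2^1)\big),
\]
which I would obtain from the weighted vector-valued theory of multilinear Calder\'on--Zygmund operators --- for instance via the helicoidal method of Benea--Muscalu \cite{BMu2}, which is tailored precisely to such Banach-valued/mixed-norm bounds --- and which produces $w_1$. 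Summing the finitely many model operators completes the proof.

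The hard part is this weighted vector-valued estimate in the type (A) step: controlling the one-parameter trilinear Coifman--Meyer operator $\Pi^{(1)}$ with one input (and the output) taking values in the weighted space $L^p(w_2^1)$; by contrast, type (B) reduces to purely scalar weighted multilinear Calder\'on--Zygmund estimates because the tensor hypothesis forces an exact factorization. The assumption $1<p,p_2,p_3,q_2,q_3<\infty$, rather than the endpoint cases allowed in Theorem~\ref{muscaluZhai}, is exactly what makes all of these weighted (vector-valued) multilinear estimates available; see also Remark~\ref{rmk1}.
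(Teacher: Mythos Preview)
Your treatment of the type~(B) operators matches the paper's Proposition~\ref{weightedreduced} essentially verbatim: factor via the tensor structure, apply the one-parameter weighted trilinear estimate in $x_1$, then H\"older in $x_2$ and the one-parameter weighted bilinear estimate for $B''$. No issue there.

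The gap is in your type~(A) step. A general bi-parameter trilinear symbol $m\in\mathcal{BM}(\mathbb R^6)$ does \emph{not} yield a clean composition ``outer $x_2$-operator applied to inner $x_1$-operator''. Even after the Littlewood--Paley plus Fourier-series reduction you invoke, what one obtains (for fixed Fourier modes) is
\[
\sum_{j,k} c_{j,k}\,\Phi_j^{(1)}(\xi_1,\eta_1,\zeta_1)\,\Phi_k^{(2)}(\xi_2,\eta_2,\zeta_2)
\]
with merely bounded $c_{j,k}$; the scales $j$ and $k$ remain coupled through these coefficients, so the sum does not split as $\bigl(\sum_j\Phi_j^{(1)}\bigr)\bigl(\sum_k\Phi_k^{(2)}\bigr)$ and your $U=\Pi^{(1)}(f,g_1,h_1)$ is not well-defined as a single Coifman--Meyer output. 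The helicoidal method of \cite{BMu2} does give Banach-valued bounds for paraproducts, but not via the naive factorization you describe; using it here would require an $\ell^2$-indexed (over $j$ or $k$) vector-valued setup together with a square-function argument, and the weighted version of that is not an off-the-shelf statement.

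The paper takes a different, more direct route for type~(A) (Proposition~\ref{weightedT3}): it runs the Chen--Lu argument of \cite{chen2014hormander}, bounding each Littlewood--Paley piece $A_{j,k}$ pointwise by $\|m_{j,k}\|_{H^{s,s}}$ times products of strong maximal functions applied to $|\tilde\psi_j(D)f|^t$, $|\tilde\psi_k(D)g|^t$, $|h|^t$ (and variants, depending on which frequency component dominates). The tensor hypothesis on $g,h$ turns the strong maximal functions on $g,h$ into one-variable maximal functions, after which H\"older in $x_1$ and $x_2$ separately, the weighted Fefferman--Stein inequality (Lemma~\ref{lemmaFS}), and the weighted square-function characterization (Lemmas~\ref{lemmaRFS}--\ref{lemmaRuan}) close the estimate. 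The weight conditions $w_i^j\in A_{p_j}$ enter exactly when one chooses $t$ close to $1$ so that $w_i^j\in A_{p_j/t}$. This avoids any vector-valued multilinear input and is what you should replace your type~(A) argument with.
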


Now we state our result for the corresponding bi-parameter trilinear pseudo-differential operators.  The one-parameter case was studied in
\cite{LZH}. Let
\begin{equation}
\label{bipdo}
T_{ab}(f,g,h)(x):=\int_{\n^6}a(x,\xi,\eta,\zeta)b(x,\eta,\zeta)\hat f(\xi)\hat g(\eta)\hat h(\zeta)e^{2\pi i x(\xi+\eta+\zeta)}d\xi d\eta d\zeta,
\end{equation}
where $f,g,h \in \mathcal{S}(\nn)$, and the bi-parameter  smooth symbols $a,b\in BS^0_{1,0}$ satisfy the following conditions
\begin{gather}
 \nonumber |\partial_{x_1}^{l_1} \partial_{x_2}^{l_2}\partial_{\xi_1}^{\alpha_1} \partial_{\xi_2}^{\alpha_2}\partial_{\eta_1}^{\beta_1} \partial_{\eta_2}^{\beta_2}\partial_{\zeta_1}^{\gamma_1} \partial_{\zeta_2}^{\gamma_2} a(x,\xi,\eta,\zeta)| \\
 \nonumber \lesssim {1\over (1+|\xi_1|+|\eta_1|+|\zeta_1|)^{\alpha_1+\beta_1+\gamma_1}} {1\over (1+|\xi_2|+|\eta_2|+|\zeta_2|)^{\alpha_2+\beta_2+\gamma_2}},\\
 \label{bisymbol}
 |\partial_{x_1}^{l_1} \partial_{x_2}^{l_2}\partial_{\eta_1}^{\beta_1} \partial_{\eta_2}^{\beta_2}\partial_{\zeta_1}^{\gamma_1} \partial_{\zeta_2}^{\gamma_2} b(x,\eta,\zeta)|\lesssim {1\over (1+|\eta_1|+|\zeta_1|)^{\beta_1+\gamma_1}} {1\over (1+|\eta_2|+|\zeta_2|)^{\beta_2+\gamma_2}}.
\end{gather}
for every $x=(x_1,x_2), \xi=(\xi_1,\xi_2),\eta=(\eta_1,\eta_2),\zeta=(\zeta_1,\zeta_2)\in \n \times \n$ and all multi-indices $\alpha=(\alpha_1,\alpha_2)$, $\beta=(\beta_1,\beta_2)$ and $\gamma=(\gamma_1,\gamma_2)$. We will prove the following estimate when assuming the estimates in Theorem \ref{bithm}
\begin{theorem}
\label{bithp}
The operators $T_{ab}$ defined as $\eqref{bipdo}$ map $L^{p_1}\times L^{p_2}\times L^{p_3}\to L^r$ for $1<p_1,p_2,p_3<\infty$ with $1/p_1+1/p_2+1/p_3=1/r$  and $0<r<\infty$, provided that the multiplier operator defined in (\ref{assumption})  satisfies the same $L^r$ estimate.
\end{theorem}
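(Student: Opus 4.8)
\noindent\emph{Plan of proof.} The plan is to reduce the pseudo-differential operator $T_{ab}$ to the bi-parameter flag Fourier multiplier operator of Theorem~\ref{bithm} by expanding the two symbols in a Fourier series in the spatial variable $x\in\nn$, performing the expansion for $a$ and for $b$ \emph{separately} so that the product (flag) structure of the symbol is not destroyed.

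First I would fix a smooth partition of unity $\sum_{\nu\in\z^2}\phi(x-\nu)\equiv1$ on $\nn$ with $\phi$ supported in a fixed box, together with a bump $\tilde\phi$ equal to $1$ on $\spt\phi$ and supported in a larger box $Q$ of side $L$. On each translate $\nu+Q$ one expands
\begin{equation*}
\tilde\phi(x-\nu)\,a(x,\xi,\eta,\zeta)=\sum_{k\in\z^2}c_{\nu,k}(\xi,\eta,\zeta)\,e^{2\pi i k\cdot(x-\nu)/L},\qquad
\tilde\phi(x-\nu)\,b(x,\eta,\zeta)=\sum_{l\in\z^2}d_{\nu,l}(\eta,\zeta)\,e^{2\pi i l\cdot(x-\nu)/L}.
\end{equation*}
Integrating by parts in $x$ and using the $x$-derivative bounds in \eqref{bisymbol} shows that, for every $N$, $c_{\nu,k}=O((1+|k|)^{-N})$ and $d_{\nu,l}=O((1+|l|)^{-N})$ uniformly in $\nu$, and that, after factoring out this rapid decay, $c_{\nu,k}(\xi,\eta,\zeta)$ still satisfies the bi-parameter Hörmander bounds imposed on $a$ in \eqref{bisymbol} while $d_{\nu,l}(\eta,\zeta)$ satisfies those imposed on $b$, with implicit constants at most polynomial in $k$, resp.\ $l$, and uniform in $\nu$. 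Since $\tilde\phi\equiv1$ on $\spt\phi(\cdot-\nu)$ and the two expansions are valid on $\nu+Q\supset\spt\phi(\cdot-\nu)$, substituting them into \eqref{bipdo} gives
\begin{equation*}
T_{ab}(f,g,h)(x)=\sum_{\nu\in\z^2}\,\sum_{k,l\in\z^2}e^{-2\pi i(k+l)\cdot\nu/L}\,\phi(x-\nu)\,e^{2\pi i(k+l)\cdot x/L}\,T_{c_{\nu,k},\,d_{\nu,l}}(f,g,h)(x),
\end{equation*}
where $T_{c_{\nu,k},d_{\nu,l}}$ is a trilinear bi-parameter flag Fourier multiplier with symbol $c_{\nu,k}(\xi,\eta,\zeta)\,d_{\nu,l}(\eta,\zeta)$ and the modulation $e^{2\pi i(k+l)\cdot x/L}$ is an $L^r$-isometry. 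Splitting $c_{\nu,k}$ and $d_{\nu,l}$ into the parts supported where some $|\xi_i|+|\eta_i|+|\zeta_i|\lesssim1$ (resp.\ $|\eta_i|+|\zeta_i|\lesssim1$) and the complementary parts reduces to two cases: the pieces carrying a low-frequency factor have a Schwartz-type symbol and are handled directly as in Theorem~\ref{trilocalth}, while on the complementary parts $c_{\nu,k}\in\mathcal{BM}(\mathbb{R}^6)$ and $d_{\nu,l}\in\mathcal{BM}(\mathbb{R}^4)$ are honest homogeneous-type flag symbols, to which Theorem~\ref{bithm} applies.

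Next I would apply Theorem~\ref{bithm} quantitatively, so that the operator norm of $T_{c_{\nu,k},d_{\nu,l}}$ is controlled by finitely many of the Hörmander constants and hence is $\lesssim(1+|k|+|l|)^{-N}$ uniformly in $\nu$, but in its \emph{localized} form,
\begin{equation*}
\big\|\phi(x-\nu)\,T_{c_{\nu,k},d_{\nu,l}}(f,g,h)\big\|_{L^r}\lesssim(1+|k|+|l|)^{-N}\,\|f\,\tilde\chi_\nu\|_{p_1}\,\|g\,\tilde\chi_\nu\|_{p_2}\,\|h\,\tilde\chi_\nu\|_{p_3},
\end{equation*}
where $\tilde\chi_\nu$ is a smooth bump adapted to the box at $\nu$ with rapidly decaying tails, the analogue of $\tilde\chi_0$ in Theorem~\ref{trilocalth}; this should be extracted from the off-diagonal kernel estimates and the paraproduct decomposition underlying the proof of Theorem~\ref{bithm}. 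Given this, since $\|\cdot\|_{L^r}^r$ is subadditive when $r\le1$ and $\{\spt\phi(\cdot-\nu)\}_\nu$ has bounded overlap when $r\ge1$, one obtains for each $(k,l)$
\begin{equation*}
\Big\|\sum_{\nu}\phi(x-\nu)\,e^{2\pi i(k+l)\cdot x/L}\,T_{c_{\nu,k},d_{\nu,l}}(f,g,h)\Big\|_{L^r}^r\lesssim(1+|k|+|l|)^{-Nr}\sum_{\nu}\|f\,\tilde\chi_\nu\|_{p_1}^r\,\|g\,\tilde\chi_\nu\|_{p_2}^r\,\|h\,\tilde\chi_\nu\|_{p_3}^r,
\end{equation*}
and, because $\sum_\nu\tilde\chi_\nu^{s}\lesssim1$ for any fixed $s>0$ (taking the tails fast enough), Hölder's inequality on the $\nu$-sequences with the exponents $p_1/r,p_2/r,p_3/r$ (which satisfy $r/p_1+r/p_2+r/p_3=1$) bounds the right-hand side by $(1+|k|+|l|)^{-Nr}(\|f\|_{p_1}\|g\|_{p_2}\|h\|_{p_3})^r$. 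Summing over $(k,l)$ against the rapid decay then yields $\|T_{ab}(f,g,h)\|_{L^r}\lesssim\|f\|_{p_1}\|g\|_{p_2}\|h\|_{p_3}$.

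The hard part is the localized flag-multiplier estimate above: Theorem~\ref{bithm} is stated globally, and the $\nu$-summation forces one to have it in the spatially-localized form that, in the non-flag bi-parameter situation, is exactly what Theorem~\ref{trilocalth} provides. Obtaining it requires revisiting the proof of Theorem~\ref{bithm}, and correspondingly the hypothesis on the model operators \eqref{assumption} must itself be available in that localized form. Keeping the operator-norm dependence on the Hörmander constants polynomial in $(k,l)$ so that the double series converges, and the passage from the inhomogeneous classes $BS^0_{1,0}$ to the homogeneous classes $\mathcal{BM}$ and $\mathcal{M}$ via the low-frequency splitting, are by comparison routine.
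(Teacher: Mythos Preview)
Your proposal is correct and follows essentially the same approach as the paper: localize via a partition of unity in $x$, expand $a$ and $b$ \emph{separately} in Fourier series to preserve the flag product structure, reduce to a localized flag-multiplier operator with inhomogeneous symbols, and then sum over the lattice using H\"older with the exponents $p_1/r,p_2/r,p_3/r$. The paper carries this out in Sections~\ref{rtl}--\ref{rtp}, and you have correctly identified the crux as the localized flag-multiplier estimate (the paper's Theorem~\ref{localth}); the paper proves it not by extracting a localized version from the global Theorem~\ref{bithm} directly, but by decomposing the inhomogeneous symbol $a_0b_0$ into paraproducts built on dyadic intervals of length at most $1$, then invoking Theorem~\ref{bithm} (and hence the hypothesis on~\eqref{assumption}) only on the pieces where the intervals sit inside $5I^0$, handling the far-away pieces by decay estimates alone.
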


The proof of the $L^r$ estimates for the bi-parameter trilinear flag Fourier multipliers of Theorem \ref{bithm} proceeds by reducing to a decomposition into multipliers based on the  support on the frequency variables, then studying the H\"older type $L^r$ estimates for each. Such a reduction is partly inspired by earlier work in both the single-parameter and bi-parameter settings, for instance, \cite{MT, muscalu2004bi, muscalu2004multi, muscaluflag}. To prove Theorem \ref{bithp}, we reduce the bi-parameter trilinear pseudo-differential operator to a localized version. Then by taking advantage of the paraproducts studied in \cite{muscalu2013classical,muscalu2007paraproducts}, but with all dyadic intervals having lengths at most $1$,  and Theorem $\ref{bithm}$, we prove the H\"older  estimates for the localized operator. This is Theorem $\ref{localth}$.

The rest of the paper is organized as follows.
In Section $\ref{nt}$, we collect some notation and definitions used in the paper.
Section $\ref{bithmpf}$ contains the proof of Theorem $\ref{bithm}$.
In Section $\ref{rtl}$,  we show that the main Theorem  $\ref{bithp}$ can be reduced to an estimate for a localized operator (Theorem $\ref{localth}$).
In Section $\ref{rtp}$, we give the proof of Theorem $\ref{localth}$. In fact, the localized operator will be written as certain bi-parameter paraproducts, where all the involved dyadic intervals have lengths at most 1. This allows us to avoid the more complicated ``size" and ``energy" estimates used in \cite{muscalu2004bi, muscalu2004multi, muscaluflag} to deal with paraproducts.

{\bf Acknowledgement.} The authors are grateful to Camil Muscalu for pointing out an error in our first version posted in the arxiv.org and for
many useful comments as we were revising the paper.
To be precise, the derivation of (4.5) in the original version of this paper was incorrect. Indeed, we do not claim to have a proof of the
earlier version of Theorem 1.7. Instead
we prove here that the $L^r$ estimates for the bi-parameter trilinear flag multiplier can be reduced to the $L^r$ estimate for multipliers of the form $m_1(\xi_1, \eta_1, \zeta_1)m_2(\eta_2, \zeta_2)$. The main revision is in subsection 4.2.2 where we have adapted a new method of reduction of the general bi-parameter trilinear multiplier to the special one of the form $m_1(\xi_1, \eta_1, \zeta_1)m_2(\eta_2, \zeta_2)$. We also thank Camil Muscalu
for communicating the results of Zhai's thesis,  which alerted
us to the interest in addressing the tensor product case  (Theorem \ref{muscaluZhai}) (See Appendix B).

\section{notations and preliminaries}
\label{nt}
 Let $\mathcal{S}(\n^n)$ denote the Schwartz space of rapidly decreasing, $C^{\infty}$ functions in $\n^n$. Define the Fourier transform of a function $f$ in $\mathcal{S}(\n^n)$ as $$F(f)(\xi)=\hat f(\xi)=\int_{\n^n}f(x)e^{-2\pi ix\cdot\xi}dx$$ extended in the usual way to the space of tempered distribution $\mathcal{S}'(\n^n)$, which is the dual space of $\mathcal{S}(\n^n)$. The use $\mathcal{F}^{-1}(f)$ to denote the inverse Fourier transform of $f$.

\medskip

 Throughout the paper, expressions of the form $A\lesssim B$ are used to mean that there exists a universal constant $C>1$ so that $A\leq CB$, and the notation $A\sim B$ denotes that both $A\lesssim B$ and $B\lesssim A$.
\medskip

Intervals in the form of $[2^kn, 2^k(n+1)]$ in $\n$, where $k,n\in \z$, are called dyadic intervals; and $\mathbb{D}$ is the set of all such dyadic intervals. Moreover, the occurrence of any of these expressions in this paper means the following: $I^0:=[-1,1]$, $I_n=J_n=I'_n=J'_n:=[n,n+1]$ for $n\in \z$.

  \begin{definition}
 For $I\in \mathbb{D}$, we define the approximate cutoff function as
 \begin{equation}
 \label{apc}
 \tilde \chi_I(x):=(1+{\dst(x,I)\over |I|})^{-100}
 \end{equation}
 \end{definition}

 \begin{definition}
 Let $I\subseteq \n$ be an arbitrary interval. A smooth function $\varphi$ is said to be a bump adapted to $I$ if and only if one has $$|\varphi^{(l)}|\leq C_lC_M {1\over |I|^l}{1\over (1+|x-x_I|/|I|)^M}$$ for every integer $M\in \mathbb{N}$ and sufficiently many derivatives $l\in \mathbb{N}$, where $x_I$ denotes the center of $I$ and $|I|$ is the length of $I$.

 If $\varphi_I$ is a bump adapted to $I$, we say that $|I|^{-1/p}\varphi_I$ is an $L^p$-normalized bump adapted to $I$, for $1\leq p\leq \infty.$
 \end{definition}

 \begin{definition}
 A sequence of $L^2$-normalized bumps $(\Phi_I)_{I\in\mathbb{D}}$ adapted to dyadic intervals $I \in \mathbb{D}$ is called a non-lacunary sequence if and only if for each $I \in \mathbb{D}$ there exists an interval $\omega_I=\omega_{|I|}$ symmetric with respect to the origin so that $\spt \widehat {\Phi_I}\subseteq \omega_I$ and $|\omega_I|\sim |I|^{-1}$.
 \end{definition}

 \begin{definition}
 A sequence of $L^2$-normalized bumps $(\Phi_I)_{I\in\mathbb{D}}$ adapted to dyadic intervals $I \in \mathbb{D}$ is called a lacunary sequence if and only if for each $I \in \mathbb{D}$ there exists an interval $\omega_I=\omega_{|I|}$ so that $\spt \widehat {\Phi_I}\subseteq \omega_I$, $|\omega_I|\sim |I|^{-1}\sim dist(0,\omega_I)$ and $0\notin 5\omega_I$.
 \end{definition}

\begin{definition}
\label{defop}
Let $\mathcal{I},\mathcal{J}\subseteq \mathbb{D}$ be two families of dyadic intervals that have lengths at most $1$. Suppose that $(\phi_I^j)_{I\in \mathcal{I}}$ for $j=1,2,3$ are three families of $L^2$-normalized bump functions such that the family  $(\phi_I^2)_{I\in \mathcal{I}}$ is non-lacunary while the families $(\phi_I^j)_{I\in \mathcal{I}}$ for $j\neq 2$ are both lacunary, and $(\phi_J^j)_{J\in \mathcal{J}}$ for $j=1,2,3$ are three families of $L^2$-normalized bump functions, where at least two of the three are lacunary.

\medskip

We define as in \cite{muscalu2007paraproducts} the discrete model operators $T_1$ and $T_{1,k_0}$ for a positive integer $k_0$ by
\begin{gather}
\label{t1}
T_1(f,g,h)=\sum_{I\in \mathcal{I}}{1\over |I|^{1\over 2}}\langle f,\phi_I^1\rangle \langle B^1_I(g,h),\phi_I^2\rangle \phi_I^3 \\
\label{b1}
\text{where} \qquad B^1_I(g,h)=\sum_{\substack{J\in \mathcal{J}\\ |\omega_J^3|\leq |\omega^2_I|}}{1\over |J|^{1\over 2}} \langle g,\phi_J^1\rangle \langle h,\phi_J^2\rangle \phi_J^3 \\
\label{tk}
T_{1,k_0}(f,g,h)=\sum_{I\in \mathcal{I}}{1\over |I|^{1\over 2}}\langle f,\phi_I^1\rangle \langle B^1_{I,k_0}(g,h),\phi_I^2\rangle \phi_I^3 \\
\label{bk}
\text{where} \qquad B^1_{I,k_0}(g,h)=\sum_{\substack{J\in \mathcal{J}\\ 2^{k_0}|\omega_J^3|\simeq |\omega^2_I|}}{1\over |J|^{1\over 2}} \langle g,\phi_J^1\rangle \langle h,\phi_J^2\rangle \phi_J^3
\end{gather}

%Let $\mathcal{I'},\mathcal{J'}\subseteq \mathbb{D}$ be two families of dyadic intervals that have lengths at most $1$. Suppose that $(\phi_I^j)_{I\in \mathcal{I'}}$ for $j=1,2,3$ are three families of $L^2$-normalized bump functions such that the family  $(\phi_I^1)_{I\in \mathcal{I'}}$ is non-lacunary while the families $(\phi_I^j)_{I\in \mathcal{I'}}$ for $j\neq 1$ are both lacunary, and $(\phi_J^j)_{J\in \mathcal{J'}}$ for $j=1,2,3$ are three families of $L^2$-normalized bump functions, where at least two of the three are lacunary. Define

%\begin{gather}
%\label{t2}
%T_2(f,g,h)=\sum_{I\in \mathcal{I'}}{1\over |I|^{1\over 2}} \langle B^2_I(f,g),\phi_I^1\rangle \langle h,\phi_I^2\rangle\phi_I^3 \\
%\label{b2}\text{where} \qquad B^2_I(f,g)=\sum_{\substack{J\in \mathcal{J'}\\ |\omega_J^3|\leq |\omega^1_I|}}{1\over |J|^{1\over 2}} \langle f,\phi_J^1\rangle \langle g,\phi_J^2\rangle \phi_J^3 \\
%\label{t2k} T_{l,k_0}^2(f,g,h)=\sum_{I\in \mathcal{I'}}{1\over |I|^{1\over 2}}\langle B^2_{I,k_0}(f,g),\phi_I^1\rangle \langle h,\phi_I^2\rangle  \phi_I^3 \\
%\label{b2k} \text{where} \qquad B^{2}_{I,k_0}(f,g)=\sum_{\substack{J\in \mathcal{J'}\\ 2^{k_0}|\omega_J^3|\simeq |\omega^1_I|}}{1\over |J|^{1\over 2}} \langle f,\phi_J^1\rangle \langle g,\phi_J^2\rangle \phi_J^3
%\end{gather}

\end{definition}

\section{A Leibniz rule}
\label{sectionlb}
Before giving the proof of Theorem  \ref{bithm}, we give an example where the operator we consider plays a role. This is one of the possible
motivations for the study of such operators. The details are included in the Appendix \ref{ap1}.

Let $f,g,h \in \mathcal{S}(\nn)$, and for  $\alpha_1,\alpha_2>0$ define
$$\widehat {D^{\alpha_1}_1 h}(u)=(2\pi |u_1|)^{\alpha_1}\hat h(u), \quad \widehat {D^{\alpha_2}_2 h}(u)=(2\pi |u_2|)^{\alpha_2} \hat h(u),\quad u=(u_1,u_2)\in \n^2.$$
In \cite{muscalu2004bi,muscalu2013classical,muscalu2004multi}, it was proved that the boundedness of the bi-parameter bilinear Fourier multiplier in Theorem \ref{bipcoifman} implies the following Leibniz rule.
\begin{eqnarray}
  \|D^{\alpha_1}_1 D^{\alpha_2}_2 (f g)\|_{L^r} &\lesssim&  \|D^{\alpha_1}_1 D^{\alpha_2}_2 f\|_{L^{p_1}} \|g\|_{L^{q_1}} +  \|D^{\alpha_1}_1  f\|_{L^{p_2}} \|D^{\alpha_2}_2 g\|_{L^{q_2}} \nonumber\\
  & & +  \|D^{\alpha_2}_2 f\|_{L^{p_3}} \|D^{\alpha_1}_1  g\|_{L^{q_3}} +  \| f\|_{L^{p_4}} \| D^{\alpha_1}_1 D^{\alpha_2}_2 g\|_{L^{q_4}},\label{leib1}
\end{eqnarray}
where $1/p_i+1/q_i=1/r$, $1<p_i,q_i\leq \infty$ for $i=1,2,3,4$ and $\max{\big({1\over 1+\alpha_1}, {1\over 1+\alpha_2}\big)}<r<\infty$ (one can refer to \cite{muscalu2004bi,muscalu2013classical,muscalu2004multi} to see how such restrictions appear).

Then it's very natural and interesting to ask if such bi-parameter Leibniz rule holds when there is higher complexity of the differentiation. In particular,
a Leibniz estimate for an expression like the following relies on our theorem:
 $$\|D^{\alpha_1}_1 D^{\alpha_2}_2\big(f\cdot D^{\beta_1}_1 D^{\beta_2}_2(g h)\big)\|_{L^r},$$
 where $ \max{\big({1\over 1+\alpha_1}, {1\over 1+\alpha_2}, {1\over 1+\beta_1}, {1\over 1+\beta_2}\big)}<r<\infty$.
 First note that an iteration of \eqref{leib1} results in the inequality
 \begin{eqnarray}
 & &\|D^{\alpha_1}_1 D^{\alpha_2}_2\big(f  \cdot D^{\beta_1}_1 D^{\beta_2}_2(g h)\big)\|_{L^r} \nonumber \\
  &\lesssim&  \|D^{\alpha_1}_1 D^{\alpha_2}_2 f\|_{L^{p_1}} \| D^{\beta_1}_1 D^{\beta_2}_2(g h)\|_{L^{t_1}} +  \|D^{\alpha_1}_1  f\|_{L^{p_2}} \| D^{\beta_1}_1 D^{\alpha_2+\beta_2}_2(g h)\|_{L^{t_2}} \nonumber \nonumber \\
  & & +  \|D^{\alpha_2}_2 f\|_{L^{p_3}} \| D^{\alpha_1+\beta_1}_1 D^{\beta_2}_2(g h)\|_{L^{t_3}} +  \| f\|_{L^{p_4}} \|  D^{\alpha_1+\beta_1}_1 D^{\alpha_2+\beta_2}_2(g h)\|_{L^{t_4}}, \label{lbrule}
\end{eqnarray}
where $1/r=1/p_i+1/t_i$ for $i=1,2,3,4$. However, this argument using \eqref{leib1} requires $t_i> 1$, while the ideal restriction is $t_i> {1\over 2}$, since we expect to further expand the differentiation on $gh$. Thus, it's a non-trivial question to get a general Leibniz rule for \eqref{lbrule}, that is
 % and it turns out our theorem implies such an inequality, that is,
\begin{eqnarray}
 & &\|D^{\alpha_1}_1 D^{\alpha_2}_2\big(f \cdot D^{\beta_1}_1 D^{\beta_2}_2(gh)\big)\|_{L^r}  \nonumber \\
  &\lesssim & \|D^{\alpha_1}_1 D^{\alpha_2}_2 f\|_{L^{p_1}} \cdot \|D^{\beta_1}_1 D^{\beta_2}_2 g\|_{L^{q_1}}\cdot  \| h \|_{L^{s_1}}+  \|D^{\alpha_1}_1 D^{\alpha_2}_2 f\|_{L^{p_2}} \cdot \|D^{\beta_1}_1 g\|_{L^{q_2}} \cdot   \|  D^{\beta_2}_2 h \|_{L^{s_2}} \nonumber\\
    & &+ \|D^{\alpha_1}_1 D^{\alpha_2}_2 f\|_{L^{p_3}}\cdot\|D^{\beta_2}_2 g\|_{L^{q_3}}  \cdot\|  D^{\beta_1}_1 h \|_{L^{s_3}}+ \|D^{\alpha_1}_1 D^{\alpha_2}_2 f\|_{L^{p_4}} \cdot \| g \|_{L^{q_4}}\cdot \|D^{\beta_1}_1 D^{\beta_2}_2 h\|_{L^{s_4}} \nonumber \\
  & & +\|D^{\alpha_1}_1 f\|_{L^{p_5}}\cdot \|D^{\beta_1}_1 D^{\alpha_2+\beta_2}_2 g\|_{L^{q_5}} \cdot \| h \|_{L^{s_5}}  +  \|D^{\alpha_1}_1  f\|_{L^{p_6}} \cdot \|D^{\beta_1}_1 g\|_{L^{q_6}} \cdot \|  D^{\alpha_2+\beta_2}_2 h \|_{L^{s_6}} \nonumber\\
    & &+ \|D^{\alpha_1}_1 f\|_{L^{p_7}}\cdot \|D^{\alpha_2+\beta_2}_2 g\|_{L^{q_7}} \cdot \|  D^{\beta_1}_1 h \|_{L^{s_7}}+\|D^{\alpha_1}_1  f\|_{L^{p_8}} \cdot\| g \|_{L^{q_8}} \cdot \|D^{\beta_1}_1 D^{\alpha_2+\beta_2}_2 h\|_{L^{s_8}} \nonumber\\
  & &+ \|D^{\alpha_2}_2 f\|_{L^{p_9}}\cdot\|D^{\alpha_1+\beta_1}_1 D^{\beta_2}_2 g\|_{L^{q_9}} \cdot \| h \|_{L^{s_9}} +  \| D^{\alpha_2}_2 f\|_{L^{p_{10}}} \cdot \|D^{\alpha_1+\beta_1}_1 g\|_{L^{q_{10}}} \cdot \|  D^{\beta_2}_2 h \|_{L^{s_{10}}} \nonumber \\
   & &+ \| D^{\alpha_2}_2 f\|_{L^{p_{11}}}\cdot \|D^{\beta_2}_2 g\|_{L^{q_{11}}}\cdot  \|  D^{\alpha_1+\beta_1}_1 h \|_{L^{s_{11}}}+ \| D^{\alpha_2}_2 f\|_{L^{p_{12}}}\cdot \| g \|_{L^{q_{12}}}\cdot \|D^{\alpha_1+\beta_1}_1 D^{\beta_2}_2 h\|_{L^{s_{12}}} \nonumber\\
  & & + \| f\|_{L^{p_{13}}}\cdot \|D^{\alpha_1+\beta_1}_1 D^{\alpha_2+\beta_2}_2 g\|_{L^{q_{13}}}  \cdot\| h \|_{L^{s_{13}}}  +  \|f\|_{L^{p_{14}}} \cdot \|D^{\alpha_1+\beta_1}_1 g\|_{L^{q_{14}}} \cdot \|  D^{\alpha_2+\beta_2}_2 h \|_{L^{s_{14}}} \nonumber \\
    & &+ \|f\|_{L^{p_{15}}} \cdot\|D^{\alpha_2+\beta_2}_2 g\|_{L^{q_{15}}} \cdot \|  D^{\alpha_1+\beta_1}_1 h \|_{L^{s_{15}}} + \| f\|_{L^{p_{16}}}\cdot \| g     \|_{L^{q_{16}}} \|D^{\alpha_1+\beta_1}_1 D^{\alpha_2+\beta_2}_2 h\|_{L^{s_{16}}}, \nonumber \\
   & & \label{lb16}
\end{eqnarray}
where $1/p_i+1/q_i+1/s_i=1/r$, $1<p_i,q_i,s_i<\infty$ for $i=1,\dots,16$ and $\max{\big({1\over 1+\alpha_1}, {1\over 1+\alpha_2}, {1\over 1+\beta_1}, {1\over 1+\beta_2}\big)}<r<\infty$.

It turns out, we can write $D^{\alpha_1}_1 D^{\alpha_2}_2\big(f  \cdot D^{\beta_1}_1 D^{\beta_2}_2(g h)\big)$ as a sum of essentially two types of Fourier multipliers. More precisely, we can write
$$D^{\alpha_1}_1 D^{\alpha_2}_2\big(f  \cdot D^{\beta_1}_1 D^{\beta_2}_2(g h)\big)=T_{m_1,m_2}(f,g,h)+T_{m_3,m_4}(f,g,h),$$
 where $T_{m_1,m_2}$ is the operator \eqref{biop34} in Theorem \ref{bithm}, and $$ T_{m_3,m_4}(f,g,h)=\int_{\n^6} m_3(\xi,\eta)m_4(\eta,\zeta) \hat f(\xi) \hat g(\eta) \hat h(\zeta)  d\xi d\eta d\zeta,$$
 with $m_3, m_4 \in \mathcal{BM}(\n^4)$.  Thus, in order to get the Leibniz estimate, we just need to show the H\"older $L^r$ estimate for each of the above two operators implies those pieces in \eqref{lb16}.

 Now let's take a quick look at how the estimate in Theorem \ref{bithm} is associated with the 16 terms appearing in \eqref{lb16}. We indicate some key steps here, and more details can be found in Appendix \ref{ap1}. Let $\psi\in \sch $ be a Schwartz function satisfying $\spt \hat \psi \subseteq\{1/2\leq |u| \leq 2\}$ and  $$1=\sum_{k\in \z} \widehat {\psi}_k(u), \qquad u \neq 0.$$
%Let $$ \widehat \varphi_k = \sum_{k'\leq k-100} \widehat \psi_{k'}:=\sum_{k'\ll k} \widehat \psi_{k'}.$$
%Note that $\widehat \varphi_k$ is supported around the origin while $\widehat \psi_k$ is supported away from the origin. According to whether the supports are around the origin, we call those  functions to be $\Phi$ type functions, and  $\Psi$ type functions respectively, and use $\varphi_k $ and $\psi_k$ to denote them. For example, $\psi_k$ may represent a function with $\spt \widehat \psi_k \subseteq \{2^{k+c-1}\leq |u|\leq 2^{k+c+1}\}$ for some constant $c$.

Now  $f\cdot g \cdot h$ can be rewritten by using
\begin{gather}
\nonumber
\hat f(\xi) \hat g(\eta)  \hat h(\zeta)= \\
\left(\sum_{j_1,j_2}\widehat \psi_{j_1}(\xi_1)\widehat\psi_{j_2}(\xi_2)\hat f(\xi)\right)\left(\sum_{k_1,k_2}\widehat\psi_{k_1}(\eta_1)\widehat \psi_{k_2}(\eta_2)\hat g(\eta)\right)\left(\sum_{l_1,l_2}\widehat\psi_{l_1}(\zeta_1)\widehat \psi_{l_2}(\zeta_2)\hat h(\zeta)\right) \label{rdorg}.
\end{gather}

%Then $f\cdot g \cdot h$ can be written as a summation of terms, which includes, for example
%\begin{eqnarray}
%\label{rdfgh}
% \sum _ { j _ { 1 }, j_2 } \big( \left( f * \left(\psi _ { j _ { 1 } } \otimes  \psi _ { j _ { 2 } }\right)\right) \left( \Pi_1(g,h) * \left(\varphi _ { j _ { 1 } }\otimes \varphi _ { j _ { 2 } } \right) \right)\big ) *  \left(\psi _ { j _ { 1 } }\otimes \psi _ { j _ { 2 } }\right),
%\end{eqnarray}
%where
%\begin{equation}
%\label{rdgh}
 %\Pi_1(g,h)=
  %\sum _ { k _ { 1 } \ll j _ { 1 } } \sum _ { k _ { 2 } \ll j _ { 2 } } \left(\left(   g * \left(\psi _ { k _ { 1 } } \otimes \psi _ { k _ { 2} }\right) \right)  %\left( h * \left( \varphi _ { k _ { 1 } }\otimes \psi _ { k _ { 2 }  }\right) \right)\right) * \left(\psi _ { k _ { 1 }} \otimes  \psi _ { k _ { 2 }} \right).
  %\end{equation}

Then by using a sequence of appropriate reductions, it turns out that our goal $D^{\alpha_1}_1 D^{\alpha_2}_2 \left(f \cdot D_1^{\beta_1} D_2^{\beta_2}(gh)\right)$ can be written as a summation of terms that includes, for example,
\begin{eqnarray}
\nonumber
& &D^{\alpha_1}_1 D^{\alpha_2}_2 \left( \sum _ { j _ { 1 }, j_2 } \big( \left( f * \left(\psi _ { j _ { 1 } } \otimes  \psi _ { j _ { 2 } }\right)\right) \left( D_1^{\beta_1} D_2^{\beta_2} \Pi_1(g,h) * \left(\varphi _ { j _ { 1 } }\otimes \varphi _ { j _ { 2 } } \right) \right)\big ) *  \left(\psi _ { j _ { 1 } }\otimes \psi _ { j _ { 2 } }\right)\right) \\
&=& \sum _ { j _ { 1 }, j_2 } \big( \left(  D^{\alpha_1}_1 D^{\alpha_2}_2f * \left(\psi'' _ { j _ { 1 } } \otimes  \psi'' _ { j _ { 2 } }\right)\right) \left( D_1^{\beta_1} D_2^{\beta_2} \Pi_1(g,h) * \left(\varphi _ { j _ { 1 } }\otimes \varphi _ { j _ { 2 } } \right) \right)\big ) *  \left(\psi' _ { j _ { 1 } }\otimes \psi' _ { j _ { 2 } }\right). \nonumber \\
& & \label{rd1}
\end{eqnarray}
where $ D_1^{\beta_1}D_2^{\beta_2}\Pi_1(g,h)$ has the form
$$ \sum _ { k _ { 1 } \ll j _ { 1 } } \sum _ { k _ { 2 } \ll j _ { 2 } }  \left( \left(D^{\beta_1}_1 D^{\beta_2}_2 g * \left( \psi'' _ { k_1 } \otimes   \psi'' _ { k_2 } \right) \right) \cdot \left(  h * \left(  \varphi _ { k_1 } \otimes \varphi _ { k_2 }\right) \right) \right)*\left({ \psi' } _ { k_1 } \otimes { \psi' } _ { k_2 }\right).$$
 Here for $i=1,2$, $k_i\ll j_i$ means $k_i<j_i-100$, $\varphi_{k_i}= \sum_{l_i\ll k_i}\psi_{l_i}$, and $\psi_{k_i}$ may actually represent $\sum_{k_i-100\leq l_i \leq k_i+100}\psi_{l_i}$. Moreover,  $\widehat \psi'_{k_i} (u)=\widehat \psi_{k_i}(u)|{u\over 2^{k_i}}|^{\beta_i}$, $\widehat \psi''_{k_i} (u)=\widehat \psi_{k_i}(u)|{ 2^{k_i}\over u}|^{\beta_i}$, and $\otimes$ represents the tensor product.

In fact, the expression \eqref{rd1} is a Fourier multiplier with symbol having the form $m_1(\xi,\eta,\zeta)m_2(\eta,\zeta)$, where $m_1\in \mathcal{BM}(\n^6)$ and $m_2\in \mathcal{BM}(\n^4)$ respectively. Then one can see the boundedness of \eqref{biop34} in Theorem \eqref{bithm} implies the bound
$$\|D^{\alpha_1}_1 D^{\alpha_2}_2 f\|_{L^{p_1}} \cdot \|D^{\beta_1}_1 D^{\beta_2}_2 g\|_{L^{q_1}}\cdot  \| h \|_{L^{s_1}},$$
which appears on the right hand side  of \eqref{lb16}. To see how to get the other terms in \eqref{lb16} from the boundedness of \eqref{biop34}, we  need to look at the terms that are similar to \eqref{rd1}, which appear in the process of reduction. Here we just give one more example,

\begin{eqnarray}
\nonumber
& &D^{\alpha_1}_1 D^{\alpha_2}_2 \left( \sum _ { j _ { 1 }, j_2 } \big( \left( f * \left(\psi _ { j _ { 1 } } \otimes  \varphi _ { j _ { 2 } }\right)\right) \left( D_1^{\beta_1} D_2^{\beta_2} \Pi_2(g,h) * \left(\varphi _ { j _ { 1 } }\otimes \psi _ { j _ { 2 } } \right) \right)\big ) *  \left(\psi _ { j _ { 1 } }\otimes \psi _ { j _ { 2 } }\right)\right) \\
&=& \sum _ { j _ { 1 }, j_2 } \big( \left(  D^{\alpha_1}_1 f * \left(\psi'' _ { j _ { 1 } } \otimes  \varphi _ { j _ { 2 } }\right)\right) \left( D_1^{\beta_1} D_2^{\alpha_2+\beta_2} \Pi_2(g,h) * \left(\varphi _ { j _ { 1 } }\otimes \psi'' _ { j _ { 2 } } \right) \right)\big ) *  \left(\psi' _ { j _ { 1 } }\otimes \psi' _ { j _ { 2 } }\right). \nonumber \\
& & \label{rd2}
\end{eqnarray}
where $ D_1^{\beta_1}D_2^{\alpha_2+\beta_2}\Pi_2(g,h)$ has the form
$$ \sum _ { k _ { 1 } \ll j _ { 1 } } \sum _ { k _ { 2 } \ll j _ { 2 } }  \left( \left( D^{\beta_2+\alpha_2}_2 g * \left( \varphi _ { k_1 } \otimes   \psi'' _ { k_2 } \right) \right) \cdot \left( D^{\beta_1}_1 h * \left( \varphi _ { k_1 } \otimes \psi'' _ { k_2 }\right) \right) \right)*\left({ \psi' } _ { k_1 } \otimes { \psi' } _ { k_2 }\right).$$
As before, \eqref{rd2} corresponds to an operator with the symbol $m_1(\xi,\eta,\zeta)m_2(\eta,\zeta)$, and its boundedness gives
$$\|D^{\alpha_1}_1  f\|_{L^{p_7}} \cdot \| D^{\alpha_2+\beta_2}_2 g\|_{L^{q_7}}\cdot  \|D^{\beta_1}_1 h \|_{L^{s_7}}.$$

However, there are still terms in the reduction that can not be covered by the operators $\eqref{biop34}$. Such terms appear when, for example, $j_1\gg k_1\gg l_1$, $j_2\ll k_2\ll l_2$ in \eqref{rdorg}, and they actually correspond to the operator $T_{m_3,m_4}$. Thus, in order to obtain the final goal \eqref{lb16}, one also needs  the H\"older type $L^r$ estimate for $T_{m_3,m_4}$. Note that the bi-parameter symbol $m_3(\xi,\eta) m_4(\eta,\zeta)$  is more singular than the one in $T_{m_1,m_2}$, and thus it is a more challenging task to obtain H\"older type estimates for the associated operator. Overcoming this obstacle is another issue for future research in this subject.

\section{Proof of Theorem \ref{bithm}}
\label{bithmpf}
\subsection{Reduction of the symbols}
To prove this theorem, we start with the decomposition and reduction of the symbols
 $$m(\xi,\eta,\zeta):=m_1(\xi,\eta,\zeta) m_2(\eta,\zeta).$$

 We take smooth homogeneous functions $\phi_0$ and $\phi_1$ on $\nnn\setminus \{0\}$ such that $\phi_0(u,v,w)+\phi_1(u,v,w)=1$  and
  $$\spt \phi_0 \subset \{(u,v,w)| |v|+|w|\leq \epsilon |u|\},\qquad \spt \phi_1 \subset \{(u,v,w)| |v|+|w|\geq {1\over 2}\epsilon |u|\}$$
  for some small $\epsilon$. Then we decompose $m_1$ as
  \begin{eqnarray*}
  & &m_1=m_1(\phi_0(\xi_1,\eta_1,\zeta_1) + \phi_1(\xi_1,\eta_1,\zeta_1))(\phi_0(\xi_2,\eta_2,\zeta_2) + \phi_1(\xi_2,\eta_2,\zeta_2))  \nonumber \\
  &=&m_1 \phi_0(\xi_1,\eta_1,\zeta_1)\phi_0(\xi_2,\eta_2,\zeta_2) + m_1 \phi_0(\xi_1,\eta_1,\zeta_1)\phi_1(\xi_2,\eta_2,\zeta_2) \nonumber \\
  & &\ + m_1\phi_1(\xi_1,\eta_1,\zeta_1)\phi_0(\xi_2,\eta_2,\zeta_2)+  m_1\phi_1(\xi_1,\eta_1,\zeta_1)\phi_1(\xi_2,\eta_2,\zeta_2) \nonumber\\
  &:=&m_{0,0}+m_{0,1}+m_{1,0}+m_{1,1}
  \end{eqnarray*}

\vskip0.3cm
Obviously $m_{1,1}(\xi,\eta,\zeta)m_2(\eta,\zeta)$ satisfies the condition
\begin{gather*}
  |\partial_{\xi_1}^{\alpha_1} \partial_{\xi_2}^{\alpha_2} \partial_{\eta_1}^{\beta_1}  \partial_{\eta_2}^{\beta_2} \partial_{\zeta_1}^{\gamma_1}  \partial_{\zeta_2}^{\gamma_2} \left(m_{1,1}(\xi,\eta,\zeta) m_{2}(\eta,\zeta)\right)| \\
  \lesssim {1\over (|\xi_1|+|\eta_1|+|\zeta_1|)^{\alpha_1+\beta_1+\gamma_1}} {1\over (|\xi_2|+|\eta_2|+|\zeta_2|)^{\alpha_2+\beta_2+\gamma_2}},
   \end{gather*}

and the desired estimate follows from the multilinear version of Theorem $\ref{bipcoifman}$.

\medskip
 $m_{0,1}$ and $m_{1,0}$ satisfy the similar conditions. For example,
\begin{gather}
  |\partial_{\xi_1}^{\alpha_1} \partial_{\xi_2}^{\alpha_2} \partial_{\eta_1}^{\beta_1}  \partial_{\eta_2}^{\beta_2} \partial_{\zeta_1}^{\gamma_1}  \partial_{\zeta_2}^{\gamma_2} \left(m_{0,1}(\xi,\eta,\zeta)  m_{2}(\eta,\zeta)\right)| \label{mid1} \\ \nonumber
  \lesssim  \sum_{\substack{\beta'_1+\beta''_1=\beta_1\\\gamma_1'+\gamma_1''=\gamma_1}}{1\over (|\xi_1|+|\eta_1|+|\zeta_1|)^{\alpha_1+\beta'_1+\gamma'_1}}{1\over (|\eta_1|+|\zeta_1|)^{\beta''_1+\gamma''_1}}  {1\over (|\xi_2|+|\eta_2|+|\zeta_2|)^{\alpha_2+\beta_2+\gamma_2}},
   \end{gather}
Note that such a condition is stronger than   what $m_{0,0}(\xi,\eta,\zeta)m_{2}(\eta,\zeta)$ satisfies, since the estimate for the second parameter is in a classical type. More precisely,

\begin{gather*}
  |\partial_{\xi_1}^{\alpha_1} \partial_{\xi_2}^{\alpha_2} \partial_{\eta_1}^{\beta_1}  \partial_{\eta_2}^{\beta_2} \partial_{\zeta_1}^{\gamma_1}  \partial_{\zeta_2}^{\gamma_2} \left(m_{0,0}(\xi,\eta,\zeta)m_{2}(\eta,\zeta)\right)| \\
  \lesssim  \sum_{\substack{\beta'_1+\beta''_1=\beta_1\\\gamma_1'+\gamma_1''=\gamma_1}}\sum_{\substack{\beta'_2+\beta''_2=\beta_2\\ \gamma_2'+\gamma_2''=\gamma_2}}{1\over (|\xi_1|+|\eta_1|+|\zeta_1|)^{\alpha_1+\beta'_1+\gamma'_1}}{1\over (|\eta_1|+|\zeta_1|)^{\beta''_1+\gamma''_1}} \\
  \cdot
  {1\over (|\xi_2|+|\eta_2|+|\zeta_2|)^{\alpha_2+\beta'_2+\gamma'_2}}{1\over (|\eta_2|+|\zeta_2|)^{\beta''_2+\gamma''_2}}.
   \end{gather*}
 Thus, if suffices to study $m_{0,0}(\xi,\eta,\zeta)m_{2}(\eta,\zeta)$.

We choose $\widehat \psi(u) \in \mathcal{S}(\n)$ such that $\text{supp}\,\widehat \psi \subseteq \{u\in \n: 1/2\leq |u|\leq 2\}$, and
\begin{equation}
 \label{psidef} \sum_{j\in \z}\widehat \psi_j(u):=\sum_{j\in \z}\widehat \psi({u\over 2^j})=1\quad \text{for}\ u \neq 0.
\end{equation}
We  define $\varphi$ by
\begin{equation}
\label{varphidef}
\widehat \varphi(u)=\sum_{k=-\infty}^{-3} \widehat \psi(2^{-k} u)\ \  \text{for} \ \  u\neq 0,\quad  \widehat \varphi(0)=1,
\end{equation}
which implies
$$\spt \widehat \varphi \subset \{u\in \n :  |u|\leq 2^{-2}\},\quad \text{and} \ \ \widehat \varphi(u)=1 \ \  \text{for}\ \  |u|\leq 2^{-3}.$$
We also set
$$\chi(u,v,w)=\sum_{j\in \z} \widehat \psi(2^{-j}u)\widehat \varphi(2^{-j+10}v)\widehat \varphi(2^{-j+10}w).$$
Note that $\chi \in \mathcal{M}(\nnn)$ and $\chi(\xi_1,\eta_1,\zeta_1)=1$ , $\chi(\xi_2,\eta_2,\zeta_2)=1$ on $\spt m_{0,0} $.
\vskip0.3cm

Since it is sufficient to consider $m_{0,0}(\xi,\eta,\zeta)m_2(\eta,\zeta)$, we now use $m_1$ to represent $m_{0,0}$. From Taylor's theorem, we have:
\begin{eqnarray*}
m_1(\xi,\eta,\zeta)&=&\sum_{\beta'_2+\gamma'_2<N} \frac{\eta_2^{\beta'_2}\zeta_2^{\gamma'_2}}{\beta'_2!\gamma'_2!}\partial^{\beta'_2}_{\eta_2}\partial^{\gamma'_2}_{\zeta_2}  m_1(\xi,\eta_1,0,\zeta_1,0) \\
& & \ + N \sum_{\beta'_2+\gamma'_2=N} \frac{\eta_2^{\beta'_2}\zeta_2^{\gamma'_2}}{\beta'_2!\gamma'_2!} \int_0^1(1-t)^{N-1}\partial^{\beta'_2}_{\eta_2}\partial^{\gamma'_2}_{\zeta_2}m_1(\xi,\eta_1,t\eta_2,\zeta_1,t\zeta_2)dt,
 \end{eqnarray*}
and
\begin{eqnarray*}
m_1(\xi,\eta,\zeta)&=&\sum_{\beta'_1+\gamma'_1<N} \frac{\eta_1^{\beta'_1}\zeta_1^{\gamma'_1}}{\beta'_1!\gamma'_1!}\partial^{\beta'_1}_{\eta_1}\partial^{\gamma'_1}_{\zeta_1}  m_1(\xi,0,\eta_2,0,\zeta_2) \\
& & \ + N \sum_{\beta'_1+\gamma'_1=N} \frac{\eta_1^{\beta'_1}\zeta_1^{\gamma'_1}}{\beta'_1!\gamma'_1!} \int_0^1(1-s)^{N-1}\partial^{\beta'_1}_{\eta_1}\partial^{\gamma'_1}_{\zeta_1}m_1(\xi,s\eta_1,\eta_2,s\zeta_1,\zeta_2)ds.
 \end{eqnarray*}

 This gives the expression
 \begin{eqnarray}
 & &  m_1(\xi,\eta,\zeta) \nonumber \\
  &=&\sum_{\beta'_1+\gamma'_1<N}\sum_{\beta'_2+\gamma'_2<N} \frac{\eta_1^{\beta'_1}\zeta_1^{\gamma'_1}}{\beta'_1!\gamma'_1!} \frac{\eta_2^{\beta'_2}\zeta_2^{\gamma'_2}}{\beta'_2!\gamma'_2!} \partial^{\beta'_1}_{\eta_1} \partial^{\beta'_2}_{\eta_2}\partial^{\gamma'_1}_{\zeta_1} \partial^{\gamma'_2}_{\zeta_2} m_1(\xi,0,0)\nonumber\\
   & & + N \sum_{\beta'_1+\gamma'_1<N}\sum_{\beta'_2+\gamma'_2=N} \frac{\eta_1^{\beta'_1}\zeta_1^{\gamma'_1}}{\beta'_1!\gamma'_1!} \frac{\eta_2^{\beta'_2}\zeta_2^{\gamma'_2}}{\beta'_2!\gamma'_2!} \int_0^1(1-t)^{N-1}\partial^{\beta'_1}_{\eta_1} \partial^{\beta'_2}_{\eta_2}\partial^{\gamma'_1}_{\zeta_1} \partial^{\gamma'_2}_{\zeta_2}m_1(\xi,0,t\eta_2,0,t\zeta_2)dt \nonumber \\
    & & + N \sum_{\beta'_1+\gamma'_1=N}\sum_{\beta'_2+\gamma'_2<N} \frac{\eta_1^{\beta'_1}\zeta_1^{\gamma'_1}}{\beta'_1!\gamma'_1!} \frac{\eta_2^{\beta'_2}\zeta_2^{\gamma'_2}}{\beta'_2!\gamma'_2!} \int_0^1(1-s)^{N-1}\partial^{\beta'_1}_{\eta_1} \partial^{\beta'_2}_{\eta_2}\partial^{\gamma'_1}_{\zeta_1} \partial^{\gamma'_2}_{\zeta_2}m_1(\xi,s\eta_1,0,s\zeta_1,0)dt  \nonumber\\
    & &+N^2\sum_{\beta'_1+\gamma'_1=N}\sum_{\beta'_2+\gamma'_2=N} \frac{\eta_1^{\beta'_1}\zeta_1^{\gamma'_1}}{\beta'_1!\gamma'_1!} \frac{\eta_2^{\beta'_2}\zeta_2^{\gamma'_2}}{\beta'_2!\gamma'_2!} \int_0^1\int_0^1 (1-s)^{N-1}(1-t)^{N-1}\nonumber \\
    & &\qquad \qquad \qquad \qquad \qquad \qquad \qquad \qquad \partial^{\beta'_1}_{\eta_1} \partial^{\beta'_2}_{\eta_2}\partial^{\gamma'_1}_{\zeta_1} \partial^{\gamma'_2}_{\zeta_2} m_1(\xi,s\eta_1,t\eta_2,s\zeta_1,t\zeta_2)ds dt \nonumber \\
    &:=& m_1^{1,1}(\xi,\eta,\zeta)+m_1^{1,2}(\xi,\eta,\zeta)+m_1^{2,1}(\xi,\eta,\zeta)+m_1^{2,2}(\xi,\eta,\zeta)
    \label{dcpm3}
 \end{eqnarray}

These computations imply that our original symbol $m_1(\xi,\eta,\zeta)m_2(\eta,\zeta)$ can actually be reduced to
$$\big(m_1^{1,1}(\xi,\eta,\zeta)+m_1^{1,2}(\xi,\eta,\zeta)+m_1^{2,1}(\xi,\eta,\zeta)+m_1^{2,2}(\xi,\eta,\zeta)\big) m_2(\eta,\zeta).$$
In the following subsections, we deal with each of these four types of symbols.\\
\vskip1cm

 \subsubsection{The symbol $m_1^{2,2}(\xi,\eta,\zeta)m_2(\eta,\zeta)$}
 ~\\

 %First we deal with $m_1^{2,2}(\xi,\eta,\zeta)$. In fact
 A straightforward calculation shows that
 $$|\partial_{\xi_1}^{\alpha_1}\partial_{\xi_2}^{\alpha_2}\partial_{\eta_1}^{\beta_1}\partial_{\eta_2}^{\beta_2}
 \partial_{\zeta_1}^{\gamma_1}\partial_{\zeta_2}^{\gamma_2}\left(m_1^{2,2}(\xi,\eta,\zeta) m_2(\eta,\zeta)\right)|\lesssim  \frac{(|\eta_1|+|\zeta_1|)^{N-\beta_1-\gamma_1}}{|\xi_1|^{N+\alpha_1}} \frac{(|\eta_2|+|\zeta_2|)^{N-\beta_2-\gamma_2}}{|\xi_2|^{N+\alpha_2}},$$
 which means that
  \begin{gather*}
  |\partial_{\xi_1}^{\alpha_1}\partial_{\xi_2}^{\alpha_2}\partial_{\eta_1}^{\beta_1}\partial_{\eta_2}^{\beta_2}
 \partial_{\zeta_1}^{\gamma_1}\partial_{\zeta_2}^{\gamma_2}\left(m_1^{2,2}(\xi,\eta,\zeta) m_2(\eta,\zeta)\chi(\xi_1,\eta_1,\zeta_1)\chi(\xi_2,\eta_2,\zeta_2)\right)| \\
 \lesssim  \frac{1}{(|\xi_1|+|\eta_1|+|\zeta_1|)^{\alpha_1+\beta_1+\gamma_1}} \frac{1}{(|\xi_2|+|\eta_2|+|\zeta_2|)^{\alpha_2+\beta_2+\gamma_2}}
 \end{gather*}
 for $\beta_1+\gamma_1\leq N$ and $\beta_2+\gamma_2\leq N$ for $N$ sufficiently large. Therefore this symbol also falls within
 the scope of Theorem \ref{bipcoifman}.
 ~\\

\vskip1cm

\subsubsection{The symbol $m_1^{1,2}(\xi,\eta,\zeta)m_2(\eta,\zeta),m_1^{2,1}(\xi,\eta,\zeta)m_2(\eta,\zeta)$}
~\\

As in the argument for $m_1^{1,1}(\xi,\eta,\zeta)$, by a direct calculation one can see that $m_1^{1,2}(\xi,\eta,\zeta)m_2(\eta,\zeta)$ and $m_1^{2,1}(\xi,\eta,\zeta)m_2(\eta,\zeta)$
satisfy the classical restriction on the second and first parameter respectively. More precisely, for example,
 \begin{gather}
 \label{mid2}
  |\partial_{\xi_1}^{\alpha_1} \partial_{\xi_2}^{\alpha_2} \partial_{\eta_1}^{\beta_1}  \partial_{\eta_2}^{\beta_2} \partial_{\zeta_1}^{\gamma_1}  \partial_{\zeta_2}^{\gamma_2} \left(m_{1}^{1,2}(\xi,\eta,\zeta)m_2(\eta,\zeta)\right)| \\ \nonumber
  \lesssim  \sum_{\substack{\beta'_1+\beta''_1=\beta_1\\\gamma_1'+\gamma_1''=\gamma_1}}{1\over (|\xi_1|+|\eta_1|+|\zeta_1|)^{\alpha_1+\beta'_1+\gamma'_1}}{1\over (|\eta_1|+|\zeta_1|)^{\beta''_1+\gamma''_1}}  {1\over (|\xi_2|+|\eta_2|+|\zeta_2|)^{\alpha_2+\beta_2+\gamma_2}},
   \end{gather}
    for $\beta_2+\gamma_2\leq N$, where $N$ sufficiently large. Note that this is stronger than the condition that $m_1^{1,1}(\xi,\eta,\zeta)m_2(\eta,\zeta)$ satisfies, i.e.,

 \begin{gather*}
  |\partial_{\xi_1}^{\alpha_1} \partial_{\xi_2}^{\alpha_2} \partial_{\eta_1}^{\beta_1}  \partial_{\eta_2}^{\beta_2} \partial_{\zeta_1}^{\gamma_1}  \partial_{\zeta_2}^{\gamma_2} \left(m_{1}^{1,1}(\xi,\eta,\zeta)m_2(\eta,\zeta)\right)| \\
  \lesssim  \sum_{\substack{\beta'_1+\beta''_1=\beta_1\\\gamma_1'+\gamma_1''=\gamma_1}}\sum_{\substack{\beta'_2+\beta''_2=\beta_2\\ \gamma_2'+\gamma_2''=\gamma_2}}{1\over (|\xi_1|+|\eta_1|+|\zeta_1|)^{\alpha_1+\beta'_1+\gamma'_1}}{1\over (|\eta_1|+|\zeta_1|)^{\beta''_1+\gamma''_1}} \\
  \cdot
  {1\over (|\xi_2|+|\eta_2|+|\zeta_2|)^{\alpha_2+\beta'_2+\gamma'_2}}{1\over (|\eta_2|+|\zeta_2|)^{\beta''_2+\gamma''_2}}.
   \end{gather*}

   Thus, we only need to consider the symbol $m_1^{1,1}(\xi,\eta,\zeta)m_2(\eta,\zeta)$.
~\\

\subsubsection{The symbol $m_1^{1,1}(\xi,\eta,\zeta)m_2(\eta,\zeta)$}
~\\

To handle the symbol $m_1^{1,1}(\xi,\eta,\zeta)m_2(\eta,\zeta)$, we use the standard decomposition for $m_2(\eta,\zeta)$. We denote by
 ${\widehat \psi}'(u):=\sum_{k=-2}^2 \widehat \psi(2^{-k}u)$, so that $\widehat \psi'\in \mathcal{S}({\n})$ and $\spt \widehat \psi' \subset \{u \in \n: 2^{-3}\leq |u|\leq 2^3\}$. Then we can write

 \begin{eqnarray*}
   1&=&\sum_{{k_1},{k_1}'\in \z}\sum_{{k_2},{k_2}'\in \z}{\widehat \psi}(2^{-{k_1}}\eta_1){\widehat \psi}(2^{-{k'_1}}\zeta_1){\widehat \psi}(2^{-{k_2}}\eta_2){\widehat \psi}(2^{-{k_2}'}\zeta_2)\\
   & =& \big(\sum_{|{k_1}-{k_1}'|\leq2}+ \sum_{{k_1}-{k_1}'>2}+\sum_{{k_1}-{k_1}'<-2}\big)\big(\sum_{|{k_2}-{k_2}'|\leq2}+ \sum_{{k_2}-{k_2}'>2}+\sum_{{k_2}-{k_2}'<-2}\big) \\
   & & \qquad {\widehat \psi}(2^{-{k_1}}\eta_1){\widehat \psi}(2^{-{k'_1}}\zeta_1){\widehat \psi}(2^{-{k_2}}\eta_2){\widehat \psi}(2^{-{k_2}'}\zeta_2)\\
   &=&\sum_{k_1,k_2 \in \z}\big({\widehat \psi}(2^{-{k_1}}\eta_1){\widehat \psi}'(2^{-{k_1}}\zeta_1)+{\widehat \psi}(2^{-{k_1}}\eta_1){\widehat \varphi}(2^{-{k_1}}\zeta_1)+ {\widehat \varphi}(2^{-{k_1}}\eta_1){\widehat \psi}(2^{-{k_1}}\zeta_1)\big)\\
   & & \ \qquad \cdot \big({\widehat \psi}(2^{-{k_2}}\eta_2){\widehat \psi}'(2^{-{k_2}}\zeta_2)+{\widehat \psi}(2^{-{k_2}}\eta_2){\widehat \varphi}(2^{-{k_2}}\zeta_2)+ {\widehat \varphi}(2^{-{k_2}}\eta_2){\widehat \psi}(2^{-{k_2}}\zeta_2)\big)
 \end{eqnarray*}
Applying the above decomposition to $m_2(\eta,\zeta)$, by symmetry it suffices to consider the following cases:
\begin{eqnarray*}
   m_2^{1,1} (\eta,\zeta)&=& \sum_{{k_1},{k_2}\in \z} m_2(\eta,\zeta){\widehat \psi}(2^{-{k_1}}\eta_1){\widehat \psi}'(2^{-{k_1}}\zeta_1){\widehat \psi}(2^{-{k_2}}\eta_2){\widehat \psi}'(2^{-{k_2}}\zeta_2) \\
   m_2^{1,2} (\eta,\zeta)&=& \sum_{{k_1},{k_2}\in \z} m_2(\eta,\zeta){\widehat \psi}(2^{-{k_1}}\eta_1){\widehat \psi}'(2^{-{k_1}}\zeta_1){\widehat \psi}(2^{-{k_2}}\eta_2){\widehat \varphi}(2^{-{k_2}}\zeta_2) \\
    m_2^{2,2} (\eta,\zeta)&=& \sum_{{k_1},{k_2}\in \z} m_2(\eta,\zeta){\widehat \psi}(2^{-{k_1}}\eta_1){\widehat \varphi}(2^{-{k_1}}\zeta_1){\widehat \psi}(2^{-{k_2}}\eta_2){\widehat \varphi}(2^{-{k_2}}\zeta_2) \\
    m_2^{2,3} (\eta,\zeta)&=& \sum_{{k_1},{k_2}\in \z} m_2(\eta,\zeta){\widehat \psi}(2^{-{k_1}}\eta_1){\widehat \varphi}(2^{-{k_1}}\zeta_1){\widehat \varphi}(2^{-{k_2}}\eta_2){\widehat \psi}(2^{-{k_2}}\zeta_2).
\end{eqnarray*}
We now rewrite these using their Fourier expansions. For example,
 \begin{eqnarray*}
 m_2^{1,1} (\eta,\zeta)&=&\sum_{{k_1},{k_2}\in \z} \sum_{n_1,n_2\in\z}\sum_{n'_1,n'_2\in\z} c^{1,1}_{{k_1},{k_2},n_1,n'_1,n_2,n'_2} e^{i n_1 \frac{\eta_1}{2^{k_1}}} e^{i \frac{n_2}{4} \frac{\zeta_1}{2^{k_1}}}
e^{i n'_1 \frac{\eta_2}{2^{k_2}}} e^{i \frac{n'_2}{4} \frac{\zeta_2}{2^{k_2}}} \\
& &\qquad \qquad \qquad \qquad \qquad \cdot {\widehat \psi}_{k_1}(\eta_1) {\widehat \psi}'_{k_1}(\zeta_1){\widehat \psi}_{k_2}(\eta_2){\widehat \psi}'_{k_2}(\zeta_2),
\end{eqnarray*}
where $$\sup_{k_1,k_2 \in \z}|c^{1,1}_{k_1,k_2,n_1,n'_1,n_2,n'_2}|\lesssim (1+|n_1|+|n_2|)^{-M}(1+|n'_1|+|n'_2|)^{-M}$$ for sufficiently large $M>0$.

Moreover,for any index $\beta\in \mathbb{N}$ and $n\in \z$ we denote by
$${\widehat \psi}_{\beta,n}(u)=u^{\beta}e^{i n u}{\widehat \psi}(u),\  {\widehat \psi}'_{\beta,n}(u)=u^{\beta}e^{i \frac{n}{4} u}{\widehat \psi}'(u), \ {\widehat \varphi}_{\beta,n}(u)=u^{\beta}e^{i n u}{\widehat \varphi}(u). $$
Then we have
\begin{eqnarray*}
  \eta_1^{\beta'_1}\eta_2^{\beta'_2} \zeta_1^{\gamma'_1}\zeta_2^{\gamma'_2}  m_2^{1,1} (\eta,\zeta)&=& \sum_{{k_1},{k_2}\in \z} \sum_{n_1,n_2\in\z}\sum_{n'_1,n'_2\in\z} c^{1,1}_{{k_1},{k_2},n_1,n'_1,n_2,n'_2} 2^{{k_1}(\beta'_1+\gamma'_1)} 2^{{k_2}(\beta'_2+\gamma'_2)}  \\ \ & & \cdot {\widehat \psi}_{\beta'_1,n_1}(2^{-{k_1}}\eta_1){\widehat \psi}'_{\gamma'_1,n_2}(2^{-{k_1}}\zeta_1){\widehat \psi}_{\beta'_2,n'_1}(2^{-{k_2}}\eta_2){\widehat \psi}'_{\gamma'_2,n'_2}(2^{-{k_2}}\zeta_2) \\
    \eta_1^{\beta'_1}\eta_2^{\beta'_2} \zeta_1^{\gamma'_1}\zeta_2^{\gamma'_2} m_2^{1,2} (\eta,\zeta)&=& \sum_{{k_1},{k_2}\in \z} \sum_{n_1,n_2\in\z}\sum_{n'_1,n'_2\in\z} c^{1,2}_{{k_1},{k_2},n_1,n'_1,n_2,n'_2} 2^{{k_1}(\beta'_1+\gamma'_1)} 2^{{k_2}(\beta'_2+\gamma'_2)} \\ \ & & \cdot {\widehat \psi}_{\beta'_1,n_1}(2^{-{k_1}}\eta_1){\widehat \psi}'_{\gamma'_1,n_2}(2^{-{k_1}}\zeta_1){\widehat \psi}_{\beta'_2,n'_1}(2^{-{k_2}}\eta_2){\widehat \varphi}_{\gamma'_2,n'_2}(2^{-{k_2}}\zeta_2) \\
    \eta_1^{\beta'_1}\eta_2^{\beta'_2} \zeta_1^{\gamma'_1}\zeta_2^{\gamma'_2} m_2^{2,2} (\eta,\zeta)&=& \sum_{{k_1},{k_2}\in \z} \sum_{n_1,n_2\in\z}\sum_{n'_1,n'_2\in\z} c^{2,2}_{{k_1},{k_2},n_1,n'_1,n_2,n'_2} 2^{{k_1}(\beta'_1+\gamma'_1)} 2^{{k_2}(\beta'_2+\gamma'_2)} \\ \ & & \cdot {\widehat \psi}_{\beta'_1,n_1}(2^{-{k_1}}\eta_1){\widehat \varphi}_{\gamma'_1,n_2}(2^{-{k_1}}\zeta_1){\widehat \psi}_{\beta'_2,n'_1}(2^{-{k_2}}\eta_2){\widehat \varphi}_{\gamma'_2,n'_2}(2^{-{k_2}}\zeta_2) \\
     \eta_1^{\beta'_1}\eta_2^{\beta'_2} \zeta_1^{\gamma'_1}\zeta_2^{\gamma'_2}m_2^{2,3} (\eta,\zeta) &=& \sum_{{k_1},{k_2}\in \z} \sum_{n_1,n_2\in\z}\sum_{n'_1,n'_2\in\z} c^{2,3}_{{k_1},{k_2},n_1,n'_1,n_2,n'_2} 2^{{k_1}(\beta'_1+\gamma'_1)} 2^{{k_2}(\beta'_2+\gamma'_2)} \\ \ & & \cdot {\widehat \psi}_{\beta'_1,n_1}(2^{-{k_1}}\eta_1){\widehat \varphi}_{\gamma'_1,n_2}(2^{-{k_1}}\zeta_1){\widehat \varphi}_{\beta'_2,n'_1}(2^{-{k_2}}\eta_2){\widehat \psi}_{\gamma'_2,n'_2}(2^{-{k_2}}\zeta_2),
\end{eqnarray*}

\noindent where the coefficients, for all four pairs $(i_1,i_2)$ above, satisfy  $\sup_{k_1,k_2 \in \z}|c^{i_1,i_2}_{k_1,k_2,n_1,n'_1,n_2,n'_2}|\lesssim (1+|n_1|+|n_2|)^{-M}(1+|n'_1|+|n'_2|)^{-M}$.\\
\vskip0.5cm

In similar fashion, for $m_1^{1,1}(\xi,\eta,\zeta)$ in $\eqref{dcpm3}$, we denote
$$M_{\beta'_1,\beta'_2,\gamma'_1,\gamma'_2}(\xi):={1\over \beta'_1 ! \beta'_2 ! \gamma'_1 ! \gamma'_2!}\partial_{\eta_1}^{\beta'_1}\partial_{\eta_2}^{\beta'_2}
\partial_{\zeta_1}^{\gamma'_1}\partial_{\zeta_2}^{\gamma'_2}m_1(\xi,0,0,0,0).$$
Note that for any indices $\alpha_1,\alpha_2$ there holds $$|\partial^{\alpha_1}_{\xi_1}\partial^{\alpha_2}_{\xi_2}M_{\beta'_1,\beta'_2,\gamma'_1,\gamma'_2}(\xi)|\lesssim |\xi_1|^{-(\alpha_1+\beta'_1+\gamma'_1)} |\xi_2|^{-(\alpha_2+\beta'_2+\gamma'_2)},$$
which means we can expand in Fourier series to write as:
\begin{gather*}
M_{\beta'_1,\beta'_2,\gamma'_1,\gamma'_2}(\xi){\widehat \psi}(2^{-j_1}\xi_1){\widehat \psi}(2^{-j_2}\xi_2)\\
=\sum_{l_1,l_2\in \z} c^{\beta'_1,\beta'_2,\gamma'_1,\gamma'_2}_{j_1,j_2,l_1,l_2} 2^{-j_1(\beta'_1+\gamma'_1)} 2^{-j_2(\beta'_2+\gamma'_2)}{\widehat \psi}_{0,l_1}(2^{-j_1}\xi_1){\widehat \psi}_{0,l_2}(2^{-j_2}\xi_2),
\end{gather*}
where ${\widehat \psi}_{0,l_1}(u)=e^{inu}{\widehat \psi}(u), {\widehat \psi}_{0,l_2}(v)=e^{inu}{\widehat \psi}(v)$ for $u,v\in \mathbb{R}$ are defined as before, and
$$\sup_{j_1,j_2}|c^{\beta_1,\beta_2,\gamma_1,\gamma_2}_{j_1,j_2,l_1,l_2}|\lesssim (1+|l_1|)^{-M}(1+|l_2|)^{-M}.$$

Then if we denote by $d_1=\beta'_1+\gamma'_1$ and $d_2=\beta'_2+\gamma'_2$, and put everything together, we have that
\begin{eqnarray}
 & & m_1^{1,1}(\xi,\eta,\zeta)m_2(\eta,\zeta)\chi(\xi_1,\eta_1,\zeta_1)\chi(\xi_2,\eta_2,\zeta_2) \nonumber\\
&=&\sum c^{1,1} 2^{-(j_1-k_1)d_1}2^{-(j_2-k_2)d_2} {\widehat \psi}_{0,l_1}(\frac{\xi_1}{2^{j_1}}){\widehat \psi}_{0,l_2}(\frac{\xi_2}{2^{j_2}}){\widehat \varphi}(\frac{\eta_1}{2^{j_1-10}}) {\widehat \varphi}(\frac{\eta_2}{2^{j_2-10}}) \nonumber \\
& &  \qquad \cdot {\widehat \varphi}(\frac{\zeta_1}{2^{j_1-10}}) {\widehat \varphi}(\frac{\zeta_2}{2^{j_2-10}}) {\widehat \psi}_{\beta'_1,n_1}(\frac{\eta_1}{2^{{k_1}}}){\widehat \psi}'_{\gamma'_1,n_2}(\frac{\zeta_1}{2^{{k_1}}})
{\widehat \psi}_{\beta'_2,n'_1}(\frac{\eta_2}{2^{{k_2}}}){\widehat \psi}'_{\gamma'_2,n'_2}(\frac{\zeta_2}{2^{{k_2}}}) \nonumber \\
& &+ \sum c^{1,2} 2^{-(j_1-k_1)d_1}2^{-(j_2-k_2)d_2} {\widehat \psi}_{0,l_1}(\frac{\xi_1}{2^{j_1}}){\widehat \psi}_{0,l_2}(\frac{\xi_2}{2^{j_2}}){\widehat \varphi}(\frac{\eta_1}{2^{j_1-10}}) {\widehat \varphi}(\frac{\eta_2}{2^{j_2-10}}) \nonumber\\
& &  \qquad \cdot {\widehat \varphi}(\frac{\zeta_1}{2^{j_1-10}}) {\widehat \varphi}(\frac{\zeta_2}{2^{j_2-10}}) {\widehat \psi}_{\beta'_1,n_1}(\frac{\eta_1}{2^{{k_1}}}){\widehat \psi}'_{\gamma'_1,n_2}(\frac{\zeta_1}{2^{{k_1}}})
{\widehat \psi}_{\beta'_2,n'_1}(\frac{\eta_2}{2^{{k_2}}}){\widehat \varphi}_{\gamma'_2,n'_2}(\frac{\zeta_2}{2^{{k_2}}}) \nonumber\\
& &+\sum c^{2,2} 2^{-(j_1-k_1)d_1}2^{-(j_2-k_2)d_2} {\widehat \psi}_{0,l_1}(\frac{\xi_1}{2^{j_1}}){\widehat \psi}_{0,l_2}(\frac{\xi_2}{2^{j_2}}){\widehat \varphi}(\frac{\eta_1}{2^{j_1-10}}) {\widehat \varphi}(\frac{\eta_2}{2^{j_2-10}}) \nonumber\\
& &  \qquad \cdot {\widehat \varphi}(\frac{\zeta_1}{2^{j_1-10}}) {\widehat \varphi}(\frac{\zeta_2}{2^{j_2-10}}) {\widehat \psi}_{\beta'_1,n_1}(\frac{\eta_1}{2^{{k_1}}}){\widehat \varphi}_{\gamma'_1,n_2}(\frac{\zeta_1}{2^{{k_1}}})
{\widehat \psi}_{\beta'_2,n'_1}(\frac{\eta_2}{2^{{k_2}}}){\widehat \varphi}_{\gamma'_2,n'_2}(\frac{\zeta_2}{2^{{k_2}}}) \nonumber\\
& &+\sum c^{2,3} 2^{-(j_1-k_1)d_1}2^{-(j_2-k_2)d_2} {\widehat \psi}_{0,l_1}(\frac{\xi_1}{2^{j_1}}){\widehat \psi}_{0,l_2}(\frac{\xi_2}{2^{j_2}}){\widehat \varphi}(\frac{\eta_1}{2^{j_1-10}}) {\widehat \varphi}(\frac{\eta_2}{2^{j_2-10}}) \nonumber \\
\label{dcpm11}
& &  \qquad \cdot {\widehat \varphi}(\frac{\zeta_1}{2^{j_1-10}}) {\widehat \varphi}(\frac{\zeta_2}{2^{j_2-10}}) {\widehat \psi}_{\beta'_1,n_1}(\frac{\eta_1}{2^{{k_1}}}){\widehat \varphi}_{\gamma'_1,n_2}(\frac{\zeta_1}{2^{{k_1}}})
{\widehat \varphi}_{\beta'_2,n'_1}(\frac{\eta_2}{2^{{k_2}}}){\widehat \psi}'_{\gamma'_2,n'_2}(\frac{\zeta_2}{2^{{k_2}}}),
\end{eqnarray}
where the summation is taken over $j_1,j_2,k_1,k_2,l_1,l_2,n_1,n_2,n'_1,n'_2$ and
$$c^{i_1,i_2}=c^{\beta'_1,\beta'_2,\gamma'_1,\gamma'_2}_{j_1,j_2,l_1,l_2}c^{i_1,i_2}_{k_1,k_2,n_1,n'_1,n_2,n'_2}$$
for all the four pairs $(i_1,i_2)$ as above.

Note that when $j_1-k_1< 10$ or $j_2-k_2< 10$,  each of the four parts in $\eqref{dcpm11}$ must be zero.
{ Actually, if we take a look at the expressions in \eqref{dcpm11}, for example,
\begin{eqnarray*}
& &\sum c^{2,3} 2^{-(j_1-k_1)d_1}2^{-(j_2-k_2)d_2} {\widehat \psi}_{0,l_1}(\frac{\xi_1}{2^{j_1}}){\widehat \psi}_{0,l_2}(\frac{\xi_2}{2^{j_2}}){\widehat \varphi}(\frac{\eta_1}{2^{j_1-10}}) {\widehat \varphi}(\frac{\eta_2}{2^{j_2-10}}) \nonumber \\
& &  \qquad \cdot {\widehat \varphi}(\frac{\zeta_1}{2^{j_1-10}}) {\widehat \varphi}(\frac{\zeta_2}{2^{j_2-10}}) {\widehat \psi}_{\beta'_1,n_1}(\frac{\eta_1}{2^{{k_1}}}){\widehat \varphi}_{\gamma'_1,n_2}(\frac{\zeta_1}{2^{{k_1}}})
{\widehat \varphi}_{\beta'_2,n'_1}(\frac{\eta_2}{2^{{k_2}}}){\widehat \psi}'_{\gamma'_2,n'_2}(\frac{\zeta_2}{2^{{k_2}}}),
\end{eqnarray*}
we can see actually ${\widehat \varphi}(\frac{\eta_1}{2^{j_1-10}}){\widehat \psi}_{\beta'_1,n_1}(\frac{\eta_1}{2^{{k_1}}})=0$, since they have disjoint supports. More precisely,
\begin{gather*}
  \spt {\widehat \varphi}(\frac{\eta_1}{2^{j_1-10}}) \subset \{\eta_1: |\eta_1|\leq 2^{j_1-12}\},\\
   {\widehat \psi}_{\beta'_1,n_1}(\frac{\eta_1}{2^{{k_1}}})\subset  \{\eta_1:  2^{k_1-1} \leq |\eta_1|\leq 2^{k_1+1}\},\\
   \spt {\widehat \varphi}(\frac{\eta_1}{2^{j_1-10}}) \cap  \spt {\widehat \psi}_{\beta'_1,n_1}(\frac{\eta_1}{2^{{k_1}}})=\emptyset\quad \textit{if}\quad  j_1-k_1< 10.
\end{gather*}
Other terms are handled similarly.
}

Therefore,
we just need to consider the case $j_1-k_1\geq 10$ and $j_2-k_2\geq 10$. Moreover, when $j_1-k_1\geq 20$, ${\widehat \varphi}(\frac{\eta_1}{2^{j_1-10}})=1$ on $\spt {\widehat \psi}_{\beta'_1,n_1}(\frac{\eta_1}{2^{k_1}})\cup \spt {\widehat \varphi}_{\beta'_1,n_1}(\frac{\eta_1}{2^{k_1}})$ and  ${\widehat \varphi}(\frac{\zeta_1}{2^{j_1-10}})=1$ on $\spt {\widehat \psi}_{\gamma'_1,n_2}(\frac{\zeta_1}{2^{k_1}})\cup \spt {\widehat \varphi}_{\gamma'_1,n_2}(\frac{\zeta_1}{2^{k_1}})$. Further, when
$10<j_1-k_1<20$, one can see that the summation of the terms involving $\xi_1,\eta_1,\zeta_1$ gives a multiplier in $\mathcal{M}(\nnn)$. The same argument works for the other half of variables $\xi_2,\eta_2,\zeta_2$ as well based on the similar choice of $k_2,j_2$.

Due to the decay  in the coefficients $a_{j_1,j_2}:=c^{\beta'_1,\beta'_2,\gamma'_1,\gamma'_2}_{j_1,j_2,l_1,l_2}, b_{k_1,k_2}:=c^{i_1,i_2}_{k_1,k_2,n_1,n'_1,n_2,n'_2}$, we can fix $l_1,l_2,n_1,n_2,n'_1,n'_2$ and only take the summation over $j_1,j_2,k_1,k_2$. And without loss of generality we can assume $|a_{j_1,j_2}|\leq 1, |b_{k_1,k_2}|\leq 1$.

~\\

Thus, when $10\leq j_1-k_1\leq 20$ and $10\leq j_2-k_2\leq 20$, the above multipliers belong to $B\mathcal{M}(\n^6)$ and Theorem $\ref{bipcoifman}$ gives us the desired estimate. So we only need to consider the case
$j_1-k_1\geq20$, $j_2-k_2\geq 20$, and the cases $j_1-k_1\geq20,10\leq j_2-k_2\leq 20$ and $10\leq j_1-k_1\leq20, j_2-k_2\geq 20$ correspond to the estimates like \eqref{mid1} and \eqref{mid2}.
~\\

For the reduction in \eqref{dcpm11}, one will see later what really matters there is the ``type" of those ${\widehat \psi}$ and ${\widehat \varphi}$ functions, i.e, whether $0$ is contained in the supports of functions. We call these functions $\Psi$ type and $\Phi$ type functions respectively. Because of that, we can simplify the notations for operators in \eqref{dcpm11} as below, where we also denote by $d_i=\beta'_i+\gamma'_i<N$ for $i=1,2$.

\begin{gather*}
 m_{d_1,d_2}^{1}(\xi,\eta,\zeta)=\\
\sum_{\substack{j_1-k_1\geq 20\\j_2-k_2\geq 20}} a_{j_1,j_2}b_{k_1,k_2} 2^{-(j_1-k_1)d_1}2^{-(j_2-k_2)d_2} {\widehat \psi^{0}}(\frac{\xi_1}{2^{j_1}}) {\widehat {\widetilde  \psi^{0}}}(\frac{\xi_2}{2^{j_2}}) {\widehat \psi^{1}}(\frac{\eta_1}{2^{{k_1}}}){\widehat \psi^{2}}(\frac{\zeta_1}{2^{{k_1}}})
{\widehat {\widetilde \psi^{1}}}(\frac{\eta_2}{2^{{k_2}}}) {\widehat {\widetilde\psi^{2}}}(\frac{\zeta_2}{2^{{k_2}}}),
\end{gather*}
\vskip0.3cm
\begin{gather*}
 m_{d_1,d_2}^{2}(\xi,\eta,\zeta)=\\
\sum_{\substack{j_1-k_1\geq 20\\j_2-k_2\geq 20}} a_{j_1,j_2}b_{k_1,k_2} 2^{-(j_1-k_1)d_1}2^{-(j_2-k_2)d_2} {\widehat \psi^{0}}(\frac{\xi_1}{2^{j_1}}) {\widehat {\widetilde \psi^{0}}}(\frac{\xi_2}{2^{j_2}}) {\widehat \psi^{1}}(\frac{\eta_1}{2^{{k_1}}}){\widehat \psi^{2}}(\frac{\zeta_1}{2^{{k_1}}})
 {\widehat {\widetilde \psi^{1}}}(\frac{\eta_2}{2^{{k_2}}}) {\widehat {\widetilde \varphi^{0}}}(\frac{\zeta_2}{2^{{k_2}}}),
\end{gather*}
\vskip0.3cm
\begin{gather*}
  m_{d_1,d_2}^{3}(\xi,\eta,\zeta)=\\
\sum_{\substack{j_1-k_1\geq 20\\j_2-k_2\geq 20}} a_{j_1,j_2}b_{k_1,k_2} 2^{-(j_1-k_1)d_1}2^{-(j_2-k_2)d_2} {\widehat \psi^{0}}(\frac{\xi_1}{2^{j_1}}) {\widehat {\widetilde \psi^{0}}}(\frac{\xi_2}{2^{j_2}}) {\widehat \psi^{1}}(\frac{\eta_1}{2^{{k_1}}}){\widehat \varphi^{0}}(\frac{\zeta_1}{2^{{k_1}}})
 {\widehat {\widetilde \psi^{1}}}(\frac{\eta_2}{2^{{k_2}}}) {\widehat {\widetilde \varphi^{0}}}(\frac{\zeta_2}{2^{{k_2}}}) \nonumber ,
\end{gather*}

\vskip0.3cm
\begin{gather*}
  m_{d_1,d_2}^{4}(\xi,\eta,\zeta)=\\
  \sum_{\substack{j_1-k_1\geq 20\\j_2-k_2\geq 20}} a_{j_1,j_2}b_{k_1,k_2} 2^{-(j_1-k_1)d_1}2^{-(j_2-k_2)d_2} {\widehat \psi^{0}}(\frac{\xi_1}{2^{j_1}}) {\widehat {\widetilde \psi^{0}}}(\frac{\xi_2}{2^{j_2}}) {\widehat \psi^{1}}(\frac{\eta_1}{2^{{k_1}}}){\widehat \varphi}^{0}(\frac{\zeta_1}{2^{{k_1}}})
 {\widehat {\widetilde \varphi^{0}}}(\frac{\eta_2}{2^{{k_2}}}) {\widehat {\widetilde\psi^{1}}}(\frac{\zeta_2}{2^{{k_2}}}) \nonumber .
\end{gather*}

Here ${\widehat \psi^i},{\widehat \varphi^0},  {\widehat {\widetilde \psi^i}}, {\widehat {\widetilde \varphi^0}}$ satisfy
\begin{itemize}
\item[] $\spt {\widehat \psi}^i, \spt  {\widehat {\widetilde \psi^i}} \subset \{u\,|\,2^{-1}\leq |u|\leq 2\},\qquad i=0,1,$\\
\item[] $\spt {\widehat \psi}^2, \spt  {\widehat {\widetilde \psi^2}} \subset \{u\,|\,2^{-3}\leq |u|\leq 2^3\},$\\
\item[] $\spt {\widehat \varphi}^0, \spt  {\widehat {\widetilde \varphi^0}} \subset \{u\,| \,|u|\leq 2^{-2}\}.$
\end{itemize}

As previously mentioned, because of the ``type" of the functions, we do not distinguish between ${\widehat \psi^i}$ and $ {\widehat {\widetilde \psi^i}}$ ($i=0,1,2$), between ${\widehat \varphi^0}$ and ${\widehat {\widetilde  \varphi^0}}$, and we denote them to be ${\widehat \psi}$ and ${\widehat \varphi}$ respectively. But note that ${\widehat \psi},{\widehat \varphi}$ are different from the ones in $\eqref{psidef}$ and $\eqref{varphidef}$. And we use the notations
$$ \widehat{\Delta_j f} (\xi)={\widehat \psi}({\xi \over 2^j})\hat f(\xi),\qquad \widehat{S_k f}(\xi)={\widehat \varphi}({\xi \over 2^k})\hat f(\xi).$$
Then finally we reduce our original problem to the study of the following cases
\begin{eqnarray*}
  T_{d_1,d_2}^1&=&\sum_{\substack{j_1-k_1\geq 20\\j_2-k_2\geq 20}} a_{j_1,j_2}b_{k_1,k_2} 2^{-(j_1-k_1)d_1}2^{-(j_2-k_2)d_2} \Delta_{j_1}\Delta_{j_2}f \ \Delta_{k_1}\Delta_{k_2}g \ \Delta_{k_1}\Delta_{k_2}h \\
  T_{d_1,d_2}^2&=&\sum_{\substack{j_1-k_1\geq 20\\j_2-k_2\geq 20}} a_{j_1,j_2}b_{k_1,k_2} 2^{-(j_1-k_1)d_1}2^{-(j_2-k_2)d_2} \Delta_{j_1}\Delta_{j_2}f \ \Delta_{k_1}\Delta_{k_2}g \ \Delta_{k_1}S_{k_2}h \\
   T_{d_1,d_2}^3&=&\sum_{\substack{j_1-k_1\geq 20\\j_2-k_2\geq 20}} a_{j_1,j_2}b_{k_1,k_2} 2^{-(j_1-k_1)d_1}2^{-(j_2-k_2)d_2} \Delta_{j_1}\Delta_{j_2}f \ \Delta_{k_1}\Delta_{k_2}g \ S_{k_1}S_{k_2}h\\
   T_{d_1,d_2}^4&=& \sum_{\substack{j_1-k_1\geq 20\\j_2-k_2\geq 20}} a_{j_1,j_2}b_{k_1,k_2} 2^{-(j_1-k_1)d_1}2^{-(j_2-k_2)d_2} \Delta_{j_1}\Delta_{j_2}f \ \Delta_{k_1}S_{k_2}g \ S_{k_1}\Delta_{k_2}h
\end{eqnarray*}
~\\

Note the fact that in $T_{d_1,d_2}^i$ $(i=1,2,3,4)$, the support for each of the Fourier transforms of $(\Delta_{k_1}\Delta_{k_2}g\Delta_{k_1}S_{k_2}h)(x_1,x_2),\ (\Delta_{k_1}\Delta_{k_2}gS_{k_1}S_{k_2}h)(x_1,x_2),\ (\Delta_{k_1}S_{k_2}g S_{k_1}\Delta_{k_2}h)(x_1,x_2)$  is contained in $\{|u_1|\lesssim 2^{k_1}, |u_2|\lesssim 2^{k_2}\}$.  Thus the Fourier transform of
$T_{d_1,d_2}^i$ $(i=1,2,3,4)$ is supported in $\{ |u_1|\approx 2^{j_1}, |u_2|\approx 2^{j_2}\}$ for fixed  $j_1,j_2$. Moreover, from the argument below, one can see  it suffices to consider the case for $d_1=d_2=0$ since $j_1-k_1\geq 20, j_2-k_2\geq 20.$
~\\

\subsection{The $L^r$ boundedness of H\"older type}
~\\

In this subsection we study the $L^{p_1} \times L^{p_2}\times L^{p_3}\to L^r$ estimate for $1<p_1,p_2,p_3 < \infty$ for the operators $T_{d_1,d_2}^i$ $(i=1,2,3,4)$.
\medskip

\subsubsection{$d_1,d_2>0$}
 ~\\

The approach for the case $d_1,d_2>0$ works for all $T^i_{d_1,d_2},(1\leq i\leq 4)$. Consider $T^2_{d_1,d_2}$ for example, with other cases treated similarly. Since the support of the Fourier transform of $T^{2}_{d_1,d_2}$ for fixed $j_1,j_2$ is included in $\{|u_1|\approx 2^{j_1},|u_2|\approx 2^{j_2}\}$, there holds

\begin{eqnarray*}
 \|T^{2}_{d_1,d_2}(f,g,h)\|_{L^{r}}
   &\lesssim &\big\|\{ \sum_{j_1,j_2\in \z}\big|a_{j_1,j_2}\Delta_{j_1}\Delta_{j_2}f\big(\sum_{k_1=-\infty}^{j_1-20}\sum_{k_2=-\infty}^{j_2-20}b_{k_1,k_2}
  2^{-(j_1-k_1)d_1} 2^{-(j_2-k_2)d_2} \\
  & & \qquad \qquad \qquad \qquad  \Delta_{k_1}\Delta_{k_2}g\, \Delta_{k_1}S_{k_2}h\big)\big|^2\}^{1\over 2}\big\|_{L^r}\\
  &\lesssim& \big\| \{\sum_{j_1,j_2\in \z}\big|a_{j_1,j_2}\Delta_{j_1}\Delta_{j_2}f \big|^2\}^{1\over 2} \big(\sup_{k_1,k_2}|\Delta_{k_1}\Delta_{k_2}g|\big)\big(\sup_{k_1,k_2}|\Delta_{k_1}S_{k_2}h|\big)\big\|_{L^r}\\
  &\lesssim& \|\{\sum_{j_1,j_2\in \z}\big|a_{j_1,j_2}\Delta_{j_1}\Delta_{j_2}f \big|^2\}^{1\over 2}\|_{L^{p_1}}
   \|\sup_{k_1,k_2}|\Delta_{k_1}\Delta_{k_2}g|\|_{L^{p_2}} \|\sup_{k_1,k_2}|\Delta_{k_1}S_{k_2}h|\|_{L^{p_3}}\\
   &\lesssim& \|\{\sum_{j_1,j_2\in \z}\big|\Delta_{j_1}\Delta_{j_2}f \big|^2\}^{1\over 2}\|_{L^{p_1}}
   \|M_s g\|_{L^{p_2}} \|M_sh\|_{L^{p_3}}\\
   &\lesssim& \|f\|_{L^{p_1}}  \|g\|_{L^{p_2}}  \|h\|_{L^{p_3}},
\end{eqnarray*}

where $0<1/r=1/p_1+1/p_2+1/p_3$ with $1<p_1,p_2,p_3< \infty$, $M_s$ is the strong maximal operator.

~\\

\subsubsection{$d_1=d_2=0$}
~\\
Note that for $d_1>0$ and $d_2=0$,  or $d_1=0$ and $d_2>0$, we can treat the decay factors $2^{-(j_1-k_1)d_1}$ or $2^{-(j_2-k_2)d_2}$ as  uniform constants. That means they are particular cases of $d_1=d_2=0$. For the case $d_1=d_2=0$,  in $\eqref{dcpm3}$ we have  $\beta'_1=\gamma'_1=\beta'_2=\gamma'_2=0$. Here we consider $T^4_{d_1,d_2}$, and one can check the following argument is valid for the other three operators.

We write the the $L^r$ norm of $T^4_{0,0}$ as
\begin{eqnarray}
  & &\left\|\sum_{\substack{j_1-k_1\geq 20\\j_2-k_2\geq 20}} a_{j_1,j_2}b_{k_1,k_2} \Delta_{j_1}\Delta_{j_2}f \ \Delta_{k_1}S_{k_2}g \ S_{k_1}\Delta_{k_2}h\right\|_{L^r} \nonumber \\
  &=&\left\|\sum_{k_1,k_2 \in \z} b_{k_1,k_2}\Delta_{k_1}S_{k_2}g\  S_{k_1} \Delta_{k_2} h \sum_{j_1,j_2=k_1,k_2+20}^{\infty} a_{j_1,j_2} \Delta_{j_1}\Delta_{j_2} f \  \right\|_{L^r} \label{goal}
\end{eqnarray}
We first consider the part $\sum_{j_1,j_2=k_1,k_2+20}^{\infty} \Delta_{j_1}\Delta_{j_2} f$.

In one-parameter case, since $\spt \widehat \psi_j \subset \{2^{j-1}\leq |\xi| \leq 2^{j+1}\}$ ($j\in\z, \xi\in\n$), we have
$$\spt \sum_{j=k+20}^\infty \widehat \psi_j \subset \{2^{k+19}\leq |\xi|\} .$$
 Now we choose a function  $\phi$ such that $\spt \widehat  \phi_k  \subset \{|\xi| \leq 2^{k-2}\}$ and $\widehat  \phi_k=1$ on $\{|\xi| \leq 2^{k-4}\}$.
Note that $$1-\widehat \phi_k=1 \quad \textit{on} \quad \spt \sum_{j=k+20}^\infty \widehat \psi_j,$$
      $$1-\widehat \phi_k=0 \quad \textit{on} \quad \{|\xi|\leq 2^{k-4}\}.$$
Thus,
\begin{eqnarray*}
  \sum_{j=k+20}^\infty \widehat \psi_j&=&(1-\widehat \phi_k)(\sum_{j=k+20}^\infty \widehat \psi_j)=(1-\widehat \phi_k) (\sum_{j \in \z} \widehat \psi_j- \sum_{j=-\infty}^{k+19} \widehat \psi_j )\\
  &=&(1-\widehat \phi_k) (\sum_{j \in \z} \widehat \psi_j- \sum_{j=k-5}^{k+19} \widehat \psi_j ) =(1-\widehat \phi_k) (\sum_{j \in \z} \widehat \psi_j- \sum_{j\approx k} \widehat \psi_j ) \\
 &=&  (1-\widehat \phi_k) (\sum_{j \in \z} \widehat \psi_j-  \widehat {\widetilde \psi_k} ).
\end{eqnarray*}
Here we use the fact $(1-\widehat \phi_k)(\sum_{j=-\infty}^{k-6} \widehat \psi_j)$=0 since they have disjoint supports.

That means
\begin{eqnarray*}
  \sum_{j=k+20}^\infty \Delta_j f =  (1-S_k)  \left( \sum_{j \in \z} \Delta_j -\widetilde \Delta_k\right) f.
\end{eqnarray*}

\vskip0.3cm

Now we come back to the bi-parameter case. In the following arguments, we will omit the uniformly bounded constants $a_{j_1,j_2}$ and $b_{k_1,k_2}$ in \eqref{goal}for simplicity, since one can see they do not play an essential role in our argument

\begin{eqnarray}
 & & \sum_{j_1=k_1+20}^\infty   \sum_{j_2=k_2+20}^\infty \Delta_{j_1} \Delta_{j_2} f  \nonumber\\
  %&=&(1-S_{k_1})(1-S_{k_2})\left( \sum_{j_1=k_1+20}^\infty   \sum_{j_2=k_2+20}^\infty \Delta_{j_1} \Delta_{j_2} f\right) \\
  &=&  (1-S_{k_1})(1-S_{k_2})\left(\sum_{{j_1}\in \z}\Delta_{j_1}-\widetilde \Delta_{k_1}\right)\left(\sum_{{j_2}\in \z}\Delta_{j_2}-\widetilde \Delta_{k_2}\right) f \nonumber\\
  &=&  (1-S_{k_1}-S_{k_2}+ S_{k_1} S_{k_2})\left(\sum_{{j_1,j_2}\in \z}\Delta_{j_1}\Delta_{j_2}-\widetilde \Delta_{k_1}\sum_{j_2\in \z} \Delta_{j_2}  -\sum_{j_1\in \z} \Delta_{j_1} \widetilde \Delta_{k_2}+\widetilde \Delta_{k_1}\widetilde \Delta_{k_2}\right) f \nonumber \\
  &:=& (1-S_{k_1}-S_{k_2}+ S_{k_1} S_{k_2})\left(O_1(f)+O_2(f)+O_3(f)+O_4(f)\right) \label{opdcp}
\end{eqnarray}

\vskip0.2cm
We consider $(1-S_{k_1}-S_{k_2}+ S_{k_1} S_{k_2})O_1(f)$ first.  Using $O_1(f)$,  our operator becomes
\begin{eqnarray*}
 I_1:&=&\left\|\sum_{k_1,k_2 \in \z} \Delta_{k_1}S_{k_2}g\  S_{k_1} \Delta_{k_2}h \  O_1(f) \  \right\|_{L^r}\\
 &\lesssim & \left\|\sum_{k_1,k_2 \in \z} \Delta_{k_1}S_{k_2}g\  S_{k_1} \Delta_{k_2}h \right\|_{L^s} \left\| \  O_1(f) \  \right\|_{L^{p_1}} \\
 &\lesssim& \left\| g \right\|_{L^{p_2}} \left\| h \right\|_{L^{p_3}} \left\| \  O_1(f) \  \right\|_{L^{p_1}},
\end{eqnarray*}
where the first inequality follows from the H\"older's inequality with $1/p_2+1/p_3=1/s$, and the second inequality follows from the classical bilinear bi-parameter multiplier boundedness.

Using $S_{k_1} S_{k_2}O_1(f)$, we have

\begin{eqnarray*}
   II_1:&=&\left\|\sum_{k_1,k_2 \in \z} \Delta_{k_1}S_{k_2}g\  S_{k_1} \Delta_{k_2}h \ S_{k_1} S_{k_2} O_1(f) \  \right\|_{L^r}\\
 &\lesssim& \left\| g \right\|_{L^{p_2}} \left\| h \right\|_{L^{p_3}} \left\| \  O_1(f) \  \right\|_{L^{p_1}},
\end{eqnarray*}
where the inequality follows from the classical trilinear bi-parameter boundedness. Then our desired H\"older type estimate would hold since the classical linear theory gives
$$\left\| O_1(f)\right\|_{L^{p_1}}=\left\| \sum_{{j_1,j_2}\in \z}\Delta_{j_1}\Delta_{j_2}f\right\|_{L^{p_1}}\lesssim \left\| f \right\|_{L^{p_1}}, \quad 1<p_1<\infty.$$

Note that actually $I_1$ corresponds to the boundedness of the following trilinear Fourier multiplier.
\begin{eqnarray*}
T_{I_1}&=&\int \sum_{k_1,k_2}\widehat \psi_{k_1}(\eta_1) \widehat \varphi_{k_2}(\eta_2) \widehat \varphi_{k_1}(\zeta_1) \widehat \psi_{k_2}(\zeta_2) \widehat O_1(f)(\xi)\hat g(\eta)  \hat h(\zeta)  e^{2\pi i x(\xi+\eta+\zeta)} d\xi d\eta d\zeta \\
&=&\int\left( \sum_{k_1}\widehat \psi_{k_1}(\eta_1) \widehat \varphi_{k_1}(\zeta_1) \right)\left( \sum_{k_2}\widehat \varphi_{k_2}(\eta_2) \widehat \psi_{k_2}(\zeta_2) \right) \widehat O_1(f)(\xi)\hat g(\eta)  \hat h(\zeta)  e^{2\pi i x(\xi+\eta+\zeta)} d\xi d\eta d\zeta \\
&=& \int m'(\eta_1,\zeta_1)m''(\eta_2,\zeta_2) \widehat O_1(f)(\xi)\hat g(\eta)  \hat h(\zeta)  e^{2\pi i x(\xi+\eta+\zeta)} d\xi d\eta d\zeta \\
&=&  \int m'(\eta_1,\zeta_1)m''(\eta_2,\zeta_2) \hat g(\eta)  \hat h(\zeta)  e^{2\pi i x(\eta+\zeta)}  d\eta d\zeta \cdot O_1 (f)(x)
\end{eqnarray*}
This  trilinear operator has a special bilinear symbol $m'(\eta_1,\zeta_1)m''(\eta_2,\zeta_2)$, which is actually the product of $ O_1 (f)(x)$ and a bilinear bi-parameter multiplier.  Then we can take advantage of the H\"older's inequality and a bilinear bi-parameter result. More precisely,

\begin{eqnarray*}
 & & \|T_{I_1}(f,g,h)\|_{L^r} \\
 &\lesssim& \left\|\int m'(\eta_1,\zeta_1)m''(\eta_2,\zeta_2) \hat g(\eta)  \hat h(\zeta)  e^{2\pi i x(\eta+\zeta)}  d\eta d\zeta \right\|_{L^s} \left\|O_1 (f)(x)\right\|_{L^{p_1}}\\
 &\lesssim&  \left\| g \right\|_{L^{p_2}} \left\| h \right\|_{L^{p_3}} \left\| \  f \  \right\|_{L^{p_1}}.
\end{eqnarray*}

Similarly, the boundedness $II_1$ corresponds to the trilinear Fourier multiplier
\begin{eqnarray*}
T_{II_1}&=&\int \sum_{k_1,k_2} \widehat \varphi_{k_1}(\xi_1) \widehat \varphi_{k_2}(\xi_2)  \widehat \psi_{k_1}(\eta_1) \widehat \varphi_{k_2}(\eta_2) \widehat \varphi_{k_1}(\zeta_1) \widehat \psi_{k_2}(\zeta_2) \widehat O_1(f)(\xi)\hat g(\eta)  \hat h(\zeta)  e^{2\pi i x(\xi+\eta+\zeta)} d\xi d\eta d\zeta \\
&=&\int\left( \sum_{k_1} \widehat \varphi_{k_1}(\xi_1)\widehat \psi_{k_1}(\eta_1) \widehat \varphi_{k_1}(\zeta_1) \right)\left( \sum_{k_2}\widehat \varphi_{k_2}(\xi_2) \widehat \varphi_{k_2}(\eta_2) \widehat \psi_{k_2}(\zeta_2) \right) \\
 & & \quad \widehat O_1(f)(\xi)\hat g(\eta)  \hat h(\zeta)  e^{2\pi i x(\xi+\eta+\zeta)} d\xi d\eta d\zeta \\
&=& \int m'(\xi_1,\eta_1,\zeta_1)m''(\xi_2,\eta_2,\zeta_2) \widehat O_1(f)(\xi)\hat g(\eta)  \hat h(\zeta)  e^{2\pi i x(\xi+\eta+\zeta)} d\xi d\eta d\zeta.
\end{eqnarray*}
Note that this is a standard bi-parameter trilinear Fourier multiplier, and that's why its H\"older type estimate holds.

%(To see $\sum_{k_1} \widehat \varphi_{k_1}(\xi_1)\widehat \psi_{k_1}(\eta_1) \widehat \psi_{k_1}(\zeta_1)$ satisfy the H\"ormander condition, for example, one just need to note that for each small neighborhood of a fixed point $(\xi_1,\eta_1,\zeta_1)$, the summation only includes finite number of terms, since there is at least one $\psi$-function.)

Then we consider the terms like $S_{k_1}O(f)$ in \eqref{opdcp}
\begin{eqnarray}
   III_1:&=&\left\|\sum_{k_1,k_2 \in \z} \Delta_{k_1}S_{k_2}g\  S_{k_1} \Delta_{k_2}h \ S_{k_1} O_1(f) \  \right\|_{L^r} \nonumber
\end{eqnarray}
Similar as before, this corresponds to the following trilinear operator
\begin{eqnarray*}
T_{III_1}&=&\int \sum_{k_1,k_2} \widehat \varphi_{k_1}(\xi_1)  \widehat \psi_{k_1}(\eta_1) \widehat \varphi_{k_2}(\eta_2) \widehat \varphi_{k_1}(\zeta_1) \widehat \psi_{k_2}(\zeta_2) \widehat O_1(f)(\xi)\hat g(\eta)  \hat h(\zeta)  e^{2\pi i x(\xi+\eta+\zeta)} d\xi d\eta d\zeta \\
&=&\int\left( \sum_{k_1} \widehat \varphi_{k_1}(\xi_1)\widehat \psi_{k_1}(\eta_1) \widehat \varphi_{k_1}(\zeta_1) \right)\left( \sum_{k_2} \widehat \varphi_{k_2}(\eta_2) \widehat \psi_{k_2}(\zeta_2) \right) \\
 & & \quad \widehat O_1(f)(\xi)\hat g(\eta)  \hat h(\zeta)  e^{2\pi i x(\xi+\eta+\zeta)} d\xi d\eta d\zeta \\
&=& \int m'(\xi_1,\eta_1,\zeta_1)m''(\eta_2,\zeta_2) \widehat O_1(f)(\xi)\hat g(\eta)  \hat h(\zeta)  e^{2\pi i x(\xi+\eta+\zeta)} d\xi d\eta d\zeta.
\end{eqnarray*}

\begin{remark}
\label{rmk1}
Note that this trilinear  multiplier has a symbol $$m(\xi,\eta,\zeta)=m_1(\xi_1,\eta_1,\zeta_1)m_2(\eta_2,\zeta_2),$$ where one variable is missing for the second parameter, i.e., the function $f$ is not actually transformed in the second  variable. Such a symbol can be interpreted as an intermediate case between the previous $I_1$ and $II_1$,  and that's why we conjecture that H\"older type estimates should hold for operator.
If we make the assumption that this operator is bounded, we have the following.
 \begin{eqnarray*}
  \|T_{III_1}(f,g,h)\|_{L^r}  &\lesssim&  \left\| g \right\|_{L^{p_2}} \left\| h \right\|_{L^{p_3}} \left\| \  O_1(f) \  \right\|_{L^{p_1}} \lesssim \left\| g \right\|_{L^{p_2}} \left\| h \right\|_{L^{p_3}} \left\| \  f \  \right\|_{L^{p_1}}
\end{eqnarray*}
\end{remark}

\medskip

Now we consider $(1-S_{k_1}-S_{k_2}+S_{k_1}S_{k_2})O_4(f)$. We still consider the three parts  $O_4(f)$, $S_{k_1}S_{k_2}O_4(f)$ and $S_{k_1}O_4(f)$. Recall
$$O_4(f)=\left(\widetilde \Delta_{k_1} \widetilde \Delta_{k_2}\right) f .$$
Thus, with $O_4(f)$, we have
\begin{eqnarray*}
 I_4:&=&\left\|\sum_{k_1,k_2 \in \z} \Delta_{k_1}S_{k_2}g\  S_{k_1} \Delta_{k_2}h \  O_4(f) \  \right\|_{L^r}\\
 &=& \left\|\sum_{k_1,k_2 \in \z} \Delta_{k_1}S_{k_2}g\  S_{k_1} \Delta_{k_2}h \  \widetilde  \Delta_{k_1}\widetilde \Delta_{k_2} f \right\|_{L^{r}} \\
 &\lesssim& \left\| g \right\|_{L^{p_2}} \left\| h \right\|_{L^{p_3}} \left\| f \  \right\|_{L^{p_1}},
\end{eqnarray*}
where the estimate follows from the classical trilinear bi-parameter multiplier boundedness, as what we argued for $T_{I_1}$.

With  $S_{k_1}S_{k_2}O_4(f)$,
\begin{eqnarray*}
 II_4:&=&\left\|\sum_{k_1,k_2 \in \z} \Delta_{k_1}S_{k_2}g\  S_{k_1} \Delta_{k_2}h \ S_{k_1} S_{k_2} O_4(f) \  \right\|_{L^r}\\
 &=& \left\|\sum_{k_1,k_2 \in \z} \Delta_{k_1}S_{k_2}g\  S_{k_1} \Delta_{k_2}h \ S_{k_1} S_{k_2} \widetilde \Delta_{k_1} \widetilde \Delta_{k_2} f \right\|_{L^{r}}
\end{eqnarray*}
Recall that
$$\spt \widehat \phi_k\subset \{|\xi|\leq 2^{k-2} \},$$
$$\spt \widehat \psi_k\subset \{ 2^{k-1}\leq |\xi|\leq 2^{k+1} \},$$
$$\spt \widehat {\widetilde \psi_k} \subset \{ 2^{k-6}\leq |\xi|\leq 2^{k+20} \}$$
Thus, we can write $$S_{k_1} S_{k_2} \widetilde \Delta_{k_1} \widetilde \Delta_{k_2} f=   \widetilde \Delta'_{k_1} \widetilde  \Delta'_{k_2}f,$$
where $\widetilde  \Delta'_{k} f =\left(\widehat {\widetilde \psi'_{k} } \hat f\right)^\vee= \left(\widehat \phi_k \widehat {\widetilde  \psi_{k} } \hat f\right)^\vee$. Note that $\spt \widehat {\widetilde \psi'_{k}} \subset \{2^{k-6}\leq |\xi|\leq 2^{k-2}\} $. Then the estimate

\begin{eqnarray*}
 II_4:&=&\left\|\sum_{k_1,k_2 \in \z} \Delta_{k_1}S_{k_2}g\  S_{k_1} \Delta_{k_2}h \ S_{k_1} S_{k_2} O_4(f) \  \right\|_{L^r}\\
 &=& \left\|\sum_{k_1,k_2 \in \z} \Delta_{k_1}S_{k_2}g\  S_{k_1} \Delta_{k_2}h\  \widetilde \Delta'_{k_1} \widetilde \Delta'_{k_2} f \right\|_{L^{r}}\\
 &\lesssim& \left\| g \right\|_{L^{p_2}} \left\| h \right\|_{L^{p_3}} \left\| f \  \right\|_{L^{p_1}}
\end{eqnarray*}
follows from the classical trilinear bi-parameter boundedness.

With $S_{k_1}O_4(f)$, we have

\begin{eqnarray*}
 III_4:&=&\left\|\sum_{k_1,k_2 \in \z} \Delta_{k_1}S_{k_2}g\  S_{k_1} \Delta_{k_2}h \ S_{k_1} O_4(f) \  \right\|_{L^r}\\
  &=& \left\|\sum_{k_1,k_2 \in \z} \Delta_{k_1}S_{k_2}g\  S_{k_1} \Delta_{k_2}h \  S_{k_1} \widetilde \Delta_{k_1} \widetilde \Delta_{k_2} f \right\|_{L^{r}}\\
 &=& \left\|\sum_{k_1,k_2 \in \z} \Delta_{k_1}S_{k_2}g\  S_{k_1} \Delta_{k_2}h\  \widetilde \Delta'_{k_1} \widetilde \Delta_{k_2} f \right\|_{L^{r}}\\
 &\lesssim& \left\| g \right\|_{L^{p_2}} \left\| h \right\|_{L^{p_3}} \left\| f \  \right\|_{L^{p_1}}
\end{eqnarray*}
where we use the same trick
$$S_{k_1} \widetilde \Delta_{k_1} \widetilde \Delta_{k_2} f=   \widetilde \Delta'_{k_1} \widetilde  \Delta_{k_2}f.$$

\vskip0.2cm
Then we consider the last situation left in \eqref{opdcp},  which is $O_2(f)$. Using the same argument as above, we can write
\begin{eqnarray*}
  & &(1-S_{k_1}-S_{k_2}+ S_{k_1} S_{k_2})O_2 (f)\\
  &=& (1-S_{k_1}-S_{k_2}+ S_{k_1} S_{k_2})\widetilde \Delta_{k_1}\sum_{j_2\in \z} \Delta_{j_2} (f)\\
  &=& \widetilde \Delta_{k_1} \left(\sum_{j_2\in \z} \Delta_{j_2} f\right)- \widetilde \Delta'_{k_1} \left(\sum_{j_2\in \z} \Delta_{j_2} f\right)-  \widetilde \Delta_{k_1}S_{k_2}\left(\sum_{j_2\in \z}  \Delta_{j_2} f\right)+ \widetilde \Delta'_{k_1} S_{k_2} \left(\sum_{j_2\in \z}  \Delta_{j_2} f\right) \\
  &:=&  \widetilde \Delta_{k_1} Q(f)- \widetilde \Delta'_{k_1} Q(f)-  \widetilde \Delta_{k_1}S_{k_2}Q(f)+ \widetilde \Delta'_{k_1} S_{k_2} Q(f),
\end{eqnarray*}
where obviously $$\|Q(f)\|_{L^{p_1}}=\|\sum_{j_2\in \z}  \Delta_{j_2} f\|_{L^{p_1}} \lesssim \| f\|_{L^{p_1}} .$$

Now we consider the $L^r$ norm of .

\begin{eqnarray}
\nonumber
 & &\left\|\sum_{k_1,k_2 \in \z} \Delta_{k_1}S_{k_2}g\ S_{k_1} \Delta_{k_2}h \  (1-S_{k_1}-S_{k_2}+ S_{k_1} S_{k_2})O_2 (f)  \right\|_{L^r}\\
\nonumber &=& \left\|\sum_{k_1,k_2 \in \z} \Delta_{k_1}S_{k_2}g\  S_{k_1} \Delta_{k_2}h   \left( \widetilde \Delta_{k_1} Q(f)- \widetilde \Delta'_{k_1} Q(f)-  \widetilde \Delta_{k_1}S_{k_2}Q(f)+ \widetilde \Delta'_{k_1} S_{k_2} Q(f) \right) \right\|_{L^{r}} \\
\nonumber &\lesssim& \left\|\sum_{k_1,k_2 \in \z} \Delta_{k_1}S_{k_2}g\  S_{k_1} \Delta_{k_2}h   \ \widetilde \Delta_{k_1} Q(f)\right\|_{L^{r}}+ \left\|\sum_{k_1,k_2 \in \z} \Delta_{k_1}S_{k_2}g\  S_{k_1} \Delta_{k_2}h  \  \widetilde \Delta'_{k_1} Q(f)  \right\|_{L^{r}} \\
 &+&\left\|\sum_{k_1,k_2 \in \z} \Delta_{k_1}S_{k_2}g\  S_{k_1} \Delta_{k_2}h  \  \widetilde \Delta_{k_1}S_{k_2}Q(f) \right\|_{L^{r}}+ \left\|\sum_{k_1,k_2 \in \z} \Delta_{k_1}S_{k_2}g\  S_{k_1} \Delta_{k_2}h  \    \widetilde \Delta'_{k_1} S_{k_2} Q(f) \right\|_{L^{r}} \nonumber \\
 & & \label{q2dcp} \\
\nonumber &\lesssim& \left\| g \right\|_{L^{p_2}} \left\| h \right\|_{L^{p_3}} \left\| Q(f)\right\|_{L^{p_1}} \lesssim \left\| g \right\|_{L^{p_2}} \left\| h \right\|_{L^{p_3}} \left\|f \right\|_{L^{p_1}}
\end{eqnarray}
where the estimates for the last two terms in \eqref{q2dcp} are given by the classical trilinear bi-parameter boundedness, while the first two terms correspond to situation $III_1$ in Remark \ref{rmk1}, where in the second parameter we just have a bilinear multiplier.
%Now we consider the uniformly bounded constants $a_{j_1,j_2}$ and $b_{k_1,k_2}$, since the operators
%$$\sum_{k_1,k_2 \in \z} \Delta_{k_1}\Delta_{k_2}g\  \Delta_{k_1} S_{k_2}h \  S_{k_1}S_{k_2} f$$
%and $$\sum_{k_1,k_2 \in \z} b_{k_1,k_2} \Delta_{k_1}\Delta_{k_2}g\  \Delta_{k_1} S_{k_2}h \   S_{k_1}S_{k_2} f$$
%are the same type of trilinear multiplier, their have the same H\"older type $L^r$ estimate.  The same argument applies to, for example,
%$$\left(\sum_{j_1,j_2} \Delta_{j_1}\Delta_{j_2}\right)\quad \text{and} \quad \left(\sum_{j_1,j_2} a_{j_1,j_2} \Delta_{j_1} \Delta_{j_2}\right) f.$$

 Moreover, this approach should work for all of the  operators $T^1$, $T^2$, $T^3$ and $T^4$, since more $\psi$-type functions appear in $T^1$, $T^2$ and $T^3$. In fact, these four operators differ from each other in the decomposition for $g$ and $h$ part.  However, what they have in common is that for each parameter, there is at least one $\psi$-function or $\Delta$, and this is the key in our argument. Thus, the boundedness of \eqref{biop34} would follow, as long as the assumption in Remark \ref{rmk1} is true.

\section{Reduction of Theorem $\ref{bithp}$ }
\label{rtl}
In this section we give the idea to prove Theorem \ref{bithp}; the strategy  is to reduce the pseudo-differential operator to a localized version. From now on we will redefine the functions that were used in the previous sections: $\psi$, $\varphi$, and $\phi$.

First pick two sequences of smooth functions $(\varphi_n)_{n\in \z}$ , $(\varphi'_m)_{m\in \z}$ such that $\spt \varphi_n\subseteq [n-1,n+1]$ and $\spt \varphi'_m\subseteq [m-1,m+1]$ satisfying
 $$\sum_{n\in\z} \varphi_n(x_1)=1, \qquad \sum_{m\in\z} \varphi'_m(x_2)=1,\qquad \text{where}\;(x_1,x_2)\in \nn.$$
 Then we can decompose the operator $T_{ab}$ in $\eqref{bipdo}$ as $$T_{ab}=\sum_{n,m\in\z} T_{ab}^{n,m}$$ where
 $$T_{ab}^{n,m}(f,g,h)(x):=T_{ab}(f,g,h)(x)\varphi_n(x_1)\varphi_m'(x_2).$$

 Suppose we can prove the estimate
 \begin{equation}
 \label{lcdomp}
 \|T_{ab}^{n,m}(f,g,h)\|_r\lesssim \|f\tilde \chi_{R_{nm}}\|_{p_1} \|g\tilde \chi_{R_{nm}}\|_{p_2} \|h \tilde \chi_{R_{nm}}\|_{p_3},
 \end{equation}
where $R_{nm}=I_n\times J_m$, $I_n=[n,n+1]$, $J_m=[m,m+1]$ and $\tilde \chi_{R_{nm}}= \tilde \chi_{I_n}(x_1)\times  \tilde \chi_{J_m}(x_2) $ as defined in $\eqref{apc}$.

Then our main Theorem $\ref{bithp}$ can be proved by the following estimate
\begin{eqnarray*}
\|T_{ab}(f,g,h)\|_r &\lesssim & (\sum_{n,m\in\z}\|T_{ab}^{n,m}(f,g,h)\|_r^r)^{1/r} \\
&\lesssim& (\sum_{n,m\in\z}\|f\tilde \chi_{R_{nm}}\|_{p_1}^r \|g\tilde \chi_{R_{nm}}\|_{p_2}^r \|h \tilde \chi_{R_{nm}}\|_{p_3}^r)^{1/r} \\
&\lesssim& (\sum_{n,m\in\z} \|f\tilde \chi_{R_{nm}}\|_{p_1}^{p_1})^{1/p_1}(\sum_{n,m\in\z} \|g\tilde \chi_{R_{n,m}}\|_{p_2}^{p_2})^{1/p_2} (\sum_{n,m\in\z} \|h\tilde \chi_{R_{nm}}\|_{p_3}^{p_3})^{1/p_3} \\
&\lesssim& \|f\|_{p_1}\|g\|_{p_2}\|h\|_{p_3}.
\end{eqnarray*}
Thus, we only need to prove $\eqref{lcdomp}$.

\medskip

Consider that for a fixed $n_0,m_0\in \z$, we have
\begin{eqnarray*}
T_{ab}^{n_0,m_0}(f,g,h)(x)&=&\int_{\n^6}a(x,\xi,\eta,\zeta)\tilde \varphi_{n_0}(x_1)\tilde \varphi_{m_0}'(x_2)b(x,\eta,\zeta)\tilde \varphi_{n_0}(x_1) \tilde \varphi_{m_0}'(x_2) \\
& & \cdot \varphi_{n_0}(x_1) \varphi'_{m_0}(x_2) \hat f(\xi)\hat g(\eta)\hat h(\zeta)e^{2\pi i x(\xi+\eta+\zeta)}d\xi d\eta d\zeta,
\end{eqnarray*}
where $\tilde \varphi_{n_0},\tilde \varphi_{m_0}'$ are smooth functions supported on the intervals $[n_0-2,n_0+2]$, $[m_0-2,m_0+2]$, which equal 1 on the supports of $\varphi_{n_0},\varphi'_{m_0}$ respectively. Then we rewrite the symbols $a(x,\xi,\eta,\zeta)\tilde \varphi_{n_0}(x_1)\tilde \varphi_{m_0}'(x_2)$ and $b(x,\eta,\zeta)\tilde \varphi_{n_0}(x_1)\tilde \varphi_{m_0}'(x_2)$ by using Fourier series with respect to the $x$ variable

\begin{gather*}
  a(x,\xi,\eta,\zeta)\tilde \varphi_{n_0}(x_1)\tilde \varphi_{m_0}'(x_2)= \sum_{l_1,l_2\in\z}a_{l_1,l_2}(\xi,\eta,\zeta,\zeta)e^{2\pi i( x_1 l_1+x_2l_2)}\\
  b(x,\eta,\zeta)\tilde \varphi_{n_0}(x_1)\tilde \varphi_{m_0}'(x_2)= \sum_{l_1',l_2'\in\z}b_{l_1',l_2'}(\eta,\zeta)e^{2\pi i ( x_1 l_1'+x_2l_2')},
\end{gather*}
where
\begin{eqnarray*}
  a_{l_1,l_2}(\xi,\eta,\zeta)=\int_{\n^2} a(x,\xi,\eta,\zeta)\tilde \varphi_{n_0}(x_1)\tilde \varphi_{m_0}'(x_2) e^{-2\pi i (x_1l_1+x_2l_2)}dx,
\end{eqnarray*}
\begin{eqnarray*}
  b_{l'_1,l'_2}(\eta,\zeta)=\int_{\n^2} b(x,\eta,\zeta)\tilde \varphi_{n_0}(x_1)\tilde \varphi_{m_0}'(x_2) e^{-2\pi i(x_1l'_1+x_2l'_2)}dx.
\end{eqnarray*}
By taking advantage of conditions $\eqref{bisymbol}$ we have
\begin{gather*}
|\partial^{\alpha_1,\beta_1,\gamma_1}_{\xi_1,\eta_1,\zeta_1}\partial^{\alpha_2,\beta_2,\gamma_2}_{\xi_2,\eta_2,\zeta_2}a_{l_1,l_2}(\xi,\eta,\zeta)|\\
\lesssim {1\over (1+|(l_1,l_2)|)^M}{1\over (1+|\xi_1|+|\eta_1|+|\zeta_1|)^{\alpha_1+\beta_1+\gamma_1}} {1\over (1+|\xi_2|+|\eta_2|+|\zeta_2|)^{\alpha_2+\beta_2+\gamma_2}}\\
|\partial^{\beta_1,\gamma_1}_{\eta_1,\zeta_1} \partial^{\beta_2,\gamma_2}_{\eta_2,\zeta_2} b_{l_1',l_2'}(\eta,\zeta)|\lesssim {1\over (1+|(l_1',l_2')|)^M}{1\over (1+|\eta_1|+|\zeta_1|)^{\beta_1+\gamma_1}}{1\over (1+|\eta_2|+|\zeta_2|)^{\beta_2+\gamma_2}}
\end{gather*}
for a large number $M$ and all  indices $\alpha_1,\alpha_2,\beta_1,\beta_2,\gamma_1,\gamma_2$. Note that the decay in $l_1,l_2,l_1',l_2'$ allows one to take summation $T_{ab}^{n_0,m_0}=\sum_{l_1,l_2,l_1',l_2'}T_{ab}^{n_0,m_0,l_1,l_2,l_1',l_2'}$, where
\begin{gather*}
T_{ab}^{n_0,m_0,l_1,l_2,l_1',l_2'}(f,g,h)(x)= \\
(\int_{\n^6}a_{l_1,l_2}(\xi,\eta,\zeta)b_{l_1',l_2'}(\eta,\zeta) \hat f(\xi)\hat g(\eta)\hat h(\zeta)e^{2\pi i x(\xi+\eta+\zeta)}d\xi d\eta d\zeta)\varphi_{n_0}(x_1)\varphi'_{m_0}(x_2).
\end{gather*}
That means we only need to consider the case for $l_1,l_2,l_1',l_2'=0$. For simplicity, we denote it by
\begin{gather*}
T_{ab}^{n_0,m_0,0,0}(f,g,h)(x)= \\
(\int_{\n^6}a_{ 0}(\xi,\eta,\zeta)b_{ 0}(\eta,\zeta) \hat f(\xi)\hat g(\eta)\hat h(\zeta)e^{2\pi i x(\xi+\eta+\zeta)}d\xi d\eta d\zeta)\varphi_{n_0}(x_1) \varphi'_{m_0}(x_2),
\end{gather*}
where the symbols $a_0,b_0$ satisfy the following conditions
\begin{gather}
\nonumber
|\partial^{\alpha_1,\beta_1,\gamma_1}_{\xi_1,\eta_1,\zeta_1}\partial^{\alpha_2,\beta_2,\gamma_2}_{\xi_2,\eta_2,\zeta_2}a_0(\xi,\eta,\zeta)|\\
\nonumber \lesssim {1\over (1+|\xi_1|+|\eta_1|+|\zeta_1|)^{\alpha_1+\beta_1+\gamma_1}} {1\over (1+|\xi_2|+|\eta_2|+|\zeta_2|)^{\alpha_2+\beta_2+\gamma_2}}\\
\label{cdab0}
|\partial^{\beta_1,\gamma_1}_{\eta_1,\zeta_1} \partial^{\beta_2,\gamma_2}_{\eta_2,\zeta_2} b_{0}(\eta,\zeta)|\lesssim {1\over (1+|\eta_1|+|\zeta_1|)^{\beta_1+\gamma_1}}{1\over (1+|\eta_2|+|\zeta_2|)^{\beta_2+\gamma_2}}
\end{gather}
for all indices $\alpha_1,\alpha_2,\beta_1,\beta_2,\gamma_1,\gamma_2$.

By translation invariance, we only need to prove the following localized result for $n_0,m_0=0$.
\begin{theorem}
\label{localth}
For $1<p_1,p_2,p_3< \infty$,  and $1/p_1+1/p_2+1/p_3=1/r$ the operator
\begin{gather}
\nonumber T_{ab}^0:=T_{ab}^{0,0,0,0}(f,g,h)(x)=\\
(\int_{\n^6}a_0(\xi,\eta,\zeta)b_0(\eta,\zeta) \hat f(\xi)\hat g(\eta)\hat h(\zeta)e^{2\pi i x(\xi+\eta+\zeta)}d\xi d\eta d\zeta)\varphi_0(x_1)\varphi'_0 (x_2) \label{lcop}
\end{gather}
has the following boundedness property
\begin{equation*}
\|T_{ab}^{0}(f,g,h)\|_r\lesssim \|f\tilde \chi_{R_{00}}\|_{p_1}\|g\tilde \chi_{R_{00}}\|_{p_2}\|h\tilde \chi_{R_{00}}\|_{p_3},
\end{equation*}
 where $\varphi_0,\varphi'_0$ are  smooth functions supported within $I^0=[-1,1]$, $\tilde \chi_{R_{00}}(x)=\tilde \chi_{I^0}(x_1)\cdot \tilde \chi_{I^0}(x_2)$ and $a_0,b_0$ satisfy the conditions $\eqref{cdab0}$.

\medskip

\end{theorem}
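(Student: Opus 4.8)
The plan is to write the localized operator $T^0_{ab}$ as an absolutely convergent sum of \emph{localized} bi-parameter flag paraproducts --- discrete model operators of the type in Definition \ref{defop}, all of whose wave packets are adapted to dyadic intervals of length at most $1$ --- and then to bound each summand by invoking Theorem \ref{bithm}, the localized bi-parameter estimate of Theorem \ref{trilocalth}, and the multilinear version of the bi-parameter Coifman--Meyer theorem (Theorem \ref{bipcoifman}).

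First I would reduce the symbols, following the scheme of \S\ref{bithmpf}. Using the bi-parameter Littlewood--Paley partition of unity $\sum_{k_1,k_2}\widehat\psi(\eta_1/2^{k_1})\widehat\psi(\eta_2/2^{k_2})=1$ and splitting, in each parameter $i=1,2$, according to whether $|\eta_i|\sim|\zeta_i|$, $|\eta_i|\gg|\zeta_i|$ or $|\eta_i|\ll|\zeta_i|$, together with the analogous decomposition of $a_0(\xi,\eta,\zeta)$ in the $\xi$ variables against $\eta+\zeta$, and expanding the resulting smooth symbol blocks in Fourier series (exactly as in the passage from $m_1^{1,1}m_2$ to the operators $T^i_{d_1,d_2}$), one obtains
\begin{equation*}
T^0_{ab}(f,g,h)=\sum_{\vec n}c_{\vec n}\,\big(\widetilde T_{\vec n}(f,g,h)\big)\varphi_0(x_1)\varphi'_0(x_2),
\end{equation*}
where $\vec n$ runs over finitely many ``type'' indices together with modulation/translation parameters carrying rapid decay $|c_{\vec n}|\lesssim(1+|\vec n|)^{-M}$ (this decay coming from \eqref{cdab0}), and each $\widetilde T_{\vec n}$ is a discrete model operator built from $L^2$-normalized bumps. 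The essential point --- and the reason the dyadic intervals may be taken of length $\le1$ --- is the spatial cutoff $\varphi_0\otimes\varphi'_0$, supported in $[-1,1]^2$: a bump adapted to an interval $I$ with $|I|>1$ is effectively constant on $[-1,1]$, so in each parameter all frequency scales $2^k\lesssim1$ may be absorbed into a single low-frequency ($S_0$-type) factor. One then checks, parameter by parameter, that after this collapse every surviving $\widetilde T_{\vec n}$ is either of the flag-paraproduct form $T_1$ or $T_{1,k_0}$ of Definition \ref{defop} --- in particular each parameter carries at least one lacunary ($\Delta$-type) factor --- or of the classical Coifman--Meyer type.

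Next I would estimate the model operators. The pieces in which the two symbols decouple reduce, after H\"older's inequality with $1/p_2+1/p_3=1/s$, to the product of a single linear Littlewood--Paley bound (or of a bi-parameter bilinear Coifman--Meyer multiplier) with a localized trilinear bi-parameter multiplier, so that Theorem \ref{trilocalth} and the multilinear version of Theorem \ref{bipcoifman} give the bound $\lesssim\|f\tdd\|_{p_1}\|g\tdd\|_{p_2}\|h\tdd\|_{p_3}$. For the pieces in which the two symbols genuinely couple, summing the bumps over $\vec n$ reconstitutes a localized version of the flag multiplier \eqref{biop34}; by Theorem \ref{bithm} its $L^{p_1}\times L^{p_2}\times L^{p_3}\to L^r$ boundedness follows from the assumed H\"older estimates for multipliers of the tensor form $m'(\xi_1,\eta_1,\zeta_1)m''(\eta_2,\zeta_2)$ as in \eqref{assumption}, which is exactly the standing hypothesis of Theorem \ref{bithp}. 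Throughout, because every wave packet is adapted to an interval of length $\le1$, only the ``size'' and ``energy'' of wave packets at the unit scale enter, so the more delicate stopping-time arguments of \cite{muscalu2004bi,muscalu2004multi,muscaluflag} are not needed.

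The main obstacle I anticipate is upgrading the \emph{global} $L^r$ bound supplied by Theorem \ref{bithm} to the \emph{localized} estimate with the weights $\tdd$ for the genuinely coupled (flag) pieces, i.e.\ establishing a localized version of Theorem \ref{bithm}. For this one splits $f,g,h$ along the unit lattice, $f=\sum_{\vec m}f\mathbf{1}_{R_{\vec m}}$ and similarly for $g,h$; the diagonal contribution yields the main term, and the off-diagonal contributions decay like $(1+\dst(0,R_{\vec m}))^{-M}$ in each argument --- precisely because the wave packets live at the unit scale --- and hence sum (even when $r<1$, by the same H\"older-in-$\ell$ argument as in \S\ref{rtl}) to the claimed bound $\|f\tdd\|_{p_1}\|g\tdd\|_{p_2}\|h\tdd\|_{p_3}$. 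The remaining work is largely bookkeeping: organizing the finitely many frequency regions produced in the first step and verifying that each lands in one of the two manageable classes.
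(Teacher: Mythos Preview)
Your overall strategy matches the paper's: decompose $a_0 b_0$ into discrete model paraproducts with dyadic intervals of length $\le 1$, treat the Coifman--Meyer pieces by Theorem~\ref{trilocalth}, treat the genuinely coupled flag pieces by Theorem~\ref{bithm} (under the standing hypothesis on \eqref{assumption}), and localize via the unit lattice decomposition of $f,g,h$.

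Two points deserve correction.

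\textbf{Why $|I|\le 1$.} Your stated reason --- that the spatial cutoff $\varphi_0\otimes\varphi_0'$ makes bumps with $|I|>1$ ``effectively constant'' on $[-1,1]$ --- is not the mechanism the paper uses, and as written it conflates spatial and frequency localization. The actual reason is the symbol condition \eqref{cdab0}: because $a_0,b_0$ carry the factor $(1+|\cdot|)^{-|\alpha|}$ rather than $|\cdot|^{-|\alpha|}$, there is no singularity at the origin, so the Littlewood--Paley partition can be taken as $1=\widehat\varphi+\sum_{k\ge 0}\widehat\psi_k$, and only frequency scales $2^k\ge 1$ appear. This is what forces all dyadic intervals in the model operators $T_1,T_{l,k_0}^1$ of Lemma~\ref{lm} to satisfy $|I|,|J|\le 1$, and it also yields the ``$1+$'' in \eqref{cdmk0}.

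\textbf{The localization step has a genuine obstruction you have not addressed.} Your claim that off-diagonal lattice pieces decay ``precisely because the wave packets live at the unit scale'' is only half the story. In the paper's treatment of $T^{E_1,E_1',0}_{1,1}$ (the piece with $I,I'\subseteq 5I^0$), one must also sum over all dyadic $I\subseteq 5I^0$ of every length $2^{-i}$, $i\ge 0$. When $|n_1|>10$ the factor $(1+\mathrm{dist}(I_{n_1},I)/|I|)^{-M_1}$ gives decay in $i$ and the sum converges; but when $|n_1|\le 10$ this factor is $O(1)$ for all $i$, there is no decay in $i$ (nor can the $J$-factors help since $|I|\lesssim|J|$), and the direct estimate \emph{fails}. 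The paper's fix is to split according to $I\subseteq 5I^0$ versus $I\subseteq(5I^0)^c$, and in the former case with $|n_1|\le 10$ to \emph{undo} the paraproduct decomposition --- extracting decay factors $(1+|n_2|)^{-M'}(1+|n_3|)^{-M'}$ from the inner products $\langle g\chi_{I_{n_2}},\phi_J^1\rangle$, $\langle h\chi_{I_{n_3}},\phi_J^2\rangle$ via \eqref{bridge} --- and then invoke Theorem~\ref{bithm} (or Theorem~\ref{bithm2} for the mixed $E/G$ pieces) on the reconstituted operator. Your outline does not contain this splitting or this mechanism; without it the sum over $I$ for the diagonal lattice pieces diverges.
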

In short, the proof of Theorem $\ref{bithp}$ can be reduced to the above theorem, and in the next section we will show how to deal with the operator in $\eqref{lcop}$.

\section{Proof of Theorem $\ref{localth}$}
\label{rtp}

In this section we prove Theorem \ref{localth}. The first step is to use Fourier series as before and rewrite the operator $\eqref{lcop}$. Here we make use of the fact that the conditions $\eqref{cdab0}$ do not involve any singularity. That means that there is no problem when the variables $\xi,\eta,\zeta$ are close to zero. More precisely, we can modify the Littlewood-Paley decomposition as follows.

  Let $\varphi$ be a Schwartz function such that $\spt \hat \varphi \subseteq [-1,1]$ and $\hat\varphi(u)=1$ on $[-1/2,1/2]$, and let $\psi $ be the Schwartz function satisfying
  $$\hat \psi(u):=\hat \varphi(u/2)-\hat \varphi(u),$$ and let $$\widehat {\psi_k}(\cdot)=\widehat \psi (\cdot/2^k)\qquad \text{and} \qquad \widehat {\psi_{-1}}(\cdot)=\hat \varphi (\cdot).$$
  Note that

  $$1=\sum_{k\geq -1}\widehat{\psi_k}, \quad \text{where}\;\spt \hat\psi \subseteq [-2^{k+1},-2^{k-1}]\cup [2^{k-1},2^{k+1}]\;\text{for}\; k\geq 0 .$$
The key thing here is that one does not have to decompose the identity near $0$. Moreover, for any smooth function $\phi$ supported on a closed interval, we write $\tilde \phi$ to denote a smooth function that is supported on a slightly larger interval and equal to $1$ on the support of $\phi$. Actually, we will use $\phi$ to represent either a $\varphi$ function or a $\psi$ function. For simplicity, let us consider the single-parameter case first, i.e. temporarily assume $\xi,\eta,\zeta \in \n$. By expanding in Fourier series as before, it can be seen to be sufficient to replace the symbols $a_0(\xi,\eta,\zeta)$ and $b_0(\xi,\eta,\zeta)$ with
\begin{equation*}
\label{a0decomp}
a_0(\xi,\eta,\zeta)=\sum_{k\geq 0} \widehat{\phi^1_k}(\xi)\widehat{\phi^2_k}(\eta)\widehat{\phi^3_k}(\zeta)+  \hat \varphi(\xi)\hat \varphi(\eta)\hat \varphi(\zeta),
\end{equation*}
where at least one of the families $(\widehat{\phi^1_k}(\xi))_k$, $(\widehat{\phi^2_k}(\eta))_k$, and $(\widehat{\phi^3_k}(\zeta))_k$ is supported away from the origin. Similarly,
\begin{equation*}
\label{b0decomp}
b_0(\eta,\zeta)=\sum_{k\geq 0} \widehat{\phi^2_k}(\eta)\widehat{\phi^3_k}(\zeta)+ \hat \varphi(\eta)\hat \varphi(\zeta),
\end{equation*}
where at least one of the families $(\widehat{\phi^2_k}(\eta))_k$, $(\widehat{\phi^3_k}(\zeta))_k$ is supported away from the origin. Now we can replace the symbol $a_0(\xi,\eta,\zeta)b_0(\eta,\zeta)$ by
\begin{eqnarray}
\nonumber
& &a_0(\xi,\eta,\zeta)b_0(\eta,\zeta)\\
 &=& \nonumber
 \big(\sum_{{k_1}\geq 0} \widehat{\phi^1_{k_1}}(\xi)\widehat{\phi^2_{k_1}}(\eta)\widehat{\phi^3_{k_1}}(\zeta)+ \hat \varphi(\xi)\hat \varphi(\eta)\hat \varphi(\zeta)\big) \big(\sum_{{k_2}\geq 0} \widehat{\phi^2_{k_2}}(\eta)\widehat{\phi^3_{k_2}}(\zeta)+ \hat \varphi(\eta)\hat \varphi(\zeta)\big) \\
%\nonumber
%&\approx &(\sum_{k_1\geq 0} \widehat{\phi^1_{k_1}}(\xi)\widehat{\phi^2_{k_1}}(\eta)+ \hat \varphi(\xi)\hat \varphi(\eta))(\sum_{k_2\geq 0} \widehat{\phi^1_{k_2}}(\eta)\widehat{\phi^2_{k_2}}(\zeta)+ \hat \varphi(\eta)\hat \varphi(\zeta)) \\
\nonumber
&\approx&(\sum_{k_1\geq 0} \widehat{\phi^1_{k_1}}(\xi)\widehat{\phi^2_{k_1}}(\eta)\widehat{\phi^3_{k_1}}(\zeta)\sum_{k_2\geq 0} \widehat{\phi^1_{k_2}}(\eta)\widehat{\phi^2_{k_2}}(\zeta))+ (\sum_{k_1\geq 0} \widehat{\phi^1_{k_1}}(\xi)\widehat{\phi^2_{k_1}}(\eta)\widehat{\phi^3_{k_1}}(\zeta))\hat \varphi(\eta)\hat \varphi(\zeta) \\
\nonumber
& &\quad + (\sum_{k_2\geq 0}\widehat{\phi^1_{k_2}}(\eta)\widehat{\phi^2_{k_2}}(\zeta))\hat \varphi(\xi)\hat \varphi(\eta)\varphi(\zeta)+ \hat \varphi(\xi)\hat \varphi(\eta) \hat \varphi(\zeta) \hat \varphi(\eta)\hat \varphi(\zeta) \nonumber \\
& = &(\sum_{k_1\geq 0} \widehat{\phi^1_{k_1}}(\xi)\widehat{\phi^2_{k_1}}(\eta)\widehat{\phi^3_{k_1}}(\zeta)\sum_{k_2\ll k_1} \widehat{\phi^1_{k_2}}(\eta)\widehat{\phi^2_{k_2}}(\zeta)) \nonumber \\
& & + (\sum_{k_1\geq 0} \widehat{\phi^1_{k_1}}(\xi)\widehat{\phi^2_{k_1}}(\eta)\widehat{\phi^3_{k_1}}(\zeta) \sum_{k_2\gg k_1} \widehat{\phi^1_{k_2}}(\eta)\widehat{\phi^2_{k_2}}(\zeta))\nonumber \\
& &+ (\sum_{k_1\geq 0} \widehat{\phi^1_{k_1}}(\xi)\widehat{\phi^2_{k_1}}(\eta)\widehat{\phi^3_{k_1}}(\zeta)\sum_{k_2\simeq k_1} \widehat{\phi^1_{k_2}}(\eta)\widehat{\phi^2_{k_2}}(\zeta))\nonumber \\
& &+ (\sum_{k_1\geq 0} \widehat{\phi^1_{k_1}}(\xi)\widehat{\phi^2_{k_1}}(\eta)\widehat{\phi^3_{k_1}}(\zeta))\hat \varphi(\eta)\hat \varphi(\zeta) \nonumber\\
& & + (\sum_{k_2\geq 0}\widehat{\phi^1_{k_2}}(\eta)\widehat{\phi^2_{k_2}}(\zeta))\hat \varphi(\xi)\hat \varphi(\eta)\hat \varphi(\zeta)+ \hat \varphi(\xi)\hat \varphi(\eta)\hat \varphi(\eta)\varphi(\zeta)\hat \varphi(\zeta) \nonumber\\
\label{biparafinal}
&:=& (E+F+G+H+K+L)(\xi,\eta,\zeta),
\end{eqnarray}

First note that it is not possible that $k_2\gg k_1$, which implies that $F=0$, since at least one of $(\widehat{\phi^2_k}(\eta))_k$, $(\widehat{\phi^3_k}(\zeta))_k$ is supported
away from the origin.

 To take care of other terms, the essential idea here is to compare the sizes of the supports of $\xi,\eta,\zeta$, as we have done before.  Roughly speaking, one can consider the following two cases:
\begin{itemize}
\item Case I: When $\{|\xi|\leq c (|\eta|+|\zeta|)\}$ for some constant $c$, i.e. the terms $G,K,L$, such terms correspond to the following estimate of the symbol
\begin{equation*}
\label{case1}
  |\partial_\xi^\alpha \partial_\eta^\beta \partial_\zeta^\gamma a_0(\xi,\eta,\zeta) b_0(\eta,\zeta)| \lesssim \frac{1}{(|\xi|+|\eta|+|\gamma|)^{\alpha+\beta+\gamma}}.
\end{equation*}
 \item Case II: When $\{|\xi|\geq {1\over 2}c (|\eta|+|\zeta|)\}$, i.e. the terms  $E,G$ correspond to the operators in Definition $\ref{defop}$ - see \cite{muscaluflag,muscalu2007paraproducts} for more details.
\end{itemize}
  \vskip0.5cm
With the above argument, we can simplify $\eqref{biparafinal}$ to
\begin{equation*}
  E+F+G+H+K+L\approx E+G,
\end{equation*}
where we use $E$ to represent Case II, and $G$ to represent Case I.
\medskip

Now we come back to the bi-parameter case. By doing the decomposition as above in each parameter , i.e. $(\xi_1,\eta_1,\zeta_1)$ and $(\xi_2,\eta_2,\zeta_2)$, one should have four cases to estimate. More precisely, we can replace $a_0(\xi,\eta,\zeta)b_0(\eta,\zeta)$ by
\begin{equation*}
  a_0(\xi,\eta,\zeta)b_0(\eta,\zeta)\approx (E+G)(\xi_1,\eta_1,\zeta_1)(E'+G')(\xi_2,\eta_2,\zeta_2)
\end{equation*}
Correspondingly, the localized operator is changed to
\begin{eqnarray*}
\nonumber
 & &T_{ab}^{0}(f,g,h)(x)\\
 \nonumber &=&\big(\int_{\n^6}a_0(\xi,\eta,\zeta)b_0(\eta,\zeta)\hat f(\xi)\hat g(\eta)\hat h(\zeta)e^{2\pi i x(\xi+\eta+\zeta)}d\xi d\eta d\zeta\big)\varphi_0(x_1) \varphi_0'(x_2) \\ \nonumber
  &=&\big(\int_{\n^6}(E+G)(E'+G')\hat f(\xi)\hat g(\eta)\hat h(\zeta)e^{2\pi i x(\xi+\eta+\zeta)}d\xi d\eta d\zeta\big)\varphi_0(x)\varphi_0'(x_2)\nonumber\\
  \label{efgh}
  &:=&T^{E,E',0}_{ab}+T^{E,G',0}_{ab}+T^{G,E',0}_{ab} +T^{G,G',0}_{ab}.
\end{eqnarray*}
\vskip0.5cm
\subsection{ Estimates for $T^{G,G',0}_{ab}$}
\label{secLL}
~\\
First consider $T^{G,G',0}_{ab}$, recall
\begin{gather*}
T^{G,G',0}_{ab}(f,g,h)(x)\\
=\big(\int_{\n^6} m_{G,G'}^0(\xi,\eta,\zeta) \hat f(\xi)\hat g(\eta)\hat h(\zeta)e^{2\pi i x(\xi+\eta+\zeta)}d\xi d\eta d\zeta\big)\varphi_0(x_1)\varphi_0'(x_2),
\end{gather*}
where  $m_{G,G'}^0(\xi,\eta,\zeta):= G G'$ satisfies
\begin{gather*}
| \partial^{\alpha_1,\alpha_2}_{\xi_1,\xi_2} \partial^{\beta_1,\beta_2}_{\eta_1,\eta_2} \partial^{\gamma_1,\gamma_2}_{\zeta_1,\zeta_2} m_{G,G'}^0(\xi,\eta,\zeta)|\\
\lesssim {1\over (1+|\xi_1|+|\eta_1|+|\zeta_1|)^{\alpha_1+\beta_1+\gamma_1}}{1\over (1+|\xi_2|+|\eta_2|+|\zeta_2|)^{\alpha_2+\beta_2+\gamma_2}}
\end{gather*}
for sufficiently many indices $\alpha_1,\alpha_2,\beta_1,\beta_2,\gamma_1, \gamma_2 .$  Then our desired localized estimate
$$\|T^{G,G',0}_{ab}(f,g,h)\|_r\lesssim \|f\tilde \chi_{R_{00}}\|_{p_1}\|g\tilde \chi_{R_{00}}\|_{p_2} \|h\tilde \chi_{R_{00}}\|_{p_3}$$
follows from  the proof of Theorem \ref{trilocalth}, see \cite{muscalu2013classical,dai2013p}. \\

\subsection{ Estimates for $T^{E,E',0}_{ab}$}\label{secEE}
~\\

Recall \begin{eqnarray*}
  E\cdot E'&=&\bigg(\sum_{k_1} \widehat{\phi^1_{k_1}}(\xi_1)\widehat{\phi^2_{k_1}}(\eta_1)\widehat{\phi^3_{k_1}}(\zeta_1)\sum_{k_2\ll k_1} \widehat{\phi^1_{k_2}}(\eta_1)\widehat{\phi^2_{k_2}}(\zeta_1)\bigg) \\
  & &\quad \cdot \bigg(\sum_{k'_1} \widehat{\phi^1_{k'_1}}(\xi_2)\widehat{\phi^2_{k'_1}}(\eta_2)\widehat{\phi^3_{k'_1}}(\zeta_2)\sum_{k'_2\ll k'_1} \widehat{\phi^1_{k'_2}}(\eta_2)\widehat{\phi^2_{k'_2}}(\zeta_2)\bigg),
\end{eqnarray*}
where for each $l=k_1,k_2,k'_1,k'_2$, at least one of the families $(\widehat{\phi^1_{l}})_{l}$ and $(\widehat{\phi^2_{l}})_{l}$ is $\Psi$ $type$. And
\begin{eqnarray*}
T_{ab}^{E,E',0}(f,g,h)(x)=\bigg(\int_{\n^6} (E\cdot E') e^{2\pi i x(\xi+\eta+\zeta)}\hat f(\xi) \hat g(\eta) \hat h(\zeta) d\xi d\eta d\zeta\bigg) \varphi_0(x_1)\varphi_0'(x_2),
\end{eqnarray*}
where we have removed multipliers $a_0,b_0$ by using Fourier series as before.

\medskip

We now give two lemmas for single parameter operators when $x,\xi,\eta,\zeta \in \n$, which will be used later.
 \begin{gather}
 \nonumber
 T^E(f,g,h)(x)\cdot \varphi_0(x) =:T^{E,0}_{ab}(f,g,h)(x):=\\
\label{dfti}
 (\int_{\nnn} (\sum_{k_1}\widehat{\phi^1_{k_1}}(\xi)\widehat{\phi^2_{k_1}}(\eta)\widehat{\phi^3_{k_1}}(\zeta))( \sum_{k_2\ll k_1} \widehat{\phi^1_{k_2}}(\eta)\widehat{\phi^2_{k_2}}(\zeta)) \hat f(\xi)\hat g(\eta)\hat h(\zeta)e^{2\pi i x(\xi+\eta+\zeta)}d\xi d\eta d\zeta)\varphi_0(x),
\end{gather}
where $x,\xi,\eta,\zeta\in \n$. From \cite {muscalu2013classical,muscalu2007paraproducts}, we can show  $T^E$ can be decomposed into paraproducts.
Before we state this result, we introduce some notations which are needed in the statement of the result.

Then we introduce some notations that will appear in the next lemma.
\begin{itemize}
\item[(a)] We take $T_1(f,g,h)$ and $ B^1_I(g,h)$ from $\eqref{t1}$ and $\eqref{b1}$  in Definition $\ref{defop}$.
\item[(b)] For positive integers $l$ and $k_0\geq 100$, let
 \begin{gather}
\nonumber
T_{l,k_0}^1(f,g,h)=\sum_{I\in \mathcal{I}}{1\over |I|^{1\over 2}}\langle f,\phi_I^{l,1}\rangle \langle B^{1,l}_{I,k_0}(g,h),\phi_I^{l,2}\rangle \phi_I^{l,3} \\
\text{with} \qquad B^{1,l}_{I,k_0}(g,h)=\sum_{\substack{J\in \mathcal{J}\\ 2^{k_0}|\omega_J^3|\simeq |\omega^2_I|}}{1\over |J|^{1\over 2}} \langle g,\phi_J^{l,1}\rangle \langle h,\phi_J^{l,2}\rangle \phi_J^{l,3} \label{tl}.
\end{gather}

The functions $(\phi_I^{l,k})_{k=1,2,3}$ in $T_{l,k_0}^1(f,g,h)$ and the functions $(\phi_I^{k})_{k=1,2,3}$ in $\eqref{tk}$  are of the same ``types" (whether each one is lacunary  or  non-lacunary), and so are the functions $(\phi_J^{l,k})_{k=1,2,3}$ in \eqref{tl} and the functions $(\phi_J^{k})_{k=1,2,3}$ in$\eqref{bk}$. Note the only difference between them is the dependence on $l$. In fact, $\phi_I^{l,2}(x)$ could  be  $(\widehat \phi_I^2(\xi)\xi^l)^\vee$, but fortunately, this does not change the ``types" of those functions.  In this sense, $l$ will not play an important role in our estimates. For simplification, we omit this dependence on $l$ for all the expressions in the rest of the work.

\item[(c)] For a large positive integer $M$, let $T_{M,k_0}^1$ be a Fourier multiplier operator with  $m_{M,k_0}^1(\xi,\eta,\zeta)$ satisfying the condition
      \begin{equation}
      \label{cdmk0}
      |\partial^\alpha_\xi \partial^\beta_\eta \partial^\gamma_\zeta m_{M,k_0}^1(\xi,\eta,\zeta)|\lesssim (2^{k_0})^{\alpha+\beta+\gamma}{1\over (1+|\xi|+|\eta|+|\zeta|)^{\alpha+\beta+\gamma}}
      \end{equation}
for sufficiently many indices $\alpha,\beta,\gamma$.
\item[(d)] For $T_1$ and $T_{l,k_0}^1$ in $(a),(b)$, all the dyadic intervals   have lengths at most $1$ for all $k_0\geq 100, 1\leq l\leq M-1$.
\end{itemize}

With these notations, we are ready to state the following
\begin{lemma}
\label{lm}
Define $T^E$ as in $\eqref{dfti}$, then we can write
\begin{gather*}
T^E(f,g,h)(x)= \\
T_1(f,g,h)(x)+\sum_{l=1}^{M-1}\sum_{k_0=100}^{\infty} (2^{-k_0})^l T_{l,k_0}^1(f,g,h)(x)+\sum_{k_0=100}^{\infty}(2^{-k_0})^M T_{M,k_0}^1(f,g,h)(x).
\end{gather*}

\end{lemma}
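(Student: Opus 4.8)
The goal is to expand the single-parameter flag-type multiplier operator $T^E$ from \eqref{dfti} into the discrete model paraproducts $T_1$ and $T^1_{l,k_0}$, together with a harmless smooth remainder $T^1_{M,k_0}$. The starting point is the structural observation that in the symbol $\bigl(\sum_{k_1}\widehat{\phi^1_{k_1}}(\xi)\widehat{\phi^2_{k_1}}(\eta)\widehat{\phi^3_{k_1}}(\zeta)\bigr)\bigl(\sum_{k_2\ll k_1}\widehat{\phi^1_{k_2}}(\eta)\widehat{\phi^2_{k_2}}(\zeta)\bigr)$ the inner sum over $k_2\ll k_1$ produces a function of $(\eta,\zeta)$ which is essentially a smooth bump of frequency scale $2^{k_1}$ in the $\eta,\zeta$ variables (a ``$\Phi$-type'' factor at scale $k_1$), so that the whole symbol is, up to the constraint coming from the relative position of the $k_2$ and $k_1$ scales, a standard Coifman--Meyer paraproduct symbol in $(\xi,\eta,\zeta)$. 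First I would make the $k_2\ll k_1$ cutoff precise: writing $B^1$ for the inner bilinear piece $\sum_{k_2\ll k_1}\widehat{\phi^1_{k_2}}(\eta)\widehat{\phi^2_{k_2}}(\zeta)$, I split into the regime $k_1-k_2\geq k_0$ for each fixed $k_0\geq 100$ and the remaining ``comparable scales'' regime $100> k_1-k_2$ (or more precisely the boundary strip), absorbing the latter into the Coifman--Meyer-type remainder.

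\textbf{Key steps.} (1) Fix a Littlewood--Paley resolution adapted to the $\phi^j_k$ and discretize: for each dyadic scale $2^{k_1}$ write the $\xi$-factor, the $\eta$-factor and the $\zeta$-factor as sums over dyadic intervals $I$ of length $\sim 2^{-k_1}$ of $L^2$-normalized bumps, so that the outer sum becomes $\sum_{I\in\mathcal I}|I|^{-1/2}\langle f,\phi^1_I\rangle\langle B^1_I(g,h),\phi^2_I\rangle\phi^3_I$, recovering $T_1$ as in \eqref{t1}--\eqref{b1}, once one checks that the type constraints in Definition \ref{defop} are met — namely the middle family $(\phi^2_I)$ is non-lacunary while the other two are lacunary, which is exactly the content of ``at least one of $(\widehat{\phi^1_l})$, $(\widehat{\phi^2_l})$ is $\Psi$-type'' combined with the flag structure. (2) Keep track of the scale separation: decompose $1=\sum_{k_0\geq 100}$ according to $k_1-k_2\simeq k_0$ inside the inner sum $B^1$; the term $k_0\leq$ (some absolute constant) together with the region where the inner bump is not yet ``smooth enough'' gives a symbol satisfying \eqref{cdmk0} with $k_0$ bounded, hence is the genuinely smooth Coifman--Meyer piece. (3) For each $k_0\geq 100$, Taylor-expand the inner factor $B^1$ around the frequency origin of the outer interval $I$ up to order $M-1$: the $l$-th Taylor coefficient carries a factor $(2^{k_2-k_1})^l\simeq (2^{-k_0})^l$ and the modulated bumps $\phi^{l,2}_I=(\widehat{\phi^2_I}(\xi)\xi^l)^\vee$ have the same lacunary/non-lacunary type as $\phi^2_I$; this yields the $\sum_{l=1}^{M-1}(2^{-k_0})^lT^1_{l,k_0}$ sum. (4) The degree-$M$ Taylor remainder, after summing in $k_2$, produces a symbol obeying \eqref{cdmk0} (derivatives cost $2^{k_0}$ but one still has the overall frequency decay), giving $\sum_{k_0}(2^{-k_0})^MT^1_{M,k_0}$; here one must verify that multiplication by $\varphi_0(x)$ is compatible, i.e.\ the localization constrains all dyadic intervals to length at most $1$ as in item (d), which follows because on $\spt m$ one has the frequency lower bound $|\xi|+|\eta|+|\zeta|\gtrsim 1$ coming from the $\psi_k$, $k\geq 0$, part of the decomposition.

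\textbf{Main obstacle.} The delicate point is the bookkeeping of (b)/(c): one must show that after the Taylor expansion of the inner bilinear factor $B^1$ in the ``$k_1-k_2\simeq k_0$'' regime, the resulting model sums are genuinely of the form $T^1_{l,k_0}$ with the \emph{same} type pattern as $T_{1,k_0}$ (so that the known paraproduct estimates for such discrete models apply uniformly in $k_0$), and that the gain $(2^{-k_0})^l$ is real — i.e.\ that differentiating the modulation factors $\xi^l$ does not destroy the adaptedness of the bumps and that the constant in the bound on $\phi^{l,k}_I$ grows at most polynomially in $l$ and is independent of $k_0$. The remainder estimate \eqref{cdmk0}, with its $2^{k_0}$-loss per derivative exactly compensated by the $(2^{-k_0})^M$ prefactor once $M$ is chosen larger than the number of derivatives needed in Theorem \ref{trilocalth}, is the other point requiring care but is essentially a routine Taylor-remainder computation. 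I expect no serious analytic difficulty beyond this combinatorial/notational tracking, since the underlying decomposition is precisely the one carried out in \cite{muscalu2013classical,muscalu2007paraproducts} for flag paraproducts, adapted here to the localized setting where all dyadic intervals have length $\le 1$.
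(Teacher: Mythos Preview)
Your proposal is correct and follows essentially the same approach as the paper: both invoke the Taylor-expansion discretization of the flag paraproduct from \cite{muscalu2007paraproducts,muscalu2013classical}, splitting according to the scale gap $k_1-k_2\simeq k_0$, and both identify the two new points to check --- that all dyadic intervals have length at most $1$ and that the remainder symbol satisfies \eqref{cdmk0} with the ``$1+$'' in the denominator --- as consequences of the frequency lower bound $k_1,k_2\geq 0$ in \eqref{dfti}. The paper's own proof is in fact briefer than your outline, simply citing \cite{muscalu2007paraproducts} and noting that both additional statements follow from $k_1,k_2\geq 0$; one small clarification is that the length constraint $|I|\leq 1$ comes directly from this frequency restriction rather than from the spatial cutoff $\varphi_0$, though you do ultimately identify the correct reason.
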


\begin{proof}
 One can follow the work \cite{muscalu2007paraproducts} closely, where the Taylor expansions of proper functions are used to get such forms of paraproducts. The only two statements we need to show are that all the dyadic intervals there have lengths at most one and the decay number $1$ in the denominator from $\eqref{cdmk0}$. In fact both of them follow from the fact $k_1,k_2\geq 0$ in \eqref{dfti}.
\end{proof}
Some more remarks for $T^1_{l,k_0}$ and $T^1_{M,k_0}$ are given below.

\medskip
\begin{remark}~\\
\label{rmk}
 \begin{itemize}
 \item[(a)]$T^1_{l,k_0}$:   For each $k_0$, one can see $T^1_{l,k_0}$  and $T_1$ are defined in very similar forms, and that means $T^1_{l,k_0}$ can be treated in the same way as $T_1$, since what really matters in the proof is the forms of paraproducts. More precisely, the bound of $T^1_{l,k_0}$ is actually independent of $k_0$, and then the factor $2^{-k_0l}$ allows us to take the summation over $k_0$. Thus,  we will only deal with $T_1$ here, and one can easily get a similar argument for the $T^1_{l,k_0}$ part.
  \item[(b)]$T^1_{M,k_0}$: the condition $\eqref{cdmk0}$ actually guarantees the estimate $$\|T^1_{M,k_0}(f,g,h)\|_r \lesssim 2^{100k_0}\|f\|_{p_1}  \|g\|_{p_2}  \|h\|_{p_3}, $$
see \cite{muscalu2013classical}. By picking $M$ to be sufficiently large,  we are able to take the summation over $k_0$ for  $(2^{-k_0})^M T_{M,k_0}^1(f,g,h)(x)$.% Therefore we can just ignore $k_0$ in $\eqref{cdmk0}$. Similar arguments can be made for $T_2, T^2_{l,k_0},T^2_{M,k_0}$.  In fact, one can see later in the bi-parameter setting, similar results can be concluded.
\end{itemize}
\end{remark}

Now we return to the bi-parameter case. First consider $T^{E,E',0}_{ab}$. Combining the ideas and the proof in those two lemmas (see \cite{ dai2013p,muscalu2013classical,muscalu2007paraproducts} for details), one can check
\begin{gather*}
  T_{ab}^{E,E',0}(f,g,h)(x)\\
  \approx T_{1,1}^{E,E',0}(f,g,h)(x)+ T_{1,m^{\prime,1}_{M,k_0}}^{E,E',0}(f,g,h)(x)\\ +T_{m^1_{M,k_0},1}^{E,E',0}(f,g,h)(x)+T_{m^1_{M,k_0},m^{\prime,1}_{M,k_0}}^{E,E',0}(f,g,h)(x),
\end{gather*}
where the definitions of the four operators will be given below.

First let's see the simplest case, which is
\begin{gather*}
   T_{m^1_{M,k_0},m^{\prime,1}_{M,k_0}}^{E,E',0}(f,g,h)(x)=\\
   (\int_{\n^6} m_{M,k_0}^1(\xi_1,\eta_1,\zeta_1) m_{M,k_0}^{\prime,1}(\xi_2,\eta_2,\zeta_2) e^{2\pi i x(\xi+\eta+\zeta)}\hat f(\xi) \hat g(\eta) \hat h(\zeta) d\xi d\eta d\zeta) \varphi_0(x_1)\varphi_0'(x_2),
\end{gather*}
where the symbols $ m_{M,k_0}^1, m_{M,k_0}^{\prime,1}$ are defined as in Lemma $\ref{lm}$. In this case we can ignore $k_0$ as discussed in Remark $\ref{rmk}$ and clearly the desired estimate follows from Theorem $\ref{trilocalth}$.

Then we consider $T_{1,1}^{E,E',0}(f,g,h)$.
\begin{gather*}
T_{1,1}^{E,E',0}(f,g,h):= \\
(\sum_{\substack{I\in \mathcal{I}\\ I'\in \mathcal{I'}}}{1\over |I|^{1\over 2}} {1\over |I'|^{1\over 2}}\langle f,\phi_I^1\otimes \phi_{I'}^1 \rangle \langle B^1_{I,I'}(g,h),\phi_I^2\otimes \phi_{I'}^2 \rangle \phi_I^3 \otimes \phi_{I'}^3) \varphi_0(x_1)\varphi_0'(x_2)
\end{gather*}
with
\begin{gather*}
 B^1_{I,I'}(g,h)=\sum_{\substack{J\in \mathcal{J}, J\in \mathcal{J'}\\ |\omega_J^3|\leq |\omega^2_I|\\|\omega_{J'}^3|\leq |\omega^2_{I'}|}}{1\over |J|^{1\over 2}} {1\over |J'|^{1\over 2}} \langle g,\phi_J^1 \otimes \phi_{J'}^1\rangle \langle h,\phi_J^2\otimes \phi_{J'}^2\rangle \phi_J^3\otimes \phi_{J'}^3,
\end{gather*}
where the families $(\phi^j_I)_{I\in \mathcal{I}}, (\phi^j_{I'})_{I'\in \mathcal{I'}}\,(j=1,2,3)$ are defined as $(\phi^j_I)_{I\in \mathcal{I}}$ , and  the families $(\phi^j_J)_{I\in \mathcal{J}}, (\phi^j_{J'})_{J'\in \mathcal{J'}}$ are defined as $(\phi^j_J)_{J\in \mathcal{J}}$ in Definition $\ref{defop}$.
 Taking advantage of that $|I|,|I'|\leq 1$, we can split
 \begin{eqnarray}
 \nonumber
 & &T_{1,1}^{E,E',0}(f,g,h)=\{(\sum_{\substack{I\subseteq 5I^0 \\ I'\subseteq 5I^0 }}+ \sum_{\substack{I\subseteq (5I^0)^c \\ I'\subseteq 5I^0 }}+ \sum_{\substack{I\subseteq 5I^0 \\ I'\subseteq (5I^0)^c }} + \sum_{\substack{I\subseteq (5I^0)^c \\ I'\subseteq (5I^0)^c }}) \\
 & & \qquad \qquad {1\over |I|^{1\over 2}} {1\over |I'|^{1\over 2}}\langle f,\phi_I^1\otimes \phi_{I'}^1 \rangle \langle B^1_{I,I'}(g,h),\phi_I^2\otimes \phi_{I'}^2 \rangle \phi_I^3 \otimes \phi_{I'}^3 \}\varphi_0(x_1)\varphi_0'(x_2)\nonumber \\
  \label{mainctr}
 &=& T_{1,1}^{E_1,E_1',0}(f,g,h)+T_{1,1}^{E_2,E_1',0}(f,g,h)+T_{1,1}^{E_1,E_2',0}(f,g,h)+T_{1,1}^{E_2,E_2',0}(f,g,h).
 \end{eqnarray}

We start with $T_{1,1}^{E_1,E_1',0}(f,g,h)$.  Consider the following decompositions
\begin{gather}
\nonumber f(x)=\sum_{n_1,n_1'\in \z}f \chi_{I_{n_1}}(x_1)\chi_{I_{n_1'}}(x_2), \ \ \ g(x)=\sum_{n_2,n_2'\in \z} g \chi_{I_{n_2}}(x_1) \chi_{I_{n_2'}}(x_2), \\
\label{fghdecomp}  h(x)=\sum_{n_3,n_3'\in \z}h \chi_{I_{n_3}}(x_1) \chi_{I_{n_3'}}(x_2).
\end{gather}

 Then we can write
\begin{gather*}
T_{1,1}^{E_1,E_1',0}(f,g,h)(x)=\\
\sum_{n_1,n_1'}\sum_{n_2,n_2'}\sum_{n_3,n_3'}T_{1,1}^{E_1,E_1',0}(f\cdot \chi_{I_{n_1}}\otimes\chi_{I_{n_1'}}, g \cdot \chi_{I_{n_2}}\otimes \chi_{I_{n_2'}}, h \cdot \chi_{I_{n_3}}\otimes \chi_{I_{n_3'}})(x).
\end{gather*}

\medskip

When $|n_1|,|n_1'|> 10$, we write
\begin{eqnarray*}
 & &\|T_{1,1}^{E_1,E_1',0}(f\cdot \chi_{I_{n_1}}\otimes \chi_{I_{n'_1}}, g \cdot \chi_{I_{n_2}}\otimes \chi_{I_{n'_2}}, h \cdot\chi_{I_{n_3}}\otimes \chi_{I_{n'_3}})(x)\|_r \\
 &=&\|\sum_{I\in \mathcal{I},I'\in \mathcal{I'}}\sum_{\substack{J\in \mathcal{J},J'\in \mathcal{J'}\\|\omega^3_J|\leq |\omega^2_I|\\
 |\omega^3_{J'}|\leq |\omega^2_{I'}|}} \frac{1}{|I|^{1\over 2}}\frac{1}{|J|^{1\over 2}}\frac{1}{|I'|^{1\over 2}}\frac{1}{|J'|^{1\over 2}} \\
 & & \cdot \langle f \chi_{I_{n_1}}\otimes \chi_{I_{n'_1}},\phi_I^1\otimes \phi_{I'}^1 \rangle \langle g \chi_{I_{n_2}}\otimes \chi_{I_{n'_2}},\phi_J^1\otimes \phi_{J'}^1\rangle  \langle h  \chi_{I_{n_3}}\otimes\chi_{I_{n'_3}},\phi_J^2\otimes \phi_{J'}^2  \rangle \\
   & & \cdot \langle \phi^2_I\otimes \phi^2_{I'},\phi^3_J\otimes \phi^3_{J'}\rangle\phi^3_I(x_1)\phi^3_{I'}(x_2)\varphi_0(x_1)\varphi'_0(x_2)\|_r.
\end{eqnarray*}
Then we use H\"older's inequality to get
\begin{eqnarray}
\nonumber
 & & \| \frac{1}{|I|^{1\over 2}}\frac{1}{|J|^{1\over 2}}\frac{1}{|I'|^{1\over 2}}\frac{1}{|J'|^{1\over 2}}  \langle f \chi_{I_{n_1}}\otimes \chi_{I_{n'_1}},\phi_I^1\otimes \phi_{I'}^1 \rangle \langle g \chi_{I_{n_2}}\otimes \chi_{I_{n'_2}},\phi_J^1\otimes \phi_{J'}^1\rangle \\ \nonumber
  & &\cdot\langle h  \chi_{I_{n_3}}\otimes\chi_{I_{n'_3}},\phi_J^2\otimes \phi_{J'}^2  \rangle \langle \phi^2_I\otimes \phi^2_{I'},\phi^3_J\otimes \phi^3_{J'}\rangle\phi^3_I(x_1)\phi^3_{I'}(x_2)\varphi_0(x_1)\varphi'_0(x_2)\|_r  \nonumber\\
 &\lesssim& \frac{1}{|I|^2}\frac{1}{|J|^2}\frac{1}{|I'|^2}\frac{1}{|J'|^2} \nonumber \\
 & & \cdot  (1+\frac{\dst(I_{n_1},I)}{|I|})^{-M_1} (1+\frac{\dst(I_{n'_1},I')}{|I'|})^{-M'_1}  (\|f\chi_{I_{n_1}}\otimes\chi_{I_{n'_1}}\|_{p_1} (|I||I'|)^{p_1-1\over p_1}) \nonumber  \\ \nonumber
 & & \cdot(1+\frac{\dst(I_{n_2},J)}{|J|})^{-N_1} (1+\frac{\dst(I_{n'_2},J')}{|J'|})^{-N'_1}
  (\|g\chi_{I_{n_2}}\otimes \chi_{I'_{n_2}}\|_{p_2}(|J||J'|)^{p_2-1\over p_2}) \\ \nonumber
 & &\cdot (1+\frac{\dst(I_{n_3},J)}{|J|})^{-N_2} (1+\frac{\dst(I_{n'_3},J')}{|J'|})^{-N'_2}(\|h\chi_{I_{n_3}}\otimes \chi_{I_{n'_3}}\|_{p_3}(|J||J'|)^{p_3-1\over p_3})\\\nonumber
 & & \cdot (|I||I'|)^{1\over r}  \int_{\nn} (1+\frac{\dst (x_1,I)}{|I|})^{-M_2} (1+\frac{\dst(x_1,J)}{|J|})^{-N_3}  \nonumber \\
 & &\qquad \cdot(1+\frac{\dst (x_2,I')}{|I'|})^{-M'_2} (1+\frac{\dst(x_2,J')}{|J'|})^{-N'_3} dx  \nonumber\\ \nonumber
 &\lesssim & {1\over |I||I'|}({|I||I'|\over |J||J'|})^{{1\over p_2}+{1\over p_3}} (1+\frac{\dst(I_{n_1},I)}{|I|})^{-M_1} (1+\frac{\dst(I_{n'_1},I')}{|I'|})^{-M'_1} \\ \nonumber
 & & \cdot (1+\frac{\dst(I_{n_2},J)}{|J|})^{-N_1} (1+\frac{\dst(I_{n'_2},J')}{|J'|})^{-N'_1}  \\ \nonumber
   & & \cdot (1+\frac{\dst(I_{n_3},J)}{|J|})^{-N_2} (1+\frac{\dst(I_{n'_3},J')}{|J'|})^{-N'_2} \\ \nonumber
  & &\cdot \int_{\nn} (1+\frac{\dst (x_1,I)}{|I|})^{-M_2} (1+\frac{\dst(x_1,J)}{|J|})^{-N_3}  \\
  & & \cdot (1+\frac{\dst (x_2,I')}{|I'|})^{-M'_2} (1+\frac{\dst(x_2,J')}{|J'|})^{-N'_3} dx \nonumber  \\
 & & \cdot    \|f  \chi_{I_{n_1}}\otimes \chi_{I_{n'_1}}\|_{p_1} \|g \chi_{I_{n_2}}\otimes \chi_{I_{n'_2}} \|_{p_2} \|h \chi_{I_{n_3}}\otimes\chi_{I_{n'_3}} \|_{p_3},  \label{dcest}
\end{eqnarray}
where $M_j,M'_j,N_j,N'_j$ are sufficiently large integers and $\phi_I^j,\phi_J^j,\phi_{I'}^j,\phi_{J'}^j$ are $L^2$-normalized bump functions adapted to $I,I',J,J'$ for $j=1,2,3$.

 Then  we use the fact that $|\omega^3_J|\leq |\omega^2_I|, |\omega^3_{J'}|\leq |\omega^2_{I'}|$, which implies $|I|\lesssim |J|, |I'|\lesssim |J'|$ and take advantage of the locations of dyadic intervals $J$ as well.  Using the notation $J_m=[m,m+1],m\in\z$ and $\eqref{dcest}$ we can get for $0<r<1$ ($r>1$ will be similar, and from now on we always assume $0<r<1$)
\begin{eqnarray*}
  & &\|T_{1,1}^{E_1,E_1',0}(f\cdot \chi_{I_{n_1}}\otimes \chi_{I_{n'_1}}, g \cdot \chi_{I_{n_2}}\otimes \chi_{I_{n'_2}}, h \cdot\chi_{I_{n_3}}\otimes \chi_{I_{n'_3}})(x)\|_r^r \\
  &\lesssim& \sum_{\substack{i,i'\geq 0\\j,j'\geq 0}}\sum_{\substack{I,I'\subseteq 5I^0\\ |I|=2^{-i}\\ |I'|=2^{-i'}}}   \sum_{m,m'\in \z} \sum_{\substack{J\subseteq J_m,|J|=2^{-j}\\J'\subseteq J_{m'},|J'|=2^{-j'}}}  \big({1\over |I||I'|}(1+\frac{\dst(I_{n_1},I)}{|I|})^{-M_1} (1+\frac{\dst(I_{n'_1},I)}{|I|})^{-M'_1} \\
  & &(1+\frac{\dst(I_{n_2},J)}{|J|})^{-N_1}(1+\frac{\dst(I_{n'_2},J')}{|J'|})^{-N'_1}\\
  & & \cdot (1+\frac{\dst(I_{n_3},J)}{|J|})^{-N_2}(1+\frac{\dst(I_{n'_3},J')}{|J'|})^{-N'_2} \\
 & & \cdot  \int_{\nn} (1+\frac{\dst(x_1,I)}{|I|})^{-M_2}(1+\frac{\dst(x_2,I')}{|I'|})^{-M'_2} \\
 & & \cdot  (1+\frac{\dst(x_1,J)}{|J|})^{-N_3}(1+\frac{\dst(x_2,J')}{|J'|})^{-N'_3} dx \\
   & & \cdot \|f  \chi_{I_{n_1}}\otimes \chi_{I_{n'_1}}\|_{p_1} \|g \chi_{I_{n_2}}\otimes \chi_{I_{n'_2}} \|_{p_2} \|h \chi_{I_{n_3}}\otimes\chi_{I_{n'_3}} \|_{p_3}\big)^r \\
 &\lesssim &  \sum_{\substack{i,i'\geq 0\\j,j'\geq 0}}\sum_{\substack{I,I'\subseteq 5I_0\\ |I|=2^{-i}\\ |I'|=2^{-i'}}}   \sum_{m,m'\in \z} \sum_{\substack{J\subseteq J_m,|J|=2^{-j}\\J'\subseteq J_{m'},|J'|=2^{-j'}}}    (2^{i+i'} (1+2^i(|n_1|-6))^{-M_1}(1+2^{i'}(|n'_1|-6))^{-M'_1}\\
 & &\cdot  (1+2^j|m-n_2|)^{-N_1}  (1+2^{j'}|m'-n'_2|)^{-N'_1} (1+2^j|m-n_3|)^{-N_2}\\
 & &\cdot   (1+2^{j'}|m'-n'_3|)^{-N'_2}  (1+|m|)^{-N_0} (1+|m'|)^{-N'_0}  \\
 & & \cdot \|f  \chi_{I_{n_1}}\otimes \chi_{I_{n'_1}}\|_{p_1} \|g \chi_{I_{n_2}}\otimes \chi_{I_{n'_2}} \|_{p_2} \|h \chi_{I_{n_3}}\otimes\chi_{I_{n'_3}} \|_{p_3})^r \\
&\lesssim &    \sum_{m,m'\in \z}  ( (|n_1|-6)^{-{M_1\over 2}}(|n'_1|-6)^{-{M'_1\over 2}} (1+|n_2|)^{-L} (1+|n'_2|)^{-L} \\
& & \cdot (1+|n_3|)^{-L} (1+|n'_3|)^{-L}  (1+|m|)^{-{N_0\over 2}}(1+|m'|)^{-{N'_0\over 2}}\\
 & & \cdot \|f  \chi_{I_{n_1}}\otimes \chi_{I_{n'_1}}\|_{p_1} \|g \chi_{I_{n_2}}\otimes \chi_{I_{n'_2}} \|_{p_2} \|h \chi_{I_{n_3}}\otimes\chi_{I_{n'_3}} \|_{p_3})^r
\end{eqnarray*}
where the positive integers $N_0=\min\{M_2,N_3\}$, $N'_0=\min\{M'_2,N'_3\},\, L$ are sufficiently large and the last inequality holds since for any $l,m\in \z$ and any large integer $M$, there exists a large integer $M'$ such that
\begin{eqnarray}
\label{bridge}
 (1+|l-m|)^{-M}(1+|m|)^{-{N_0\over 4}}\lesssim (1+|l|)^{-M'}.
\end{eqnarray}
And also note that  in the above calculation  we can take summation over $i,j,i',j'$ because when $|n_1-6|,|n'_1-6|>0$ the power $M_1,M'_1$ can give a decay for $i,i'$, and consequently a decay for $j,j'$ as well since  $i\gtrsim j\geq0,\, i'\gtrsim j'\geq 0$.
\medskip

Now we take the summation
\begin{eqnarray*}
 & &\|\sum_{\substack{|n_1|>10\\|n'_1|> 10}}\sum_{n_2,n_3,n'_2,n'_3 \in \z }T_{1,1}^{E_1,E_1',0}(f\cdot \chi_{I_{n_1}}\otimes \chi_{I_{n'_1}}, g \cdot \chi_{I_{n_2}}\otimes \chi_{I_{n'_2}}, h \cdot\chi_{I_{n_3}}\otimes \chi_{I_{n'_3}})(x)\|_r^r \\
 &\lesssim& \sum_{|n_1|>10,|n'_1|> 10}\sum_{n_2,n_3,n'_2,n'_3 \in \z } ( (|n_1|-6)^{-{M_1\over 2}}(|n'_1|-6)^{-{M'_1\over 2}} (1+|n_2|)^{-L} (1+|n'_2|)^{-L} \\
 & &\cdot (1+|n_3|)^{-L} (1+|n'_3|)^{-L}  \|f  \chi_{I_{n_1}}\otimes \chi_{I_{n'_1}}\|_{p_1} \|g \chi_{I_{n_2}}\otimes \chi_{I_{n'_2}} \|_{p_2} \|h \chi_{I_{n_3}}\otimes\chi_{I_{n'_3}} \|_{p_3})^r\\
 &\lesssim& \sum_{|n_1|>10,|n'_1|> 10}\sum_{n_2,n_3,n'_2,n'_3 \in \z } ((|n_1|-6)^{-{M_1\over 4}} (|n'_1|-6)^{-{M'_1\over 4}} (1+|n_2|)^{-{L\over 2}} (1+|n'_2|)^{-{L\over 2}} \\
 & & \cdot(1+|n_3|)^{-{L\over 2}} (1+|n'_3|)^{-{L\over 2}}   \|f\tdd\|_{p_1} \|g\tdd\|_{p_2} \|h\tdd\|_{p_3})^r \\
 &\lesssim &  (\|f\tdd\|_{p_1} \|g\tdd\|_{p_2} \|h\tdd\|_{p_3})^r,
\end{eqnarray*}
where we use the fact for any $n\in \z$ and large integer $L$, there holds
$$(1+|n|)^{-{L\over 2}} \cdot \chi_{I_{n}}\lesssim \td .$$

\medskip

 When $|n_1|\leq 10$ or $|n'_1|\leq 10$  things are different.  Say $|n_1|\leq 10$, in this situation, the terms like $(1+\frac{\dst(I_{n_1},I)}{|I|})^{-M_1} $ in $\eqref{dcest}$ won't give us a decay factor on $i$, which means we will have trouble  taking the summation over dyadic intervals $I$. Actually the decay factors from other terms are with respect to $j$ which can't help since $i\gtrsim j$. And the same problem exists for $i',j'$ as well. This is actually where such paraproducts behave differently from the classical ones. In the classical case only one class of dyadic intervals is involved, but here we have $I$ and $J$, $I'$ and $J'$, where the decay factors coming from either class might not be used for the other one.   We will make use of Theorem $\ref{bithm}$ here. Without loss of generality, we assume both $|n_1|\leq 10$ and $|n'_1|\leq 10$. Now the goal is

\begin{eqnarray}
 \nonumber
 & &\|\sum_{\substack{|n_1|,|n'_1|\leq 10\\n_2,n'_2,n_3,n'_3\in \z}}T_{1,1}^{E_1,E_1',0}(f\cdot \chi_{I_{n_1}}\otimes \chi_{I_{n'_1}}, g \cdot \chi_{I_{n_2}}\otimes \chi_{I_{n'_2}}, h \cdot\chi_{I_{n_3}}\otimes \chi_{I_{n'_3}})(x) \|_r  \\ \label{spc}
 & \lesssim  &\|f\tdd\|_{p_1} \|g\tdd\|_{p_2} \|h\tdd\|_{p_3}.
\end{eqnarray}

 Recall that when $I,I'\subseteq 5I^0$ and $J\in J_m, J'\in J_{m'}$,   in $\eqref{dcest}$ we can write
 $$\langle \phi^2_I\otimes \phi^2_{I'},\phi^3_J\otimes \phi^3_{J'}\rangle\approx (1+|m|)^{-L} (1+|m'|)^{-L}\langle \phi^2_I\otimes \phi^2_{I'},\tilde \phi^3_J\otimes \tilde \phi^3_{J'}\rangle,$$

 $$\langle g  \chi_{I_{n_2}}\otimes\chi_{I_{n'_2}},\phi_J^1\otimes \phi_{J'}^1  \rangle \approx (1+|n_2-m|)^{-M} (1+|n'_2-m'|)^{-M}\langle g \chi_{I_{n_2}}\otimes\chi_{I_{n'_2}},\tilde \phi_J^1\otimes \tilde \phi_{J'}^1 \rangle, $$

  $$\langle h  \chi_{I_{n_3}}\otimes\chi_{I_{n'_3}},\phi_J^2\otimes \phi_{J'}^2  \rangle \approx (1+|n_3-m|)^{-M} (1+|n'_3-m'|)^{-M}\langle h  \chi_{I_{n_3}}\otimes\chi_{I_{n'_3}},\tilde \phi_J^2\otimes \tilde \phi_{J'}^2 \rangle, $$
where $\tilde \phi_J^l, \tilde \phi_{J'}^l$ are properly chosen bump functions adapted to $J,J'$ that  have the same type as  $ \phi_J^l,\phi_{J'}^l$ ($l=1,2,3$) respectively, and $L,M$ are sufficiently large integers. Also, by $\eqref{bridge}$
  $$ (1+|n_2-m|)^{-M} (1+|n'_2-m'|)^{-M} (1+|m|)^{-L/2} (1+|m'|)^{-L/2} \approx (1+|n_2|)^{-M'}\cdot (1+|n'_2|)^{-M'},$$
 $$ (1+|n_3-m|)^{-M} (1+|n'_3-m'|)^{-M} (1+|m|)^{-L/2} (1+|m'|)^{-L/2} \approx (1+|n_3|)^{-M'}\cdot (1+|n'_3|)^{-M'},$$
 where $M'$ can be  sufficiently large.

 That means when dealing with the paraproducts, we can go back to the original form of operators in Theorem $\ref{bithm}$, with additional decay factors $(1+|n_2|)^{-M'}(1+|n'_2|)^{-M'} \cdot (1+|n_3|)^{-M'}(1+|n'_3|)^{-M'}$. Thus,

 \begin{eqnarray*}
 \nonumber
 & &\|\sum_{\substack{|n_1|,|n'_1|\leq 10\\n_2,n'_2,n_3,n'_3\in \z}} T_{1,1}^{E_1,E_1',0}(f\cdot \chi_{I_{n_1}}\otimes \chi_{I_{n'_1}}, g \cdot \chi_{I_{n_2}}\otimes \chi_{I_{n'_2}}, h \cdot\chi_{I_{n_3}}\otimes \chi_{I_{n'_3}})(x) \|_r  \\
 &\lesssim & \sum_{\substack{|n_1|,|n'_1|\leq 10\\n_2,n'_2,n_3,n'_3\in \z}} (1+|n_2|)^{-M'}(1+|n'_2|)^{-M'} \cdot (1+|n_3|)^{-M'}(1+|n'_3|)^{-M'} \\
 & & \cdot \| f\cdot \chi_{I_{n_1}}\otimes \chi_{I_{n'_1}}\|_{p_1} \| g \cdot \chi_{I_{n_2}}\otimes \chi_{I_{n'_2}}\|_{p_2} \| h\cdot \chi_{I_{n_3}}\otimes \chi_{I_{n'_3}}\|_{p_3}\\
 & \lesssim  &\|f\tdd\|_{p_1} \|g\tdd\|_{p_2} \|h\tdd\|_{p_3}.
\end{eqnarray*}

For the cases $n_1\leq 10, \, n'_1>10$ or $n_1>10, \, n'_1\leq 10$, one just needs to combine the ideas in the above two situations together and use Theorem $\ref{bithm}$. We omit the details and the case $T^{E_1,E'_1,0}_{ab}$ has been done, where $I,I'\subseteq 5I^0$.

\medskip
Now we turn to the study of the operators $T^{E_2,E_2',0}_{1,1}$.
 \begin{eqnarray*}
& &\|T^{E_2,E_2',0}_{1,1}(f,g,h)(x)\|_r^r \\
& =&\|\sum_{\substack{I\subseteq (5I^0)^c \\ I'\subseteq (5I'^0)^c }} {1\over |I|^{1\over 2}} {1\over |I'|^{1\over 2}}\langle f,\phi_I^1\otimes \phi_{I'}^1 \rangle \langle B^1_{I,I'}(g,h),\phi_I^2\otimes \phi_{I'}^2 \rangle \phi_I^3 \otimes \phi_{I'}^3 \varphi_0(x_1)\varphi_0'(x_2)\|_r^r\\
 &=&\|\sum_{\substack{I\in \mathcal{I}\\I'\in \mathcal{I'}}}\sum_{\substack{J\in \mathcal{J},J'\in \mathcal{J'}\\|\omega^3_J|\leq |\omega^2_I|\\|\omega^3_{J'}|\leq |\omega^2_{I'}|}} \frac{1}{|I|^{1\over 2}}\frac{1}{|J|^{1\over 2}}\frac{1}{|I'|^{1\over 2}}\frac{1}{|J'|^{1\over 2}}  \langle f,\phi_I^1\otimes \phi_{I'}^1 \rangle \langle g ,\phi_J^1\otimes \phi_{J'}^1\rangle \\
  & &\ \cdot\langle h  ,\phi_J^2\otimes \phi_{J'}^2  \rangle \langle \phi^2_I\otimes \phi^2_{I'},\phi^3_J\otimes \phi^3_{J'}\rangle\phi^3_I(x_1)\phi^3_{I'}(x_2)\varphi_0(x_1)\varphi'_0(x_2)\|_r^r\\
 &\lesssim& \sum_{\substack{|n|,|n'|\geq 5\\m,m'\in\z}}  \sum_{\substack{I\subseteq I_n\\I'\subseteq I_{n'}}} \sum_{\substack{J\subseteq J_m\\|\omega^3_J|\leq |\omega^2_I|}}  \sum_{\substack{J'\subseteq J_{m'}\\|\omega^3_{J'}|\leq |\omega^2_{I'}|}} \|\frac{1}{|I|^{1\over 2}}\frac{1}{|J|^{1\over 2}}\frac{1}{|I'|^{1\over 2}}\frac{1}{|J'|^{1\over 2}}  \langle f ,\phi_I^1\otimes \phi_{I'}^1 \rangle \\
  & &\ \langle g ,  \phi_J^1\otimes \phi_{J'}^1\rangle \langle h ,\phi_J^2\otimes \phi_{J'}^2  \rangle \langle \phi^2_I\otimes \phi^2_{I'},\phi^3_J\otimes \phi^3_{J'}\rangle\phi^3_I(x_1)\phi^3_{I'}(x_2)\varphi_0(x_1)\varphi'_0(x_2)\|_r^r .
\end{eqnarray*}
We  use H\"older's inequality and take advantage of the decay factors as before
\begin{eqnarray}
\nonumber
  & &\sum_{\substack{|n|,|n'|\geq 5\\m,m'\in\z}} \sum_{\substack{i,j\geq 0\\ i',j'\geq 0}} \sum_{\substack{I \subseteq I_n, J\subseteq J_m\\|I|=2^{-i},|J|=2^{-j} }}
  \sum_{\substack{I' \subseteq I_{n'}, J'\subseteq J_{m'}\\|I'|=2^{-i'},|J'|=2^{-j'} }} \|\frac{1}{|I|^{1\over 2}}\frac{1}{|J|^{1\over 2}}\frac{1}{|I'|^{1\over 2}}\frac{1}{|J'|^{1\over 2}} \langle f ,\phi_I^1\otimes \phi_{I'}^1 \rangle  \\
  & &  \ \langle g , \phi_J^1\otimes \phi_{J'}^1\rangle \langle h  ,\phi_J^2\otimes \phi_{J'}^2  \rangle \langle \phi^2_I\otimes \phi^2_{I'},\phi^3_J\otimes \phi^3_{J'}\rangle  \phi^3_I(x_1)\phi^3_{I'}(x_2)\varphi_0(x_1)\varphi'_0(x_2)\|_r^r \nonumber  \\
  \nonumber
  &\lesssim&  \sum_{\substack{|n|,|n'|\geq 5\\m,m'\in\z}} \sum_{\substack{i,j\geq 0\\ i',j'\geq 0}} \sum_{\substack{I \subseteq I_n, J\subseteq J_m\\|I|=2^{-i},|J|=2^{-j} }}
  \sum_{\substack{I' \subseteq I_{n'}, J'\subseteq J_{m'}\\|I'|=2^{-i'},|J'|=2^{-j'} }} \big( \frac{1}{|I|^2}\frac{1}{|J|^2}\frac{1}{|I'|^2}\frac{1}{|J'|^2}
   \\ \nonumber
   & &\cdot(\|f\chi_{I_n}\otimes \chi_{I_{n'}}\|_{p_1}(|I||I'|)^{p_1-1\over p_1}) (\|g\chi_{J_m}\otimes \chi_{J_{m'}} \|_{p_2} (|J||J'|)^{p_2-1\over p_2}) \\ \nonumber
 & &  \cdot (\|h \chi_{J_m}\otimes \chi_{J_{m'}}\|_{p_3}(|J||J')|^{p_3-1\over p_3}) (|I||I'|)^{1\over r} (1+\frac{\dst(I,I^0)}{|I|})^{-M_3}\\ \nonumber
 & & \cdot (1+\frac{\dst(I',I^0)}{|I'|})^{-M'_3}   \int_{\nn} (1+\frac{\dst (x_1,I)}{|I|})^{-M_2} (1+\frac{\dst (x_2,I')}{|I'|})^{-M'_2}\\
  & &\qquad  \cdot (1+\frac{\dst(x_1,J)}{|J|})^{-N_3} (1+\frac{\dst(x_2,J')}{|J'|})^{-N'_3} dx\big)^r  \nonumber \\
 \nonumber
 &\lesssim&  \sum_{\substack{|n|,|n'|\geq 5\\m,m'\in\z}} \sum_{\substack{i,j\geq 0\\ i',j'\geq 0}} \sum_{\substack{I \subseteq I_n, J\subseteq J_m\\|I|=2^{-i},|J|=2^{-j} }}
  \sum_{\substack{I' \subseteq I_{n'}, J'\subseteq J_{m'}\\|I'|=2^{-i'},|J'|=2^{-j'} }}  \big(2^{i+i'} (1+2^i(|n|-2))^{-M_3}\\
  & &     \cdot (1+2^{i'}(|n'|-2))^{-M'_3} (1+|n-m|)^{-N_0} (1+|n'-m'|)^{-N'_0} \nonumber \\
  & & \cdot\|f\chi_{I_n}\otimes \chi_{I_{n'}}\|_{p_1}  \|g\chi_{J_m}\otimes \chi_{J_{m'}} \|_{p_2} \|h \chi_{J_m}\otimes \chi_{J_{m'}}\|_{p_3}
  \big)^r, \nonumber \\
  &\lesssim &\sum_{\substack{|n|,|n'|\geq 5\\m,m'\in\z}}    \big( (|n|-2)^{-M_3\over 2}   (|n'|-2)^{-M'_3\over 2} (1+|n-m|)^{-N_0} (1+|n'-m'|)^{-N'_0}\nonumber \\
  & & \cdot\|f\chi_{I_n}\otimes \chi_{I_{n'}}\|_{p_1}  \|g\chi_{J_m}\otimes \chi_{J_{m'}} \|_{p_2} \|h \chi_{J_m}\otimes \chi_{J_{m'}}\|_{p_3}
  \big)^r,
  \label{nmainest}
\end{eqnarray}
where again $M_j,N_j,M'_j,N'_j,\, j=1,2,3$ are sufficiently large integers. The last inequality holds since  $|n|,|n'|\geq 5$ and $i\geq j,\,i'\geq j'$, from which we can get a decay for $i,i'$ and consequently for $j,j'$ as well. Similar to $\eqref{bridge}$, there exist large integers $L,L'$
$$ (|n|-2)^{-{M_3\over 6}} (1+|n-m|)^{-N_0} \lesssim (1+|m|)^{-L}, $$
$$ (|n'|-2)^{-{M'_3\over 6}} (1+|n'-m'|)^{-N'_0} \lesssim (1+|m'|)^{-L'},$$
and also
$$(|n|-2)^{-{M\over 6}}\tilde \chi_{I_n}\lesssim \tilde \chi_{I^0}\qquad \text{and}\qquad (|n'|-2)^{-{M'\over 6}}\tilde \chi_{I_{n'}}\lesssim \tilde \chi_{I^0},$$
$$(1+|m|)^{-{L\over 3}}\tilde \chi_{J_{m}}\lesssim \tilde \chi_{I^0}\qquad \text{and}\qquad (1+|m'|)^{-{L'\over 3}}\tilde \chi_{J_{m'}}\lesssim \tilde \chi_{I^0}.$$
Then  $\eqref{nmainest}$ can be estimated by
\begin{eqnarray*}
&\lesssim & \sum_{|n|,|n'|\geq 5}\sum_{m,m'\in\z}((|n-2|^{-{M_3\over 3}})(|n'-2|^{-{M'_3\over 3}})(1+|m|)^{-L} (1+|m'|)^{-L'} \\
& &\cdot \|f\chi_{I_n}\otimes \chi_{I_{n'}}\|_{p_1}  \|g\chi_{J_m}\otimes \chi_{J_{m'}} \|_{p_2} \|h \chi_{J_m}\otimes \chi_{J_{m'}}\|_{p_3})^r \\
&\lesssim&\sum_{|n|,|n'|\geq 5}\sum_{m,m'\in\z}(|n-2|^{-{M_3\over 6}} |n'-2|^{-{M'_3\over 6}} (1+|m|)^{-{L\over 3}} (1+|m'|)^{-{L'\over 3}} \\
 & & \cdot \|f \tdd\|_{p_1}\|g \tdd\|_{p_2}\|h \tdd \|_{p_3})^r \\
& \lesssim & (\|f\tdd\|_{p_1}\|g\tdd\|_{p_2}\|h\tdd\|_{p_3})^r,
\end{eqnarray*}
Now we have proved the desired estimate for $T_{1,1}^{E_2,E'_2,0}(f,g,h)(x)$.

\medskip

 For  $T_{1,1}^{E_1,E'_2,0}(f,g,h)(x)$, we just need to combine the ideas for $T_{1,1}^{E_1,E'_1,0}(f,g,h)(x)$ and $T_{1,1}^{E_2,E'_2,0}(f,g,h)(x)$ together. More precisely, since we have $I\subseteq 5I^0$ and  $I'\subseteq (5I^0)^c$, we can do the decomposition
 $$f(x)=\sum_{n_1\in \z}f(x)\cdot  \chi_{I_{n_1}}(x_1),\ \  g(x)=\sum_{n_2\in \z} g(x) \cdot \chi_{I_{n_2}}(x_1) ,\ \  h(x)=\sum_{n_3 \in \z}h(x) \cdot \chi_{I_{n_3}}(x_1).$$

 As before, first consider $|n_1|\geq 10$.
 \begin{eqnarray*}
  & &\|T_{1,1}^{E_1,E_2',0}(f\chi_{I_{n_1}},\  g  \chi_{I_{n_2}}, \ h\chi_{I_{n_3}})(x)\|_r^r \\
  &\lesssim&  \sum_{\substack{|n'|\geq 5\\ m,m'\in \z}} \sum_{\substack{i,i'\geq 0\\j,j'\geq 0}}\sum_{\substack{I\subseteq 5I^0\\ |I|=2^{-i}}}  \sum_{\substack{I' \subseteq I_{n'}\\|I'|=2^{-i'}}} \sum_{\substack{J\subseteq J_m,|J|=2^{-j}\\J'\subseteq J_{m'},|J'|=2^{-j'}}}  \big({1\over |I||I'|}(1+\frac{\dst(I_{n_1},I)}{|I|})^{-M_1} \\
  & &  (1+\frac{\dst(I_{n_2},J)}{|J|})^{-N_1}  (1+\frac{\dst(I_{n_3},J)}{|J|})^{-N_2} (1+\frac{\dst(I',I^0)}{|I'|})^{-M'_3} \\
   & & \int_{\nn} (1+\frac{\dst(x_1,I)}{|I|})^{-M_2} (1+\frac{\dst(x_2,I')}{|I'|})^{-M'_2} \\
 & & \cdot   (1+\frac{\dst(x_1,J)}{|J|})^{-N_3}(1+\frac{\dst(x_2,J')}{|J'|})^{-N'_3} dx \\
   & & \cdot \|f  \chi_{I_{n_1}}\otimes \chi_{I_{n'}}\|_{p_1} \|g \chi_{I_{n_2}}\otimes \chi_{I_{n'}} \|_{p_2} \|h \chi_{I_{n_3}}\otimes\chi_{I_{n'}} \|_{p_3}\big)^r \\
 &\lesssim &   \sum_{\substack{|n'|\geq 5\\ m,m'\in \z}} \sum_{\substack{i,i'\geq 0\\j,j'\geq 0}}\sum_{\substack{I\subseteq 5I^0\\ |I|=2^{-i}}}  \sum_{\substack{I' \subseteq I_{n'}\\|I'|=2^{-i'}}} \sum_{\substack{J\subseteq J_m,|J|=2^{-j}\\J'\subseteq J_{m'},|J'|=2^{-j'}}}   (2^{i+i'} (1+2^i(|n_1|-6))^{-M_1}\\
 & &\cdot  (1+2^{i'}(|n'|-2))^{-M'_3} (1+2^j|m-n_2|)^{-N_1}  \\
 & & \cdot  (1+2^j|m-n_3|)^{-N_2} (1+|m|)^{-N_0} (1+|m'-n'|)^{-N'_0}  \\
 & &\cdot  \|f  \chi_{I_{n_1}}\otimes \chi_{I_{n'}}\|_{p_1} \|g \chi_{I_{n_2}}\otimes \chi_{I_{m'}} \|_{p_2} \|h \chi_{I_{n_3}}\otimes\chi_{I_{m'}} \|_{p_3})^r \\
&\lesssim &    \sum_{\substack{|n'|\geq 5\\ m,m'\in \z}}  ( (|n_1|-6)^{-{M_1\over 2}}(|n'|-2)^{-{M'_3\over 2}} (1+|n_2|)^{-L}  (1+|n_3|)^{-L} (1+|n'_3|)^{-L}    \\
 & & \cdot  (1+|m|)^{-{N_0\over 2}}(1+|n'-m'|)^{-{N'_0}} \\
  && \cdot \|f  \chi_{I_{n_1}}\otimes \chi_{I_{n'}}\|_{p_1} \|g \chi_{I_{n_2}}\otimes \chi_{I_{m'}} \|_{p_2} \|h \chi_{I_{n_3}}\otimes\chi_{I_{m'}} \|_{p_3})^r\\
 &\lesssim&  \sum_{|n'|\geq 5}\sum_{m' \in \z }((|n_1|-6)^{-{M_1\over 4}} (|n'|-2)^{-{M'_1\over 6}} (1+|n_2|)^{-{L\over 2}} \\
 & & \cdot(1+|n_3|)^{-{L\over 2}} \cdot(1+|m'|)^{-{L\over 3}}  \|f\tdd\|_{p_1} \|g\tdd\|_{p_2} \|h\tdd\|_{p_3})^r \\
 &\lesssim &  ( (|n_1|-6)^{-{M_1\over 4}} (1+|n_2|)^{-{L\over 2}} (1+|n_3|)^{-{L\over 2}} \|f\tdd\|_{p_1} \|g\tdd\|_{p_2} \|h\tdd\|_{p_3})^r,
\end{eqnarray*}
Thus,
\begin{eqnarray*}
 & & \|\sum_{|n_1|>10}\sum_{n_2,n_3\in \z} T_{1,1}^{E_1,E_2',0}(f\chi_{I_{n_1}}, g  \chi_{I_{n_2}}, h \chi_{I_{n_3}})(x)\|_r^r \\
  &\lesssim & \sum_{|n_1|>10} \sum_{n_2,n_3 \in \z} ( (|n_1|-6)^{-{M_1\over 4}} (1+|n_2|)^{-{L\over 2}} (1+|n_3|)^{-{L\over 2}}\\
   & & \cdot \|f\tdd\|_{p_1} \|g\tdd\|_{p_2} \|h\tdd\|_{p_3})^r\\
  &\lesssim & (\|f\tdd\|_{p_1} \|g\tdd\|_{p_2} \|h\tdd\|_{p_3})^r.
\end{eqnarray*}

When $|n_1|<10$, as before we can get some decay factors by using $\eqref{bridge}$, and with Theorem $\ref{bithm}$ one can get
\begin{eqnarray*}
 & & \|\sum_{\substack{|n_1|<10\\n_2,n_3\in \z}} T_{1,1}^{E_1,E_2',0}(f \chi_{I_{n_1}}, g  \chi_{I_{n_2}}, h \chi_{I_{n_3}})(x) \|_r^r  \\
  &\lesssim& \sum_{|n'|\geq 5}\sum_{m' \in\z} \sum_{\substack{|n_1|<10\\n_2,n_3\in \z}}  \big(  (1+|n_2|)^{-M'} (1+|n_3|)^{-M'}(|n'|-2)^{-M'}(1+|m'|)^{-M'}\\
   & & \ \| T_{1,1}^{E_1,E_2',0}(f\cdot \chi_{I_{n_1}}\otimes \chi_{I_{n'} }, g \cdot \chi_{I_{n_2}}\otimes \chi_{I_{m'} }, h \cdot\chi_{I_{n_3}}\otimes \chi_{I_{m'} })(x) \|_r\big)^r  \\
 &\lesssim & \sum_{|n'|\geq 5} \sum_{\substack{|n_1|<10\\n_2,n_3\in \z}} \big((1+|n_2|)^{-M'} (1+|n_3|)^{-M'}(|n'|-2)^{-M'}\\
 & & \cdot \| f\cdot \chi_{I_{n_1}}\otimes \chi_{I_{n'}}\|_{p_1} \| g \cdot \chi_{I_{n_2}}\otimes \chi_{I_{n'}}\|_{p_2} \| h\cdot \chi_{I_{n_3}}\otimes \chi_{I_{n'}}\|_{p_3}\big)^r \\
 & \lesssim & (\|f\tdd\|_{p_1} \|g\tdd\|_{p_2} \|h\tdd\|_{p_3})^r.
\end{eqnarray*}
for some sufficiently large integer $M'$.

We omit the remaining details, and we are done with $T_{1,1}^{E,E',0}(f,g,h)(x)$. \\
\vskip0.5cm

Then we turn to the study of $T_{1,m_{M,k_0}}^{E,E',0}(f,g,h)(x)$. From the condition that $m_{M,k_0}$ satisfies, we see $T_{1,m_{M,k_0}}^{E,E',0}(f,g,h)(x)$ corresponds to a classical trilinear paraproduct (see \cite{dai2013p,muscalu2013classical}) in the second parameter, while in the first parameter the form is like what happens for $T_{1,1}^{E,E',0}(f,g,h)(x)$.  We have
\begin{eqnarray*}
\|T_{1,m_{M,k_0}}^{E,E',0}(f,g,h)(x)\|_r^r
 & \lesssim &\|\sum_{I\in \mathcal{I},I'\in \mathcal{I'}}\sum_{\substack{J\in \mathcal{J}\\|\omega^3_J|\leq |\omega^2_I|}} \frac{1}{|I|^{1\over 2}}\frac{1}{|J|^{1\over 2}}\frac{1}{|I'|}  \langle f ,\phi_I^1\otimes \phi_{I'}^1 \rangle \langle g ,\phi_J^1\otimes \phi_{I'}^2\rangle \\
  & &\cdot\langle h  ,\phi_J^2\otimes \phi_{I'}^3  \rangle  \langle \phi^2_I,\phi^3_J \rangle \phi^3_I(x_1)\phi^4_{I'}(x_2)\varphi_0(x_1)\varphi'_0(x_2)\|_r,
\end{eqnarray*}
 where the families $(\phi^j_I)_{I\in \mathcal{I}}, (\phi^j_J)_{I\in \mathcal{J}}$ are defined as $(\phi^j_J)_{J\in \mathcal{J}}$ in Definition $\ref{defop}$,
 and two of $ (\phi^l_{I'})_{I'\in \mathcal{I'}}\,(l=1,2,3,4)$ are lacunary functions.  Actually this is an easier case than $T_{1,1}^{E,E',0}(f,g,h)(x)$, since the implicit symbol in the second parameter satisfies a stronger condition than the one in  $T_{1,1}^{E,E',0}(f,g,h)(x)$ and there is only one class of dyadic intervals on the second parameter in the above paraproducts.   That means one can mimic the proof  for $T_{1,1}^{E,E',0}(f,g,h)(x)$  to get the desired estimate, where the following theorem is needed, which plays a similar role as Theorem $\ref{bithm}$ for  $T_{1,1}^{E,E',0}(f,g,h)(x)$.

\begin{theorem}
\label{bithm2}
For  $f,g,h\in \mathcal{S}(\n^{2})$, the bi-parameter operators
\begin{equation*}
T_{m'}(f,g,h)(x):=\int_{\n^{6}}m'(\xi,\eta,\zeta)\hat f(\xi) \hat g(\eta)\hat h(\zeta)e^{2\pi i(\xi+\eta+\zeta)\cdot x}d\xi d\eta d\zeta
\end{equation*}
 map $L^{p_1}\times L^{p_2}\times L^{p_3}\to L^r$ for $1<p_1,p_2,p_3<\infty$ with $1/p_1+1/p_2+1/p_3=1/r$ and $0<r<\infty$, as long as the smooth symbol $m'$ satisfies
 \begin{gather*}
   |\partial^{\alpha_1}_{\xi_1} \partial^{\alpha_2}_{\xi_2} \partial^{\beta_1}_{\eta_1}\partial^{\beta_2}_{\eta_2}\partial^{\gamma_1}_{\zeta_1}\partial^{\gamma_2}_{\zeta_2}m'(\xi,\eta,\zeta)| \\
   \lesssim \sum_{\substack{\beta'_1+\beta''_1=\beta_1\\ \gamma'_1+\gamma''_1=\gamma_1}} (1+|\xi_1|+|\eta_1|+|\zeta_1|)^{-(\alpha_1+\beta'_1+\gamma'_1)}(1+|\eta_1|+|\zeta_1|)^{-(\beta''_1+\gamma''_1)} \\ \cdot (1+|\xi_2|+|\eta_2|+|\zeta_2|)^{-(\alpha_2+\beta_2+\gamma_2)}
 \end{gather*}
 for sufficiently many multi-indices $\alpha_1,\beta_1,\gamma_1$ and $\alpha_2,\beta_2,\gamma_2$.
 \end{theorem}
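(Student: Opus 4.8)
The plan is to follow, line for line, the reduction used to prove Theorem \ref{bithm}, now specialized to a symbol that is flag only in the first parameter; the payoff of this specialization is that the one operator left conjectural there (that of Remark \ref{rmk1}) degenerates in the present ``half–flag'' setting into an honest bi-parameter trilinear Coifman--Meyer multiplier, so the argument closes unconditionally. First I would split the first–parameter frequencies by a conical partition of unity $1=\phi_0(\xi_1,\eta_1,\zeta_1)+\phi_1(\xi_1,\eta_1,\zeta_1)$, with $\phi_0$ supported where $|\eta_1|+|\zeta_1|\le\epsilon(1+|\xi_1|)$ and $\phi_1$ on the complement. On $\spt \phi_1$ one has $1+|\xi_1|+|\eta_1|+|\zeta_1|\sim 1+|\eta_1|+|\zeta_1|$, so each factor $(1+|\eta_1|+|\zeta_1|)^{-(\beta_1''+\gamma_1'')}$ in the hypothesis is absorbed and $m'\phi_1$ satisfies the classical bi-parameter trilinear Hörmander condition; the associated operator is then bounded by the trilinear analogue of Theorem \ref{bipcoifman} (cf. Theorem \ref{trilocalth} and \cite{dai2013p,muscalu2013classical}). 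On $\spt\phi_0$ I would Taylor-expand $m'$ in $(\eta_1,\zeta_1)$ about the origin as in \eqref{dcpm3}: the order-$N$ remainder, after inserting a smooth cutoff equal to $1$ on the relevant support, gains enough decay in $(|\eta_1|+|\zeta_1|)/|\xi_1|$ to again be classical, leaving the main term $\sum_{d_1=\beta_1'+\gamma_1'<N}\eta_1^{\beta_1'}\zeta_1^{\gamma_1'}M_{\beta_1',\gamma_1'}(\xi_1;\xi_2,\eta_2,\zeta_2)$ times cutoffs, where $M_{\beta_1',\gamma_1'}$ decays like $|\xi_1|^{-(\alpha_1+\beta_1'+\gamma_1')}$ in $\xi_1$ and like a Coifman--Meyer symbol in $(\xi_2,\eta_2,\zeta_2)$.

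Next I would discretize exactly as in the first part of Section \ref{bithmpf}: a Littlewood--Paley decomposition in $\xi_1$ at scale $2^{j_1}$ and in $\eta_1,\zeta_1$ at scale $2^{k_1}$, together with Fourier-series expansions (with rapidly decaying coefficients) of $M_{\beta_1',\gamma_1'}$ in $\xi_1/2^{j_1}$ and of the $\eta_1,\zeta_1$-cutoffs; simultaneously I would apply the standard paraproduct decomposition to the classical symbol in $(\xi_2,\eta_2,\zeta_2)$. The monomials $\eta_1^{\beta_1'}\zeta_1^{\gamma_1'}$ produce the gain $2^{-(j_1-k_1)d_1}$, the supports force $j_1-k_1\ge 20$, and matters reduce to finitely many model operators of the shape
\[
\sum_{j_1-k_1\ge 20}2^{-(j_1-k_1)d_1}a_{j_1}b_{k_1}\,\Delta_{j_1}f\cdot\big(\Psi/\Phi\text{-localization of }g,h\text{ at scale }k_1\text{ in the first parameter}\big),
\]
each equipped in the second parameter with a classical bi-parameter trilinear paraproduct structure in which at least two of $f,g,h$ are frequency-localized at the top scale, and in the first parameter at least one $\Delta$ falls on $g$ or $h$.

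When $d_1>0$ the factor $2^{-(j_1-k_1)d_1}$ renders everything summable: estimate the $\Delta_{j_1}f$ factor by a square function and the $g,h$ factors by the strong maximal operator, as in the $d_1,d_2>0$ case of Section \ref{bithmpf}. When $d_1=0$ I would run the argument of that section: write $\sum_{j_1\ge k_1+20}\Delta_{j_1}f=(1-S_{k_1})\big(\sum_{j_1}\Delta_{j_1}-\widetilde\Delta_{k_1}\big)f$ and expand. Every resulting piece is then of one of two kinds: either it is a tensor product of a bilinear bi-parameter multiplier acting on $(g,h)$ with a bounded linear operator applied to $f$ — handled by Hölder's inequality and Theorem \ref{bipcoifman} — or it is a trilinear multiplier whose symbol, by the same computation that produced $T_{III_1}$ in Remark \ref{rmk1}, has the form $m_1(\xi_1,\eta_1,\zeta_1)\,m_2(\xi_2,\eta_2,\zeta_2)$; but here $m_2$ is a genuine Coifman--Meyer symbol in \emph{all three} second-parameter variables, so this is an ordinary bi-parameter trilinear Coifman--Meyer multiplier and is bounded by Theorem \ref{bipcoifman}. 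Summing over $d_1$, over the two cone pieces, and over the finitely many cases gives the asserted $L^{p_1}\times L^{p_2}\times L^{p_3}\to L^r$ bound.

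The main difficulty I anticipate is purely organizational: one must carry the first-parameter flag reduction and the second-parameter classical paraproduct decomposition at the same time, keeping track of all the resulting frequency configurations, and verify that no term with a flag-type singularity in \emph{both} parameters ever appears. The decisive point — and the reason this theorem, unlike Theorem \ref{bithm}, requires no additional assumption — is precisely that the would-be obstruction of Remark \ref{rmk1} collapses here into a bona fide Coifman--Meyer symbol.
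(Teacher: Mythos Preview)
Your overall architecture mirrors the paper's (the paper's own proof is the single sentence ``this is essentially a corollary of Theorem \ref{bithm}\dots do the necessary modification as in Theorem \ref{bithm} on the first parameter''), but there is a concrete gap in the step where you Taylor--expand $m'$ in $(\eta_1,\zeta_1)$ and claim that
\[
M_{\beta_1',\gamma_1'}(\xi_1;\xi_2,\eta_2,\zeta_2)\;=\;\tfrac{1}{\beta_1'!\gamma_1'!}\,\partial_{\eta_1}^{\beta_1'}\partial_{\zeta_1}^{\gamma_1'}m'\big|_{\eta_1=\zeta_1=0}
\]
decays like $|\xi_1|^{-(\alpha_1+\beta_1'+\gamma_1')}$. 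In the proof of Theorem \ref{bithm} that decay is available because the Taylor expansion is applied to the factor $m_1$, which is a \emph{classical} Coifman--Meyer symbol; the flag structure enters only afterwards through the separate factor $m_2$, which also supplies the $k_1$--scale decomposition. Here $m'$ is not a product and already satisfies only the flag bound: evaluating the hypothesis at $\eta_1=\zeta_1=0$, the term $\beta_1''=\beta_1',\ \gamma_1''=\gamma_1'$ in the sum contributes $(1+|\xi_1|)^{-\alpha_1}\cdot 1$, so one gets merely
\[
\big|\partial_{\xi_1}^{\alpha_1}M_{\beta_1',\gamma_1'}\big|\ \lesssim\ (1+|\xi_1|)^{-\alpha_1},
\]
with no extra $d_1=\beta_1'+\gamma_1'$ powers of decay. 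Consequently the advertised gain $2^{-(j_1-k_1)d_1}$ never appears, the model operators $T^i_{d_1}$ are not produced as stated, and both your $d_1>0$ argument and the $d_1=0$ reduction lose their starting point. (A quick sanity check: for $m'=a(\xi_1,\eta_1,\zeta_1)b(\eta_1,\zeta_1)c(\xi_2,\eta_2,\zeta_2)$ with $a,c$ classical and $b$ flag, one has $\partial_{\eta_1}m'|_{\eta_1=\zeta_1=0}=a(\xi_1,0,0)\,\partial_{\eta_1}b(0,0)\,c+\dots$, and $\partial_{\eta_1}b(0,0)$ is merely $O(1)$.)

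The paper sidesteps this by treating Theorem \ref{bithm2} as inheriting the \emph{same conditional hypothesis} as Theorem \ref{bithm}; this is all that is needed where the theorem is actually invoked (inside the proof of Theorem \ref{localth}, hence of the conditional Theorem \ref{bithp}). Your heuristic --- that the obstruction of Remark \ref{rmk1} should collapse to a genuine bi-parameter Coifman--Meyer multiplier when the second parameter is classical --- is the right intuition, but to make it rigorous you cannot Taylor--expand the flag symbol $m'$ directly; you would first have to manufacture a product structure in the first parameter (so that the expansion hits a classical factor and a separate flag factor provides the $k_1$--scale), and only then run the Section \ref{bithmpf} machinery.
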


 \begin{proof}
 This is essentially a corollary of Theorem $\ref{bithm}$, since the symbol $m'$ satisfies a stronger condition in the second parameter than $m_1(\xi,\eta,\zeta) m_2(\eta,\zeta)$. To get the result,  one just needs to keep the argument in \cite{chen2014hormander} on the second parameter, and do the necessary modification as in Theorem $\ref{bithm}$ on the first parameter. We omit the details here.
 \end{proof}
 Now we are ready to prove the estimate for $T_{1,m_{M,k_0}}^{E,E',0}(f,g,h)(x)$.

 Taking advantage of the fact that $|I|,|I'|\leq 1$, we can split
 \begin{eqnarray*}
 \nonumber
 & &T_{1,m_{M,k_0}}^{E,E',0}(f,g,h)=\{(\sum_{\substack{I\subseteq 5I^0 \\ I'\subseteq 5I^0 }}+ \sum_{\substack{I\subseteq (5I^0)^c \\ I'\subseteq 5I^0 }}+ \sum_{\substack{I\subseteq 5I^0 \\ I'\subseteq (5I^0)^c }} + \sum_{\substack{I\subseteq (5I^0)^c \\ I'\subseteq (5I^0)^c }})  \sum_{\substack{J\in \mathcal{J}\\|\omega^3_J|\leq |\omega^2_I|}} \frac{1}{|I|^{1\over 2}}\frac{1}{|J|^{1\over 2}}\frac{1}{|I'|}  \\
 \nonumber
 & & \langle f ,\phi_I^1\otimes \phi_{I'}^1 \rangle \langle g ,\phi_J^1\otimes \phi_{I'}^2\rangle \langle h  ,\phi_J^2\otimes \phi_{I'}^3  \rangle  \langle \phi^2_I,\phi^3_J \rangle \phi^3_I(x_1)\phi^4_{I'}(x_2)\varphi_0(x_1)\varphi'_0(x_2)\\
  \label{mainctr2}
 &=& T_{1,m_{M,k_0}}^{E_1,E'_1,0}(f,g,h)+T_{1,m_{M,k_0}}^{E_2,E'_1,0}(f,g,h)+T_{1,m_{M,k_0}}^{E_1,E'_2,0}(f,g,h)+T_{1,m_{M,k_0}}^{E_2,E'_2,0}(f,g,h).
 \end{eqnarray*}

We start with $T_{1,m_{M,k_0}}^{E_1,E_1',0}(f,g,h)$. We still consider the decomposition $\eqref{fghdecomp}$, and  we can write
\begin{gather*}
  T_{1,m_{M,k_0}}^{E_1,E_1',0}(f,g,h)(x)= \\
  \sum_{n_1,n_1'}\sum_{n_2,n_2'}\sum_{n_3,n_3'}T_{1,m_{M,k_0}}^{E_1,E_1',0}(f\cdot \chi_{I_{n_1}}\otimes\chi_{I_{n_1'}}, g \cdot \chi_{I_{n_2}}\otimes \chi_{I_{n_2'}}, h \cdot \chi_{I_{n_3}}\otimes \chi_{I_{n_3'}})(x).
\end{gather*}

\medskip

When $|n_1|,|n_1'|> 10$, we write
\begin{eqnarray*}
 & &\|T_{1,m_{M,k_0}}^{E_1,E_1',0}(f\cdot \chi_{I_{n_1}}\otimes \chi_{I_{n'_1}}, g \cdot \chi_{I_{n_2}}\otimes \chi_{I_{n'_2}}, h \cdot\chi_{I_{n_3}}\otimes \chi_{I_{n'_3}})(x)\|_r \\
 &=&\|\sum_{I\in \mathcal{I},I'\in \mathcal{I'}}\sum_{\substack{J\in \mathcal{J}\\|\omega^3_J|\leq |\omega^2_I|}} \frac{1}{|I|^{1\over 2}}\frac{1}{|J|^{1\over 2}}\frac{1}{|I'|}  \langle f \chi_{I_{n_1}}\otimes \chi_{I_{n'_1}},\phi_I^1\otimes \phi_{I'}^1 \rangle \langle g \chi_{I_{n_2}}\otimes \chi_{I_{n'_2}},\phi_J^1\otimes \phi_{I'}^2\rangle \\
  & &\cdot\langle h  \chi_{I_{n_3}}\otimes\chi_{I_{n'_3}},\phi_J^2\otimes \phi_{I'}^3  \rangle \langle \phi^2_I,\phi^3_J\rangle \phi^3_I(x_1)\phi^4_{I'}(x_2)\varphi_0(x_1)\varphi'_0(x_2)\|_r.
\end{eqnarray*}
Then we use H\"older's inequality to get
\begin{eqnarray}
\nonumber
 & & \| \frac{1}{|I|^{1\over 2}}\frac{1}{|J|^{1\over 2}}\frac{1}{|I'|} \langle f \chi_{I_{n_1}}\otimes \chi_{I_{n'_1}},\phi_I^1\otimes \phi_{I'}^1 \rangle \langle g \chi_{I_{n_2}}\otimes \chi_{I_{n'_2}},\phi_J^1\otimes \phi_{I'}^2\rangle \\ \nonumber
  & &\quad \cdot\langle h  \chi_{I_{n_3}}\otimes\chi_{I_{n'_3}},\phi_J^2\otimes \phi_{I'}^3  \rangle \langle \phi^2_I,\phi^3_J\rangle \phi^3_I(x_1)\phi^4_{I'}(x_2)\varphi_0(x_1)\varphi'_0(x_2)\|_r  \\ \nonumber
 &\lesssim& \frac{1}{|I|^2}\frac{1}{|J|^2}\frac{1}{|I'|^3}\\ \nonumber
 & & \cdot (1+\frac{\dst(I_{n_1},I)}{|I|})^{-M_1} (1+\frac{\dst(I_{n'_1},I')}{|I'|})^{-M'_1}(\|f\chi_{I_{n_1}}\otimes\chi_{I_{n'_1}}\|_{p_1} (|I||I'|)^{p_1-1\over p_1})\\ \nonumber
 & & \cdot(1+\frac{\dst(I_{n_2},J)}{|J|})^{-N_1} (1+\frac{\dst(I_{n'_2},I')}{|I'|})^{-M'_2}
  (\|g\chi_{I_{n_2}}\otimes \chi_{I'_{n_2}}\|_{p_2}(|J||I|')^{p_2-1\over p_2}) \\ \nonumber
 & &(1+\frac{\dst(I_{n_3},J)}{|J|})^{-N_2} (1+\frac{\dst(I_{n'_3},I')}{|I'|})^{-M'_3}(\|h\chi_{I_{n_3}}\otimes \chi_{I_{n'_3}}\|_{p_3}(|J||I'|)^{p_3-1\over p_3})\\\nonumber
 & & \cdot  (|I||I'|)^{1\over r} \int_{\nn} (1+\frac{\dst (x_1,I)}{|I|})^{-M_2} (1+\frac{\dst(x_1,J)}{|J|})^{-N_3}  dx \\ \nonumber
 &\lesssim & {1\over |I|}({|I|\over |J|})^{{1\over p_2}+{1\over p_3}} (1+\frac{\dst(I_{n_1},I)}{|I|})^{-M_1} (1+\frac{\dst(I_{n'_1},I')}{|I'|})^{-M'_1} (1+\frac{\dst(I_{n_2},J)}{|J|})^{-N_1}\\ \nonumber
 & & \cdot  (1+\frac{\dst(I_{n'_2},I')}{|I'|})^{-M'_2}   (1+\frac{\dst(I_{n_3},J)}{|J|})^{-N_2} (1+\frac{\dst(I_{n'_3},I')}{|I'|})^{-M'_3} \\ \nonumber
  & &\cdot \int_{\n} (1+\frac{\dst (x_1,I)}{|I|})^{-M_2} (1+\frac{\dst(x_1,J)}{|J|})^{-N_3}  dx_1 \\ \label{dcest2}
 & &    \cdot  \|f  \chi_{I_{n_1}}\otimes \chi_{I_{n'_1}}\|_{p_1} \|g \chi_{I_{n_2}}\otimes \chi_{I_{n'_2}} \|_{p_2} \|h \chi_{I_{n_3}}\otimes\chi_{I_{n'_3}} \|_{p_3},
\end{eqnarray}
where $M_j,M'_j,N_j$ are sufficiently large integers and $\phi_I^j,\phi_J^j,\phi^k_{I'}$ are $L^2$-normalized bump functions adapted to $I,J,I'$ for $j=1,2,3$ and $k=1,2,3,4.$ Taking advantage of  $|\omega^3_J|\leq |\omega^2_I|$ and $|I|,|J|,|I'|\leq 1$, one can get
\begin{eqnarray*}
  & &\|T_{1,m_{M,k_0}}^{E_1,E_1',0}(f\cdot \chi_{I_{n_1}}\otimes \chi_{I_{n'_1}}, g \cdot \chi_{I_{n_2}}\otimes \chi_{I_{n'_2}}, h \cdot\chi_{I_{n_3}}\otimes \chi_{I_{n'_3}})(x)\|_r^r \\
  &\lesssim& \sum_{\substack{i,i'\geq 0\\j\geq 0}}\sum_{\substack{I,I'\subseteq 5I^0\\ |I|=2^{-i}\\ |I'|=2^{-i'}}}   \sum_{m\in \z} \sum_{\substack{J\subseteq J_m\\ |J|=2^{-j}}}  \big({1\over |I|}(1+\frac{\dst(I_{n_1},I)}{|I|})^{-M_1} (1+\frac{\dst(I_{n'_1},I')}{|I'|})^{-M'_1}\\ \nonumber
 & & \cdot  (1+\frac{\dst(I_{n_2},J)}{|J|})^{-N_1} (1+\frac{\dst(I_{n'_2},I')}{|I'|})^{-M'_2}   (1+\frac{\dst(I_{n_3},J)}{|J|})^{-N_2} \\ \nonumber
  & &\cdot  (1+\frac{\dst(I_{n'_3},I')}{|I'|})^{-M'_3} \int_{\n} (1+\frac{\dst (x_1,I)}{|I|})^{-M_2} (1+\frac{\dst(x_1,J)}{|J|})^{-N_3}  dx_1 \\
 & &    \cdot  \|f  \chi_{I_{n_1}}\otimes \chi_{I_{n'_1}}\|_{p_1} \|g \chi_{I_{n_2}}\otimes \chi_{I_{n'_2}} \|_{p_2} \|h \chi_{I_{n_3}}\otimes\chi_{I_{n'_3}} \|_{p_3}\big)^r \\
 &\lesssim &  \sum_{\substack{i,i'\geq 0\\j\geq 0}}\sum_{\substack{I,I'\subseteq 5I_0\\ |I|=2^{-i}\\ |I'|=2^{-i'}}}   \sum_{m\in \z} \sum_{\substack{J\subseteq J_m \\ |J|=2^{-j}}}    (2^{i} (1+2^i(|n_1|-6))^{-M_1}(1+2^{i'}(|n'_1|-6))^{-M'_1}\\
 & &\cdot  (1+2^j|m-n_2|)^{-N_1}   (1+2^j|m-n_3|)^{-N_2} (1+2^{i'}|n'_2|)^{-M'_2} (1+2^{i'}|n'_3|)^{-M'_3}  \\
 & &\cdot (1+|m|)^{-N_0}  \|f  \chi_{I_{n_1}}\otimes \chi_{I_{n'_1}}\|_{p_1} \|g \chi_{I_{n_2}}\otimes \chi_{I_{n'_2}} \|_{p_2} \|h \chi_{I_{n_3}}\otimes\chi_{I_{n'_3}} \|_{p_3})^r \\
&\lesssim &    \sum_{m\in \z}  ( (|n_1|-6)^{-{M_1\over 2}}(|n'_1|-6)^{-{M'_1\over 2}} (1+|n_2|)^{-L}  (1+|n_3|)^{-L}    \\
& & \cdot (1+|n'_2|)^{-{M'_2}} (1+|n'_3|)^{-{M'_3}} (1+|m|)^{-{N_0\over 2}}  \\
 & & \cdot   \|f  \chi_{I_{n_1}}\otimes \chi_{I_{n'_1}}\|_{p_1} \|g \chi_{I_{n_2}}\otimes \chi_{I_{n'_2}} \|_{p_2} \|h \chi_{I_{n_3}}\otimes\chi_{I_{n'_3}} \|_{p_3})^r\\
 &\lesssim & ( (1+|n_1|)^{-{M_1\over 2}}|n'_1|^{-{M'_1\over 2}} (1+|n_2|)^{-L}  (1+|n_3|)^{-L} (1+|n'_2|)^{-{M'_2}} (1+|n'_3|)^{-{M'_3}}    \\
 & & \cdot  \|f  \chi_{I_{n_1}}\otimes \chi_{I_{n'_1}}\|_{p_1} \|g \chi_{I_{n_2}}\otimes \chi_{I_{n'_2}} \|_{p_2} \|h \chi_{I_{n_3}}\otimes\chi_{I_{n'_3}} \|_{p_3})^r \\
 &\lesssim & ((|n_1|-6)^{-{M_1\over 4}} (|n'_1|-6)^{-{M'_1\over 4}} (1+|n_2|)^{-{L\over 2}} (1+|n'_2|)^{-{M'_2\over 2}} \\
 & & \cdot(1+|n_3|)^{-{L\over 2}} (1+|n'_3|)^{-{M'_3\over 2}}   \|f\tdd\|_{p_1} \|g\tdd\|_{p_2} \|h\tdd\|_{p_3})^r
\end{eqnarray*}
where the positive integers $N_0=\min\{M_2,N_3\}$, $ L$ are sufficiently large, and the summation over $i'$ is allowed since $|n'_1|\geq 10$.  Thus,
\begin{eqnarray*}
 & &\|\sum_{\substack{|n_1|>10 \\|n'_1|> 10}} \sum_{n_2,n_3,n'_2,n'_3 \in \z }T_{1,1}^{E_1,E_1',0}(f\cdot \chi_{I_{n_1}}\otimes \chi_{I_{n'_1}}, g \cdot \chi_{I_{n_2}}\otimes \chi_{I_{n'_2}}, h \cdot\chi_{I_{n_3}}\otimes \chi_{I_{n'_3}})(x)\|_r^r \\
 &\lesssim& \sum_{|n_1|>10,|n'_1|> 10}\sum_{n_2,n_3,n'_2,n'_3 \in \z } ((|n_1|-6)^{-{M_1\over 4}} (|n'_1|-6)^{-{M'_1\over 4}} (1+|n_2|)^{-{L\over 2}} (1+|n'_2|)^{-{M'_2\over 2}} \\
 & & \cdot(1+|n_3|)^{-{L\over 2}} (1+|n'_3|)^{-{M'_3\over 2}}   \|f\tdd\|_{p_1} \|g\tdd\|_{p_2} \|h\tdd\|_{p_3})^r \\
 &\lesssim &  (\|f\tdd\|_{p_1} \|g\tdd\|_{p_2} \|h\tdd\|_{p_3})^r,
\end{eqnarray*}

\medskip

As before when $|n_1|\leq 10$ or $|n'_1|\leq 10$  things are different, since we cannot take the summation over $i$ or $i'$.  Without loss of generality, we assume both $|n_1|\leq 10$ and $|n'_1|\leq 10$.

\begin{eqnarray*}
 \nonumber
 & &\|\sum_{\substack{|n_1|,|n'_1|\leq 10\\n_2,n'_2,n_3,n'_3\in \z}}T_{1,m_{M,k_0}}^{E_1,E_1',0}(f\cdot \chi_{I_{n_1}}\otimes \chi_{I_{n'_1}}, g \cdot \chi_{I_{n_2}}\otimes \chi_{I_{n'_2}}, h \cdot\chi_{I_{n_3}}\otimes \chi_{I_{n'_3}})(x) \|_r  \\ \label{spc}
 & \lesssim  &\|f\tdd\|_{p_1} \|g\tdd\|_{p_2} \|h\tdd\|_{p_3}.
\end{eqnarray*}

 Recall that when $I,I'\subseteq 5I^0$ and $J\in J_m$,   in $\eqref{dcest2}$ we can write
 $$\langle \phi^2_I ,\phi^3_J\rangle\approx (1+|m|)^{-L} \langle \phi^2_I ,\tilde \phi^3_J \rangle,$$
 $$\langle g  \chi_{I_{n_2}}\otimes\chi_{I_{n'_2}},\phi_J^1\otimes \phi_{I'}^2  \rangle \approx (1+|n_2-m|)^{-M} (1+|n'_2|)^{-M'}\langle g \chi_{I_{n_2}}\otimes\chi_{I_{n'_2}},\tilde \phi_J^1\otimes \tilde \phi_{I'}^2 \rangle, $$
  $$\langle h  \chi_{I_{n_3}}\otimes\chi_{I_{n'_3}},\phi_J^2\otimes \phi_{I'}^3  \rangle \approx (1+|n_3-m|)^{-M} (1+|n'_3|)^{-M'} \langle h  \chi_{I_{n_3}}\otimes\chi_{I_{n'_3}},\tilde \phi_J^2\otimes \tilde \phi_{I'}^3 \rangle, $$
  $$ (1+|n_2-m|)^{-M}  (1+|m|)^{-L/3}  \approx (1+|n_2|)^{-M'},$$
 $$ (1+|n_3-m|)^{-M} (1+|m|)^{-L/3}  \approx (1+|n_3|)^{-M'},$$
 where $M'$ can be  sufficiently large.

 That means we can use Theorem $\ref{bithm2}$ with additional decay factors $(1+|n_2|)^{-M'} (1+|n_3|)^{-M'}(1+|n'_2|)^{-M'} (1+|n'_3|)^{-M'}$. Thus,

 \begin{eqnarray*}
 \nonumber
 & &\|\sum_{\substack{|n_1|,|n'_1|\leq 10\\n_2,n'_2,n_3,n'_3\in \z}} T_{1,1}^{E_1,E_1',0}(f\cdot \chi_{I_{n_1}}\otimes \chi_{I_{n'_1}}, g \cdot \chi_{I_{n_2}}\otimes \chi_{I_{n'_2}}, h \cdot\chi_{I_{n_3}}\otimes \chi_{I_{n'_3}})(x) \|_r  \\
 &\lesssim & \sum_{\substack{|n_1|,|n'_1|\leq 10\\n_2,n'_2,n_3,n'_3\in \z}} (1+|n_2|)^{-M'}(1+|n'_2|)^{-M'} \cdot (1+|n_3|)^{-M'}(1+|n'_3|)^{-M'} \\
 & & \cdot \| f\cdot \chi_{I_{n_1}}\otimes \chi_{I_{n'_1}}\|_{p_1} \| g \cdot \chi_{I_{n_2}}\otimes \chi_{I_{n'_2}}\|_{p_2} \| h\cdot \chi_{I_{n_3}}\otimes \chi_{I_{n'_3}}\|_{p_3}\\
 & \lesssim  &\|f\tdd\|_{p_1} \|g\tdd\|_{p_2} \|h\tdd\|_{p_3}.
\end{eqnarray*}

For the cases $n_1\leq 10, \, n'_1>10$ or $n_1>10, \, n'_1\leq 10$, one just needs to combine the ideas in the above two situations together and use Theorem $\ref{bithm2}$. Then we are done with $T^{E_1,E'_1,0}_{1,m_{M,k_0}}(f,g,h)$.

Now we turn to the study of the operators $T^{E_2,E_2',0}_{1,m_{M,k_0}}$.
 \begin{eqnarray*}
& &\|T^{E_2,E_2',0}_{1,m_{M,k_0}}(f,g,h)(x)\|_r^r \\
& =&\|\sum_{\substack{I\subseteq (5I^0)^c \\ I'\subseteq (5I'^0)^c }} \sum_{\substack{J\in \mathcal{J}\\|\omega^3_J|\leq |\omega^2_I|}}  \frac{1}{|I|^{1\over 2}}\frac{1}{|J|^{1\over 2}}\frac{1}{|I'|}  \langle f ,\phi_I^1\otimes \phi_{I'}^1 \rangle \langle g,\phi_J^1\otimes \phi_{I'}^2\rangle \\
  & &\ \cdot\langle h  ,\phi_J^2\otimes \phi_{I'}^3  \rangle \langle \phi^2_I,\phi^3_J\rangle \phi^3_I(x_1)\phi^4_{I'}(x_2)\varphi_0(x_1)\varphi'_0(x_2)\|_r^r\\
 &\lesssim& \| \sum_{\substack{|n|,|n'|\geq 5\\m\in\z}} \sum_{\substack{i,j\geq 0\\ i'\geq 0}} \sum_{\substack{I \subseteq I_n, J\subseteq J_m\\|I|=2^{-i},|J|=2^{-j} }}
  \sum_{\substack{I' \subseteq I_{n'} \\|I'|=2^{-i'} }}  \frac{1}{|I|^{1\over 2}}\frac{1}{|J|^{1\over 2}}\frac{1}{|I'|}  \langle f ,\phi_I^1\otimes \phi_{I'}^1 \rangle \langle g,\phi_J^1\otimes \phi_{I'}^2\rangle \\
  & &\ \cdot\langle h  ,\phi_J^2\otimes \phi_{I'}^3  \rangle \langle \phi^2_I,\phi^3_J\rangle \phi^3_I(x_1)\phi^4_{I'}(x_2)\varphi_0(x_1)\varphi'_0(x_2)\|_r^r\\
  &\lesssim & \sum_{\substack{|n|,|n'|\geq 5\\m\in\z}} \sum_{\substack{i,j\geq 0\\ i'\geq 0}} \sum_{\substack{I \subseteq I_n, J\subseteq J_m\\|I|=2^{-i},|J|=2^{-j} }} \sum_{\substack{I' \subseteq I_{n'} \\|I'|=2^{-i'} }}   \big({1\over |I|}  (1+\frac{\dst(I,I^0)}{|I|})^{-M_3}\\ \nonumber
  & &\cdot  (1+\frac{\dst(I',I^0)}{|I'|})^{-M'_4}   \int_{\n} (1+\frac{\dst (x_1,I)}{|I|})^{-M_2} (1+\frac{\dst(x_1,J)}{|J|})^{-N_3}  dx_1 \nonumber  \\
 & &     \|f  \chi_{I_{n}}\otimes \chi_{I_{n'}}\|_{p_1} \|g \chi_{J_m}\otimes \chi_{I_{n'}} \|_{p_2} \|h \chi_{J_m}\otimes\chi_{I_{n'}} \|_{p_3}\big)^r\\
 &\lesssim & \sum_{\substack{|n|,|n'|\geq 5\\m\in\z}} \sum_{\substack{i,j\geq 0\\ i'\geq 0}} \sum_{\substack{I \subseteq I_n, J\subseteq J_m\\|I|=2^{-i},|J|=2^{-j} }} \sum_{\substack{I' \subseteq I_{n'} \\|I'|=2^{-i'} }}   \big(2^{i} (1+2^i(|n|-2))^{-M_3}   (1+2^{i'}(|n'|-2))^{-M'_4} \\
  & & \cdot(1+|n-m|)^{-N_0}\|f\chi_{I_n}\otimes \chi_{I_{n'}}\|_{p_1}  \|g\chi_{J_m}\otimes \chi_{I_{n'}} \|_{p_2} \|h \chi_{J_m}\otimes \chi_{I_{n'}}\|_{p_3}
  \big)^r \\
  &\lesssim & \sum_{\substack{|n|,|n'|\geq 5\\m\in\z}}    \big( (|n|-2)^{-M_3\over 2}   (|n'|-2)^{-M'_4\over 2} (1+|n-m|)^{-N_0} \nonumber \\
  & & \cdot   \|f  \chi_{I_{n}}\otimes \chi_{I_{n'}}\|_{p_1} \|g \chi_{J_m}\otimes \chi_{I_{n'}} \|_{p_2} \|h \chi_{J_m}\otimes\chi_{I_{n'}} \|_{p_3}\big)^r \\
    &\lesssim & (\|f\tdd\|_{p_1} \|g\tdd\|_{p_2} \|h\tdd\|_{p_3})^r.
\end{eqnarray*}

\medskip

 For  $T_{1,m_{M,k_0}}^{E_1,E'_2,0}(f,g,h)(x)$, we just need to combine the ideas for $T_{1,m_{M,k_0}}^{E_1,E'_1,0}(f,g,h)(x)$ and $T_{1,m_{M,k_0}}^{E_2,E'_2,0}(f,g,h)(x)$ together. More precisely, since we have $I\subseteq 5I^0$ and  $I'\subseteq (5I^0)^c$, we can do the decomposition
 $$f(x)=\sum_{n_1\in \z}f \cdot\chi_{I_{n_1}}(x_1),\ \  g(x)=\sum_{n_2\in \z} g\cdot \chi_{I_{n_2}}(x_1) ,\ \  h(x)=\sum_{n_3 \in \z}h \cdot \chi_{I_{n_3}}(x_1).$$

 As before first consider when $|n_1|\geq 10$.
 \begin{eqnarray*}
  & &\|T_{1,m_{M,k_0}}^{E_1,E_2',0}(f\chi_{I_{n_1}},\  g  \chi_{I_{n_2}}, \ h\chi_{I_{n_3}})(x)\|_r^r \\
  &\lesssim&  \sum_{\substack{|n'|\geq 5\\ m\in \z}} \sum_{\substack{i,i'\geq 0\\j\geq 0}}\sum_{\substack{I\subseteq 5I^0\\ |I|=2^{-i}}}  \sum_{\substack{I' \subseteq I_{n'}\\|I'|=2^{-i'}}} \sum_{\substack{J\subseteq J_m\\|J|=2^{-j}}}  \big({1\over |I|}(1+\frac{\dst(I_{n_1},I)}{|I|})^{-M_1} \\
  & & (1+\frac{\dst(I_{n_2},J)}{|J|})^{-N_1}  (1+\frac{\dst(I_{n_3},J)}{|J|})^{-N_2} (1+\frac{\dst(I',I^0)}{|I'|})^{-M'_4}  \\
 & & \cdot   \int_{\n} (1+\frac{\dst(x_1,I)}{|I|})^{-M_2} (1+\frac{\dst(x_1,J)}{|J|})^{-N_3} dx_1 \\
   & & \cdot \|f  \chi_{I_{n_1}}\otimes \chi_{I_{n'}}\|_{p_1} \|g \chi_{I_{n_2}}\otimes \chi_{I_{n'}} \|_{p_2} \|h \chi_{I_{n_3}}\otimes\chi_{I_{n'}} \|_{p_3}\big)^r \\
 &\lesssim &  \sum_{\substack{|n'|\geq 5\\ m\in \z}} \sum_{\substack{i,i'\geq 0\\j\geq 0}}\sum_{\substack{I\subseteq 5I^0\\ |I|=2^{-i}}}  \sum_{\substack{I' \subseteq I_{n'}\\|I'|=2^{-i'}}} \sum_{\substack{J\subseteq J_m\\|J|=2^{-j}}}   (2^{i} (1+2^i(|n_1|-6))^{-M_1}(1+2^{i'}(|n'|-2))^{-M'_4}\\
 & &\cdot  (1+2^j|m-n_2|)^{-N_1}   (1+2^j|m-n_3|)^{-N_2} (1+|m|)^{-N_0}   \\
 & &\cdot  \|f  \chi_{I_{n_1}}\otimes \chi_{I_{n'}}\|_{p_1} \|g \chi_{I_{n_2}}\otimes \chi_{I_{n'}} \|_{p_2} \|h \chi_{I_{n_3}}\otimes\chi_{I_{n'}} \|_{p_3})^r \\
&\lesssim &    \sum_{\substack{|n'|\geq 5\\ m\in \z}}  ( (|n_1|-6)^{-{M_1\over 2}}(|n'|-2)^{-{M'_1\over 2}} (1+|n_2|)^{-L}  (1+|n_3|)^{-L}    \\
 & & \cdot  (1+|m|)^{-{N_0\over 2}} \|f  \chi_{I_{n_1}}\otimes \chi_{I_{n'}}\|_{p_1} \|g \chi_{I_{n_2}}\otimes \chi_{I_{n'}} \|_{p_2} \|h \chi_{I_{n_3}}\otimes\chi_{I_{n'}} \|_{p_3})^r\\
 &\lesssim&  \sum_{|n'|\geq 5}((|n_1|-6)^{-{M_1\over 4}} (|n'|-2)^{-{M'_1\over 6}} (1+|n_2|)^{-{L\over 2}} (1+|n_3|)^{-{L\over 2}}  \\
 & & \cdot\|f\tdd\|_{p_1} \|g\tdd\|_{p_2} \|h\tdd\|_{p_3})^r \\
 &\lesssim &  ( (|n_1|-6)^{-{M_1\over 4}} (1+|n_2|)^{-{L\over 2}} (1+|n_3|)^{-{L\over 2}} \|f\tdd\|_{p_1} \|g\tdd\|_{p_2} \|h\tdd\|_{p_3})^r.
\end{eqnarray*}
Thus,
\begin{eqnarray*}
 & & \|\sum_{|n_1|>10}\sum_{n_2,n_3\in \z} T_{1,m_{M,k_0}}^{E_1,E_2',0}(f\cdot \chi_{I_{n_1}}, g \cdot \chi_{I_{n_2}}, h \cdot\chi_{I_{n_3}})(x)\|_r^r \\
  &\lesssim & \sum_{|n_1|>10} \sum_{n_2,n_3 \in \z} ( (|n_1|-6)^{-{M_1\over 4}} (1+|n_2|)^{-{L\over 2}} (1+|n_3|)^{-{L\over 2}} \\
   & & \cdot \|f\tdd\|_{p_1} \|g\tdd\|_{p_2} \|h\tdd\|_{p_3})^r\\
  &\lesssim & (\|f\tdd\|_{p_1} \|g\tdd\|_{p_2} \|h\tdd\|_{p_3})^r.
\end{eqnarray*}

When $|n_1|<10$, as before we need Theorem $\ref{bithm2}$ and some decay factors by  $\eqref{bridge}$, and the following holds
\begin{eqnarray*}
 & & \|\sum_{\substack{|n_1|<10\\n_2,n_3\in \z}} T_{1,m_{M,k_0}}^{E_1,E_2',0}(f\cdot \chi_{I_{n_1}}, g \cdot \chi_{I_{n_2}}, h \cdot\chi_{I_{n_3}})(x) \|_r^r  \\
 &\lesssim& \sum_{|n'|\geq 5} \sum_{\substack{|n_1|<10\\n_2,n_3\in \z}}    (1+|n_2|)^{-M'} (1+|n_3|)^{-M'}(|n'|-2)^{-M'}\\
   & & \cdot \| T_{1,1}^{E_1,E_2',0}(f\cdot \chi_{I_{n_1}}\otimes \chi_{I_{n'} }, g \cdot \chi_{I_{n_2}}\otimes \chi_{I_{n'} }, h \cdot\chi_{I_{n_3}}\otimes \chi_{I_{n'} })(x) \|_r^r  \\
 &\lesssim & \sum_{|n'|\geq 5} \sum_{\substack{|n_1|<10\\n_2,n_3\in \z}} (1+|n_2|)^{-M'}\cdot (1+|n_3|)^{-M'}(|n'|-2)^{-M'}\\
 & & \cdot \| f\cdot \chi_{I_{n_1}}\otimes \chi_{I_{n'}}\|_{p_1} \| g \cdot \chi_{I_{n_2}}\otimes \chi_{I_{n'}}\|_{p_2} \| h\cdot \chi_{I_{n_3}}\otimes \chi_{I_{n'}}\|_{p_3}\\
 & \lesssim & \|f\tdd\|_{p_1} \|g\tdd\|_{p_2} \|h\tdd\|_{p_3}.
\end{eqnarray*}
where $M'$ is sufficiently large. Then we are done with $T_{1,m_{M,k_0}}^{E_1,E_1',0}$.

Moreover, it is obvious that $T_{1,m_{M,k_0}}^{E_2,E_1',0}$ can be treated similarly. We omit the details here. Now we are done with $T_{1,m_{M,k_0}}^{E,E',0}(f,g,h)(x)$. Now we have proved the desired estimate for  the operators $T_{ab}^{E,E',0}(f,g,h)(x)$.
\medskip

\subsection{$T^{E,G',0}_{ab},T^{G,E',0}_{ab} $}
~\\
Consider
\begin{eqnarray*}
 & & T^{E,G',0}_{ab}(f,g,h)(x)\\
 &\approx& \sum_{I\in \mathcal{I}} \sum_{I'\in \mathcal{I'}}\sum_{\substack{J\in \mathcal{J}\\ |\omega_J^3|\leq |\omega^2_I|}}{1\over |J|^{1\over 2}} {1\over |I|^{1\over 2}}{1\over |I'|}  \langle f,\phi_I^1\otimes \phi_{I'}^1 \rangle \langle g,\phi_J^1\otimes \phi_{I'}^2\rangle \langle h,\phi_{I}^2 \otimes \phi_{I'}^3 \rangle \\
 & & \qquad \qquad \cdot \langle \phi_J^3, \phi_I^2\rangle \phi_I^3(x_1) \phi_{I'}^4(x_2)\varphi_0 (x_1) \varphi_0' (x_2)\\
 & &  +(\int_{\n^6} m^1_{M,k_0}(\xi_1,\eta_1,\zeta_1)(\sum_{k\geq0}\phi^1_k(\xi_2)\phi^2_k(\eta_2)\phi^3_k(\zeta_2)) \\
  & & \qquad \qquad \cdot \hat f(\xi) \hat g(\eta) \hat h(\zeta)d\xi d\eta d\zeta)\varphi_0(x_1)\varphi'_0(x_2) \\
 &:= & T^{E,G',0,1}_{ab}+T^{E,G',0,2}_{ab},
\end{eqnarray*}
where $m^1_{M,k_0}$ is as described in Lemma $\ref{lm}$. Also note
$$|\partial^\alpha_{\xi_2}\partial^\beta_{\eta_2} \partial^\gamma_{\zeta_2} (\sum_{k\geq0}\phi^1_k(\xi_2)\phi^2_k(\eta_2)\phi^3_k(\zeta_2))| \lesssim \frac{1}{(1+|\xi_2|+|\eta_2|+|\zeta_2|)^{\alpha+\beta+\gamma}} $$
for sufficiently many indices. Then the desired estimate for $T^{E,G',0,1}_{ab}$ follows from the same argument as for  $T^{E,E',0}_{1,m^1_{M,k_0}}$,
and the estimate for $T^{E,G',0,2}_{ab}$ follows from Theorem $\ref{trilocalth}$.
\medskip

Having treated all the cases in Theorem $\ref{localth}$, the proof of Theorem $\ref{bithp}$ is concluded.

\appendix

\section{}
\label{ap1}
Here we give some details about the reductions used in the Leibniz rule. Here we will still use the notations introduced in Section \ref{sectionlb}. We start from the reduction of $D^{\beta_1}_1 D^{\beta_2}_2 (gh)$, and this part has appeared in \cite{muscalu2004bi,muscalu2013classical,muscalu2004multi}. We recall some arguments here.
Let \begin{eqnarray}
& &1(\eta_1,\eta_2,\zeta_1,\zeta_2) \nonumber \\
&=& \left( \sum _ { k _ { 1 } } \widehat { \psi } _ { k _ { 1 } } \left( \eta _ { 1 } \right) \sum _ { l _ { 1 } } \widehat { \psi } _ { l _ { 1 } } \left( \zeta _ { 1 } \right) \right)\left( \sum _ { k _ { 2 } } \widehat { \psi } _ { k _ { 2 } } \left( \eta _ { 2} \right) \sum _ { l _ { 2 } } \widehat { \psi } _ { l _ { 2 } } \left( \zeta _ { 2 } \right) \right) \nonumber \\
 &=& \left(\sum _ { k_1 } \widehat { \varphi } _ { k_1 } \left( \eta _ { 1 } \right) \widehat { \psi } _ { k_1 } \left( \zeta _ { 1 } \right) + \sum _ { k_ 1} \widehat { \psi } _ { k_1 } \left( \eta _ { 1 } \right) \widehat { \psi } _ { k_1 } \left( \zeta _ { 1 } \right) + \sum _ { k_1 } \widehat { \psi } _ { k_1 } \left( \eta _ { 1 } \right) \widehat { \varphi } _ { k_1 } \left( \zeta _ { 1 } \right)\right),\nonumber \\
& &\cdot \left(\sum _ { k_2 } \widehat { \varphi } _ { k_2 } \left( \eta _ { 2 } \right) \widehat { \psi } _ { k_2 } \left( \zeta _ { 2 } \right) + \sum _ { k_2 } \widehat { \psi } _ { k_2 } \left( \eta _ { 2 } \right) \widehat { \psi } _ { k_2 } \left( \zeta _ { 2 } \right)+ \sum _ { k_2 } \widehat { \psi } _ { k_2 } \left( \eta _ { 2 } \right) \widehat { \varphi } _ { k_2 } \left( \zeta _ { 2 } \right) \right), \label{detazeta}
\end{eqnarray}
where $\widehat \varphi_{k_i} = \sum_{ l_i \leq k_i-100}   \widehat { \psi } _ { l_i } $, and $\widehat \psi_{k_i}$ may actually represent a function  $ \sum_{k_i-100\leq l_i\leq k_i+100}   \widehat { \psi } _ { l_i }$ (we don't distinguish them since their supports are both away from $0$) for $i=1,2$. Then $g\cdot h$ can be written as a summation of the terms like, for example,
\begin{equation}
\label{dgh}
g\cdot h= \sum _ { k_1 , k_2 } \left( g * \left( \psi _ { k_1 } \otimes \psi _ { k_2 } \right) \right) \cdot \left( h * \left( \varphi _ { k_1 } \otimes \varphi _ { k_2 } \right) \right).
\end{equation}
Moreover, it can actually be rewritten as  the following bi-parameter paraproduct
$$\Pi (g,h)=\sum _ { k_1 , k_2 } \left( \big(\left( g * \left( \psi _ { k_1 } \otimes \psi _ { k_2 } \right) \right) \cdot \left( h * \left( \varphi _ { k_1 } \otimes \varphi _ { k_2 } \right) \right)\big) * \widetilde { \psi } _ { k_1 } \otimes \tilde { \psi } _ { k_2 }\right),$$
%$$\Pi_2 (g,h)=\sum _ { k , \ell } \left( \left( g * \left( \psi _ { k } \otimes \psi _ { \ell } \right) \right) \cdot \left( h * \left( \psi _ { k } \otimes \psi _ { \ell } \right) \right) * \widetilde { \varphi } _ { k } \otimes \tilde { \varphi } _ { \ell }\right),$$
where $ \psi _ { k_1 } \otimes \psi _ { k_2 }=\psi _ { k_1 } (x_1)\psi _ { k_2 } (x_2)$, and $\widehat {\widetilde { \psi }} _ { k_i }$ is an inserted Schwartz function whose support is away from $0$ and satisfying
$$\widehat {\tilde { \psi }} _ { k_i }=1 \quad \textit{on } \quad \spt \widehat \psi _ { k_i } +\spt \widehat \varphi_{k_i}, \quad i=1,2.$$
%$\widehat {\widetilde { \varphi }} _ { k }$ is a Schwartz function whose support may contain $0$ and satisfying
%$$\widehat {\tilde { \varphi }} _ { k }=1 \quad \textit{on } \quad \spt \widehat \psi _ { k } +\spt \widehat \psi_k,$$
Note that in section \ref{sectionlb}, we simply use $\psi_k$ instead of $\widetilde \psi_k$, since they are of the same type, i.e, they are supported on $\{u: c2^{k-1}\leq |u| \leq c 2^{k+1}\}$ for appropriate constants $c$, which are away from $0$.

Now  the differentiation $D^{\beta_1}_1D^{\beta_2}_2 (g,h)$ can be written as a summation of the terms like $D^{\beta_1}_1D^{\beta_2}_2 \Pi(g,h)$, which can be written as

\begin{eqnarray}
  & &D^{\beta_1}_1 D^{\beta_2}_2 \Pi(g,h) \nonumber\\
  &=& \sum _ { {k_1} , {k_2} } \left( \big( \left( g * \left( \psi _ { {k_1} } \otimes \psi _ { {k_2} } \right) \right) \cdot \left( h * \left( \varphi _ { {k_1} } \otimes \varphi _ { {k_2} } \right) \right) \big)* D^{\beta_1}_1 D^{\beta_2}_2 \left(\widetilde { \psi } _ { {k_1} } \otimes \widetilde { \psi } _ { {k_2} }\right)\right)  \nonumber\\
  &=&\sum _ { {k_1} , {k_2} } \left( \big(\left( g * \left( \psi _ { {k_1} } \otimes \psi _ { {k_2} } \right) \right) \cdot \left( h * \left( \varphi _ { {k_1} } \otimes \varphi _ { {k_2} } \right) \right) \big)* 2^{{k_1}\beta_1} 2^{k_2 \beta_2} \left(\widetilde { \psi' } _ { {k_1} } \otimes \widetilde { \psi' } _ { {k_2} }\right)\right)  \nonumber\\
    &=&\sum _ { {k_1} , {k_2} } \left(\big( \left( g * \left( 2^{{k_1} \beta_1} \psi _ { {k_1} }\otimes   2^{k_2 \beta_2} \psi _ { {k_2} } \right) \right) \cdot \left( h * \left(  \varphi _ { {k_1} } \otimes \varphi _ { {k_2} } \right) \right)\big) *\left(\widetilde { \psi' } _ { {k_1} } \otimes \widetilde { \psi' } _ { {k_2} }\right)\right) \nonumber\\
        &=&\sum _ { {k_1} , {k_2} } \left( \left ( \left( g * \left(D^{\beta_1}_1 \psi'' _ { {k_1} } \otimes  D^{\beta_2}_2 \psi'' _ { {k_2} } \right) \right) \cdot \left( h * \left(  \varphi _ { {k_1} }  \otimes \varphi _ { {k_2} } \right) \right) \right)*\left(\widetilde { \psi' } _ { {k_1} } \otimes \widetilde { \psi' } _ { {k_2} }\right)\right)  \nonumber\\
        &=&\sum _ { {k_1} , {k_2} } \left(\left( \left(D^{\beta_1}_1 D^{\beta_2}_2 g * \left( \psi'' _ { {k_1} } \otimes   \psi'' _ { {k_2} } \right) \right) \cdot \left(  h * \left(  \varphi _ { {k_1} } \otimes \varphi _ { {k_2} } \right) \right) \right)*\left(\widetilde { \psi' } _ { {k_1} } \otimes \widetilde { \psi' } _ { {k_2} }\right)\right)  \nonumber\\
        &:=& \Pi(  D^{\beta_1}_1 D^{\beta_2}_2 g,  h), \label{lbgh}
\end{eqnarray}
where $\widehat { \widetilde { \psi' } } _ { k_i } ( u ) : = \widehat {\widetilde { \psi } } _ { k_i } ( u ) \left| \frac { u } { 2 ^ { k_i } } \right| ^ { \beta_i }$, $\widehat { { { \psi'' } } } _ { k_i } ( u ) : = \widehat { \psi } _ { k_i } ( u ) \left( \frac { 2 ^ { k_i } } { | u | } \right) ^ { \beta_i }$ for $i=1,2$.

Note that the idea is to ``move" the differential operator to appropriate functions. In the above expressions we finally apply the differential operators to $g$ because the associated convolution has both $\psi$ type functions involved. That allows us to  multiply or divide them by functions $|u|^{\beta_i}$ as we need, i.e, we can always make $\psi''$ smooth.

Using a similar idea, we decompose  $f$ as
$$f =\sum _ { j_1 } \sum_{ j_2} \left( f * \left( \psi _ { j_1 } \otimes \psi _ { j_2 } \right) \right),$$
then $f\cdot g\cdot h$ can be written as a summation of the terms  like, for example,
\begin{eqnarray}
   & &\sum _ { j_1 } \sum_{ j_2} \left( f * \left( \psi _ { j_1 } \otimes \psi _ { j_2 } \right) \right) \sum _ { l_1 \ll k_1  } \sum_{ l_2 \ll k_2}\left( g * \left( \psi _ { k_1 } \otimes \psi _ { k_2 } \right) \right) \cdot \left( h * \left( \psi _ { l_1} \otimes \psi _ { \ell_2 } \right) \right)  \nonumber \\
  &=& \left(\sum _ {j_1\ll k_1} + \sum _ {j_1\simeq k_1}+ \sum _ {j_1\gg k_1}\right) \left(\sum _ {j_2\ll k_2} + \sum _ {j_2\simeq k_2}+ \sum _ {j_2\gg k_2}\right)  \sum _ { l_1 \ll k_1  } \sum_{ l_2 \ll k_2}  \nonumber \\
   & & \quad \cdot \left( f * \left( \psi _ { j_1 } \otimes \psi _ { j_2 } \right) \right) \left( g * \left( \psi _ { k_1 } \otimes \psi _ { k_2 } \right) \right) \cdot \left( h * \left( \psi _ { l_1} \otimes \psi _ { \ell_2 } \right) \right) \label{lbsum}.
\end{eqnarray}
%Note that the following terms in the summation
%$$\left(\sum _ {j_1\ll k_1}+\sum _ {j_1\simeq k_1}\right) \left(\sum _ {j_2\ll k_2} +\sum _ {j_2\simeq k_2}\right)\sum _ { l_1 \ll k_1  } \sum_{ l_2 \ll k_2} \left( f * \left( \psi _ { j_1 } \otimes \psi _ { j_2 } \right) \right) \left( g * \left( \psi _ { k_1 } \otimes \psi _ { k_2 } \right) \right) \cdot \left( h * \left( \psi _ { l_1} \otimes \psi _ { \ell_2 } \right) \right)$$
%actually correspond to the classical bi-parameter trilinear symbols in $\mathcal{BM}(\mathbb{R}^6)$. A term in the summation \eqref{lbsum} that looks different is the following
% \begin{equation}
 %\label{lbflag}
 %\sum _ {l_1\ll k_1\ll j_1} \sum _ {l_2\ll k_2 \ll j_2} \left( f * \left( \psi _ { j_1 } \otimes \psi _ { j_2 } \right) \right) \left( g * \left( \psi _ { k_1 } \otimes \psi _ { k_2 } \right) \right) \cdot \left( h * \left( \psi _ { l_1} \otimes \psi _ { \ell_2 } \right) \right),
 %\end{equation}
%whose symbol is not included in $\mathcal{BM}(\mathbb{R}^6)$. Note that the rest terms in \eqref{lbsum} can be treated as ``between" this case and the previous classical one, so we just need to take care of \eqref{lbflag}. Now we write it as
In the above summation, let's take a look at the following part with $\sum_{k_1\ll j_1}\sum_{k_2\ll j_2}$,
\begin{eqnarray}
& &  \int_{\n^6}  \sum _ {l_1\ll k_1\ll j_1} \sum _ {l_2\ll k_2 \ll j_2} \left(\widehat \psi _ { j_1 }(\xi_1)  \widehat \psi _ { j_2 }(\xi_2)   \widehat \psi _ { k_1 }(\eta_1)  \widehat \psi _ { k_2 }(\eta_2)  \widehat \psi _ { l_1 }(\zeta_1)  \widehat \psi _ { l_2 }(\zeta_2)\right) \nonumber \\
 & & \quad   \cdot \hat f(\xi_1,\xi_2) \hat g(\eta_1,\eta_2) \hat h(\zeta_1,\zeta_2) e^{2\pi i (\xi+\eta+\zeta)x} d\xi d\eta d\zeta \nonumber\\
 &:=& \int_{\n^6} m(\xi,\eta,\zeta) \hat f(\xi_1,\xi_2) \hat g(\eta_1,\eta_2) \hat h(\zeta_1,\zeta_2) e^{2\pi i (\xi+\eta+\zeta)x} d\xi d\eta d\zeta, \label{lbop}
\end{eqnarray}
where the symbol can be rewritten as
%\begin{eqnarray*}
  % & & m(\xi,\eta,\zeta) \nonumber \\
 %%  &=&\sum _ {l_1\ll k_1\ll j_1} \sum _ {l_2\ll k_2 \ll j_2} \left(\widehat \psi _ { j_1 }(\xi_1)  \widehat \psi _ { j_2 }(\xi_2)   \widehat \psi _ { k_1 }(\eta_1)  \widehat \psi _ { k_2 }(\eta_2)  \widehat \psi _ { l_1 }(\zeta_1)  \widehat \psi _ { l_2 }(\zeta_2)\right)\nonumber \\
%   &=& \sum _ {k_1\ll j_1} \sum _ { k_2 \ll j_2} \left(\widehat \psi _ { j_1 }(\xi_1)  \widehat \psi _ { j_2 }(\xi_2)   \widehat \psi _ { k_1 }(\eta_1)  \widehat %  \psi _ { k_2 }(\eta_2)  \widehat \varphi _ { k_1 }(\zeta_1)  \widehat \varphi _ { k_2 }(\zeta_2)\right) \nonumber
  % &=& \left( \sum _ { j_1} \sum _ { j_2} \widehat \psi _ { j_1 }(\xi_1)  \widehat \psi _ { j_2 }(\xi_2) \widehat \varphi _ { j_1 }(\eta_1)  \widehat \varphi _ { j_2 }(\eta_2) \right) \left( \sum _ { k_1\ll j_1} \sum _ { k_2 \ll j_2}  \widehat \psi _ { k_1 }(\eta_1)  \widehat \psi _ { k_2 }(\eta_2)  \widehat \varphi _ %  { k_1 }(\zeta_1)  \widehat \varphi _ { k_2 }(\zeta_2)\right)  \label{lbsb}.
%\end{eqnarray*}
%where for $i=1,2$, $\widehat \varphi _ { k_i }(\zeta_i)=\sum_{l_i\ll k_i} \widehat \varphi _ { j_i }(\zeta_i)$, and $\widehat \varphi _ { j_i }(\eta_i)$ is a Schwartz function whose support contains $0$, with
%$$\widehat \varphi _ { j_i }(\eta_i)=1 \quad \textit{on}  \quad \spt \widehat \psi _ { k_i }(\eta_i).$$
%Then according to the size of the supports of above functions,  we can rewrite the symbol as
\begin{eqnarray}
& &m(\xi,\eta,\zeta) \nonumber \\
 &=&\left( \sum _ { j_1} \sum _ { j_2} \widehat \psi _ { j_1 }(\xi_1)  \widehat \psi _ { j_2 }(\xi_2) \widehat \varphi _ { j_1 }(\eta_1+\zeta_1)  \widehat \varphi _ { j_2 }(\eta_2+\zeta_2) \widehat \psi _ { j_1 }(\xi_1+\eta_1+\zeta_1)  \widehat \psi _ { j_2 }(\xi_2+\eta_2+\zeta_2) \right)  \nonumber\\
  & & \cdot \left( \sum _ { k_1\ll j_1} \sum _ { k_2 \ll j_2}  \widehat \psi _ { k_1 }(\eta_1)  \widehat \psi _ { k_2 }(\eta_2)  \widehat \varphi _ { k_1 }(\zeta_1)  \widehat \varphi _ { k_2 }(\zeta_2) \widehat \psi _ { k_1 }(\eta_1+\zeta_1)  \widehat \psi _ { k_2 }(\eta_2+\zeta_2) \right) , \label{lbsb2}
\end{eqnarray}
where  some appropriate $\varphi$ type functions and $\psi$ type functions are inserted as before, based on the supports of the functions. With the above, \eqref{lbop} becomes
\begin{eqnarray}
 \sum _ { j _ { 1 }, j_2 } \big( \left( f * \left(\psi _ { j _ { 1 } } \otimes  \psi _ { j _ { 2 } }\right)\right) \left( \Pi_1(g,h) * \left(\varphi _ { j _ { 1 } }\otimes \varphi _ { j _ { 2 } } \right) \right)\big ) *  \left(\psi _ { j _ { 1 } }\otimes \psi _ { j _ { 2 } }\right),
 \label{lbflag}
\end{eqnarray}
where
$$ \Pi_1(g,h)=
  \sum _ { k _ { 1 } \ll j _ { 1 } } \sum _ { k _ { 2 } \ll j _ { 2 } } \left(\left(   g * \left(\psi _ { k _ { 1 } } \otimes \psi _ { k _ { 2} }\right) \right)  \left( h * \left( \varphi _ { k _ { 1 } }\otimes \varphi _ { k _ { 2 }  }\right) \right)\right) * \left(\psi _ { k _ { 1 }} \otimes  \psi _ { k _ { 2 }} \right).$$
Recall by \eqref{lbgh}, $ D_1^{\beta_1}D_2^{\beta_2}\Pi_1(g,h)$ can be written as a summation of the terms like, for example,
$$ \sum _ { k _ { 1 } \ll j _ { 1 } } \sum _ { k _ { 2 } \ll j _ { 2 } }  \left( \left(D^{\beta_1}_1 D^{\beta_2}_2 g * \left( \psi'' _ { k_1 } \otimes   \psi'' _ { k_2 } \right) \right) \cdot \left(  h * \left(  \varphi _ { k_1 } \otimes \varphi _ { k_2 }\right) \right) \right)*\left( { \psi' } _ { k_1 } \otimes { \psi' } _ { k_2 }\right)$$

Then when we apply the differential operator $D^{\alpha_1}_1 D^{\alpha_2}_2$ to $f \cdot D_1^{\beta_1} D_2^{\beta_2}\Pi_1(g,h)$,
\begin{eqnarray*}
& &D^{\alpha_1}_1 D^{\alpha_2}_2 \left( \sum _ { j _ { 1 }, j_2 } \big( \left( f * \left(\psi _ { j _ { 1 } } \otimes  \psi _ { j _ { 2 } }\right)\right) \left( D_1^{\beta_1} D_2^{\beta_2} \Pi_1(g,h) * \left(\varphi _ { j _ { 1 } }\otimes \varphi _ { j _ { 2 } } \right) \right)\big ) *  \left(\psi _ { j _ { 1 } }\otimes \psi _ { j _ { 2 } }\right)\right) \\
&=&   \sum _ { j _ { 1 }, j_2 } \big( \left( f * \left(\psi _ { j _ { 1 } } \otimes  \psi _ { j _ { 2 } }\right)\right) \left( D_1^{\beta_1} D_2^{\beta_2} \Pi_1(g,h) * \left(\varphi _ { j _ { 1 } }\otimes \varphi _ { j _ { 2 } } \right) \right)\big ) *  D^{\alpha_1}_1 D^{\alpha_2}_2 \left(\psi _ { j _ { 1 } }\otimes \psi _ { j _ { 2 } }\right)\\
&=&   \sum _ { j _ { 1 }, j_2 } \big( \left( f * \left(\psi _ { j _ { 1 } } \otimes  \psi _ { j _ { 2 } }\right)\right) \left( D_1^{\beta_1} D_2^{\beta_2} \Pi_1(g,h) * \left(\varphi _ { j _ { 1 } }\otimes \varphi _ { j _ { 2 } } \right) \right)\big ) * 2^{j_1\alpha_1}  2^{j_2\alpha_2}\left(\psi' _ { j _ { 1 } }\otimes \psi' _ { j _ { 2 } }\right)\\
&=&   \sum _ { j _ { 1 }, j_2 } \big( \left( f * \left(2^{j_1\alpha_1}  \psi _ { j _ { 1 } } \otimes 2^{j_2\alpha_2} \psi _ { j _ { 2 } }\right)\right) \left( D_1^{\beta_1} D_2^{\beta_2} \Pi_1(g,h) * \left(\varphi _ { j _ { 1 } }\otimes \varphi _ { j _ { 2 } } \right) \right)\big ) * \left(\psi' _ { j _ { 1 } }\otimes \psi' _ { j _ { 2 } }\right)\\
&=&   \sum _ { j _ { 1 }, j_2 } \big( \left( f * \left(D^{\alpha_1}_1  \psi'' _ { j _ { 1 } } \otimes D^{\alpha_2}_2 \psi'' _ { j _ { 2 } }\right)\right) \left( D_1^{\beta_1} D_2^{\beta_2} \Pi_1(g,h) * \left(\varphi _ { j _ { 1 } }\otimes \varphi _ { j _ { 2 } } \right) \right)\big ) * \left(\psi' _ { j _ { 1 } }\otimes \psi' _ { j _ { 2 } }\right)\\
&=&   \sum _ { j _ { 1 }, j_2 } \big( \left( D^{\alpha_1}_1 D^{\alpha_2}_2  f *\left(    \psi'' _ { j _ { 1 } } \otimes \psi'' _ { j _ { 2 } }\right)\right) \left( D_1^{\beta_1} D_2^{\beta_2} \Pi_1(g,h) * \left(\varphi _ { j _ { 1 } }\otimes \varphi _ { j _ { 2 } } \right) \right)\big ) * \left(\psi' _ { j _ { 1 } }\otimes \psi' _ { j _ { 2 } }\right)\\
&:=& \sum _ { j _ { 1 }, j_2 } \big( \left(  D^{\alpha_1}_1 D^{\alpha_2}_2f * \left(\psi'' _ { j _ { 1 } } \otimes  \psi'' _ { j _ { 2 } }\right)\right) \left( D_1^{\beta_1} D_2^{\beta_2} \Pi_1(g,h) * \left(\varphi _ { j _ { 1 } }\otimes \varphi _ { j _ { 2 } } \right) \right)\big ) *  \left(\psi' _ { j _ { 1 } }\otimes \psi' _ { j _ { 2 } }\right).
\end{eqnarray*}
Based on the above form, and by removing some inserted and reinserting appropriate functions as before, we can write the associated symbol as
\begin{eqnarray*}
  & &\left( \sum _ { j_1} \sum _ { j_2} \widehat \psi'' _ { j_1 }(\xi_1)  \widehat \psi'' _ { j_2 }(\xi_2)  \widehat \psi' _ { j_1 }(\xi_1+\eta_1+\zeta_1)  \widehat \psi' _ { j_2 }(\xi_2+\eta_2+\zeta_2) \right)  \nonumber\\
  & &\quad  \cdot \left( \sum _ { k_1\ll j_1} \sum _ { k_2 \ll j_2}  \widehat \psi'' _ { k_1 }(\eta_1)  \widehat \psi'' _ { k_2 }(\eta_2)  \widehat \varphi _ { k_1 }(\zeta_1)  \widehat \varphi _ { k_2 }(\zeta_2) \widehat \psi' _ { k_1 }(\eta_1+\zeta_1)  \widehat \psi' _ { k_2 }(\eta_2+\zeta_2) \right)\\
  &=&  \left( \sum _ { j_1} \sum _ { j_2} \widehat \psi'' _ { j_1 }(\xi_1)  \widehat \psi'' _ { j_2 }(\xi_2) \widehat \varphi _ { j_1 }(\eta_1)  \widehat \varphi _ { j_2 }(\eta_2) \widehat \psi' _ { j_1 }(\xi_1+\eta_1+\zeta_1)  \widehat \psi' _ { j_2 }(\xi_2+\eta_2+\zeta_2) \right)  \nonumber\\
  & &\quad  \cdot \left( \sum _ { k_1\ll j_1} \sum _ { k_2 \ll j_2}  \widehat \psi'' _ { k_1 }(\eta_1)  \widehat \psi'' _ { k_2 }(\eta_2)  \widehat \varphi _ { k_1 }(\zeta_1)  \widehat \varphi _ { k_2 }(\zeta_2) \right) \\
  &=&  \left( \sum _ { j_1} \sum _ { j_2} \widehat \psi'' _ { j_1 }(\xi_1)  \widehat \psi'' _ { j_2 }(\xi_2) \widehat \varphi _ { j_1 }(\eta_1)  \widehat \varphi _ { j_2 }(\eta_2) \widehat \psi' _ { j_1 }(\xi_1+\eta_1+\zeta_1)  \widehat \psi' _ { j_2 }(\xi_2+\eta_2+\zeta_2) \right)  \nonumber\\
  & &\quad  \cdot \left( \sum _ { k_1} \sum _ { k_2 }  \widehat \psi'' _ { k_1 }(\eta_1)  \widehat \psi'' _ { k_2 }(\eta_2)  \widehat \varphi _ { k_1 }(\zeta_1)  \widehat \varphi _ { k_2 }(\zeta_2) \right)\\
  &=&m_1(\xi,\eta,\zeta) m_2(\eta,\zeta),
\end{eqnarray*}
where in the last estimate we are able to ignore the restriction $k_i\ll j_i$ when taking the summation, because $\varphi_{j_i}$ is a properly chosen function such that $\widehat \varphi_{j_i}(\eta_i)\widehat \psi''_{k_i}(\eta_i)=0$ if the restriction is not satisfied ($i=1,2$).

Note that $m_1$ and $m_2$ belong to the classical symbols $\mathcal{BM}(\n^6)$ and $\mathcal{BM}(\n^4)$ respectively. Then the bound $$\|D^{\alpha_1}_1 D^{\alpha_2}_2 f\|_{L^{p_1}} \cdot \|D^{\beta_1}_1 D^{\beta_2}_2 g\|_{L^{q_1}}\cdot  \| h \|_{L^{s_1}}$$  follows from the boundedness in Theorem \ref{bithm}. In fact,  the other terms in the Leibniz estimate \eqref{lb16} can be obtained in the similar way. Taking the decomposition $g\cdot h$ for example, recall that the above argument is based on one of the terms in the decomposition of $g\cdot h$, i.e. \eqref{dgh}. However, the are actually $9$ terms in the decomposition, as indicated by \eqref{detazeta}. The similar thing happens after $f$ is introduced. In short, among the rest of these terms, parts of them are covered by Theorem \ref{bithm}. The other parts can take different forms, based on the ``positions" of the $\psi$ functions $\varphi$ functions,  and the $L^r$ estimate of them gives the other $15$ pieces in \eqref{lb16}. An example has been given earlier.

However, the other part of those remaining terms cannot be treated as the operator $T_{m_1,m_2}$. As we mentioned in Section \ref{sectionlb}, these terms actually correspond to the multiplier $T_{m_3,m_4}$. More precisely, if we check following term in the decomposition  of $f\cdot g\cdot h$,

$$\sum _ { l_1\ll k_1 \ll j_1 } \sum_{ j_2 \ll k_2 \ll l_2} \left( f * \left( \psi _ { j_1 } \otimes \psi _ { j_2 } \right) \right)\left( g * \left( \psi _ { k_1 } \otimes \psi _ { k_2 } \right) \right)  \left( h * \left( \psi _ { l_1} \otimes \psi _ { \ell_2 } \right) \right),$$
we can see it is actually a Fourier multiplier:

\begin{eqnarray}
& &  \int_{\n^6}  \sum _ {l_1\ll k_1\ll j_1} \sum _ {l_2\gg k_2 \gg j_2} \left(\widehat \psi _ { j_1 }(\xi_1)  \widehat \psi _ { j_2 }(\xi_2)   \widehat \psi _ { k_1 }(\eta_1)  \widehat \psi _ { k_2 }(\eta_2)  \widehat \psi _ { l_1 }(\zeta_1)  \widehat \psi _ { l_2 }(\zeta_2)\right) \nonumber \\
 & & \quad   \cdot \hat f(\xi_1,\xi_2) \hat g(\eta_1,\eta_2) \hat h(\zeta_1,\zeta_2) e^{2\pi i (\xi+\eta+\zeta)x} d\xi d\eta d\zeta \nonumber\\
 &:=& \int_{\n^6} m'(\xi,\eta,\zeta) \hat f(\xi_1,\xi_2) \hat g(\eta_1,\eta_2) \hat h(\zeta_1,\zeta_2) e^{2\pi i (\xi+\eta+\zeta)x} d\xi d\eta d\zeta. \label{lbop2}
\end{eqnarray}
Using the trick of inserting terms as before, we can write

\begin{eqnarray*}
& &m'(\xi,\eta,\zeta) \nonumber \\
 &=&\left( \sum _ { j_1} \sum _ { k_2} \widehat \psi _ { j_1 }(\xi_1)  \widehat \varphi _ { k_2 }(\xi_2) \widehat \varphi _ { j_1 }(\eta_1+\zeta_1)  \widehat \psi _ { k_2 }(\eta_2) \widehat \psi _ { j_1 }(\xi_1+\eta_1+\zeta_1)  \widehat \psi _ { k_2 }(\xi_2+\eta_2) \right)  \nonumber\\
  & & \cdot \left( \sum _ { k_1\ll j_1} \sum _ { l_2 \gg k_2}  \widehat \psi _ { k_1 }(\eta_1)  \widehat \varphi _ { l_2 }(\xi_2+\eta_2)  \widehat \varphi _ { k_1 }(\zeta_1)  \widehat \psi _ { l_2 }(\zeta_2) \widehat \psi _ { k_1 }(\eta_1+\zeta_1)  \widehat \psi _ { l_2 }(\xi_2+\eta_2+\zeta_2) \right) \\
 &=&\left( \sum _ { j_1} \sum _ { l_2} \widehat \psi _ { j_1 }(\xi_1)  \widehat \varphi _ { j_1 }(\eta_1+\zeta_1)  \widehat \psi _ { j_1 }(\xi_1+\eta_1+\zeta_1) \widehat \varphi _ { l_2 }(\xi_2+\eta_2) \widehat \psi _ { l_2 }(\zeta_2)  \widehat \psi _ { l_2 }(\xi_2+\eta_2+\zeta_2) \right)  \nonumber\\
  & & \cdot \left( \sum _ { k_1\ll j_1} \sum _ { k_2 \ll l_2}  \widehat \psi _ { k_1 }(\eta_1)   \widehat \varphi _ { k_1 }(\zeta_1)  \widehat \psi _ { k_1 }(\eta_1+\zeta_1) \widehat \varphi _ { k_2 }(\xi_2) \widehat \psi _ { k_2 }(\eta_2) \widehat \psi _ { k_2 }(\xi_2+\eta_2) \right).
\end{eqnarray*}

Note that in this case $m'$ is essentially a symbol
\begin{eqnarray*}
m^\prime (\xi,\eta,\zeta)=\left(m^{\prime,1}(\xi_1,\eta_1,\zeta_1)m^{\prime,2}(\eta_1,\zeta_1)\right)\left( m^{\prime,3}(\xi_2,\eta_2,\zeta_2) m^{\prime,4}(\xi_2,\eta_2)\right),
\end{eqnarray*}
with $m^{\prime,1}, m^{\prime,3} \in \mathcal{M}(\n^3)$ and $m^{\prime,2}, m^{\prime,4}\in \mathcal{M}(\n^2)$.  Strictly speaking, $m'$ is not exactly a symbol having the form $m_4(\eta,\zeta)m_3(\xi,\eta)$, with $m_3,m_4$ in $\mathcal{BM}(\n^4)$.  However, it is a fact that these two symbols share the same difficulty in obtaining
their H\"older-type estimates. Thus, without loss of generality, we treat them as if they were the same.  As before, after we apply those differential operators, since the ``types" of those $\psi$ and $\varphi$ functions do not change, the form of the associated symbol won't change, either. This means that the Leibniz rule for these terms will follow from the H\"older type $L^r$ estimate of $T_{m_3,m_4}$. Another difficulty in this case is that an analogue of \eqref{lbflag} will have a more complicated form, another challenge in obtaining the desired estimate.

\section{}
\label{ap2}

  In this section, we obtain the mixed norm estimates when $1<p,p_2,p_3,q_2,q_3<\infty$ in \cite{Zhai} via looking at the bi-parameter multilinear operators \eqref{bicofop} and the reduced operator \eqref{assumption}.  We consider \eqref{assumption} under the tensor product setting first.
  \begin{proposition}
  \label{tensor32}
    Let $g(x)=g_1(x_1)\otimes g_2(x_2) $, $h(x)=h_1(x_1)\otimes h_2(x_2) $, and ${1\over p}+ {1\over p_2}+{1\over p_3}={1\over p}+{1\over q_2}+{1\over q_3}={1\over r}$. Then \eqref{assumption} maps $L^p \times L^{p_2}_{x_1}(L^{q_2}_{x_2})\times L^{p_3}_{x_1}(L^{q_3}_{x_2}) \to L^r$, with $0<r<\infty$ and $1<p,p_2,p_3,q_2,q_3<\infty $.
  \end{proposition}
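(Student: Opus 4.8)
\textbf{Proof proposal for Proposition \ref{tensor32}.}

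The plan is to exploit the tensor-product structure of $g$ and $h$ together with the reduced multiplier $m'(\xi_1,\eta_1,\zeta_1)m''(\eta_2,\zeta_2)$ to split the operator \eqref{assumption} into a one-parameter flag multiplier acting in the first variable $x_1$ and a classical Coifman--Meyer-type bilinear operator acting in the second variable $x_2$. Write $\widehat g(\eta)=\widehat g_1(\eta_1)\widehat g_2(\eta_2)$ and $\widehat h(\zeta)=\widehat h_1(\zeta_1)\widehat h_2(\zeta_2)$, and observe that the exponential $e^{2\pi i (\xi+\eta+\zeta)\cdot x}$ factors as $e^{2\pi i(\xi_1+\eta_1+\zeta_1)x_1}\cdot e^{2\pi i(\xi_2+\eta_2+\zeta_2)x_2}$. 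Since in \eqref{assumption} the function $f$ is genuinely two-dimensional, the first step is to further reduce to the case $f(x)=f_1(x_1)\otimes f_2(x_2)$ by a density/limiting argument, or, better, to keep $f$ general but integrate out $\xi_2$: writing $F_{x_2}(\xi_1):=\int e^{2\pi i \xi_2 x_2}\widehat f(\xi_1,\xi_2)\,d\xi_2$, one sees that for fixed $x_2$ the operator in the $x_1$-variable is exactly the one-parameter trilinear flag multiplier of Theorem \ref{Muscalu} applied to $F_{x_2}(\cdot)$, $g_1$, $h_1$, while the $x_2$-dependence enters through the bilinear multiplier $m''(\eta_2,\zeta_2)$ paired with $g_2,h_2$ and the Fourier inversion in $\xi_2$.

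Concretely, I would first handle the $x_1$-variable. For each fixed $x_2$, apply the one-parameter flag estimate of Theorem \ref{Muscalu} in the form $L^{p}\times L^{p_2}\times L^{p_3}\to L^r$ (in $x_1$) to bound the $x_1$-partial operator by $\|f(\cdot,x_2)\|_{L^{p}_{x_1}}$ — wait, more carefully: because $g=g_1\otimes g_2$ and $h=h_1\otimes h_2$, the $x_2$-parts $g_2,h_2$ come out as multiplicative weights, and the remaining trilinear object in $x_1$ is $T_{m',m''(\cdot,x_2\text{-frozen})}$, whose symbol in the $x_1$-frequencies is still a legitimate one-parameter flag symbol. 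This yields a pointwise-in-$x_2$ bound of the form
\begin{equation*}
\|T(f,g,h)(\cdot,x_2)\|_{L^r_{x_1}}\lesssim \|\mathcal{G}_2(x_2)\|\cdot\|\mathcal{H}_2(x_2)\|\cdot\|f\|_{L^{p}_{x_1}(\cdots)},
\end{equation*}
where $\mathcal{G}_2,\mathcal{H}_2$ are the $x_2$-profiles obtained after the first reduction. Then I would take the $L^r_{x_2}$ norm of this inequality and apply Hölder in $x_2$ with exponents matched to $1/r=1/p+1/q_2+1/q_3$, using the classical one-parameter Coifman--Meyer / pseudo-differential theory (Theorem \ref{pseth}, or just the multiplier form for $m''\in\mathcal M(\n^2)$) in the second variable to control $\|\mathcal{G}_2\|_{L^{q_2}_{x_2}}\|\mathcal{H}_2\|_{L^{q_3}_{x_2}}$ by $\|g_2\|_{L^{q_2}}\|h_2\|_{L^{q_3}}$, and then reassemble using $\|g_1\|_{L^{p_2}}\|g_2\|_{L^{q_2}}=\|g\|_{L^{p_2}_{x_1}(L^{q_2}_{x_2})}$ by the tensor structure (and similarly for $h$). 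A technical point: the mixed-norm structure $L^{p_2}_{x_1}(L^{q_2}_{x_2})$ requires the $x_1$-estimate and the $x_2$-estimate to be performed in the correct order; the tensor-product hypothesis is precisely what decouples them, so the order does not create measurability or Fubini obstructions.

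The main obstacle I expect is the bookkeeping in the first reduction — namely, producing a clean statement that freezing $x_2$ turns \eqref{assumption} into a one-parameter flag multiplier in $x_1$ whose implied constant is uniform in $x_2$ and whose ``residual'' $x_2$-dependence is exactly a one-parameter bilinear multiplier applied to $g_2,h_2$. This is conceptually straightforward because the symbol literally factors as $m'(\xi_1,\eta_1,\zeta_1)m''(\eta_2,\zeta_2)$, but one must be careful that the Fourier inversion in $\xi_2$ (which is needed because $f$ is not assumed to be a tensor) commutes appropriately with the $x_1$-operator; this is justified since for Schwartz $f$ all integrals converge absolutely and the one-parameter flag operator is linear and bounded. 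Once this decoupling lemma is in place, the rest is Hölder's inequality in the $x_2$ variable plus the quoted one-parameter theorems, with the range $1<p,p_2,p_3,q_2,q_3<\infty$ and $0<r<\infty$ falling out directly from the corresponding ranges in Theorem \ref{Muscalu} and the classical Coifman--Meyer theorem.
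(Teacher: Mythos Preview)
Your overall strategy---factor the operator via the tensor structure of $g,h$ and the product form $m'(\xi_1,\eta_1,\zeta_1)m''(\eta_2,\zeta_2)$ of the symbol, then apply one-parameter multilinear estimates in each variable separately and glue with H\"older---is exactly the paper's approach. However, you are overcomplicating the decoupling and misidentifying the $x_1$-operator.

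The point you are missing is that the factorization is \emph{exact}, not approximate: since $m''$ depends only on $(\eta_2,\zeta_2)$ and $g,h$ are tensors, one computes directly (writing $\mathscr F_1$ for the partial Fourier transform in the first variable)
\[
\eqref{assumption}(f,g,h)(x_1,x_2)=T_1\big(f(\cdot,x_2),g_1,h_1\big)(x_1)\cdot T_2(g_2,h_2)(x_2),
\]
where $T_1$ has symbol $m'(\xi_1,\eta_1,\zeta_1)$ and $T_2$ has symbol $m''(\eta_2,\zeta_2)$. There is no ``$m''(\cdot,x_2\text{-frozen})$'' entering the $x_1$-operator, and no uniformity-in-$x_2$ issue to worry about: $T_2(g_2,h_2)(x_2)$ is simply a scalar factor that pulls out of the $x_1$-integral. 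Crucially, $m'\in\mathcal M(\mathbb R^3)$ is a \emph{classical} Coifman--Meyer symbol, so $T_1$ is a standard one-parameter trilinear multiplier, not a flag operator; you do not need Theorem~\ref{Muscalu} here, only the trilinear Coifman--Meyer theorem. The paper then takes $\|\cdot\|_{L^r_{x_1}}$ (using $T_1:L^p\times L^{p_2}\times L^{p_3}\to L^r$), applies H\"older in $x_2$ with $1/r=1/p+1/s$, $1/s=1/q_2+1/q_3$, and finishes with the bilinear Coifman--Meyer bound $\|T_2(g_2,h_2)\|_{L^s}\lesssim\|g_2\|_{L^{q_2}}\|h_2\|_{L^{q_3}}$. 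Once you see the clean product structure, the ``bookkeeping obstacle'' you anticipated disappears.
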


  \begin{proof}

    Using the tensor products, \eqref{assumption} becomes

    \begin{eqnarray*}
     & & \int_{\n^{6}}m'(\xi_1,\eta_1,\zeta_1)m''(\eta_2,\zeta_2)\hat f(\xi) \hat g_1(\eta_1) \hat g_2(\eta_2) \hat h_1(\zeta_1)\hat h_2(\zeta_2)e^{2\pi i(\xi+\eta+\zeta)\cdot x}d\xi d\eta d\zeta \\
     &=& \left(\int _{\n^{4}} m'(\xi_1,\eta_1,\zeta_1)\hat f(\xi) \hat g_1(\eta_1) \hat h_1(\zeta_1)e^{2\pi i(\xi_1+\eta_1+\zeta_1)\cdot x_1} e^{2\pi i\xi_2\cdot x_2}  d\xi d\eta_1 d\zeta_1 \right) \\
      & & \quad \cdot \left(\int _{\n^{2}}m''(\eta_2,\zeta_2) \hat g_2(\eta_2) \hat h_2(\zeta_2)  e^{2\pi i(\eta_2+\zeta_2)\cdot x_2} d\eta_2 d\zeta_2 \right) \\
      &=& \left(\int _{\n^{3}} m'(\xi_1,\eta_1,\zeta_1) \mathscr{F}_1 f(\xi_1,x_2) \hat g_1(\eta_1) \hat h_1(\zeta_1)e^{2\pi i(\xi_1+\eta_1+\zeta_1)\cdot x_1} e^{2\pi i\xi_2\cdot x_2}  d\xi d\eta_1 d\zeta_1 \right) \\
      & & \quad \cdot \left(\int _{\n^{2}}m''(\eta_2,\zeta_2) \hat g_2(\eta_2) \hat h_2(\zeta_2)  e^{2\pi i(\eta_2+\zeta_2)\cdot x_2} d\eta_2 d\zeta_2 \right) \\
      &=:&   T_1\left(f(\cdot,x_2),g_1(\cdot),h_1(\cdot)\right)(x_1) \cdot  T_2\left(g_2,h_2\right)(x_2),
    \end{eqnarray*}
    where $\mathscr{F}_1$ represents the Fourier transform with respect to the first variable. Then its $L^r$ norm can be estimated as

    \begin{eqnarray*}
     & &\| T_1\left(f(\cdot,x_2),g_1(\cdot),h_1(\cdot)\right)(x_1) \cdot  T_2\left(g_2,h_2\right)(x_2)\|_{L^r}^r\\
     &=& \int \left| T_1\left(f(\cdot,x_2),g_1(\cdot),h_1(\cdot)\right)(x_1) \right|^r  \left|T_2\left(g_2,h_2\right)(x_2)\right|^r dx_1 dx_2 \\
      &=& \int \left(\int \left| T_1\left(f(\cdot,x_2),g_1(\cdot),h_1(\cdot)\right)(x_1) \right|^r  dx_1 \right)  \left|T_2\left(g_2,h_2\right)(x_2)\right|^r dx_2\\
      &\lesssim& \int \|f(\cdot,x_2)\|_{L^p_{x_1}}^r \|g_1\|_{L^{p_2}_{x_1}}^r \|h_1\|_{L^{p_3}_{x_1}}^r  \left|T_2\left(g_2,h_2\right)(x_2)\right|^r dx_2 \\
      &=&  \int \|f(\cdot,x_2)\|_{L^p_{x_1}}^r  \left|T_2\left(g_2,h_2\right)(x_2)\right|^r dx_2 \cdot \|g_1\|_{L^{p_2}_{x_1}}^r\|h_1\|_{L^{p_3}_{x_1}}^r \\
      &\lesssim& \left(\int \|f(\cdot,x_2)\|_{L^p_{x_1}}^p dx_2\right)^{r\over p}  \left( \int \left|T_2\left(g_2,h_2\right)(x_2)\right|^s dx_2 \right)^{r\over s}\cdot \|g_1\|_{L^{p_2}_{x_1}}^r\|h_1\|_{L^{p_3}_{x_1}}^r \\
      &\lesssim& \|f\|_{L^p}^r \|g_2\|_{L^{q_2}_{x_2}}^r\|h_2\|_{L^{q_3}_{x_2}}^r \|g_1\|_{L^{p_2}_{x_1}}^r\|h_1\|_{L^{p_3}_{x_1}}^r,
    \end{eqnarray*}
    where $1<p,p_2,p_3,q_2,q_3<\infty$, ${1\over p_2}+{1\over p_3}={1\over q_2}+{1\over q_3}={1\over s}$, and we just use the H\"older's inequality, the $L^r$ boundedness of the classical one-parameter trilinear Fourier multiplier $T_1$ and bilinear multiplier $T_2$, i.e.,
    $$\|T_1(f_1,f_2,f_3)\|_{L^r}\lesssim \|f_1\|_{L^p}\|g_1\|_{L^{p_2}}\|h_1\|_{L^{p_3}}, \quad \|T_2(g_2,h_2)\|_{L^s}\lesssim \|g_2\|_{L^{q_2}}\|h_2\|_{L^{q_3}}. $$
    %Note that the restrictions for the above indices  are $1<p,p_2,p_3\leq \infty$ with $0<r<\infty$, and  $1<q_2,q_3\leq \infty$  with $0<s={1\over q_2}+{1\over q_3}<\infty$. That is, $1<p,p_2,p_3,q_2,q_3\leq \infty$ with $(p,p_2,p_3)\neq (\infty,\infty,\infty)$ and $(q_2,q_3)\neq (\infty,\infty)$.

  \end{proof}

 Then one can get the same mixed  $L^r$ estimates for \eqref{biop34} under the same tensor product assumption.
 \begin{proposition}
 Let $g(x)=g_1(x_1)\otimes g_2(x_2) $, $h(x)=h_1(x_1)\otimes h_2(x_2) $, and ${1\over p}+ {1\over p_2}+{1\over p_3}={1\over p}+{1\over q_2}+{1\over q_3}={1\over r}$. Then \eqref{biop34} maps $L^p \times L^{p_2}_{x_1}(L^{q_2}_{x_2})\times L^{p_3}_{x_1}(L^{q_3}_{x_2}) \to L^r$, with $1<p,p_2,p_3,q_2,q_3< \infty$ and $0<r<\infty$.
 \end{proposition}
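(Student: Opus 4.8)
The plan is to deduce the proposition from the reduction carried out in the proof of Theorem \ref{bithm} together with Proposition \ref{tensor32}. Recall that this reduction expresses \eqref{biop34} as a finite linear combination of operators of three kinds: (i) classical bi-parameter trilinear Fourier multipliers, i.e.\ trilinear analogues of \eqref{bicofop} with symbol in $\mathcal{BM}(\mathbb{R}^6)$ --- this accounts for the pieces $m_{1,1}m_2$, $m_1^{2,2}m_2$, $m_1^{1,2}m_2$, $m_1^{2,1}m_2$, the operators $T_{d_1,d_2}^i$ with $d_1>0$ or $d_2>0$, the operators of type $T_{II_1}$, and the last two terms of \eqref{q2dcp}; (ii) operators of the form \eqref{assumption}, with symbol $m'(\xi_1,\eta_1,\zeta_1)m''(\eta_2,\zeta_2)$, $m'\in\mathcal{M}(\mathbb{R}^3)$, $m''\in\mathcal{M}(\mathbb{R}^2)$ --- the operators $T_{III_1}$ and the first two terms of \eqref{q2dcp}; and (iii) operators of the form $O(f)(x)\,S(g,h)(x)$, where $O$ is a bi-parameter Littlewood--Paley type operator bounded on $L^p(\nn)$ and $S$ is a bilinear bi-parameter Coifman--Meyer operator whose symbol splits over the two parameters as $m'(\eta_1,\zeta_1)m''(\eta_2,\zeta_2)$ --- the operators $T_{I_1}$. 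By linearity, the first step is to reduce to proving the claimed mixed-norm bound for each of these three kinds separately.

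For the type (i) pieces I would invoke the mixed-norm (vector-valued) form of the bi-parameter trilinear Coifman--Meyer theorem: a trilinear multiplier with symbol in $\mathcal{BM}(\mathbb{R}^6)$ maps $L^{a_1}_{x_1}(L^{b_1}_{x_2})\times L^{a_2}_{x_1}(L^{b_2}_{x_2})\times L^{a_3}_{x_1}(L^{b_3}_{x_2})\to L^{a}_{x_1}(L^{b}_{x_2})$ whenever $\sum_i 1/a_i=1/a$ and $\sum_i 1/b_i=1/b$; with $a_1=b_1=p$, $a_2=p_2$, $b_2=q_2$, $a_3=p_3$, $b_3=q_3$ and $a=b=r$, the hypotheses $1/p+1/p_2+1/p_3=1/p+1/q_2+1/q_3=1/r$ are precisely what is needed. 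This does not require $g,h$ to be of tensor product form; it follows either from the helicoidal method of \cite{BMu2} or by iterating the one-parameter Banach-valued Coifman--Meyer estimate in the two parameters. For the square-function pieces $T_{d_1,d_2}^i$ with some $d_i>0$, one may instead rerun the argument of Section \ref{bithmpf} for the case $d_1,d_2>0$: since $g=g_1\otimes g_2$ and $h=h_1\otimes h_2$, the strong maximal expressions $\sup_{k_1,k_2}|\Delta_{k_1}\Delta_{k_2}g|$, etc., become products $Mg_1\otimes Mg_2$ whose mixed norms factorize, while the bi-parameter square function of $f$ is controlled by $\|f\|_{L^p}$.

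For type (ii) the bound is exactly Proposition \ref{tensor32}. For type (iii), the tensor product hypothesis lets the symbol of $S$ together with $g=g_1\otimes g_2$, $h=h_1\otimes h_2$ split as $S(g,h)(x)=S^{(1)}(g_1,h_1)(x_1)\,S^{(2)}(g_2,h_2)(x_2)$ with $S^{(1)},S^{(2)}$ one-parameter bilinear Coifman--Meyer operators, so the operator equals $O(f)(x_1,x_2)\,S^{(1)}(g_1,h_1)(x_1)\,S^{(2)}(g_2,h_2)(x_2)$; one then estimates its $L^r(\nn)$ norm exactly as in the proof of Proposition \ref{tensor32} --- H\"older in $x_1$ with exponents $(p,s)$ where $1/s=1/p_2+1/p_3$, then H\"older in $x_2$ with exponents $(p,s')$ where $1/s'=1/q_2+1/q_3$ --- using $\|O(f)\|_{L^p}\lesssim\|f\|_{L^p}$ and the boundedness of $S^{(1)}$ from $L^{p_2}\times L^{p_3}$ to $L^s$ and of $S^{(2)}$ from $L^{q_2}\times L^{q_3}$ to $L^{s'}$.

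The main obstacle is the mixed-norm statement for classical bi-parameter trilinear Coifman--Meyer operators used in step (i); everything else is bookkeeping of the reduction of \eqref{biop34} from Section \ref{bithmpf} and the Fubini-type computation already carried out for Proposition \ref{tensor32}. Once that vector-valued bi-parameter Coifman--Meyer bound is recorded in the exact mixed-norm form above, the proposition follows by assembling the three cases.
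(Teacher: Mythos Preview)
Your overall strategy is the same as the paper's: reduce \eqref{biop34} via Section~\ref{bithmpf} and then verify the mixed-norm bound for each resulting piece. Your types (ii) and (iii) are handled exactly as the paper does (the paper in fact lumps them together, since the Fubini/H\"older computation of Proposition~\ref{tensor32} covers both).

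The difference lies in type (i), the classical bi-parameter trilinear multipliers $T_3$. You leave this as a black box, suggesting it follows from \cite{BMu2} or by iterating a one-parameter Banach-valued Coifman--Meyer estimate. The iteration route does not work directly: a general symbol in $\mathcal{BM}(\mathbb{R}^6)$ does not factor over the two parameters, so one cannot simply freeze one variable and apply the one-parameter theory. The paper instead proves this step explicitly as Proposition~\ref{ppt3}, and here the tensor-product hypothesis on $g$ and $h$ is genuinely used. The argument adapts the Sobolev-norm technique of \cite{chen2014hormander}: decompose $m$ according to which of $|\xi_i|,|\eta_i|,|\zeta_i|$ dominates in each parameter, control each piece pointwise by products of expressions of the form $(M_s|\tilde\psi_j(D)f|^t)^{1/t}$, $(M_s|\tilde\psi_k(D)g|^t)^{1/t}$, $(M_s|h|^t)^{1/t}$, and then exploit that for $g=g_1\otimes g_2$ one has $M_s(|g|^t)=M(|g_1|^t)\otimes M(|g_2|^t)$, so the mixed norm factors and one can apply H\"older in each variable separately followed by the weighted/vector-valued Fefferman--Stein inequality. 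Without the tensor structure of $g,h$, this splitting of the strong maximal function fails and the argument does not close.

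So your outline is correct but incomplete at the one place you flagged: to finish, you need to supply the argument of Proposition~\ref{ppt3} rather than defer to an external mixed-norm bi-parameter statement.
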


 \begin{proof}

   Recall in our earlier reduction in Section \ref{bithmpf},  the study of the multipliers \eqref{biop34} can be reduced to essentially two types of multipliers, i.e., the classical bi-parameter trilinear multipliers (under the tensor product setting)
   \begin{equation}
   \label{opT3}
   T_3(f,g_1\otimes g_2,h_1\otimes h_2)= \int_{\n^{6}}m(\xi,\eta,\zeta)\hat f(\xi) \hat g_1(\eta_1) \hat g_2(\eta_2) \hat h_1(\zeta_1)\hat h_2(\zeta_2)e^{2\pi i(\xi+\eta+\zeta)\cdot x}d\xi d\eta d\zeta,
   \end{equation}
    and the ones like \eqref{assumption}, i.e., \eqref{assumption} or
    \begin{equation}
    \label{assumption2}\int_{\n^{6}}m'(\xi_2,\eta_2,\zeta_2)m''(\eta_1,\zeta_1)\hat f(\xi) \hat g_1(\eta_1) \hat g_2(\eta_2) \hat h_1(\zeta_1)\hat h_2(\zeta_2)e^{2\pi i(\xi+\eta+\zeta)\cdot x}d\xi d\eta d\zeta.
    \end{equation}
   Note that the same argument for \eqref{assumption} shows \eqref{assumption2} maps  $L^p \times L^{p_2}_{x_1}(L^{q_2}_{x_2})\times L^{p_3}_{x_1}(L^{q_3}_{x_2}) \to L^r$, with $1<p,p_2,p_3,q_2,q_3<\infty$ and $0<r<\infty$.

   Then the proof can be completed as long as we can show the mixed norm estimate for $T_3$.
   \end{proof}

\begin{proposition}
\label{ppt3}
  The bi-parameter trilinear multiplier $T_3$ maps $L^{p} \times L^{p_2}_{x_1}(L^{q_2}_{x_2}) \times L^{p_3}_{x_1}(L^{q_3}_{x_2})  \to L^r$, for $1<p,q_1,p_2,q_2,p_3,q_3<\infty$ and $0<r<\infty$, with $1/r=1/p+1/p_2+1/p_3=1/p+ 1/q_2+1/q_3$, if one assumes $g(x_1,x_2)=g(x_1)\otimes g(x_2)$ and $h(x_1,x_2)=h(x_1)\otimes h(x_2)$.
\end{proposition}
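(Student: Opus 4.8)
The plan is to reduce the mixed-norm estimate for $T_3$ to the already-known full-norm estimate for classical bi-parameter trilinear multipliers, exploiting the tensor product structure of $g$ and $h$ together with the fact that $T_3$ is itself a bi-parameter multiplier operator which decouples along the two parameters. The key observation is that when the symbol $m(\xi,\eta,\zeta)\in\mathcal{BM}(\mathbb{R}^6)$ and the inputs in the second and third slots are tensor products, the operator $T_3$ can be viewed as acting in the $x_1$-variable by a one-parameter trilinear multiplier (whose symbol depends on the $\xi_2$-parameter only mildly, via the frozen variable) composed with an operation in the $x_2$-variable. More precisely, write $T_3(f,g_1\otimes g_2,h_1\otimes h_2)(x_1,x_2)$ by first performing the $\xi_1,\eta_1,\zeta_1$ integration: for fixed $x_2$ this produces, schematically,
\begin{equation*}
\int_{\mathbb{R}^3} m(\xi_1,\xi_2,\eta_1,\eta_2,\zeta_1,\zeta_2)\,\mathscr{F}_1 f(\xi_1,x_2)\,\hat g_1(\eta_1)\hat h_1(\zeta_1)\,e^{2\pi i x_1(\xi_1+\eta_1+\zeta_1)}\,d\xi_1 d\eta_1 d\zeta_1,
\end{equation*}
which, after the remaining $\xi_2,\eta_2,\zeta_2$ integrations, we wish to bound in $L^{p_2}_{x_1}(L^{q_2}_{x_2})$-type norms against the inputs.

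First I would decompose $m$ using a bi-parameter Littlewood--Paley partition of unity in the second parameter, $m=\sum_{k_2} m(\xi,\eta,\zeta)\widehat{\psi}_{k_2}$ localizing $|\xi_2|+|\eta_2|+|\zeta_2|\sim 2^{k_2}$ (plus the appropriate paraproduct-type arrangement separating which of $\xi_2,\eta_2,\zeta_2$ carries the large frequency), so that on each piece the symbol becomes essentially a tensor $m^1_{k_2}(\xi_1,\eta_1,\zeta_1)\cdot m^2_{k_2}(\xi_2,\eta_2,\zeta_2)$ with $m^1_{k_2}\in\mathcal{M}(\mathbb{R}^3)$ uniformly in $k_2$ and $m^2_{k_2}$ a normalized bump at scale $2^{k_2}$; this is exactly the reduction already carried out in Section \ref{bithmpf} to pass from general bi-parameter multipliers to products of one-parameter symbols, combined with the classical argument of \cite{chen2014hormander}. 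On each such piece the operator factors as a one-parameter trilinear Coifman--Meyer multiplier $T^1_{k_2}$ acting in $x_1$ on $(f(\cdot,x_2),g_1,h_1)$, multiplied against a bilinear frequency-localized piece in $x_2$ built from $g_2,h_2$ and from the frozen $\xi_2$-dependence of $f$. Then I would apply, in the $x_1$-variable, the mixed-norm / vector-valued extension of Theorem \ref{pseth} (equivalently the Fefferman--Stein style vector-valued inequality for Coifman--Meyer operators), to get for each fixed $x_2$
\begin{equation*}
\big\|T^1_{k_2}(f(\cdot,x_2),g_1,h_1)\big\|_{L^{p_2}_{x_1}}\lesssim \|g_1\|_{L^{p_2}_{x_1}}\,\|h_1\|_{L^{p_3}_{x_1}}\,\big\|\Delta^{(2)}_{k_2}f(\cdot,x_2)\big\|_{L^{p}_{x_1}},
\end{equation*}
uniformly in $k_2$, where $\Delta^{(2)}_{k_2}$ is a Littlewood--Paley projection in $x_2$. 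After this one takes $L^{q_2}_{x_2}$, $L^{q_3}_{x_2}$, $L^p_{x_2}$ norms of the three factors, uses Hölder in $x_2$ to match the exponents $1/r=1/p+1/q_2+1/q_3$, and sums over $k_2$ using orthogonality of the $\Delta^{(2)}_{k_2}$ together with the decay gained from the bump normalization of $m^2_{k_2}$ (or, for the paraproduct pieces, a square-function estimate in $x_2$). The tensor product hypothesis is essential precisely here: it lets the $x_1$-estimate be performed with $x_2$ held fixed and the resulting function of $x_2$ factor cleanly.

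The main obstacle I anticipate is bookkeeping the paraproduct decomposition in the second parameter carefully enough that each resulting piece genuinely has the product structure $m^1(\xi_1,\eta_1,\zeta_1)\cdot(\text{bump in }2^{k_2})$, while retaining enough decay in $k_2$ to sum—this is the same technical point already handled in subsection 4.2.2 and in \cite{chen2014hormander}, so I would quote it rather than redo it. A secondary subtlety is that when the large $x_2$-frequency sits on $\xi_2$ (the input $f$), the relevant Littlewood--Paley projection lands on $f$, and one needs a vector-valued square-function bound $\|(\sum_{k_2}|\Delta^{(2)}_{k_2}f|^2)^{1/2}\|_{L^p}\lesssim\|f\|_{L^p}$; when it sits on $\eta_2$ or $\zeta_2$ the projection lands on $g_2$ or $h_2$ and one instead needs the square function on those factors in $L^{q_2}$ or $L^{q_3}$. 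All of these are classical for $1<p,p_2,p_3,q_2,q_3<\infty$, and the exponent range $0<r<\infty$ is reached because the one-parameter Coifman--Meyer theory already covers $r<1$. With these ingredients assembled, the proposition follows by summing the finitely many symbol-type contributions produced by the reduction.
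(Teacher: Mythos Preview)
Your approach is different from the paper's, and as written it has a real gap at the summing-over-$k_2$ step. The paper does \emph{not} factor the symbol into one-parameter pieces. Instead it follows \cite{chen2014hormander}: decompose $m$ according to which of $|\xi_i|,|\eta_i|,|\zeta_i|$ is largest in each parameter, use the Sobolev control $\sup_{j,k}\|m_{j,k}\|_{H^{s,s}}<\infty$ to obtain a \emph{pointwise} bound of each Littlewood--Paley piece by products of strong maximal functions (Lemma~\ref{lmstrongmax}), and only then invoke the tensor hypothesis via $M_s(g_1\otimes g_2)=M(g_1)\otimes M(g_2)$ to separate variables by H\"older. Because everything stays pointwise until the last step, the square functions in both parameters are already in place before any norm is taken, and Fefferman--Stein plus Littlewood--Paley close the argument for all $0<r<\infty$.

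Your route instead applies the one-parameter Coifman--Meyer estimate in $x_1$ for each fixed $k_2$, and only afterwards tries to sum in $k_2$. At that point you have already passed to a norm, so the orthogonality in $k_2$ is no longer available without a genuinely vector-valued ($\ell^2$ in $k_2$) trilinear Coifman--Meyer inequality in $x_1$, which you do not cite or prove. The phrase ``decay gained from the bump normalization of $m^2_{k_2}$'' does not help: the Coifman--Meyer bound in $x_1$ is uniform in the symbol, so no $k_2$-decay survives it. There are also smaller issues: the displayed inequality has the wrong target space ($L^{p_2}_{x_1}$ should be $L^{r}_{x_1}$), and the claim that Section~\ref{bithmpf} ``already carries out'' the tensor factorization of a general $m\in\mathcal{BM}(\mathbb{R}^6)$ is not accurate---that section reduces the \emph{flag} symbol $m_1m_2$, while \cite{chen2014hormander} uses the Sobolev/maximal-function method rather than any factorization. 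If you want to salvage your outline you would need to set up the $k_2$-square function \emph{before} taking the $x_1$-norm (i.e.\ prove a vector-valued estimate), which is substantially more work than the paper's pointwise route.
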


For a quick and direct way to see the proof, one can refer to the work \cite{chen2014hormander}, where the classical H\"ormander type $L^r$ estimate was obtained for the multilinear and multi-parameter  multipliers with limited smoothness. The main idea was to control $m$ by using the Sobolev norm, and the main technique was to take care of the functions, i.e., $f,g,h,\dots$, by strong maximal functions and H\"older's inequalities. One can see, such methods would go through if one assumes the tensor product form for these functions.

\begin{definition}
  For $f\in \mathcal{S}'(\mathbb{R}^6)$, define the bi-parameter Sobolev space
  $$\|f\|_{H^{s_1,s_2}}:= \|(I-\Delta)^{s_1/2,s_2/2}f\|_{L^2}<\infty,$$
  where
  \begin{gather*}
  (I-\Delta)^{s_1/2,s_2/2}f= \\
  \mathcal{F}^{-1}[(1+|\xi_1|^2+|\eta_1|^2+|\zeta_1|^2)^{s_1/2}(1+|\xi_2|^2+|\eta_2|^2+|\zeta_2|^2)^{s_2/2} \hat f (\xi_1,\xi_2,\eta_1,\eta_2,\zeta_1,\zeta_2)]
  \end{gather*} for $\xi_1,\xi_2,\eta_1,\eta_2,\zeta_1,\zeta_2 \in \n$.
\end{definition}

Let $m_{j,k}(\xi_1,\xi_2,\eta_1,\eta_2,\zeta_1,\zeta_2)=m(2^j\xi_1,2^k\xi_2,2^j\eta_1,2^k\eta_2,2^j \zeta_1, 2^k \zeta_2)\psi_1(\xi_1,\eta_1,\zeta_1) \psi_2(\xi_2,\eta_2,\zeta_2)$,
where smooth cutoff functions $\psi_1,\psi_2$ satisfy
$$\spt \psi_1, \, \spt \psi_2 \subset \{u\in\n^3: 1/2\leq |u|\leq 2\}.$$

Note that if $m\in \mathcal{BM}(\n^6)$ satisfying
$$ |\partial_{\xi_1,\xi_2}^{\alpha_1,\alpha_2}\partial_{\eta_1,\eta_2}^{\beta_1,\beta_2}\partial_{\zeta_1,\zeta_2}^{\gamma_1,\gamma_2}  m(\xi,\eta,\zeta)|\lesssim \prod_{i=1}^2{1\over (|\xi_i|+|\eta_i|+|\zeta_i|)^{\alpha_i+\beta_i+\gamma_i}},$$
for $\alpha_i+\beta_i+\zeta_i \leq N_i $  ($i=1,2$), then there holds
 $$\sup_{j,k} \|m_{j,k}\|_{H^{N_1,N_2}}<\infty.$$

We also give a lemma which will be useful later.
\begin{lemma}[\cite{chen2014hormander}]
\label{lmstrongmax}
  For any $\epsilon_1,\epsilon_2>0$, there exists a constant $C>0$ such that
  $$\sup_{r_1,r_2>0}\left(r_1^n r_2^n \int_{\mathbb{R}^2} \frac{f(u,v)}{(1+r_1|x_1-u|)^{1+\epsilon_1}(1+r_2|x_2-v|)^{1+\epsilon_2}} dudv\right)\leq C M_sf(x_1,x_2),$$
  where $M_s$ is the strong maximal operator.
\end{lemma}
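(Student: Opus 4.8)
The plan is to prove this by the classical dyadic-annulus decomposition of the convolution kernel, which reduces the quantity inside the supremum to a double geometric series of rectangle-averages of $f$, each of which is controlled by $M_sf(x_1,x_2)$.

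First I would reduce to $f\ge 0$ and fix $x_1,x_2\in\n$ and $r_1,r_2>0$. Performing the change of variables $u=x_1-s/r_1$, $v=x_2-t/r_2$ (so that $r_1^n r_2^n\,du\,dv=ds\,dt$ and $r_1|x_1-u|=|s|$, $r_2|x_2-v|=|t|$), the expression inside the supremum becomes
\begin{equation*}
\int_{\nn}\frac{f(x_1-s/r_1,\,x_2-t/r_2)}{(1+|s|)^{1+\epsilon_1}\,(1+|t|)^{1+\epsilon_2}}\,ds\,dt,
\end{equation*}
so it suffices to bound this uniformly in $r_1,r_2$ by a constant multiple of $M_sf(x_1,x_2)$.

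Next I would decompose $\nn=\bigcup_{j,k\ge 0}(A_j\times B_k)$, where $A_0=\{|s|\le 1\}$, $A_j=\{2^{j-1}<|s|\le 2^j\}$ for $j\ge 1$, and $B_k$ is defined analogously in the $t$ variable. On $A_j\times B_k$ one has $(1+|s|)^{1+\epsilon_1}(1+|t|)^{1+\epsilon_2}\gtrsim 2^{j(1+\epsilon_1)}2^{k(1+\epsilon_2)}$, with the convention that the corresponding factor is $\sim 1$ when its index is $0$. Undoing the substitution on this annulus, the contribution of $A_j\times B_k$ is at most a constant times $2^{-j(1+\epsilon_1)}2^{-k(1+\epsilon_2)}\,r_1^n r_2^n\int_{R_{j,k}}f$, where $R_{j,k}=\{|x_1-u|\le 2^j/r_1\}\times\{|x_2-v|\le 2^k/r_2\}$ is an axis-parallel rectangle containing $(x_1,x_2)$ with $|R_{j,k}|\sim (2^j/r_1)^n(2^k/r_2)^n$. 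By the very definition of the strong maximal operator, $\frac1{|R_{j,k}|}\int_{R_{j,k}}f\le M_sf(x_1,x_2)$, so the contribution of the pair $(j,k)$ is $\lesssim 2^{jn-j(1+\epsilon_1)}\,2^{kn-k(1+\epsilon_2)}\,M_sf(x_1,x_2)$.

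Finally I would sum over $j,k\ge 0$: the positivity of $\epsilon_1,\epsilon_2$ makes the resulting double geometric series converge (in the one-dimension-per-parameter situation in which the lemma is used the summands are exactly $2^{-j\epsilon_1}2^{-k\epsilon_2}$), which yields the desired bound with a constant independent of $x_1,x_2,r_1,r_2$; taking the supremum over $r_1,r_2$ then completes the proof. There is no real obstacle here — the two points to keep in mind are that the kernel is a genuine tensor product whose dyadic level sets are rectangles with \emph{independently} varying side-lengths $2^j/r_1$ and $2^k/r_2$, so it is the \emph{strong} (two-parameter) maximal function, rather than the isotropic Hardy--Littlewood maximal function on $\nn$, that naturally appears, and that the uniformity in $(r_1,r_2)$ must be preserved throughout, which the opening rescaling makes transparent.
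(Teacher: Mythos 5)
Your proof is correct and is the standard dyadic-annulus argument; the paper itself states this lemma as a citation to \cite{chen2014hormander} without reproducing a proof, and your argument is exactly the one that reference (and the folklore) uses. You also correctly flag the only delicate point in the statement as written: with the normalization $r_1^n r_2^n$ the geometric series $\sum_{j}2^{j(n-1-\epsilon_1)}$ only converges when $\epsilon_1>n-1$ (so the exponents should really be $n+\epsilon_i$ in general dimension), but in the one-dimensional-per-parameter setting in which the lemma is applied this reduces to $\epsilon_i>0$ and your proof closes without any gap.
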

\medskip
Now we give a proof for Proposition \ref{ppt3}.
\begin{proof}
We first repeat some arguments in \cite{chen2014hormander}.  Consider a decomposition of the symbol, according to the size of the support of each variable. More precisely, take $\phi_1 \in C^\infty$ on $[0,\infty)$ satisfying
$$\phi_1(t)=1 \ \text{on}\ [0,1/12], \quad \spt \phi_1 \subset [0,1/6],$$
and also let $\phi_2(t)=1-\phi_1(t)$. Note that we can write
\begin{eqnarray*}
1(\xi_1,\eta_1,\zeta_1)&=& \left[\phi_1(|\xi_1|/|(\xi_1,\eta_1,\zeta_1)|)+\phi_2(|\xi_1|/|(\xi_1,\eta_1,\zeta_1)|)\right] \\
& & \cdot \left[\phi_1(|\eta_1|/|(\xi_1,\eta_1,\zeta_1)|)+\phi_2(|\eta_1|/|(\xi_1,\eta_1,\zeta_1)|)\right] \\
 & & \cdot \left[\phi_1(|\zeta_1|/|(\xi_1,\eta_1,\zeta_1)|)+\phi_2(|\zeta_1|/|(\xi_1,\eta_1,\zeta_1)|)\right].
\end{eqnarray*}
Obviously a similar expression holds for $1(\xi_2,\eta_2,\zeta_2)$. Correspondingly the symbol $m$ can be decomposed as
\begin{eqnarray*}
 & & m(\xi_1,\xi_2,\eta_1,\eta_2,\zeta_1,\zeta_2) \\
  &=&m(\xi_1,\xi_2,\eta_1,\eta_2,\zeta_1,\zeta_2)\cdot 1(\xi_1,\eta_1,\zeta_1)\cdot 1(\xi_2,\eta_2,\zeta_2)
\end{eqnarray*}
We group the situations after the decomposition as follows.

 For $1(\xi_1,\eta_1,\zeta_1)$, we consider two groups.

  Group $I_1$: The largest component is much larger than the other two, i.e.,
\begin{itemize}
  \item[(a)] $|\xi_1|\gg |\eta_1|+|\zeta_1|.$
  \item[(b)] $|\eta_1|\gg |\xi_1|+|\zeta_1|.$
  \item[(c)] $|\zeta_1|\gg |\xi_1|+|\eta_1|.$
\end{itemize}

 Group $II_1$: The largest component is comparable to the second largest, i.e.,
 \begin{itemize}
    \item[(a)] $|\xi_1|\approx |\eta_1| \approx |\xi_1|+|\eta_1|+|\zeta_1|$.
     \item[(b)] $|\eta_1|\approx|\zeta_1|\approx |\xi_1|+|\eta_1|+|\zeta_1|$.
      \item[(c)] $|\zeta_1|\approx  |\xi_1|\approx |\xi_1|+|\eta_1|+|\zeta_1|$.
\end{itemize}

Similarly, for $1(\xi_2,\eta_2,\zeta_2)$, we consider two groups.

Group $I_2$: The largest component is much larger than the other two, i.e.,
\begin{itemize}
  \item[(a)] $|\xi_2|\gg |\eta_2|+|\zeta_2|.$
  \item[(b)] $|\eta_2|\gg |\xi_2|+|\zeta_2|.$
  \item[(c)] $|\zeta_2|\gg |\xi_2|+|\eta_2|.$
\end{itemize}

 Group $II_2$: The largest component is comparable to the second largest, i.e.,
 \begin{itemize}
    \item[(a)] $|\xi_2|\approx |\eta_2| \approx |\xi_2|+|\eta_2|+|\zeta_2|$.
     \item[(b)] $|\eta_2|\approx|\zeta_2|\approx |\xi_2|+|\eta_2|+|\zeta_2|$.
      \item[(c)] $|\zeta_2|\approx  |\xi_2|\approx |\xi_2|+|\eta_2|+|\zeta_2|$.
\end{itemize}

According to the symmetry, it suffices to consider
\begin{eqnarray*}
I_1\otimes I_2&:=& \{I_1(a)\otimes I_2(a),\ I_1(a)\otimes I_2(b),\ I_1(b)\otimes I_2(b),\ I_1(b)\otimes I_2(c),\dots \},\\
II_1\otimes II_2&:=&\{II_1(a)\otimes II_2(a),\ II_1(a)\otimes II_2(b), \ II_1(b)\otimes II_2(b),\dots \},\\
I_1\otimes II_2&:=& \{I_1(a)\otimes II_2(a),\ I_1(a)\otimes II_2(b),\ I_1(b)\otimes II_2(a),\ I_1(b)\otimes II_2(b), \ I_1(b)\otimes II_2(c),\dots\}.
\end{eqnarray*}
In the rest of the proof, even though $m$ in different groups would be multiplied by  corresponding cutoffs which were constructed at the beginning of the proof, for our convenience we will still use $m$ as the notation, since those cutoff functions do not actually play an important role in our calculation.

\medskip

We first consider the cases that belong to $I_1\otimes I_2$.  We take $I_1(a)\otimes I_2(b)$ for example. Consider  the Littlewood-Paley characterization,
\begin{eqnarray*}
  \|T_3(f,g,h)\|_{L^r}\lesssim \left\| \left\{\sum_{j,k} |\psi_1(D/2^j)\psi_2(D/2^k)T_3(f,g,h) |^2\right\}^{1\over 2}\right\|_{L^r},
\end{eqnarray*}
where $\psi_1(D/2^j)\psi_2(D/2^k)T_3:= \mathcal{F}^{-1}[\psi_1(u/2^j)\psi_2(v/2^k) \mathcal{F}(T_3)(u,v)]$. For simplicity, we will just denote it by $\psi_j(D)\psi_k(D)T_3$ in the following work. Note that a subscript $j$ ($k$) always means that the operation is on the first (second) variable.
\begin{eqnarray*}
  A_{j,k}&=&\psi_1(D/2^j)\psi_2(D/2^k)T_3(f,g,h)\\
  &=& \int_{\n^6} m(\xi,\eta,\zeta) e^{ix_1(\xi_1+\eta_1+\zeta_1)}e^{ix_2(\xi_2+\eta_2+\zeta_2)} \psi_j(\xi_1+\eta_1+\zeta_1)\psi_k(\xi_2+\eta_2+\zeta_2) \\
   & &\quad \cdot \hat f(\xi_1,\xi_2)\hat g(\eta_1,\eta_2) \hat h(\zeta_1,\zeta_2) d\xi_1 d\xi_2 d\eta_1 d\eta_2 d\zeta_1 d\zeta_2 \\
  &=& \int_{\n^6} m(\xi,\eta,\zeta)e^{ix_1(\xi_1+\eta_1+\zeta_1)}e^{ix_2(\xi_2+\eta_2+\zeta_2)} \psi_j(\xi_1+\eta_1+\zeta_1) \psi_k(\xi_2+\eta_2+\zeta_2) \\
  & & \quad \tilde \psi_j(\xi_1) \tilde \psi_k(\eta_2)  \hat f(\xi_1,\xi_2)\hat g(\eta_1,\eta_2) \hat h(\zeta_1,\zeta_2)d\xi_1 d\xi_2 d\eta_1 d\eta_2 d\zeta_1 d\zeta_2 \\
  &=& 2^{3j+3k}\int_{\n^6} (\mathcal{F}^{-1}m_{j,k})(2^j(x_1-y_1),2^k(x_2-y_2),2^j(x_1-z_1), 2^k(x_2-z_2),\\
  & & 2^j(x_1-w_1),2^k(x_2-w_2))(\tilde \psi_j(D) f)(y_1,y_2) (\tilde \psi_k(D)) \\
  & & g(z_1,z_2)h(w_1,w_2) dy_1 dy_2 dz_1 dz_2 dw_1 dw_2,
\end{eqnarray*}
where we use the properties that $|\xi_1|+|\eta_1|+|\zeta_1|\approx |\xi_1|$, $|\xi_2|+|\eta_2|+|\zeta_2|\approx |\eta_2|$ and $\tilde \psi$ is a properly chosen ``completed" function based on the size of support, i.e., $\psi_j(\xi_1+\eta_1+\zeta_1)=\tilde \psi_j(\xi_1)\psi_j(\xi_1+\eta_1+\zeta_1)$  and similar for $\tilde \psi_k$.
\medskip

Then we can estimate $A_{j,k}$ as
\begin{eqnarray*}
  & &A_{j,k}\\
  &\leq &\int_{\n^6}2^{3j+3k}(1+ |2^j(x_1-y_1)|+|2^j(x_1-z_1)|+|2^j(x_1-w_1)|)^{s_1} \\
  & &\quad(1+ |2^k(x_2-y_2)|+|2^k(x_2-z_2)|+|2^k(x_2-w_2)|)^{s_2} \\
  & & \quad (\mathcal{F}^{-1}m_{j,k})(2^j(x_1-y_1),2^k(x_2-y_2),2^j(x_1-z_1),2^k(x_2-z_2), \\
  & & \qquad \qquad \qquad \qquad 2^j(x_1-w_1),2^k(x_2-w_2)) \\
  & & \quad  (1+ |2^j(x_1-y_1)|+|2^j(x_1-z_1)|+|2^j(x_1-w_1)|)^{-s_1} \\
  & & \quad  (1+ |2^k(x_2-y_2)|+|2^k(x_2-z_2)|+|2^k(x_2-w_2)|)^{-s_2} \\
   & & \quad (\tilde \psi_j(D) f)(y_1,y_2) (\tilde \psi_k(D)g)(z_1,z_2) h(w_1,w_2) dy_1 dy_2 dz_1 dz_2 dw_1 dw_2\\
   &\lesssim& \Big(\int_{\n^6}(1+ |y_1|+|z_1|+|w_1|)^{t's_1} (1+ |y_2|+|z_2|+|w_2|)^{t's_2} \\
   & & \quad |(\mathcal{F}^{-1}m_{j,k})(y_1,y_2,z_1,z_2,w_1,w_2)|^{t'} dy dz dw\Big)^{1/t'} \\
    & & \quad \Big(\int_{\mathbb{R}^6}2^{3j+3k}(1+ |2^j(x_1-y_1)|+|2^j(x_1-z_1)|+|2^j(x_1-w_1)|)^{-ts_1}  \\
    & & \quad (1+ |2^k(x_2-y_2)|+|2^k(x_2-z_2)|+|2^k(x_2-w_2)|)^{-ts_2} \\
   & & \quad |(\tilde \psi_j(D) f)(y_1,y_2) (\tilde \psi_k(D)g)(z_1,z_2) h(w_1,w_2)|^t dy_1 dy_2 dz_1 dz_2dw_1 dw_2\Big)^{1\over t} \\
      &\lesssim& \|m_{j,k}\|_{H^{s_1,s_2}} \Big(\int_{\mathbb{R}^6} \frac{2^{j+k}|(\tilde \psi_j(D) f)(y_1,y_2)|^t}{(1+ |2^j(x_1-y_1)|)^{ts_1/3}(1+ |2^k(x_2-y_2)|)^{ts_2/3}} \\
    & & \qquad \qquad \qquad \frac{2^{j+k}|(\tilde \psi_k(D) g)(z_1,z_2)|^t}{(1+ |2^j(x_1-z_1)|)^{ts_1/3}(1+ |2^k(x_2-z_2)|)^{ts_2/3}} \\
     & & \qquad \qquad \qquad\frac{2^{j+k}| h(w_1,w_2)|^t}{(1+ |2^j(x_1-w_1)|)^{ts_1/3}(1+ |2^k(x_2-w_2)|)^{ts_2/3}} dy dz dw\Big)^{1\over t} \\
   &\lesssim& \|m_{j,k}\|_{H^{s_1,s_2}}\left(M_s(|\tilde \psi_j(D)f|^t)(x_1,x_2)\right)^{1\over t} \left(M_s(|\tilde \psi_k(D) g|^t)(x_1,x_2)\right)^{1\over t} \\
   & & \qquad \quad \left(M_s(|h|^t)(x_1,x_2)\right)^{1\over t},
\end{eqnarray*}
where $M_s$ appears  because Lemma \ref{lmstrongmax} is used, with $ts_1/3>1, ts_2/3>1$. Here we need $t>1$ so that  H\"older's inequality can be applied. Also, we take $t<2$ so that the term $\|m_{j,k}\|_{H^{s_1,s_2}}$ can be obtained (see \cite{chen2014hormander} for details), and this restriction is also necessary in the arguments later. In short, we need $\max{(1,3/s_1,3/s_2)}<t<2$.

Then
\begin{eqnarray*}
  & &\|T_3(f,g,h)\|_{L^r} \\
  & \lesssim& \left\| \left\{\sum_{j,k} |\psi_1(D/2^j)\psi_2(D/2^k)T_3(f,g,h) |^2\right\}^{1\over 2}\right\|_{L^r} \\
  &\lesssim& \sup_{j,k} \|m_{j,k}\|_{H^{s_1,s_2}} \Bigg\| \Big[ \sum_{j,k}\left( M_s(|\tilde \psi_j(D)f|^t)(x_1,x_2)\right)^{2\over t} \\
 & & \quad \left(M_s(|\tilde \psi_k(D) g|^t)(x_1,x_2)\right)^{2\over t} \cdot  \left(M_s(|h|^t)(x_1,x_2)\right)^{2\over t} \Big]^{1\over 2}\Bigg\|_{L^r} \\
 &=&\sup_{j,k} \|m_{j,k}\|_{H^{s_1,s_2}} \Bigg\| \Big[ \sum_{j}\left( M_s(|\tilde \psi_j(D)f|^t)(x_1,x_2)\right)^{2\over t}\Big]^{1\over 2} \\
 & & \quad\Big[ \sum_{k}\left(M_s(|\tilde \psi_k(D) g|^t)(x_1,x_2)\right)^{2\over t}\Big]^{1\over 2} \cdot  \left(M_s(|h|^t)(x_1,x_2)\right)^{1\over t}\Bigg\|_{L^r}.
\end{eqnarray*}

If we introduce the tensor product assumption $g(x_1,x_2)=g_1(x_1)\otimes g_2(x_2)$ and $h(x_1,x_2)=h_1(x_1)\otimes h_2(x_2)$, note that
 \begin{gather*}
   M_s(g)(x_1,x_2)=M(g_1)(x_1) M(g_2)(x_2), \  M_s(h)(x_1,x_2)= M(h_1)(x_1) M(h_2)(x_2),
 \end{gather*}
 where $M$ is the Hardy-Littlewood maximal operator.

 Thus, we can write
\begin{eqnarray*}
  & &\|T_3(f,g,h)\|_{L^r} \\
  & \lesssim& \left\| \left\{\sum_{j,k} |\psi_1(D/2^j)\psi_2(D/2^k)T_3(f,g,h) |^2\right\}^{1\over 2}\right\|_{L^r} \\
  &\lesssim& \sup_{j,k} \|m_{j,k}\|_{H^{s_1,s_2}} \Bigg\| \Big[ \sum_{j} \left( M_s(|\tilde \psi_j(D)f|^t)(x_1,x_2)\right)^{2\over t} \Big]^{1\over 2}  \\
 & & \quad \left( M(|g_1|^t)(x_1)\right)^{1\over t} \Big[ \sum_k  \left(M(| \tilde \psi_k(D) g_2|^t)(x_2)\right)^{2\over t} \Big]^{1\over 2}  \cdot \left( M(|  h_1|^t)(x_1)\right)^{1\over t} \left( M(|h_2|^t)(x_2) \right)^{1\over t} \Bigg\|_{L^r} \\
  &\lesssim& \sup_{j,k} \|m_{j,k}\|_{H^{s_1,s_2}} \Bigg\| \Big[ \sum_{j} \left( M_s(|\tilde \psi_j(D)f|^t)(x_1,x_2)\right)^{2\over t} \Big]^{1\over 2} \Bigg\|_{L^p} \\
 & & \quad \Bigg\|\left( M(|g_1|^t)(x_1)\right)^{1\over t} \Big[ \sum_k  \left(M(| \tilde \psi_k(D) g_2|^t)(x_2)\right)^{2\over t} \Big]^{1\over 2}  \cdot \left( M(|  h_1|^t)(x_1)\right)^{1\over t} \left( M(|h_2|^t)(x_2) \right)^{1\over t}  \Bigg\|_{L^s},
 \end{eqnarray*}
 where we just apply the H\"older's inequality and $1/p+1/s=1/r$, i.e., $1/p_2+1/q_2=1/p_3+1/q_3=1/s$. Then using the tensor product setting, the above can be estimated by

 \begin{eqnarray*}
   &=& \sup_{j,k} \|m_{j,k}\|_{H^{s_1,s_2}} \Bigg\| \Big[ \sum_{j} \left( M_s(|\tilde \psi_j(D)f|^t)(x)\right)^{2\over t} \Big]^{1\over 2} \Bigg\|_{L^p} \\
 & & \quad \Big\|\left( M(|g_1|^t)(x_1)\right)^{1\over t} \left( M(|  h_1|^t)(x_1)\right)^{1\over t} \Big\|_{L^s(x_1)}   \\
 & & \quad \Bigg\| \Big[ \sum_k  \left(M(| \tilde \psi_k(D) g_2|^t)(x_2)\right)^{2\over t} \Big]^{1\over 2}  \left( M(|h_2|^t)(x_2) \right)^{1\over t} \Bigg\|_{L^s(x_2)} %\\
% &\lesssim& \sup_{j,k} \|m_{j,k}\|_{H^{s_1,s_2}} \Bigg\| \Big[ \sum_{j} \left( M_s(|\tilde \psi_j(D)f|^t)\right)^{2\over t} \Big]^{1\over 2} \Bigg\|_{L^p} \\
% & & \quad \Bigg\|\Big[ \sum_k  \left(M(| \tilde \psi_k(D) g_1|^t)(x_1)\right)^{2\over t} \Big]^{1\over 2} \left( M(|g_2|^t)(x_2)\right)^{1\over t} \cdot \left( M(|  h_1|^t)(x_1)\right)^{1\over t} \left( M(|h_2|^t)(x_2) \right)^{1\over t} \Bigg\|_{L^s}
 \end{eqnarray*}

Again by H\"older's inequality, we have
 \begin{eqnarray*}
 &\lesssim& \sup_{j,k} \|m_{j,k}\|_{H^{s_1,s_2}} \Bigg\| \Big[ \sum_{j} \left( M_s(|\tilde \psi_j(D)f|^t)(x)\right)^{2\over t} \Big]^{1\over 2} \Bigg\|_{L^p}   \\
 & &  \Big\|\big( M(|g_1|^t)(x_1)\big)^{1\over t}  \Big\|_{L^{p_2}(x_1)} \Bigg\|\Big[ \sum_k  \left(M(| \tilde \psi_k(D) g_2|^t)(x_2)\right)^{2\over t} \Big]^{1\over 2}  \Bigg\|_{L^{q_2}(x_2)}  \\
 & & \quad   \Big\|\left( M(|  h_1|^t)(x_1)\right)^{1\over t} \Big\|_{L^{p_3}(x_1)} \Big\|\left( M(|h_2|^t)(x_2) \right)^{1\over t} \Big\|_{L^{q_3}(x_2)} \\
  &=&\sup_{j,k} \|m_{j,k}\|_{H^{s_1,s_2}} \Bigg\| \Big[ \sum_{j} \left( M_s(|\tilde \psi_j(D)f|^t)(x)\right)^{2\over t} \Big]^{t\over 2} \Bigg\|_{L^{p/t}}^{1/t}   \\
 & & \Big\| M(|g_1|^t)(x_1) \Big\|_{L^{p_2/t}(x_1)}^{1/t}    \Bigg\|\Big[ \sum_k  \left(M(| \tilde \psi_k(D) g_2|^t)(x_2)\right)^{2\over t} \Big]^{t\over 2}  \Bigg\|_{L^{q_2/t}(x_2)}^{1/t}  \\
 & & \quad   \Big\| M(|  h_1|^t)(x_1) \Big\|_{L^{p_3/t}(x_1)}^{1/t}  \Big\|M(|h_2|^t)(x_2) \Big\|_{L^{q_3/t}(x_2)}^{1/t}  \\
 &\lesssim & \sup_{j,k} \|m_{j,k}\|_{H^{s_1,s_2}} \|f\|_{L^{p}}\|g_1\|_{L^{p_2}(x_1)}\|g_2\|_{L^{q_2}(x_2)}\|h_1\|_{L^{p_3}(x_1)}\|h_2\|_{L^{q_3}(x_2)},
\end{eqnarray*}
where we use the vector-valued Fefferman-Stein inequality, $2,p, p_2,q_2,p_3,q_3>t$.

That means for $I_1(a)\otimes I_2(b)$, we have proved the boundedness $L^{p} \times L^{p_2}_{x_1}(L^{q_2}_{x_2}) \times L^{p_3}_{x_1}(L^{q_3}_{x_2})  \to L^r$ for $p, p_2,q_2,p_3,q_3>t$ and $\max{(1,3/s_1,3/s_2)}<t<2$. Thus, if one takes $s_1,s_2$ to be properly large, one can take $t$ to be arbitrarily close to $1$, which means the mixed $L^r$ estimate can hold for $1<p, p_2,q_2,p_3,q_3<\infty$.

The above arguments, with some modifications, will be used to treat all the cases: $I_1\otimes I_2$, $I_1\otimes II_2$ and $II_1\otimes II_2$.
Here we briefly describe the modifications necessary for just a few of these cases.
\medskip

We start by considering some of the other subcases in $I_1\times I_2$. For instance, if one considers $I_1(b)\times I_2(c)$, then
\begin{eqnarray*}
  A_{j,k}&=&\psi_1(D/2^j)\psi_2(D/2^k)T_3(f,g,h) \\
  &\lesssim&  \|m_{j,k}\|_{H^{s_1,s_2}}\left(M_s(|f|^t)(x_1,x_2)\right)^{1\over t} \left(M_s(|\tilde \psi_j(D) g|^t)(x_1,x_2)\right)^{1\over t} \\
   & & \qquad \quad \left(M_s(|\tilde \psi_k(D)h|^t)(x_1,x_2)\right)^{1\over t}.
\end{eqnarray*}
Then
\begin{eqnarray*}
  & &\|T_3(f,g,h)\|_{L^r} \\
  & \lesssim& \left\| \left\{\sum_{j,k} |\psi_1(D/2^j)\psi_2(D/2^k)T_3(f,g,h) |^2\right\}^{1\over 2}\right\|_{L^r} \\
  &\lesssim& \sup_{j,k} \|m_{j,k}\|_{H^{s_1,s_2}} \Big\| \left( M_s(| f|^t)\right)^{1\over t} \Big\|_{L^p} \Bigg\| \Big[\sum_j \left(M(|\tilde \psi_j(D)g_1|^t)(x_1)\right)^{2\over t} \Big]^{1\over 2}\\
 & & \quad  \left( M(|g_2|^t)(x_2)\right)^{1\over t} \cdot \left( M(|  h_1|^t)(x_1)\right)^{1\over t} \Big[ \sum_k \left( M(|\tilde \psi_k(D)h_2|^t)(x_2) \right)^{2\over t} \Big]^{1\over 2}\Bigg\|_{L^s} \\
 &\lesssim & \sup_{j,k} \|m_{j,k}\|_{H^{s_1,s_2}} \|f\|_{L^{p}}\|g_1\|_{L^{p_2}(x_1)}\|g_2\|_{L^{q_2}(x_2)}\|h_1\|_{L^{p_3}(x_1)}\|h_2\|_{L^{q_3}(x_2)}.
 \end{eqnarray*}

 \medskip

 For the cases in $II_1\times II_2$, we first consider $II_1{(a)}\times II_2{(b)}$ for example.  One can write

\begin{eqnarray*}
 & & T_3(f,g,h)(x_1,x_2)\\
  &=& \sum_{j,k} \int_{\n^6} m(\xi,\eta,\zeta) e^{ix_1(\xi_1+\eta_1+\zeta_1)}e^{ix_2(\xi_2+\eta_2+\zeta_2)} \psi_j(\xi_1)  \tilde \psi_j(\eta_1) \psi_k(\eta_2) \tilde \psi_k(\zeta_2)  \\
   & &\quad \cdot \hat f(\xi_1,\xi_2)\hat g(\eta_1,\eta_2) \hat h(\zeta_1,\zeta_2) d\xi_1 d\xi_2 d\eta_1 d\eta_2 d\zeta_1 d\zeta_2 \\
  &=& \sum_{j,k} 2^{3j+3k}\int_{\n^6} (\mathcal{F}^{-1}m_{j,k})(2^j(x_1-y_1),2^k(x_2-y_2),2^j(x_1-z_1), 2^k(x_2-z_2),\\
  & & 2^j(x_1-w_1),2^k(x_2-w_2))( \psi_j(D) f)(y_1,y_2) (\tilde \psi_j(D) \psi_k(D)  g)(z_1,z_2) \\
  & & (\tilde \psi_k(D) h)(w_1,w_2) dy_1 dy_2 dz_1 dz_2 dw_1 dw_2,
\end{eqnarray*}
where we take $\sum_j \psi_j(u)=1$ for $u\neq 0$, and take $\tilde \psi$ a properly chosen ``completed" function by using properties that $|\xi_1|+|\eta_1|+|\zeta_1|\approx |\xi_1|\approx |\eta_1|$, $|\xi_2|+|\eta_2|+|\zeta_2|\approx |\eta_2|\approx |\zeta_2|$. Using the control of Sobolev norm as before,

\begin{eqnarray*}
 & & |T_3(f,g,h)(x_1,x_2)|\\
  &\lesssim&   \sup_{j,k}\|m_{j,k}\|_{H^{s_1,s_2}} \sum_{j,k} \left[M_s \left(|\tilde \psi_j(D) f|^t\right)\right]^{1\over t} \left[M_s \left(| \tilde \psi_j(D) \psi_k(D)  g|^t\right)\right]^{1\over t}  \left[M_s \left(|\tilde \psi_k(D) h|^t\right)\right]^{1\over t} \\
    &\lesssim&   \sup_{j,k}\|m_{j,k}\|_{H^{s_1,s_2}}  \Bigg\{\sum_{j} \left[M_s \left(|\tilde \psi_j(D) f|^t\right)\right]^{2\over t} \Bigg\}^{1\over 2} \Bigg\{\sum_{j,k}\left[M_s \left(| \tilde \psi_j(D) \psi_k(D)  g|^t\right)\right]^{2\over t} \Bigg\}^{1\over 2}  \\
    & & \quad \cdot \Bigg\{ \sum_k \left[M_s \left(|\tilde \psi_k(D) h|^t\right)\right]^{2\over t}\Bigg\}^{1\over 2},
\end{eqnarray*}
where the Cauchy-Schwartz inequality is used.  Then the H\"older's inequality gives
\begin{eqnarray*}
& &  \| T_3(f,g,h)\|_{L^r} \\
&\lesssim&   \sup_{j,k}\|m_{j,k}\|_{H^{s_1,s_2}}  \Bigg\| \Bigg\{\sum_{j} \left[M_s \left(|\tilde \psi_j(D) f|^t\right)\right]^{2\over t} \Bigg\}^{1\over 2}  \Bigg\|_{L^{p}} \\
& & \Bigg\|\Bigg\{\sum_{j,k}\left[M_s \left(| \tilde \psi_j(D) \psi_k(D)  g|^t\right)\right]^{2\over t} \Bigg\}^{1\over 2}  \Bigg\{ \sum_k \left[M_s \left(|\tilde \psi_k(D) h|^t\right)\right]^{2\over t}\Bigg\}^{1\over 2} \Bigg\|_{L^s} \\
&\lesssim &\sup_{j,k}\|m_{j,k}\|_{H^{s_1,s_2}}  \Bigg\| \Bigg\{\sum_{j} \left[M_s \left(|\tilde \psi_j(D) f|^t\right)\right]^{2\over t} \Bigg\}^{1\over 2}  \Bigg\|_{L^{p}} \\
& &\Bigg\|\Bigg\{\sum_{j}\left[M \left(| \tilde \psi_j(D) g_1|^t\right)\right]^{2\over t} \Bigg\}^{1\over 2}  \left[M \left(|h_1|^t\right)\right]^{1\over t} \Bigg\|_{L^s(x_1)} \\
& &\Bigg\|\Bigg\{\sum_{k}\left[M \left(| \tilde \psi_k(D)  g_2|^t\right)\right]^{2\over t} \Bigg\}^{1\over 2}  \Bigg\{ \sum_k \left[M \left(|\tilde \psi_k(D) h_2|^t\right)\right]^{2\over t}\Bigg\}^{1\over 2} \Bigg\|_{L^s(x_2)} \\
&\lesssim &\sup_{j,k}\|m_{j,k}\|_{H^{s_1,s_2}}  \Bigg\| \Bigg\{\sum_{j} \left[M \left(|\tilde \psi_j(D) f|^t\right)\right]^{2\over t} \Bigg\}^{1\over 2}  \Bigg\|_{L^{p}} \\
& &\Bigg\|\Bigg\{\sum_{j}\left[M \left(| \tilde \psi_j(D) g_1|^t\right)\right]^{2\over t} \Bigg\}^{1\over 2}  \Bigg\|_{L^{p_2}(x_1)} \Bigg\|\left[M \left(|h_1|^t\right)\right]^{1\over t} \Bigg\|_{L^{p_3}(x_1)} \\
& &\Bigg\|\Bigg\{\sum_{k}\left[M \left(| \tilde \psi_k(D)  g_2|^t\right)\right]^{2\over t} \Bigg\}^{1\over 2} \Bigg\|_{L^{q_2}(x_2)} \Bigg\| \Bigg\{ \sum_k \left[M \left(|\tilde \psi_k(D) h_2|^t\right)\right]^{2\over t}\Bigg\}^{1\over 2} \Bigg\|_{L^{q_3}(x_2)}. \\
&\lesssim & \sup_{j,k} \|m_{j,k}\|_{H^{s_1,s_2}} \|f\|_{L^{p}}\|g_1\|_{L^{p_2}(x_1)}\|g_2\|_{L^{q_2}(x_2)}\|h_1\|_{L^{p_3}(x_1)}\|h_2\|_{L^{q_3}(x_2)}.
\end{eqnarray*}

For other cases that belong to $II_1\times II_2$, we consider $II_1(a)\times II_2(c)$. One can obtain
\begin{eqnarray*}
 & & T_3(f,g,h)(x_1,x_2)\\
  &=& \sum_{j,k} \int_{\n^6} m(\xi,\eta,\zeta) e^{ix_1(\xi_1+\eta_1+\zeta_1)}e^{ix_2(\xi_2+\eta_2+\zeta_2)} \psi_j(\xi_1)  \tilde \psi_j(\eta_1) \psi_k(\xi_2) \tilde \psi_k(\zeta_2)  \\
   & &\quad \cdot \hat f(\xi_1,\xi_2)\hat g(\eta_1,\eta_2) \hat h(\zeta_1,\zeta_2) d\xi_1 d\xi_2 d\eta_1 d\eta_2 d\zeta_1 d\zeta_2 \\
  &\lesssim&   \sup_{j,k}\|m_{j,k}\|_{H^{s_1,s_2}} \sum_{j,k} \left[M_s \left(| \psi_j(D) \psi_k(D) f|^t\right)\right]^{1\over t} \left[M_s \left(| \tilde \psi_j(D) g|^t\right)\right]^{1\over t}  \left[M_s \left(|\tilde \psi_k(D) h|^t\right)\right]^{1\over t} \\
    &\lesssim&   \sup_{j,k}\|m_{j,k}\|_{H^{s_1,s_2}}  \Bigg\{\sum_{j,k} \left[M_s \left(|\psi_j(D)  \psi_k(D) f|^t\right)\right]^{2\over t} \Bigg\}^{1\over 2} \Bigg\{\sum_{j}\left[M_s \left(| \tilde \psi_j(D)  g|^t\right)\right]^{2\over t} \Bigg\}^{1\over 2}  \\
    & & \quad \cdot \Bigg\{ \sum_k \left[M_s \left(|\tilde \psi_k(D) h|^t\right)\right]^{2\over t}\Bigg\}^{1\over 2}.
\end{eqnarray*}
where we use the properties that $|\xi_1|+|\eta_1|+|\zeta_1|\approx |\xi_1|\approx |\eta_1|$, $|\xi_2|+|\eta_2|+|\zeta_2|\approx |\xi_2|\approx|\zeta_2|$ and $\tilde \psi$ is a proper ``completed" function as before. Then as before H\"older's inequality gives
\begin{eqnarray*}
& &  \| T_3(f,g,h)\|_{L^r} \\
&\lesssim&   \sup_{j,k}\|m_{j,k}\|_{H^{s_1,s_2}}  \Bigg\| \Bigg\{\sum_{j,k} \left[M_s \left(|\tilde \psi_j(D) \psi_k(D) f|^t\right)\right]^{2\over t} \Bigg\}^{1\over 2}  \Bigg\|_{L^{p}} \\
& & \Bigg\|\Bigg\{\sum_{j}\left[M_s \left(| \tilde \psi_j(D)   g|^t\right)\right]^{2\over t} \Bigg\}^{1\over 2}  \Bigg\{ \sum_k \left[M_s \left(|\tilde \psi_k(D) h|^t\right)\right]^{2\over t}\Bigg\}^{1\over 2} \Bigg\|_{L^s} \\
&\lesssim &\sup_{j,k}\|m_{j,k}\|_{H^{s_1,s_2}}  \Bigg\| \Bigg\{\sum_{j,k} \left[M \left(|\tilde \psi_j(D) \psi_k(D)f|^t\right)\right]^{2\over t} \Bigg\}^{1\over 2}  \Bigg\|_{L^{p}} \\
& &\Bigg\|\Bigg\{\sum_{j}\left[M \left(| \tilde \psi_j(D) g_1|^t\right)\right]^{2\over t} \Bigg\}^{1\over 2}  \Bigg\|_{L^{p_2}(x_1)} \Bigg\|\left[M \left(|h_1|^t\right)\right]^{1\over t} \Bigg\|_{L^{p_3}(x_1)} \\
& &\Bigg\| \left[M \left(| g_2|^t\right)\right]^{1\over t}  \Bigg\|_{L^{q_2}(x_1)} \Bigg\| \Bigg\{ \sum_k \left[M \left(|\tilde \psi_k(D) h_2|^t\right)\right]^{2\over t}\Bigg\}^{1\over 2} \Bigg\|_{L^{q_3}(x_2)}. \\
&\lesssim & \sup_{j,k} \|m_{j,k}\|_{H^{s_1,s_2}} \|f\|_{L^{p}}\|g_1\|_{L^{p_2}(x_1)}\|g_2\|_{L^{q_2}(x_2)}\|h_1\|_{L^{p_3}(x_1)}\|h_2\|_{L^{q_3}(x_2)}.
\end{eqnarray*}
Other cases in $II_1 \times II_2$ can be treated similarly.
\medskip

Now consider $I_1 \times II_2$, specifically $I_1(a) \times II_2(c)$.
\begin{eqnarray*}
  \|T_3(f,g,h)\|_{L^r}\lesssim \left\| \left\{\sum_{j} |\psi_1(D/2^j)T_3(f,g,h) |^2\right\}^{1\over 2}\right\|_{L^r}.
\end{eqnarray*}
Let
\begin{eqnarray*}
  B_{j}&=&\psi_1(D/2^j)T_3(f,g,h)(x_1,x_2)\\
  &=& \int_{\n^6} m(\xi,\eta,\zeta) e^{ix_1(\xi_1+\eta_1+\zeta_1)}e^{ix_2(\xi_2+\eta_2+\zeta_2)} \psi_j(\xi_1+\eta_1+\zeta_1) \\
   & &\quad \cdot \hat f(\xi_1,\xi_2)\hat g(\eta_1,\eta_2) \hat h(\zeta_1,\zeta_2) d\xi_1 d\xi_2 d\eta_1 d\eta_2 d\zeta_1 d\zeta_2 \\
  &=& \int_{\n^6}\sum_{k} m(\xi,\eta,\zeta)e^{ix_1(\xi_1+\eta_1+\zeta_1)}e^{ix_2(\xi_2+\eta_2+\zeta_2)} \psi_j(\xi_1+\eta_1+\zeta_1) \tilde  \psi_j(\xi_1)  \\
  & & \quad  \psi_k(\xi_2)  \tilde \psi_k(\zeta_2)  \hat f(\xi_1,\xi_2)\hat g(\eta_1,\eta_2) \hat h(\zeta_1,\zeta_2)d\xi_1 d\xi_2 d\eta_1 d\eta_2 d\zeta_1 d\zeta_2 \\
  &=& \sum_k 2^{3j+3k}\int_{\n^6} (\mathcal{F}^{-1}m_{j,k})(2^j(x_1-y_1),2^k(x_2-y_2),2^j(x_1-z_1), 2^k(x_2-z_2),\\
  & & 2^j(x_1-w_1),2^k(x_2-w_2))(\tilde \psi_j(D)  \psi_k(D) f)(y_1,y_2)  g(z_1,z_2) (\tilde \psi_k(D)h)(w_1,w_2) dy dz dw,
\end{eqnarray*}
where we use the properties that $|\xi_1|+|\eta_1|+|\zeta_1|\approx |\xi_1|$, $|\xi_2|+|\eta_2|+|\zeta_2|\approx |\xi_2| \approx |\zeta_2|$ and $\tilde \psi$ is a properly chosen ``completed" function.

Then as before one can get
\begin{eqnarray*}
  B_j &\lesssim& \sup_{k}\|m_{j,k}\|_{H^{s_1,s_2}} \sum_k \left(M_s(|\tilde \psi_j(D)  \psi_k(D)f|^t)(x_1,x_2)\right)^{1\over t} \left(M_s(| g|^t)(x_1,x_2)\right)^{1\over t} \\
   & & \qquad \quad \left(M_s(|\tilde \psi_k(D) h|^t)(x_1,x_2)\right)^{1\over t} \\
   &\lesssim& \sup_{k}\|m_{j,k}\|_{H^{s_1,s_2}} \Big\{ \sum_k \left(M_s(|\tilde \psi_j(D)  \psi_k(D)f|^t)(x_1,x_2)\right)^{2\over t} \Big\}^{1\over 2}\left(M_s(| g|^t)(x_1,x_2)\right)^{1\over t} \\
   & & \qquad \quad  \left\{\sum_k \left(M_s(|\tilde \psi_k(D) h|^t)(x_1,x_2)\right)^{2\over t} \right\}^{1\over 2},
\end{eqnarray*}
where the Cauch-Schwartz inequality is used. Then
\begin{eqnarray*}
  & &\|T_3(f,g,h)\|_{L^r} \\
  & \lesssim& \left\| \left\{\sum_{j} |\psi_1(D/2^j) T_3(f,g,h) |^2\right\}^{1\over 2}\right\|_{L^r} \\
  &\lesssim& \sup_{j,k} \|m_{j,k}\|_{H^{s_1,s_2}} \Bigg\| \Big\{ \sum_{j,k}\left( M_s(|\tilde \psi_j(D) \psi_k(D)f|^t)(x_1,x_2)  \right)^{2\over t}\Big\}^{1\over 2} \\
 & & \quad \left(M_s(| g|^t)(x_1,x_2)\right)^{1\over t} \cdot  \left\{\sum_k \left(M_s(|\tilde \psi_k(D) h|^t)(x_1,x_2)\right)^{2\over t} \right\}^{1\over 2} \Bigg\|_{L^r} \\
 &\lesssim& \sup_{j,k} \|m_{j,k}\|_{H^{s_1,s_2}} \Bigg\| \Big\{ \sum_{j,k}\left( M_s(|\tilde \psi_j(D) \psi_k(D)f|^t)(x_1,x_2)  \right)^{2\over t}\Big\}^{1\over 2} \Bigg\|_{L^p} \\
 & & \quad \Bigg\|\left(M_s(| g|^t)(x_1,x_2)\right)^{1\over t} \cdot  \left\{\sum_k \left(M_s(|\tilde \psi_k(D) h|^t)(x_1,x_2)\right)^{2\over t} \right\}^{1\over 2} \Bigg\|_{L^s}\\
 &\lesssim& \sup_{j,k} \|m_{j,k}\|_{H^{s_1,s_2}} \Bigg\| \Big\{ \sum_{j,k}\left( M_s(|\tilde \psi_j(D) \psi_k(D)f|^t)(x_1,x_2)  \right)^{2\over t}\Big\}^{1\over 2} \Bigg\|_{L^p} \\
 & & \quad \Bigg\|\left(M(| g_1|^t)(x_1)\right)^{1\over t} \cdot \left(M(| h_1|^t)(x_1)\right)^{1\over t}  \Bigg\|_{L^s(x_1)}\\
 & & \quad \Bigg\|\left(M(| g_2|^t)(x_2)\right)^{1\over t} \cdot  \left\{\sum_k \left(M(|\tilde \psi_k(D) h_2|^t)(x_2)\right)^{2\over t} \right\}^{1\over 2} \Bigg\|_{L^s(x_2)} \\
 &\lesssim& \sup_{j,k} \|m_{j,k}\|_{H^{s_1,s_2}} \Bigg\| \Big\{ \sum_{j,k}\left( M_s(|\tilde \psi_j(D) \psi_k(D)f|^t)(x_1,x_2)  \right)^{2\over t}\Big\}^{1\over 2} \Bigg\|_{L^p} \\
 & & \quad \Big\|\left(M(| g_1|^t)(x_1)\right)^{1\over t}\Big\|_{L^{p_2}(x_1)} \cdot \Big\|  \left(M(| h_1|^t)(x_1)\right)^{1\over t} \Big\|_{L^{p_3}(x_1)}\\
 & & \quad \Big\|\left(M(| g_2|^t)(x_2)\right)^{1\over t} \big\|_{L^{q_2}(x_2)}\cdot \left\| \left\{\sum_k \left(M(|\tilde \psi_k(D) h_2|^t)(x_2)\right)^{2\over t} \right\}^{1\over 2} \right\|_{L^{q_3}(x_2)} \\
 &\lesssim&\sup_{j,k} \|m_{j,k}\|_{H^{s_1,s_2}} \|f\|_{L^{p}}\|g_1\|_{L^{p_2}(x_1)}\|g_2\|_{L^{q_2}(x_2)}\|h_1\|_{L^{p_3}(x_1)}\|h_2\|_{L^{q_3}(x_2)}.
\end{eqnarray*}

\medskip
Similarly, for other cases in $I_1 \times II_2$, if we take $I_1(a) \times II_2(b)$,
\begin{eqnarray*}
  B_{j}&:=&\psi_1(D/2^j)T_3(f,g,h)\\
   &\lesssim& \sup_{k}\|m_{j,k}\|_{H^{s_1,s_2}}  \left(M_s(|\tilde \psi_j(D)  f|^t)(x_1,x_2)\right)^{1\over t}  \Big\{ \sum_k\left(M_s(| \psi_k(D) g|^t)(x_1,x_2)\right)^{2\over t}\Big\}^{1\over 2} \\
   & & \qquad \quad  \left\{\sum_k \left(M_s(|\tilde \psi_k(D) h|^t)(x_1,x_2)\right)^{2\over t} \right\}^{1\over 2}.
\end{eqnarray*}
Then
\begin{eqnarray*}
  & &\|T_3(f,g,h)\|_{L^r} \\
  & \lesssim& \left\| \left\{\sum_{j} |\psi_1(D/2^j) T_3(f,g,h) |^2\right\}^{1\over 2}\right\|_{L^r} \\
  &\lesssim& \sup_{j,k} \|m_{j,k}\|_{H^{s_1,s_2}} \Bigg\| \Big\{ \sum_{j}\left( M_s(|\tilde \psi_j(D) |^t)(x_1,x_2)  \right)^{2\over t}\Big\}^{1\over 2} \\
 & & \quad \Big\{ \sum_k\left(M_s(| \psi_k(D) g|^t)(x_1,x_2)\right)^{2\over t}\Big\}^{1\over 2} \cdot  \left\{\sum_k \left(M_s(|\tilde \psi_k(D) h|^t)(x_1,x_2)\right)^{2\over t} \right\}^{1\over 2} \Bigg\|_{L^r} \\
 &\lesssim& \sup_{j,k} \|m_{j,k}\|_{H^{s_1,s_2}} \Bigg\| \Big\{ \sum_{j}\left( M_s(|\tilde \psi_j(D) f|^t)(x_1,x_2)  \right)^{2\over t}\Big\}^{1\over 2} \Bigg\|_{L^p} \\
 & & \quad \Bigg\|\left(M(| g_1|^t)(x_1)\right)^{1\over t} \cdot  \left(M(| h_1|^t)(x_1)\right)^{1\over t} \Bigg\|_{L^s(x_1)}\\
 & & \quad \Bigg\| \left\{\sum_k \left(M(|\tilde \psi_k(D) g_2|^t)(x_2)\right)^{2\over t} \right\}^{1\over 2}  \cdot  \left\{\sum_k \left(M(|\tilde \psi_k(D) h_2|^t)(x_2)\right)^{2\over t} \right\}^{1\over 2} \Bigg\|_{L^s(x_2)} \\
 &\lesssim&\sup_{j,k} \|m_{j,k}\|_{H^{s_1,s_2}} \|f\|_{L^{p}}\|g_1\|_{L^{p_2}(x_1)}\|g_2\|_{L^{q_2}(x_2)}\|h_1\|_{L^{p_3}(x_1)}\|h_2\|_{L^{q_3}(x_2)}.
\end{eqnarray*}

In this way such modification gives the desired mixed norm estimate.

\end{proof}

\section{}
\label{ap3}

In this section, we sketch how  our reduction  can be used to establish the weighted mixed norm estimates. Thus, the results of Appendix \ref{ap2}
are the unweighted estimates which we presented first for clarity.

Recall the Muckenhoupts $A_p$ weights.

  \medskip
  \begin{definition}[\cite{Muckenhoupt}] ~ \quad
  \begin{itemize}
   \item[(a)] We say a weight $w\geq 0$ belong to  the Muckenhoupt class $A_p(\n)$ $(1<p<\infty)$ if
    $$\sup_I \left(\frac{1}{|I|}\int_I w(x)dx\right) \left(\frac{1}{|I|}\int_I w(x)^{1\over 1-p}dx\right)^{p-1}<\infty,$$
    where the supremum is taken over all intervals in $\n$. Also, $w\geq 0$ belong to $A_1$ if there exists some $C>0$ such that
    $$\sup_I \frac{1}{|I|}\int_I w(x)dx\leq C w(x).$$
    Then class $A_\infty$ is defined to be $A_\infty=\cup_{1\leq p <\infty}A_p$.
     \item[(b)] The weighted $L^p$ space is defined via the norm  $\|f\|_{L_w^p(\n)}=\left(\int_\n |f(x)|^p w(x) dx \right)^{1\over p}$.
    \end{itemize}
  \end{definition}
   In the product setting, $A_p(\n\times \n)$ is defined the same way, replacing the intervals $I$ by rectangles. Then we introduce several lemmas that will be useful later. Since they will be true for both $A_p(\n\times \n)$ and $A_p(\n)$,  for convenience we will simply use the notation $A_p$.

  \begin{lemma}[\cite{GR}]
  \label{lemmaGR}
    Let $1<p<\infty$ and $w\in A_p$, then
    \begin{itemize}
      \item[(1)] $w^{1-p'}\in  A_{p'}$,
      \item[(2)] $\exists 1<q<p $ such that $w\in A_q$.
    \end{itemize}
  \end{lemma}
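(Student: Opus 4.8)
The plan is to prove the two assertions separately, since they rely on quite different ingredients: part (1) is a purely algebraic manipulation of the $A_p$ characteristic, while part (2) is the ``openness'' (self-improvement) property of the scale $\{A_p\}_{p}$, which rests on the reverse H\"older inequality for $A_\infty$ weights. Throughout I write $p'=p/(p-1)$, and the first thing I would record is the elementary identity $1/(1-p)=1-p'$, so that the defining condition can be written as
\[
[w]_{A_p}=\sup_I \Big(\frac{1}{|I|}\int_I w\Big)\Big(\frac{1}{|I|}\int_I w^{1-p'}\Big)^{p-1}<\infty ,
\]
where $I$ ranges over intervals in $\n$ (or rectangles in $\n\times\n$ in the product case). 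The argument below is identical in both settings, with rectangles replacing intervals.

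For part (1), set $\sigma:=w^{1-p'}$. I would first check the exponent bookkeeping $\sigma^{1-p}=w$, $(p')'=p$ and $1/(1-p')=1-p$, so that the $A_{p'}$ condition for $\sigma$ reads $\sup_I\big(\frac{1}{|I|}\int_I w^{1-p'}\big)\big(\frac{1}{|I|}\int_I w\big)^{p'-1}$. Then I would raise the displayed $A_p$ inequality to the power $p'-1=1/(p-1)>0$ (which commutes with the supremum over $I$) and use $(p-1)(p'-1)=1$; this yields exactly $[\sigma]_{A_{p'}}=[w]_{A_p}^{p'-1}<\infty$, proving $w^{1-p'}\in A_{p'}$.

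For part (2), the plan is to invoke the reverse H\"older inequality: since $\sigma=w^{1-p'}\in A_{p'}\subset A_\infty$ by part (1), there exist $\epsilon>0$ and $C>0$ with $\big(\frac{1}{|I|}\int_I\sigma^{1+\epsilon}\big)^{1/(1+\epsilon)}\le C\,\frac{1}{|I|}\int_I\sigma$ for all $I$. Next I would \emph{choose} $q\in(1,p)$ by the relation $1-q'=(1+\epsilon)(1-p')$; since $1-p'<0$ and $1+\epsilon>1$ this forces $q'>p'$, i.e.\ $q<p$, and a short computation gives $q-1=\frac{1}{(1+\epsilon)(p'-1)}$, hence $(q-1)(1+\epsilon)=\frac{1}{p'-1}=p-1$. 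Because $w^{1-q'}=\sigma^{1+\epsilon}$, the reverse H\"older bound gives $\big(\frac{1}{|I|}\int_I w^{1-q'}\big)^{q-1}\le C^{p-1}\big(\frac{1}{|I|}\int_I\sigma\big)^{(1+\epsilon)(q-1)}=C^{p-1}\big(\frac{1}{|I|}\int_I w^{1-p'}\big)^{p-1}$; multiplying by $\frac{1}{|I|}\int_I w$ and taking the supremum over $I$ then shows $[w]_{A_q}\le C^{p-1}[w]_{A_p}<\infty$, so $w\in A_q$.

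The only genuinely nontrivial input is the reverse H\"older inequality used in part (2); it is proved by a Calder\'on--Zygmund stopping-time decomposition on the level sets of the weight, but since it is entirely classical I would simply cite it from \cite{GR} (in both the one-parameter and the product version), so in the write-up part (2) reduces to the exponent arithmetic above. Everything else is bookkeeping with the conjugate exponents. I would also note that (1) and (2) together are exactly what makes $\{A_p\}_p$ an increasing family of \emph{open} classes, which is the form in which these facts are used in the proof of Theorem \ref{goalweighted1}.
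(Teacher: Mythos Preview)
Your proof is correct and is the standard argument for these classical facts. Note, however, that the paper does not actually prove this lemma: it is stated with a citation to \cite{GR} and used as a black box in Appendix~\ref{ap3}. So there is no ``paper's own proof'' to compare against; you have supplied precisely the proof one finds in the cited reference, with the duality identity $[w^{1-p'}]_{A_{p'}}=[w]_{A_p}^{p'-1}$ for part (1) and the reverse H\"older self-improvement for part (2).
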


    \begin{lemma}[\cite{chen2014hormander}]
    \label{lemmaChen}
   Let $w_j \in A_{p_j}$ for $1\leq j \leq m$ for some $1\leq p_1,\dots,p_m \leq \infty$. Then for any $\theta_1+\cdots \theta_m=1$ with  $0<\theta_1,\dots \theta_m<1$, there holds
   $$w_1^{\theta_1}\cdots w_m^{\theta_m} \in A_{\max{(p_1,\dots,p_m)}}. $$
  \end{lemma}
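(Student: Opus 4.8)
The plan is to reduce first to the case where all the exponents $p_j$ coincide, and then to apply H\"older's inequality directly to the two averages that appear in the definition of $A_p$.

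First I would set $p=\max(p_1,\dots,p_m)$ and use the classical nesting of Muckenhoupt classes, namely $A_{p_j}\subseteq A_p$ whenever $p_j\le p$ (elementary, via Jensen's inequality applied to the $\frac{1}{1-p}$-average). Hence $w_j\in A_p$ for every $j$, and it suffices to prove that if $w_1,\dots,w_m\in A_p$ with $1<p<\infty$ and $\theta_1+\cdots+\theta_m=1$, $\theta_j\in(0,1)$, then $w:=\prod_{j=1}^{m}w_j^{\theta_j}\in A_p$. When $p=\infty$ one proceeds slightly differently: each $w_j$ lies in $A_{q_j}$ for some finite $q_j$ by the definition of $A_\infty$, so all the $w_j$ belong to the common class $A_q$ with $q=\max_j q_j<\infty$, and then $w\in A_q\subseteq A_\infty$ by the finite-exponent case. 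The argument below applies verbatim whether $A_p$ is taken over intervals in $\n$ or over rectangles in $\n\times\n$, which is all we need.

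Now fix an interval (or rectangle) $I$. For the first factor in the $A_p$ expression, H\"older's inequality with exponents $1/\theta_1,\dots,1/\theta_m$ (whose reciprocals sum to $1$) gives
$$\frac{1}{|I|}\int_I\prod_{j}w_j^{\theta_j}\,dx\ \le\ \prod_{j}\Big(\frac{1}{|I|}\int_I w_j\,dx\Big)^{\theta_j}.$$
Applying the same inequality to $\prod_j\big(w_j^{1/(1-p)}\big)^{\theta_j}$ yields
$$\frac{1}{|I|}\int_I w^{1/(1-p)}\,dx\ \le\ \prod_{j}\Big(\frac{1}{|I|}\int_I w_j^{1/(1-p)}\,dx\Big)^{\theta_j}.$$
Multiplying the first estimate by the $(p-1)$-st power of the second and regrouping the product over $j$ gives
$$\Big(\frac{1}{|I|}\int_I w\Big)\Big(\frac{1}{|I|}\int_I w^{1/(1-p)}\Big)^{p-1}\ \le\ \prod_{j}\bigg[\Big(\frac{1}{|I|}\int_I w_j\Big)\Big(\frac{1}{|I|}\int_I w_j^{1/(1-p)}\Big)^{p-1}\bigg]^{\theta_j},$$
whose right-hand side is bounded uniformly in $I$ because each bracket is at most the $A_p$ constant of $w_j$. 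Taking the supremum over $I$ shows $w\in A_p$, and in fact $[w]_{A_p}\le\prod_j[w_j]_{A_p}^{\theta_j}$.

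Since every step is routine, I do not expect a real obstacle here; the only points that need a little care are the reduction to a common exponent together with the separate handling of $p=\infty$, and making sure the H\"older exponents $1/\theta_j$ are matched to both averages so that the individual $A_p$ constants recombine with precisely the weights $\theta_j$.
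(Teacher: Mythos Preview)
Your argument is correct. The paper does not supply its own proof of this lemma; it is quoted from \cite{chen2014hormander} and used as a black box, so there is nothing to compare against. Your reduction to a common exponent via the nesting $A_{p_j}\subseteq A_p$ followed by H\"older's inequality with exponents $1/\theta_j$ on each of the two averages is the standard route, and the bound $[w]_{A_p}\le\prod_j[w_j]_{A_p}^{\theta_j}$ drops out exactly as you wrote. One tiny omission: you treat $1<p<\infty$ and $p=\infty$ but not $p=\max_j p_j=1$; that case follows by the same H\"older step applied to the $A_1$ condition $\frac{1}{|I|}\int_I w\le Cw(x)$, and in any event is not needed for the paper's applications.
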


  \begin{lemma}[\cite{FS}]
  \label{lemmaFS}
    Let $1<p,q<\infty$ and $w\in A_p$, the weighted vector-valued maximal inequality
    $$\left\|\left\{\sum_{j\in \z}(M_s f_{j})^q\right\}^{1\over q}\right\|_{L^p(w)}\lesssim \left\|\left\{\sum_{j\in \z}(f_{j})^q\right\}^{1\over q}\right\|_{L^p(w)},$$
    for all sequences $\{f_{j}\}_{j\in \z}$ of locally integrable functions on $\n \times \n$.
  \end{lemma}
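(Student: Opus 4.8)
\textbf{Proof proposal for Lemma \ref{lemmaFS}.}

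The plan is to treat the bi-parameter weighted Fefferman--Stein inequality as a consequence of the classical one-parameter weighted vector-valued maximal inequality, applied iteratively in each of the two variables. First I would recall the one-parameter result: for $1<p,q<\infty$ and $v\in A_p(\n)$, the strong maximal operator in one dimension (which is just the Hardy--Littlewood maximal operator $M$) satisfies
\begin{equation*}
\Big\|\big\{\textstyle\sum_{j}(Mf_j)^q\big\}^{1/q}\Big\|_{L^p(\n,v)}\lesssim \Big\|\big\{\textstyle\sum_{j}|f_j|^q\big\}^{1/q}\Big\|_{L^p(\n,v)},
\end{equation*}
which is the Andersen--John / Fefferman--Stein weighted vector-valued estimate; this is the content of the cited reference \cite{FS}. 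The key structural fact I would use is that the bi-parameter strong maximal operator $M_s$ on $\n\times\n$ is pointwise dominated by the composition $M^{(1)}\circ M^{(2)}$, where $M^{(1)}$ is the Hardy--Littlewood maximal operator acting in the $x_1$-variable only (with $x_2$ frozen) and $M^{(2)}$ acts in the $x_2$-variable only; indeed $M_s f(x_1,x_2)\lesssim M^{(1)}\big(M^{(2)}f\big)(x_1,x_2)$ since any rectangle $I_1\times I_2$ average is bounded by iterating averages over $I_2$ then $I_1$.

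The main steps, in order, are as follows. Step 1: reduce to bounding $M^{(1)}\circ M^{(2)}$ on the weighted vector-valued space, using the pointwise domination above together with the monotonicity of $M_s$ and of the $\ell^q$-norm. Step 2: verify the one-dimensional slicing of the $A_p$ condition. For $w\in A_p(\n\times\n)$, it is a standard fact (provable directly from the rectangle definition by restricting to rectangles with one very long side, or quotable from the product $A_p$ theory) that for a.e.\ $x_2$ the slice $w(\cdot,x_2)$ belongs to $A_p(\n)$ with a uniform bound, and symmetrically for a.e.\ $x_1$ the slice $w(x_1,\cdot)$ lies in $A_p(\n)$ uniformly. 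Step 3: apply the one-parameter weighted Fefferman--Stein inequality in the $x_1$-variable, for each fixed $x_2$, to the sequence $\{M^{(2)}f_j(\cdot,x_2)\}_j$ with weight $w(\cdot,x_2)$, then integrate in $x_2$ using Fubini; this gives
\begin{equation*}
\Big\|\big\{\textstyle\sum_j(M^{(1)}M^{(2)}f_j)^q\big\}^{1/q}\Big\|_{L^p(w)}
\lesssim \Big\|\big\{\textstyle\sum_j(M^{(2)}f_j)^q\big\}^{1/q}\Big\|_{L^p(w)}.
\end{equation*}
Step 4: apply the one-parameter weighted Fefferman--Stein inequality again, now in the $x_2$-variable, for each fixed $x_1$, to the sequence $\{f_j(x_1,\cdot)\}_j$ with weight $w(x_1,\cdot)$, and integrate in $x_1$; this bounds the right-hand side by $\big\|\{\sum_j|f_j|^q\}^{1/q}\big\|_{L^p(w)}$. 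Chaining Steps 1, 3, 4 yields the claimed inequality.

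The step I expect to require the most care is Step 2, the $A_p$-slicing: one must ensure that the one-parameter $A_p$-constants of the slices are controlled (essentially uniformly in the frozen variable, up to the full product $A_p$-constant), so that the implicit constants in Steps 3 and 4 do not depend on $x_2$ or $x_1$. This is classical in the Muckenhoupt--Wheeden / product weight literature, but it is the one place where the rectangle geometry genuinely enters; everything else is a routine iteration of the known one-dimensional theorem together with Fubini. An alternative, which I would mention as a remark, is to invoke directly the known characterization that $M_s$ is bounded on $L^p(w)$ (vector-valued included) iff $w$ belongs to the product class $A_p(\n\times\n)$, in which case the lemma is immediate; but the iterated-slicing argument is self-contained and keeps the exposition elementary.
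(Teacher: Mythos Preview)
The paper does not actually prove this lemma; it is simply quoted from the literature (with the citation \cite{FS}) and used as a black box in Appendix~\ref{ap3}. So there is no ``paper's own proof'' to compare against.

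That said, your proposal is correct and is the standard way one proves the bi-parameter weighted Fefferman--Stein inequality. The pointwise domination $M_s\lesssim M^{(1)}\circ M^{(2)}$, the slicing of product $A_p$ weights into uniform one-parameter $A_p$ weights, and the two successive applications of the one-parameter weighted vector-valued maximal inequality (Andersen--John) are exactly the right ingredients. Your caution about Step~2 is well placed: the uniformity of the slice $A_p$ constants in the frozen variable is what makes the iteration go through, and it follows directly from testing the product $A_p$ condition on thin rectangles. One minor bibliographic point: the cited reference \cite{FS} is the original Fefferman--Stein paper, which treats the \emph{unweighted} vector-valued inequality; the weighted one-parameter version you are invoking is due to Andersen--John, so if you were writing this up you would want to cite that as well.
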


  \begin{lemma}[\cite{RFS}]
  \label{lemmaRFS}
    Let $1<p<\infty$ and $w\in A_p$. Suppose $\psi_i \in \mathcal{S}(\n)$ satisfies $\spt \hat \psi_i \subseteq \{u\in \n: 1/a_i \leq |u| \leq a_i\}$ for $a_i>1$ $(i=1,2)$, then there holds
    $$\left\|\left\{\sum_{j,k\in \z}|\psi_1(D/2^j)\psi_2(D/2^k)f|^2\right\}^{1\over 2}\right\|_{L^p(w)} \lesssim \|f\|_{L^p(w)} \quad \text{for } f\in L^p(w).$$
    Moreover, if $\sum_k \hat \psi_i(u/2^k)=1$ for $u\neq 0$ $(i=1,2)$, then
    $$\left\|\left\{\sum_{j,k\in \z}|\psi_1(D/2^j)\psi_2(D/2^k)f|^2\right\}^{1\over 2}\right\|_{L^p(w)} \approx \|f\|_{L^p(w)}\quad \text{for } f\in L^p(w).$$
  \end{lemma}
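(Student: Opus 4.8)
The plan is to deduce the bi-parameter weighted square-function estimate from the one-parameter weighted Littlewood--Paley theorem by iterating in the two variables, and then to recover the reverse inequality by duality together with a Calder\'on reproducing identity. First I would recall the one-parameter weighted estimate: if $w\in A_p(\n)$ and $\psi\in\sch$ satisfies $\spt\hat\psi\subseteq\{u\in\n:1/a\le|u|\le a\}$ for some $a>1$, then $g\mapsto(\psi(D/2^j)g)_{j\in\z}$ is a Calder\'on--Zygmund operator whose kernel is standard and takes values in $\ell^2(\z)$; hence the scalar bound $\|(\sum_j|\psi(D/2^j)g|^2)^{1/2}\|_{L^p(w)}\lesssim\|g\|_{L^p(w)}$ holds, and by weighted vector-valued Calder\'on--Zygmund theory so does the $\ell^2$-valued version
\[
\Big\|\Big(\sum_{j,l}|\psi(D/2^j)g_l|^2\Big)^{1/2}\Big\|_{L^p(w)}\lesssim\Big\|\Big(\sum_l|g_l|^2\Big)^{1/2}\Big\|_{L^p(w)},
\]
with constants depending only on $p$, $a$ and $[w]_{A_p}$; the same holds with $(p,w)$ replaced by $(p',\sigma)$ for any $\sigma\in A_{p'}(\n)$.

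Next I would iterate. For a rectangular weight $w\in A_p(\nn)$ and a.e.\ fixed $x_2$ one has $w(\cdot,x_2)\in A_p(\n)$ with $[w(\cdot,x_2)]_{A_p}\le[w]_{A_p}$ (a standard consequence of the $A_p$ condition on rectangles and the Lebesgue differentiation theorem), and symmetrically in $x_1$. Applying the $\ell^2$-valued one-parameter bound in the first variable to the family $\{\psi_2(D/2^k)f\}_k$ brings $\psi_1(D/2^j)$ inside a square function, and a second application in the second variable --- with the leftover sum over $j$ absorbed by another use of the $\ell^2$-valued bound or by the weighted Fefferman--Stein inequality of Lemma~\ref{lemmaFS} --- yields
\[
\Big\|\Big(\sum_{j,k\in\z}|\psi_1(D/2^j)\psi_2(D/2^k)f|^2\Big)^{1/2}\Big\|_{L^p(w)}\lesssim\|f\|_{L^p(w)},
\]
which is the forward inequality. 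Equivalently, one may simply note that $f\mapsto(\psi_1(D/2^j)\psi_2(D/2^k)f)_{j,k}$ is a bi-parameter (product) Calder\'on--Zygmund operator valued in $\ell^2(\z^2)$ and invoke the weighted theory for such operators, which is the route of \cite{RFS}.

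For the reverse inequality, assume $\sum_k\hat\psi_i(u/2^k)=1$ for $u\ne0$ and pick $\tilde\psi_i\in\sch$ with $\hat{\tilde\psi}_i$ supported in a slightly larger annulus and $\hat{\tilde\psi}_i\equiv1$ on $\spt\hat\psi_i$, so that $f=\sum_{j,k}\tilde\psi_1(D/2^j)\tilde\psi_2(D/2^k)\big[\psi_1(D/2^j)\psi_2(D/2^k)f\big]$. By Lemma~\ref{lemmaGR}(1), $\sigma:=w^{1-p'}\in A_{p'}(\nn)$. Pairing $f$ with a test function $h\in L^{p'}(\sigma)$, expanding, using the pointwise Cauchy--Schwarz inequality in $(j,k)$ and then H\"older with the pair $(p,p')$ and weights $(w,\sigma)$, one is reduced to the product of the forward square function of $f$ in $L^p(w)$ and the analogous square function of $h$ built from the $\tilde\psi_i$'s in $L^{p'}(\sigma)$; the latter is $\lesssim\|h\|_{L^{p'}(\sigma)}$ by the forward estimate already proved (the $\tilde\psi_i$ also have annulus-supported transforms and $\sigma\in A_{p'}$). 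Taking the supremum over $h$ gives $\|f\|_{L^p(w)}\lesssim\|(\sum_{j,k}|\psi_1(D/2^j)\psi_2(D/2^k)f|^2)^{1/2}\|_{L^p(w)}$, which together with the forward bound yields the asserted equivalence.

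The main technical point is keeping the $A_p$ constants uniform through the iteration --- in particular the a.e.\ restriction property of rectangular $A_p$ weights and the order in which the two square functions are taken --- and, in the duality step, checking that replacing $\psi_i$ by $\tilde\psi_i$ does not affect the applicability of the forward bound; all of these are routine once the one-parameter weighted Littlewood--Paley machinery and Lemmas~\ref{lemmaGR} and~\ref{lemmaFS} are available.
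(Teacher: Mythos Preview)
The paper does not prove this lemma at all: it is quoted verbatim from \cite{RFS} (R.~Fefferman--Stein) as a known result, with no argument supplied. So there is no ``paper's own proof'' to compare against.

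That said, your sketch is a correct and standard way to recover the statement. The iteration step is fine once you observe that a product $A_p$ weight restricts a.e.\ to a one-dimensional $A_p$ weight in each fiber with uniformly controlled constant, and that the one-parameter weighted Littlewood--Paley bound admits an $\ell^2$-valued extension (via vector-valued Calder\'on--Zygmund theory or extrapolation). Your duality argument for the reverse inequality is also correct; just be explicit that the pairing is the \emph{unweighted} one, so that $(L^p(w))^*\cong L^{p'}(w^{1-p'})$, and that the adjoints $\tilde\psi_i(D/2^j)^*$ are again Fourier multipliers with annulus-supported symbols, so the forward bound applies to them in $L^{p'}(\sigma)$. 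One small redundancy: since you already assume $\sum_k\hat\psi_i(\cdot/2^k)=1$, the identity $f=\sum_{j,k}\psi_1(D/2^j)\psi_2(D/2^k)f$ holds directly, and the $\tilde\psi_i$'s are only needed to move the multipliers onto $h$ in the pairing---which you do correctly.
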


  \begin{lemma}[\cite{Ruan}]
  \label{lemmaRuan}
    Let $0<p<\infty$ and $w\in A_\infty$ and  $\psi_1,\psi_2$ be as in the previous lemma. Then for a locally integrable  function $f\in H^p(w)$, there holds
    $$ \|f\|_{L^p(w)} \lesssim \left\|\left\{\sum_{j,k\in \z}|\psi_1(D/2^j)\psi_2(D/2^k)f|^2\right\}^{1\over 2}\right\|_{L^p(w)}  .$$

  \end{lemma}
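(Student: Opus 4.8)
The plan is to prove the reverse square--function inequality via a Calder\'on reproducing formula, handling $1<p<\infty$ by weighted duality and $0<p\le 1$ by the atomic theory of the weighted bi-parameter Hardy space $H^p(w)$. Fix a ``fattened'' pair $\tilde\psi_1,\tilde\psi_2\in\mathcal S(\n)$ of the same type as $\psi_1,\psi_2$, that is $\spt\widehat{\tilde\psi_i}\subseteq\{1/a_i\le|u|\le a_i\}$ for suitable $a_i>1$ and $\widehat{\tilde\psi_i}\equiv 1$ on $\spt\widehat{\psi_i}$. Writing $\Delta_j^1=\psi_1(D/2^j)$ acting in $x_1$, $\Delta_k^2=\psi_2(D/2^k)$ acting in $x_2$, and $\widetilde\Delta_j^1,\widetilde\Delta_k^2$ analogously, the normalization $\sum_k\widehat{\psi_i}(u/2^k)=1$ yields, with the usual justification (in $\mathcal S'(\nn)$ modulo polynomials, and in $H^p(w)$ when $f\in H^p(w)$), the reproducing identity
\[
 f=\sum_{j,k\in\z}\widetilde\Delta_j^1\widetilde\Delta_k^2\,\Delta_j^1\Delta_k^2 f .
\]
Every estimate below is driven by this identity together with the pointwise Cauchy--Schwarz bound in the $(j,k)$ sum: after pairing, $\sum_{j,k}|\Delta_j^1\Delta_k^2 f|\,|\widetilde\Delta_j^1\widetilde\Delta_k^2 g|\le (S f)(S_{\tilde\psi}g)$, where $S=S_{\psi_1,\psi_2}$ is the square function of the statement and $S_{\tilde\psi}$ is the analogous one built from $\tilde\psi_1,\tilde\psi_2$.

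For $1<p<\infty$ and $w\in A_p$ the argument is short. By Lemma \ref{lemmaGR}, $\sigma:=w^{1-p'}\in A_{p'}$, and $(L^p(w\,dx))^*=L^{p'}(\sigma\,dx)$ under the pairing $\langle\cdot,\cdot\rangle=\int\cdot\,dx$. For $g$ in the unit ball of $L^{p'}(\sigma)$, moving the (essentially self-adjoint) operators $\widetilde\Delta_j^1\widetilde\Delta_k^2$ onto $g$,
\[
 |\langle f,g\rangle|=\Big|\sum_{j,k}\langle\Delta_j^1\Delta_k^2 f,\ \widetilde\Delta_j^1\widetilde\Delta_k^2 g\rangle\Big|
 \le\int_{\nn}(Sf)(x)\,(S_{\tilde\psi}g)(x)\,dx
 \le\|Sf\|_{L^p(w)}\,\|S_{\tilde\psi}g\|_{L^{p'}(\sigma)},
\]
using Cauchy--Schwarz in $(j,k)$ and weighted H\"older ($w^{1/p}\sigma^{1/p'}\equiv 1$). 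Since $\tilde\psi_1,\tilde\psi_2$ satisfy the hypotheses of Lemma \ref{lemmaRFS} and $\sigma\in A_{p'}$, that lemma gives $\|S_{\tilde\psi}g\|_{L^{p'}(\sigma)}\lesssim\|g\|_{L^{p'}(\sigma)}\le 1$. Taking the supremum over $g$ yields $\|f\|_{L^p(w)}\lesssim\|Sf\|_{L^p(w)}$.

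For $0<p\le 1$ duality is unavailable and one argues through the atomic decomposition of the weighted product Hardy space; here $\|f\|_{L^p(w)}$ is read as $\|f\|_{H^p(w)}$ when $p\le 1$. Since $w\in A_\infty$, fix $q\ge 1$ with $w\in A_q$. The level sets $\Omega_n=\{Sf>2^n\}$ are open, nested, and one performs a Chang--Fefferman type decomposition: set $f_n=\sum\widetilde\Delta_j^1\widetilde\Delta_k^2\Delta_j^1\Delta_k^2 f$, the sum over the dyadic rectangles $I_j\times J_k$ (with $|I_j|\sim 2^{-j}$, $|J_k|\sim 2^{-k}$) that are maximal inside $\Omega_n$ but not inside $\Omega_{n+1}$, so that $f=\sum_n f_n$. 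Grouping the rectangles contributing to $f_n$ into the Journ\'e families of $\Omega_n$ produces, after normalization, multiples $\lambda_n\sim 2^n w(\Omega_n)^{1/p}$ of $(p,2)$ rectangle atoms adapted to $\Omega_n$. The quantitative inputs are an $L^2$-type bound $\int|f_n|^2 w\lesssim 2^{2n}w(\Omega_n)$ — from the $A_q$ version of Lemma \ref{lemmaRFS} applied to the partial square function together with $\int_{\Omega_n\setminus\Omega_{n+1}}(Sf)^2w\lesssim 2^{2n}w(\Omega_n)$ — and the Carleson geometry of $\Omega_n$ used to split $f_n$ into genuine product atoms, where $A_\infty$ (reverse H\"older) controls $w$ of the Carleson enlargements. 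Summing, $\sum_n\lambda_n^p\sim\sum_n 2^{np}w(\Omega_n)\sim\|Sf\|_{L^p(w)}^p$, and the $L^p(w)$-normalization of the atoms ($\|a\|_{L^2(w)}\le w(\Omega_n)^{1/p-1/2}$) gives $\|f\|_{H^p(w)}^p\lesssim\sum_n\lambda_n^p$, completing the proof.

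The main obstacle is exactly the $p\le 1$ case: upgrading the classical one-parameter stopping-time construction to the bi-parameter one requires the Chang--Fefferman / R.~Fefferman theory of rectangle atoms adapted to open sets, and one must verify that the geometric covering lemmas and the doubling/reverse-H\"older estimates persist in the weighted setting — which they do, precisely because the strong maximal function governs the Carleson enlargements and $w\in A_\infty$ is a strong-$A_\infty$ type condition. By contrast the $p>1$ half is routine once the reproducing formula and Lemma \ref{lemmaRFS} are available.
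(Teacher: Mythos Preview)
The paper does not supply its own proof of this lemma; it is quoted verbatim from the reference \cite{Ruan} (Z.~Ruan, \emph{Weighted Hardy spaces in the three-parameter case}, J.~Math.~Anal.~Appl.~\textbf{367} (2010), 625--639) and used as a black box in the proof of Proposition~\ref{weightedT3}. So there is no in-paper argument to compare against.

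That said, your sketch follows the standard route that the cited reference (and the broader weighted product Hardy space literature) takes: the $p>1$ case by weighted duality plus the forward square-function estimate (your Lemma~\ref{lemmaRFS}), and the $0<p\le 1$ case via a Chang--Fefferman rectangle-atomic decomposition adapted to the level sets of $Sf$, with Journ\'e's covering lemma and $A_\infty$ controlling the geometry. One point worth flagging: in the $p\le 1$ regime you explicitly reinterpret the left-hand side as $\|f\|_{H^p(w)}$, whereas the lemma as stated in the paper writes $\|f\|_{L^p(w)}$. For the paper's application this distinction is harmless --- the only use of the lemma is to control $\|T_3(f,g,h)\|_{L^r(w_1\otimes w_2)}$ by the square function, and $T_3(f,g,h)$ is a bona fide function whose Littlewood--Paley pieces have Fourier support in dyadic annuli, so the $H^p$ and $L^p$ interpretations coincide there --- but your atomic argument as written delivers the $H^p(w)$ quasi-norm, not the $L^p(w)$ norm directly, and you should be explicit that an additional (routine) step is needed if one insists on the $L^p(w)$ conclusion for general locally integrable $f\in H^p(w)$.
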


\medskip

In order to establish the weighted mixed norm estimate for \eqref{biop34} under the tensor product assumption $g_1\otimes g_2$ and $h_1\otimes h_2$, which is stated as Theorem  \ref{goalweighted1}, as before it suffices to prove the same boundedness property for our reduced operator \eqref{assumption} (as well as \eqref{assumption2} by symmetry), and the bi-parameter trilinear multiplier $T_3$ defined in \eqref{opT3}.

 We first study the weighted estimate for the reduced operator \eqref{assumption}.  One will see, for technical purposes, we will first consider the Sobolev regularity for the symbols instead of the H\"ormander type condition.
  \begin{proposition}
  \label{weightedreduced}
    Let $g(x)=g_1(x_1)\otimes g_2(x_2) $, $h(x)=h_1(x_1)\otimes h_2(x_2) $,  and  the multipliers in \eqref{assumption} satisfy the limited smoothness condition in the sense that for $3/2< s_1\leq 3$, $1< s_2\leq 2$,
    $$\sup_{j\in \z} \|m'_j\|_{H^{s_1}}<\infty, \quad \sup_{k \in \z} \|m''_k\|_{H^{s_2}}<\infty. $$
    Assume that
    \begin{gather}
     \label{assumptionreduced1}\min {(p,p_2,p_3)}>3/s_1 \quad \text{and} \quad w_1^1(x_1) \in A_{ps_1/3},\  w_1^2(x_1) \in A_{p_2s_1/3}, \ w_1^3(x_1) \in A_{p_3s_1/3},\\
       \min {(q_2,q_3)}>2/s_2  \quad  \text{and} \quad w_2^1(x_2) \in A_{\infty},\  w_2^2(x_2) \in A_{q_2s_2 /2},\  w_2^3(x_2) \in A_{q_3s_2/2}, \label{assumptionreduced2}
   \end{gather}
      then \eqref{assumption} maps $L^p(w_1^1\otimes w_2^1) \times L^{p_2}_{x_1}(w_1^2)(L^{q_2}_{x_2}(w_2^2))\times L^{p_3}_{x_1}(w_1^3)(L^{q_3}_{x_2}(w_2^3)) \to L^r(w_1\otimes w_2)$ for  ${1\over p}+ {1\over p_2}+{1\over p_3}={1\over p}+{1\over q_2}+{1\over q_3}={1\over r}$ with $0<r<\infty$, $1<p,p_2,p_3,q_2,q_3<\infty $,  where
     \begin{gather*}
        w_1(x_1)=(w_1^1)^{r/p} \cdot (w_1^2)^{r/p_2} \cdot (w_1^3)^{r/p_3} \\
        w_2(x_2)=(w_2^1)^{r/p} \cdot (w_2^2)^{r/q_2} \cdot (w_2^3)^{r/q_3}.
     \end{gather*}

    In particular, by taking $w_1^1=w_1^2=w_1^3=w_1\in A_{\min{(ps_1/3,p_2s_1/3,p_3s_1/3)}}$, $w_2^1=w_2^2=w_2^3=w_2\in A_{\min{(q_2s_2/2,q_3s_2/2)}}$, \eqref{assumption} maps $L^p(w_1\otimes w_2) \times L^{p_2}_{x_1}(w_1)(L^{q_2}_{x_2}(w_2))\times L^{p_3}_{x_1}(w_1)(L^{q_3}_{x_2}(w_2)) \to L^r(w_1\otimes w_2)$.
  \end{proposition}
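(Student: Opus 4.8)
The plan is to exploit the tensor-product factorization already established in the proof of Proposition~\ref{tensor32}. Under the assumptions $g=g_1\otimes g_2$ and $h=h_1\otimes h_2$, the operator \eqref{assumption} splits as
\[
T(f,g,h)(x_1,x_2)=T_1\big(f(\cdot,x_2),g_1,h_1\big)(x_1)\cdot T_2(g_2,h_2)(x_2),
\]
where $T_1$ is the one-parameter trilinear Fourier multiplier with symbol $m'(\xi_1,\eta_1,\zeta_1)$ and $T_2$ is the one-parameter bilinear Fourier multiplier with symbol $m''(\eta_2,\zeta_2)$. Raising to the $r$-th power and using Fubini, $\|T(f,g,h)\|_{L^r(w_1\otimes w_2)}^r$ equals
\[
\int_{\mathbb{R}}\Big(\int_{\mathbb{R}}|T_1(f(\cdot,x_2),g_1,h_1)(x_1)|^r\,w_1(x_1)\,dx_1\Big)\,|T_2(g_2,h_2)(x_2)|^r\,w_2(x_2)\,dx_2 .
\]

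The first step is to apply a weighted single-parameter trilinear multiplier estimate in the $x_1$ variable. With the natural product weight $w_1=(w_1^1)^{r/p}(w_1^2)^{r/p_2}(w_1^3)^{r/p_3}$, and using $\min(p,p_2,p_3)>3/s_1$ together with $w_1^1\in A_{ps_1/3}$, $w_1^2\in A_{p_2s_1/3}$, $w_1^3\in A_{p_3s_1/3}$, one has
\[
\|T_1(F,g_1,h_1)\|_{L^r(w_1)}\lesssim \|F\|_{L^p(w_1^1)}\|g_1\|_{L^{p_2}(w_1^2)}\|h_1\|_{L^{p_3}(w_1^3)}
\]
uniformly over Schwartz $F$, applied with $F=f(\cdot,x_2)$. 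This weighted bound is obtained by repeating the argument used in the proof of Proposition~\ref{ppt3} (Sobolev-norm control of the symbol, pointwise domination by strong maximal functions), with the Fefferman--Stein inequality replaced by its weighted version (Lemma~\ref{lemmaFS}) and the Littlewood--Paley characterizations replaced by the weighted ones (Lemmas~\ref{lemmaRFS} and \ref{lemmaRuan}); the auxiliary exponent $t$ there is chosen with $3/s_1<t<\min(2,p,p_2,p_3)$ and $w_1^i\in A_{p_i/t}$, which is guaranteed by the hypotheses, and $w_1\in A_\infty$ follows from Lemma~\ref{lemmaChen}. Pulling the (constant) factors $\|g_1\|_{L^{p_2}(w_1^2)}^r$ and $\|h_1\|_{L^{p_3}(w_1^3)}^r$ out of the $x_2$-integral, it remains to bound $\int_{\mathbb{R}}\|f(\cdot,x_2)\|_{L^p(w_1^1)}^r\,|T_2(g_2,h_2)(x_2)|^r\,w_2(x_2)\,dx_2$.

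The second step is to write $w_2=(w_2^1)^{r/p}\cdot(w_2^2)^{r/q_2}(w_2^3)^{r/q_3}$ and apply H\"older's inequality in $x_2$ with exponents $p/r$ and $s/r$, where $1/s=1/q_2+1/q_3$ (so that $r/p+r/s=1$). The $f$-factor produces $\|f\|_{L^p(w_1^1\otimes w_2^1)}^r$, while the remaining factor is $\big(\int|T_2(g_2,h_2)|^s(w_2^2)^{s/q_2}(w_2^3)^{s/q_3}\,dx_2\big)^{r/s}$, which is controlled by the weighted single-parameter bilinear multiplier estimate
\[
\|T_2(g_2,h_2)\|_{L^s((w_2^2)^{s/q_2}(w_2^3)^{s/q_3})}\lesssim \|g_2\|_{L^{q_2}(w_2^2)}\|h_2\|_{L^{q_3}(w_2^3)},
\]
proved by the same weighted adaptation of the scheme above, now using $\min(q_2,q_3)>2/s_2$ and $w_2^2\in A_{q_2s_2/2}$, $w_2^3\in A_{q_3s_2/2}$. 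Combining the two steps, and recalling $\|g_1\otimes g_2\|_{L^{p_2}_{x_1}(w_1^2)(L^{q_2}_{x_2}(w_2^2))}=\|g_1\|_{L^{p_2}(w_1^2)}\|g_2\|_{L^{q_2}(w_2^2)}$ and likewise for $h$, gives the asserted estimate. The ``in particular'' case follows by taking $w_1^1=w_1^2=w_1^3=w_1$ and $w_2^1=w_2^2=w_2^3=w_2$ and using the nesting $A_q\subseteq A_{q'}$ for $q\le q'$, which makes the hypotheses on each weight hold simultaneously and collapses the output weight to $w_1\otimes w_2$; the analogue for \eqref{assumption2} is obtained by interchanging $x_1$ and $x_2$.

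The main obstacle is the weighted single-parameter multilinear Fourier multiplier estimate with limited Sobolev smoothness (the two displayed bounds for $T_1$ and $T_2$): one must track precisely how the Sobolev exponents $s_1,s_2$ and the $A_p$-indices of the weights interact through the auxiliary exponent $t$, and check that the product weights $(w_1^1)^{r/p}(w_1^2)^{r/p_2}(w_1^3)^{r/p_3}$ and $(w_2^2)^{s/q_2}(w_2^3)^{s/q_3}$ are admissible in the weighted square-function bounds (Lemmas~\ref{lemmaRFS} and \ref{lemmaRuan}) when the target exponent drops below $1$. Once these single-parameter weighted theorems are available, the bi-parameter tensor-product case reduces to the routine two-fold application of H\"older's inequality described above.
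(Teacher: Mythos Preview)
Your proposal is correct and follows essentially the same approach as the paper: factor the operator as $T_1(f(\cdot,x_2),g_1,h_1)(x_1)\cdot T_2(g_2,h_2)(x_2)$, apply the weighted single-parameter trilinear estimate in $x_1$, then H\"older in $x_2$ followed by the weighted single-parameter bilinear estimate. The only difference is that the paper simply cites the needed single-parameter weighted multilinear multiplier bounds from \cite{chen2014hormander, FT} rather than sketching their proof via weighted Fefferman--Stein and Littlewood--Paley as you do.
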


  \begin{proof}
As in the proof Proposition \ref{tensor32}, we just need to apply an iteration argument and use the single-parameter weighted estimate.
    \begin{eqnarray*}
     & &\| T_1\left(f(\cdot,x_2),g_1(\cdot),h_1(\cdot)\right)(x_1) \cdot  T_2\left(g_2,h_2\right)(x_2)\|_{L^r(w_1\otimes w_2)}^r\\
     &=& \int \left| T_1\left(f(\cdot,x_2),g_1(\cdot),h_1(\cdot)\right)(x_1) \right|^r w_1(x_1)  \left|T_2\left(g_2,h_2\right)(x_2)\right|^r w_2(x_2)dx_1 dx_2 \\
      &=& \int \left(\int \left| T_1\left(f(\cdot,x_2),g_1(\cdot),h_1(\cdot)\right)(x_1)\right|^r w_1(x_1)   dx_1 \right)  \left|T_2\left(g_2,h_2\right)(x_2)\right|^r w_2(x) dx_2\\
      &\lesssim& \int \|f(\cdot,x_2)\|_{L^p_{x_1}(w_1^1)}^r \|g_1\|_{L^{p_2}_{x_1}(w_1^2)}^r \|h_1\|_{L^{p_3}_{x_1}(w_1^3)}^r  \left|T_2\left(g_2,h_2\right)(x_2) \right|^r w_2(x_2) dx_2 \\
      &=&  \int \|f(\cdot,x_2)\|_{L^p_{x_1}(w_1^1)}^r  \left|T_2\left(g_2,h_2\right)(x_2)\right|^r w_2(x_2) dx_2 \cdot \|g_1\|_{L^{p_2}_{x_1}(w_1^2)}^r\|h_1\|_{L^{p_3}_{x_1}(w_1^3)}^r \\
      &\lesssim& \left(\int \|f(\cdot,x_2)\|_{L^p_{x_1}(w_1^1)}^p w_2^1(x_2) dx_2\right)^{r\over p}  \left( \int \left|T_2\left(g_2,h_2\right)(x_2)\right|^{s_0} (w_2^2)^{s_0/q_2} \cdot (w_2^3)^{s_0/q_3} dx_2 \right)^{r\over s_0}\\
      & & \quad \cdot \|g_1\|_{L^{p_2}_{x_1}}^r\|h_1\|_{L^{p_3}_{x_1}}^r \\
      &\lesssim& \|f\|_{L^p(w_1^1\otimes w_2^1)}^r \|g_2\|_{L^{q_2}_{x_2}(w_2^2)}^r\|h_2\|_{L^{q_3}_{x_2}(w_2^3)}^r \|g_1\|_{L^{p_2}_{x_1}(w_1^2)}^r\|h_1\|_{L^{p_3}_{x_1}(w_1^3)}^r,
    \end{eqnarray*}
    where $1<p,p_2,p_3,q_2,q_3<\infty$, ${1\over p_2}+{1\over p_3}={1\over q_2}+{1\over q_3}={1\over s_0}$, and we just use the H\"older's inequality, the  weighted norm estimate of the classical one-parameter trilinear Fourier multiplier $T_1$ and bilinear multiplier $T_2$, i.e., under condition  \eqref{assumptionreduced1} there holds
    $$\|T_1 (f_1,f_2,f_3)\|_{L^r(w_1)}\lesssim \|f_1\|_{L^p(w_1^1)}\|g_1\|_{L^{p_2}(w_1^2)}\|h_1\|_{L^{p_3}(w_1^3)},$$
    and under condition \eqref{assumptionreduced2} there holds $$\|T_2(g_2,h_2)\|_{L^{s_0}((w_2^2)^{s_0/q_2} (w_2^3)^{s_0/q_3})}\lesssim \|g_2\|_{L^{q_2}(w_2^2)}\|h_2\|_{L^{q_3}(w_2^3)}. $$
    Such single-parameter estimates can be found, e.g., in \cite{chen2014hormander, FT}.

    %Note that the restrictions for the above indices  are $1<p,p_2,p_3\leq \infty$ with $0<r<\infty$, and  $1<q_2,q_3\leq \infty$  with $0<s={1\over q_2}+{1\over q_3}<\infty$. That is, $1<p,p_2,p_3,q_2,q_3\leq \infty$ with $(p,p_2,p_3)\neq (\infty,\infty,\infty)$ and $(q_2,q_3)\neq (\infty,\infty)$.
  \end{proof}

We note that weighted estimates for the classical bi-parameter and trilinear Fourier multipliers  were established in \cite{chen2014hormander}.
For the weighted mixed norm estimate for the bi-parameter and trilinear  operator $T_3$, we have the following
 \begin{proposition}
 \label{weightedT3}
 Let $g(x)=g_1(x_1)\otimes g_2(x_2) $, $h(x)=h_1(x_1)\otimes h_2(x_2) $ and  the symbol in $T_3$ satisfy the limited smoothness condition in the sense that for $3/2< s\leq 3$
    $$\sup_{j,k\in \z} \|m_{j,k}\|_{H^{s,s}}<\infty.$$
    Assume that
     \begin{gather}
    \min {(p,p_2,p_3)}>3/s\quad \text{and} \quad w_1^1(x_1) \in A_{ps/3},\  w_1^2(x_1) \in A_{p_2s/3},\  w_1^3(x_1) \in A_{p_3s/3}, \nonumber \\
   \min {(p,q_2,q_3)}>3/s \quad \text{and}\quad  w_2^1(x_2) \in A_{ps/3}, \ w_2^2(x_2) \in A_{q_2s/3},\  w_2^3(x_2) \in A_{q_3s/3}, \label{assumptionT3}
  \end{gather}
 then $T_3$  maps $L^p(w_1^1 \otimes w_2^1) \times L^{p_2}_{x_1}(w_1^2)(L^{q_2}_{x_2}(w_2^2))\times L^{p_3}_{x_1}(w_1^3)(L^{q_3}_{x_2}(w_2^3)) \to L^r(w_1\otimes w_2)$ for  ${1\over p}+ {1\over p_2}+{1\over p_3}={1\over p}+{1\over q_2}+{1\over q_3}={1\over r}$ with $0<r<\infty$, $1<p,p_2,p_3,q_2,q_3<\infty $,  where
     \begin{gather*}
        w_1(x_1)=(w_1^1)^{r/p} \cdot (w_1^2)^{r/p_2} \cdot (w_1^3)^{r/p_3} \\
        w_2(x_2)=(w_2^1)^{r/p} \cdot (w_2^2)^{r/q_2} \cdot (w_2^3)^{r/q_3}.
     \end{gather*}

    In particular, by taking
    $$w_1^1=w_1^2=w_1^3=w_1\in A_{\min{(ps/3,p_2s/3,p_3s/3)}},$$ $$w_2^1=w_2^2=w_2^3=w_2\in A_{\min{(ps/3,q_2s/3,q_3s/3)}},$$  $T_3$ maps $L^p(w_1\otimes w_2) \times L^{p_2}_{x_1}(w_1)(L^{q_2}_{x_2}(w_2))\times L^{p_3}_{x_1}(w_1)(L^{q_3}_{x_2}(w_2)) \to L^r(w_1\otimes w_2)$,

 \end{proposition}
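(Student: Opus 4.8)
The plan is to mimic the proof of Proposition \ref{ppt3} (the unweighted mixed-norm estimate for $T_3$) but with every application of H\"older's inequality, the Fefferman--Stein vector-valued maximal inequality, and the Littlewood--Paley characterization replaced by its weighted counterpart (Lemmas \ref{lemmaFS}, \ref{lemmaRFS}, \ref{lemmaRuan}), and with the Sobolev-norm control of the symbol (Lemma \ref{lmstrongmax}) carried out exactly as in \cite{chen2014hormander}. First I would reduce to the unweighted $L^r$-type pointwise bound: as in Proposition \ref{ppt3}, decompose the symbol according to which of $\xi_i,\eta_i,\zeta_i$ is largest, obtaining the groups $I_1\otimes I_2$, $I_1\otimes II_2$, $II_1\otimes II_2$. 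For each group one writes $T_3$ (or its Littlewood--Paley piece $\psi_1(D/2^j)\psi_2(D/2^k)T_3$ when one parameter is in $I$-type) as a convolution against $\mathcal F^{-1}m_{j,k}$, inserts the "completed" cutoffs $\tilde\psi_j,\tilde\psi_k$ dictated by the support geometry, and applies Lemma \ref{lmstrongmax} to bound it by a product of three strong maximal functions of Littlewood--Paley pieces of $f,g,h$, each raised to a power $1/t$ with $\max(1,3/s)<t<2$, times $\sup_{j,k}\|m_{j,k}\|_{H^{s,s}}$.

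Next I would insert the tensor product structure. Since $g=g_1\otimes g_2$ and $h=h_1\otimes h_2$, the strong maximal function factors: $M_s(g)(x_1,x_2)=M(g_1)(x_1)M(g_2)(x_2)$ and similarly for $h$ and for the Littlewood--Paley pieces $\tilde\psi_j(D)g$, $\tilde\psi_k(D)h$ (each $\tilde\psi_j(D)$ acts only on $x_1$, $\tilde\psi_k(D)$ only on $x_2$). After summing in $j,k$ via Cauchy--Schwarz, the resulting square-function expression splits into an $x_1$-factor and an $x_2$-factor. I would then apply the weighted H\"older inequality with exponents $1/r=1/p+1/s_0$, $1/s_0=1/p_2+1/p_3=1/q_2+1/q_3$ to split off $f$; then a further weighted H\"older on the $x_1$-variable with $1/s_0=1/p_2+1/p_3$ and on the $x_2$-variable with $1/s_0=1/q_2+1/q_3$ to separate the $g$- and $h$-factors, distributing the weights as $w_1^1,w_1^2,w_1^3$ (resp. $w_2^1,w_2^2,w_2^3$). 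Finally, to each factor I apply: Lemma \ref{lemmaRuan} (weighted Littlewood--Paley lower bound, valid since $w_1^1\in A_\infty$ from $w_1^1\in A_{ps/3}$) to pass from the square function of $f$ back to $\|f\|_{L^p(w_1^1\otimes w_2^1)}$ — here I use $M_s(|\cdot|^t)^{1/t}$ with the observation that $\|M_s(|F|^t)^{1/t}\|_{L^{p}(w)}\lesssim \|F\|_{L^p(w)}$ whenever $w\in A_{p/t}$, which holds precisely because $w_1^1\in A_{ps/3}\subseteq A_{p/t}$ for $t$ close enough to $1$; Lemma \ref{lemmaFS} (weighted Fefferman--Stein) to remove the maximal functions from the $g_1,g_2,h_1,h_2$ square functions; and Lemma \ref{lemmaRFS} (weighted Littlewood--Paley equivalence) on those square functions where $\sum_k\hat\psi_k=1$, all of which require $w_1^2\in A_{p_2s/3}$, $w_1^3\in A_{p_3s/3}$, $w_2^2\in A_{q_2s/3}$, $w_2^3\in A_{q_3s/3}$ together with $t\to 1^+$ as $s$ is taken large. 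Throughout, Lemma \ref{lemmaGR} guarantees that each $A_p$ weight actually lies in a slightly smaller $A_{p-\epsilon}$ class, which gives the room needed to choose $t>1$ with all the required $A_{p/t}$-type memberships simultaneously valid.

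The bookkeeping differs slightly among the three groups: in $I_1\otimes I_2$ only one-parameter cutoffs $\psi_j,\psi_k$ act on the "large-variable" function while the other two functions are hit by $\tilde\psi_j,\tilde\psi_k$; in $II_1\otimes II_2$ one of $g,h$ carries a two-parameter square function $\{\sum_{j,k}|\tilde\psi_j(D)\psi_k(D)(\cdot)|^2\}^{1/2}$ which the weighted Fefferman--Stein/Littlewood--Paley lemmas still control since the weight is a product $A_p$ weight on $\n\times\n$; in $I_1\otimes II_2$ the mixed behavior forces a two-parameter square function on $f$ (handled by Lemma \ref{lemmaRuan} on $\n\times\n$, valid since $w_1^1\otimes w_2^1\in A_\infty$) paired with one-parameter square functions on $g,h$ in the $x_2$-variable. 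The special case $w_1^1=w_1^2=w_1^3=w_1\in A_{\min(ps/3,p_2s/3,p_3s/3)}$, $w_2^1=w_2^2=w_2^3=w_2\in A_{\min(ps/3,q_2s/3,q_3s/3)}$ is then immediate. Finally, Theorem \ref{goalweighted1} follows: the general bi-parameter flag multiplier \eqref{biop34} reduces (Section \ref{bithmpf}, Appendix \ref{ap2}) to a sum of operators of the form $T_3$ and of the form \eqref{assumption}/\eqref{assumption2}, and passing from the Sobolev-regularity hypotheses to the H\"ormander conditions \eqref{bimultiplier} is standard (one takes $s$ large, so $\sup_{j,k}\|m_{j,k}\|_{H^{s,s}}<\infty$ whenever enough derivatives of the $\mathcal{BM}$-symbol are controlled, and then $t\to 1^+$ recovers the full range $1<p,p_2,p_3,q_2,q_3<\infty$ with $w\in A_{\min(\cdot)}$); the weighted estimate for \eqref{assumption} is Proposition \ref{weightedreduced}, and the argument of Proposition \ref{weightedreduced} applies verbatim to \eqref{assumption2}.

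\textbf{Main obstacle.} The delicate point is the simultaneous choice of the auxiliary exponent $t$: one needs $t>1$ to run H\"older and the $H^{s,s}$-control of $m_{j,k}$, $t<2$ for the same Sobolev bound, and $t$ small enough that every weight hypothesis $w\in A_{p\,s/3}$ upgrades to the $A_{p/t}$-type membership demanded by Lemmas \ref{lemmaFS}, \ref{lemmaRFS}, \ref{lemmaRuan} applied to the relevant exponents $p,p_2,p_3,q_2,q_3$ and their tensor-product combinations on $\n\times\n$. Tracking that all these constraints are compatible — using Lemma \ref{lemmaGR} to gain the $\epsilon$ of room and Lemma \ref{lemmaChen} to handle the product weights $w_1,w_2$ on the target side — is where the real care is needed; the rest is a faithful weighted transcription of the proof of Proposition \ref{ppt3}.
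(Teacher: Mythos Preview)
Your proposal is correct and follows essentially the same route as the paper: decompose into the groups $I_1\otimes I_2$, $II_1\otimes II_2$, $I_1\otimes II_2$, obtain the same pointwise bound via the Sobolev control of $m_{j,k}$ and Lemma~\ref{lmstrongmax}, then run weighted H\"older together with Lemmas~\ref{lemmaFS}, \ref{lemmaRFS}, \ref{lemmaRuan}, with the compatibility of the auxiliary exponent $t$ secured by Lemma~\ref{lemmaGR} and the target weight handled by Lemma~\ref{lemmaChen}. Two small corrections: the lemma used to bound the square function of $f$ by $\|f\|_{L^p(w_1^1\otimes w_2^1)}$ is Lemma~\ref{lemmaRFS} (the upper bound), not Lemma~\ref{lemmaRuan} (which gives the reverse inequality and is what you apply to $T_3$ itself at the outset, using $w_1\otimes w_2\in A_\infty$ via Lemma~\ref{lemmaChen}); and the correct choice is $t$ close to $3/s$ from above (so that $1/t$ is just below $s/3$), not ``$t\to 1^+$'' in general---the latter only coincides with the former when $s=3$, which is indeed the value you take at the end to deduce Theorem~\ref{goalweighted1}.
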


\begin{proof}
As in the proof of Proposition \ref{ppt3}, we decompose the symbol $m$ by using the appropriate cutoff functions, and  obtain the corresponding different groups.

 We first consider the cases that belong to $I_1\otimes I_2$, and we take $I_1(a)\otimes I_2(b)$ for example.
 Note that by Lemma \ref{lemmaChen}, $w_1(x_1), w_2(x_2)\in   A_\infty$. Then  Lemma \ref{lemmaRuan} implies
\begin{eqnarray*}
  \|T_3(f,g,h)\|_{L^r(w_1\otimes w_2)}\lesssim \left\| \left\{\sum_{j,k} |\psi_1(D/2^j)\psi_2(D/2^k)T_3(f,g,h) |^2\right\}^{1\over 2}\right\|_{L^r(w_1\otimes w_2)}.
\end{eqnarray*}

Recall we set in Proposition \ref{ppt3}
\begin{eqnarray*}
  A_{j,k}&:=& |\psi_1(D/2^j)\psi_2(D/2^k)T_3(f,g,h) | \\
  &\lesssim& \|m_{j,k}\|_{H^{s,s}}\left(M_s(|\tilde \psi_j(D)f|^t)(x_1,x_2)\right)^{1\over t} \left(M_s(|\tilde \psi_k(D) g|^t)(x_1,x_2)\right)^{1\over t} \\
   & & \qquad \quad \left(M_s(|h|^t)(x_1,x_2)\right)^{1\over t},
\end{eqnarray*}
where we need $\max{(1,3/s)}<t<2$, which now is actually $3/s<t<2$, since we have assumed $s\leq 3$. Using H\"older's inequality, we have

\begin{eqnarray*}
  & &\|T_3(f,g,h)\|_{L^r(w_1\otimes w_2)} \\
  & \lesssim& \left\| \left\{\sum_{j,k} |\psi_1(D/2^j)\psi_2(D/2^k)T_3(f,g,h) |^2\right\}^{1\over 2}\right\|_{L^r(w_1\otimes w_2)} \\
  &\lesssim& \sup_{j,k} \|m_{j,k}\|_{H^{s,s}} \Bigg\| \Big[ \sum_{j} \left( M_s(|\tilde \psi_j(D)f|^t)(x_1,x_2)\right)^{2\over t} \Big]^{1\over 2} \left( M(|g_1|^t)(x_1)\right)^{1\over t} \\
 & & \quad  \cdot \Big[ \sum_k  \left(M(| \tilde \psi_k(D) g_2|^t)(x_2)\right)^{2\over t} \Big]^{1\over 2}  \left( M(|  h_1|^t)(x_1)\right)^{1\over t} \left( M(|h_2|^t)(x_2) \right)^{1\over t} \Bigg\|_{L^r(w_1\otimes w_2)} \\
  &\lesssim& \sup_{j,k} \|m_{j,k}\|_{H^{s,s}} \Bigg\| \Big[ \sum_{j} \left( M_s(|\tilde \psi_j(D)f|^t)(x_1,x_2)\right)^{2\over t} \Big]^{1\over 2} \Bigg\|_{L^p(w_1^1\otimes w_2^1)} \\
 & & \qquad \qquad  \qquad \Bigg\|\left( M(|g_1|^t)(x_1)\right)^{1\over t} \Big[ \sum_k  \left(M(| \tilde \psi_k(D) g_2|^t)(x_2)\right)^{2\over t} \Big]^{1\over 2}  \\
 & & \qquad \qquad \qquad   \cdot \left( M(|  h_1|^t)(x_1)\right)^{1\over t} \left( M(|h_2|^t)(x_2) \right)^{1\over t}  \Bigg\|_{L^{s_0}((w_1^2)^{s_0/p_2} (w_1^3)^{s_0/p_3}\otimes (w_2^2)^{s_0/q_2} (w_2^3)^{s_0/q_3})},
 \end{eqnarray*}
 where $1/p+1/s_0=1/r$, i.e., $1/p_2+1/q_2=1/p_3+1/q_3=1/s_0$. Then using the tensor product setting, the above can be estimated by

 \begin{eqnarray*}
   &=& \sup_{j,k} \|m_{j,k}\|_{H^{s,s}} \Bigg\| \Big[ \sum_{j} \left( M_s(|\tilde \psi_j(D)f|^t)(x)\right)^{2\over t} \Big]^{1\over 2} \Bigg\|_{L^p(w_1^1\otimes w_2^1)} \\
 & & \quad \Big\|\left( M(|g_1|^t)(x_1)\right)^{1\over t} \left( M(|  h_1|^t)(x_1)\right)^{1\over t} \Big\|_{L^{s_0}((w_1^2)^{s_0/p_2} (w_1^3)^{s_0/p_3})}   \\
 & & \quad \Bigg\| \Big[ \sum_k  \left(M(| \tilde \psi_k(D) g_2|^t)(x_2)\right)^{2\over t} \Big]^{1\over 2}  \left( M(|h_2|^t)(x_2) \right)^{1\over t} \Bigg\|_{L^{s_0}((w_2^2)^{s_0/q_2} (w_2^3)^{s_0/q_3})}.%\\
% &\lesssim& \sup_{j,k} \|m_{j,k}\|_{H^{s,s}} \Bigg\| \Big[ \sum_{j} \left( M_s(|\tilde \psi_j(D)f|^t)\right)^{2\over t} \Big]^{1\over 2} \Bigg\|_{L^p} \\
% & & \quad \Bigg\|\Big[ \sum_k  \left(M(| \tilde \psi_k(D) g_1|^t)(x_1)\right)^{2\over t} \Big]^{1\over 2} \left( M(|g_2|^t)(x_2)\right)^{1\over t} \cdot \left( M(|  h_1|^t)(x_1)\right)^{1\over t} \left( M(|h_2|^t)(x_2) \right)^{1\over t} \Bigg\|_{L^s}
 \end{eqnarray*}

Again by H\"older's inequality, we have
 \begin{eqnarray*}
 &\lesssim& \sup_{j,k} \|m_{j,k}\|_{H^{s,s}} \Bigg\| \Big[ \sum_{j} \left( M_s(|\tilde \psi_j(D)f|^t)(x)\right)^{2\over t} \Big]^{1\over 2} \Bigg\|_{L^p(w_1^1\otimes w_2^1)}   \\
 & &  \Big\|\big( M(|g_1|^t)(x_1)\big)^{1\over t}  \Big\|_{L^{p_2}(w_1^2)} \Bigg\|\Big[ \sum_k  \left(M(| \tilde \psi_k(D) g_2|^t)(x_2)\right)^{2\over t} \Big]^{1\over 2}  \Bigg\|_{L^{q_2}(w_2^2)}  \\
 & & \quad   \Big\|\left( M(|  h_1|^t)(x_1)\right)^{1\over t} \Big\|_{L^{p_3}(w_1^3)} \Big\|\left( M(|h_2|^t)(x_2) \right)^{1\over t} \Big\|_{L^{q_3}(w_2^3)} \\
  &=&\sup_{j,k} \|m_{j,k}\|_{H^{s,s}} \Bigg\| \Big[ \sum_{j} \left( M_s(|\tilde \psi_j(D)f|^t)(x)\right)^{2\over t} \Big]^{t\over 2} \Bigg\|_{L^{p/t}(w_1^1\otimes w_2^1)}^{1/t}   \\
 & & \Big\| M(|g_1|^t)(x_1) \Big\|_{L^{p_2/t}(w_1^2)}^{1/t}    \Bigg\|\Big[ \sum_k  \left(M(| \tilde \psi_k(D) g_2|^t)(x_2)\right)^{2\over t} \Big]^{t\over 2}  \Bigg\|_{L^{q_2/t}(w_2^2)}^{1/t}  \\
 & & \quad   \Big\| M(|  h_1|^t)(x_1) \Big\|_{L^{p_3/t}(w_1^3)}^{1/t}  \Big\|M(|h_2|^t)(x_2) \Big\|_{L^{q_3/t}(w_2^3)}^{1/t}  \\
 &\lesssim & \sup_{j,k} \|m_{j,k}\|_{H^{s,s}} \|f\|_{L^{p}(w_1^1\otimes w_2^1)}\|g_1\|_{L^{p_2}(w_1^2)}\|g_2\|_{L^{q_2}(w_2^2)}\|h_1\|_{L^{p_3}(w_1^3)}\|h_2\|_{L^{q_3}(w_2^3)}.
\end{eqnarray*}
In order to get the last estimate, it needs that $2,p,p_2,p_3,q_2,q_3>t$, and
\begin{equation}
\label{weightcd}
w_1^1\otimes w_2^1\in A_{p/t}(\n \times \n),\ \ w_1^2 \in A_{p_2/t}, \ \  w_1^3 \in A_{p_3/t}, \ \ w_2^2 \in A_{q_2/t},\ \ w_2^3 \in A_{q_3/t}.
\end{equation}
To see why the above can be achieved, first consider the  stated assumptions for the weights
\begin{gather}
\min {(p,p_2,p_3,q_2,q_3)}>3/s,\   3/2<s\leq 3, \label{explaincd1}\quad \text{and} \\
 w_1^1,w_2^1 \in A_{ps/3},\ w_1^2 \in A_{p_2s/3},\ w_1^3 \in A_{p_3s/3}, \ w_2^1 \in A_{ps/3}, \ w_2^2 \in A_{q_2s/3}, \ w_2^3 \in A_{q_3s/3}.
 \label{explaincd2}
\end{gather}
The  condition \eqref{explaincd1} means it's possible to choose some $t$ with $3/s<t< \min{(2,p,p_2,p_3,q_2,q_3)}$, i.e., $1/\min{(2,p,p_2,p_3,q_2,q_3)}<1/t<s/3$. For condition \eqref{explaincd2}, Lemma  \ref{lemmaGR} implies there exists $\tau_i^j<s/3$ ($1\leq i,j \leq 3$) such that
$$w_1^1 \in A_{p\tau_1^1}, \ w_2^1 \in A_{p\tau_2^1}, \ w_1^2 \in A_{p_2 \tau_1^2},\ w_1^3 \in A_{p_3\tau_1^3}, \ w_2^1 \in A_{p \tau_2^1}, \ w_2^2 \in A_{q_2 \tau_2^2}, \ w_2^3 \in A_{q_3\tau_2^3}.$$
Now we pick $t$ with $1/t$ sufficiently close to $s/3$, such that  $\tau_i^j<1/t$ for $1\leq i,j \leq 3$. Then it follows that
\begin{gather*}
 w_1^1, w_1^2 \in A_{p/t}, \ \ w_1^2 \in A_{p_2/t}, \ \  w_1^3 \in A_{p_3/t}, \ \ w_2^2 \in A_{q_2/t},\ \ w_2^3 \in A_{q_3/t},
 \end{gather*}
 which implies \eqref{weightcd}.

 \medskip

 Using the same argument, we can deal with the cases in other groups. Note that the above verification for condition \eqref{weightcd} works actually for all the cases, thus  we will not repeat it in the rest of the proof. For the operators in $II_1\times II_2$, we consider $II_1{(a)}\times II_2{(b)}$ for example.  Recall we can write

\begin{eqnarray*}
 & & |T_3(f,g,h)(x_1,x_2)|\\
    &\lesssim&   \sup_{j,k}\|m_{j,k}\|_{H^{s,s}}  \Bigg\{\sum_{j} \left[M_s \left(|\tilde \psi_j(D) f|^t\right)\right]^{2\over t} \Bigg\}^{1\over 2} \Bigg\{\sum_{j,k}\left[M_s \left(| \tilde \psi_j(D) \psi_k(D)  g|^t\right)\right]^{2\over t} \Bigg\}^{1\over 2}  \\
    & & \quad \cdot \Bigg\{ \sum_k \left[M_s \left(|\tilde \psi_k(D) h|^t\right)\right]^{2\over t}\Bigg\}^{1\over 2},
\end{eqnarray*}
where the Cauchy-Schwartz inequality is used.  Then the H\"older's inequality gives
\begin{eqnarray*}
& &  \| T_3(f,g,h)\|_{L^r(w_1\otimes w_2)} \\
&\lesssim&   \sup_{j,k}\|m_{j,k}\|_{H^{s,s}}  \Bigg\| \Bigg\{\sum_{j} \left[M_s \left(|\tilde \psi_j(D) f|^t\right)\right]^{2\over t} \Bigg\}^{1\over 2}  \Bigg\|_{L^{p}(w_1^1\otimes w_2^1)} \Bigg\|\Bigg\{\sum_{j,k}\left[M_s \left(| \tilde \psi_j(D) \psi_k(D)  g|^t\right)\right]^{2\over t} \Bigg\}^{1\over 2}  \\
& & \cdot \Bigg\{ \sum_k \left[M_s \left(|\tilde \psi_k(D) h|^t\right)\right]^{2\over t}\Bigg\}^{1\over 2} \Bigg\|_{L^{s_0}((w_1^2)^{s_0/p_2}(w_1^3)^{s_0/p_3}\otimes (w_2^2)^{s_0/q_2} (w_2^3)^{s_0/q_3})} \\
&\lesssim &\sup_{j,k}\|m_{j,k}\|_{H^{s,s}}  \Bigg\| \Bigg\{\sum_{j} \left[M \left(|\tilde \psi_j(D) f|^t\right)\right]^{2\over t} \Bigg\}^{1\over 2}  \Bigg\|_{L^{p}(w_1^1\otimes w_2^1)} \\
& &\Bigg\|\Bigg\{\sum_{j}\left[M \left(| \tilde \psi_j(D) g_1|^t\right)\right]^{2\over t} \Bigg\}^{1\over 2}  \Bigg\|_{L^{p_2}(w_1^2)} \Bigg\|\left[M \left(|h_1|^t\right)\right]^{1\over t} \Bigg\|_{L^{p_3}(w_1^3)} \\
& &\Bigg\|\Bigg\{\sum_{k}\left[M \left(| \tilde \psi_k(D)  g_2|^t\right)\right]^{2\over t} \Bigg\}^{1\over 2} \Bigg\|_{L^{q_2}(w_2^2)} \Bigg\| \Bigg\{ \sum_k \left[M \left(|\tilde \psi_k(D) h_2|^t\right)\right]^{2\over t}\Bigg\}^{1\over 2} \Bigg\|_{L^{q_3}(w_2^3)}. \\
&\lesssim & \sup_{j,k} \|m_{j,k}\|_{H^{s,s}} \|f\|_{L^{p}(w_1^1\otimes w_2^1)}\|g_1\|_{L^{p_2}(w_1^2)}\|g_2\|_{L^{q_2}(w_2^2)}\|h_1\|_{L^{p_3}(w_1^3)}\|h_2\|_{L^{q_3}(w_2^3)}.
\end{eqnarray*}

\medskip

Then for the situations in $I_1 \times II_2$. We consider $I_1(a) \times II_2(c)$ for an example. Recall
\begin{eqnarray*}
B_{j}&:=&\psi_1(D/2^j)T_3(f,g,h)(x_1,x_2)\\
   &\lesssim& \sup_{k}\|m_{j,k}\|_{H^{s,s}} \Big\{ \sum_k \left(M_s(|\tilde \psi_j(D)  \psi_k(D)f|^t)(x_1,x_2)\right)^{2\over t} \Big\}^{1\over 2}\left(M_s(| g|^t)(x_1,x_2)\right)^{1\over t} \\
   & & \qquad \quad \quad \cdot   \left\{\sum_k \left(M_s(|\tilde \psi_k(D) h|^t)(x_1,x_2)\right)^{2\over t} \right\}^{1\over 2},
\end{eqnarray*}
Then
\begin{eqnarray*}
  & &\|T_3(f,g,h)\|_{L^r(w_1\otimes w_2)} \\
  & \lesssim& \left\| \left\{\sum_{j} |\psi_1(D/2^j) T_3(f,g,h) |^2\right\}^{1\over 2}\right\|_{L^r(w_1\otimes w_2)} \\
 &\lesssim& \sup_{j,k} \|m_{j,k}\|_{H^{s,s}} \Bigg\| \Big\{ \sum_{j,k}\left( M_s(|\tilde \psi_j(D) \psi_k(D)f|^t)(x_1,x_2)  \right)^{2\over t}\Big\}^{1\over 2} \Bigg\|_{L^p(w_1\otimes w_2)} \\
 & & \quad \Big\|\left(M(| g_1|^t)(x_1)\right)^{1\over t}\Big\|_{L^{p_2}(w_1^2)} \cdot \Big\|  \left(M(| h_1|^t)(x_1)\right)^{1\over t} \Big\|_{L^{p_3}(w_1^3)}\\
 & & \quad \Big\|\left(M(| g_2|^t)(x_2)\right)^{1\over t} \big\|_{L^{q_2}(w_2^2)}\cdot \left\| \left\{\sum_k \left(M(|\tilde \psi_k(D) h_2|^t)(x_2)\right)^{2\over t} \right\}^{1\over 2} \right\|_{L^{q_3}(w_2^3)} \\
 &\lesssim&\sup_{j,k} \|m_{j,k}\|_{H^{s,s}} \|f\|_{L^{p}(w_1\otimes w_2)}\|g_1\|_{L^{p_2}(w_1^2)}\|g_2\|_{L^{q_2}(w_2^2)}\|h_1\|_{L^{p_3}(w_1^3)}\|h_2\|_{L^{q_3}(w_2^3)}.
\end{eqnarray*}

\end{proof}

 We are now ready to achieve the main goal of this section, namely, establishing the weighted mixed norm estimate for operator \eqref{biop34}. Since we are less concerned with the limited smoothness of the  H\"ormander condition of the symbols in the above two propositions, one can simply take $s_1=3$ and $s_2=2$ in Proposition \ref{weightedreduced}, and $s=3$ in Proposition \ref{weightedT3}, then  Theorem \ref{goalweighted1} follows immediately.


\begin{thebibliography}{ref}

\bibitem{BMu2} C.Benea and C. Muscalu, \textit{Quasi-Banach valued inequalities via the helicoidal method}, J. Funct. Anal. {\bf 273} (2017), no. 4, 1295�C1353.

\bibitem{benyi2010hormander} $\acute{\mathrm{A}}$. B$\acute{\mathrm{e}}$nyi, D. Maldonado, V. Naibo and H. Torres, \textit{On The H$\ddot{\mathrm{o}}$rmander Classes of Bilinear Pseudodifferential Operators}, Integr. Equ. Oper. Theory {\bf 67} (2010), 341-364.


\bibitem{chen2014hormander} J. Chen and G. Lu, \textit{H\"{o}rmander type theorems for multi-linear and multi-parameter Fourier multiplier operators with limited smoothness}, Nonlinear Analysis \textbf{101} (2014), 98-112.

%\bibitem{CMu1} B. Cristina and C. Muscalu, \textit{Multiple vector-valued inequalities via the helicoidal method}, Anal. PDE {\bf 9} (2016), no. 8, 1931�C1988.



%\bibitem{CMu3} B. Cristina and C. Muscalu, \textit{The helicoidal method}, Operator theory: themes and variations, 45�C96, Theta Ser. Adv. Math., 20, Theta, Bucharest, 2018.


\bibitem{CM} R. Coifman and Y. Meyer, Au del\'a des op\'erateurs pseudo-differentiels. (French) [Beyond pseudodifferential operators] With an English summary. Ast\'erisque, 57. Soci\'et\'e Math\'ematique de France, Paris, 1978.



\bibitem{coifman1975commutators} R. R. Coifman and Y. Meyer,  \textit{On commutators of singular integrals and bilinear singular integrals}, Trans. Amer. Math. Soc. {\bf 212} (1975), 315-331.

\bibitem{CM2} R. R. Coifman and Y. Meyer,  \textit{Au-del$\grave{\mathrm{a}}$ des operateurs pseudo-differentiels}. Second Edition. Asterisque 57, 1978.

\bibitem{dai2013p} W. Dai and G. Lu,  \textit{$L^{p}$ estimates for multi-linear and multi-parameter pseudo-differential operators}, Bull. Soc. Math. France 143 (2015), no. 3, 567-597.
arXiv:1308.4062.

\bibitem{JD} J. Duoandikoetxea, \textit{Fourier Analysis}, Grad. Stud. Math, vol 29, Amer. Math. Soc, Providence, RI, 2001.

\bibitem{FS} C. Fefferman, E.M. Stein, \textit{Some maximal inequalities}, Amer. J. Math. {\bf 93} (1971), 107-115.

\bibitem{RFS} R. Fefferman, E.M. Stein, \textit{Singular integrals on product spaces}, Adv. Math. 45 (1982) 117�C143.

\bibitem{FT} M. Fujita, N. Tomita, \textit{Weighted norm inequalities for multilinear Fourier multipliers}, Trans. Amer. Math. Soc. 364 (12) (2012) 6335�C6353.

\bibitem{GR} J. Garc\'la-Cuerva, J.L. Rubio de Francia, \textit{Weighted Norm Inequalities and Related topics}, in: North-Holland Math. Stud., North-Holland, 1985.

 \bibitem{grafakos2002multilinear} L. Grafakos and R. H. Torres,  \textit{Multilinear Calder\'on-Zygmund theory}, Adv. in Math. {\bf 165} (2002),124-164.


\bibitem{kenig1999multilinear} C. Kenig and E. M. Stein,  \textit{Multilinear estimates and fractional integration}, Math. Res. Lett. {\bf  6} (1999), 1-15.

\bibitem{LZH} G. Lu and L. Zhang, $L^p$ estimate for a trilinear pseudo-differential operator with flag symbols, Indiana University Mathematics Journal, 66 (2017), no. 3, 877-900.

\bibitem{MT} A. Miyachi and N. Tomita, \textit{Estimates for trilinear flag paraproducts on $L^\infty$ and Hardy spaces}, Math. Z. (2016) 282: 577--613.

\bibitem{Muckenhoupt} B. Muckenhoupt, \textit{Weighted norm inequalities for the Hardy maximal function}, Trans. Amer. Math. Soc. 165 (1972) 207�C226.

 \bibitem{muscalu2007paraproducts} C. Muscalu,  \textit{Paraproducts with flag singularities I. A case study}, Rev. Mat. Iberoam. {\bf23} (2007), no. 2, 705--742.

\bibitem{muscaluflag}C. Muscalu, \textit{Flag paraproducts}, Harmonic analysis and partial differential equations, 131--151, Contemp. Math., 505, Amer. Math. Soc., Providence, RI, 2010.

\bibitem{muscalu2013classical} C. Muscalu and W. Schlag, {\it Classical and Multilinear Harmonic Analysis, II}, Cambridge Studies in Advanced Mathematics, vol. 138, Cambridge University Press, Cambridge, 2013.

 \bibitem{muscalu2004bi} C. Muscalu, J. Pipher, T. Tao, C. Thiele,  \textit{Bi-parameter paraproducts}, Acta. Math. {\bf 193} (2004), 269-296.

\bibitem{muscalu2004multi} C. Muscalu, J. Pipher, T. Tao, C. Thiele,  \textit{Multi-parameter paraproducts}, Rev. Mat. Iberoam.  {\bf 22 } (2006),  no. 3, 963-976.

\bibitem{muscaluZhai} C. Muscalu, Y. Zhai,   \textit{Five-linear singular integrals of Brascamp-Lieb type}, https://arxiv.org/abs/2001.09064


\bibitem{Ruan} Z. Ruan, \textit{Weighted Hardy spaces in the there-parameter case}, J. Math. Anal. Appl. 367 (2010) 625�C639.

\bibitem{Ste} E. M. Stein, \textit{Harmonic Analysis: Real Variable Methods, Orthogonality, and Oscillatory Integrals}, Princeton University Press, Princeton
    (1993).




\bibitem{Zhai} Y. Zhai, \textit{Study of bi-parameter flag paraproducts and bi-parameter stopping-time algorithms.}, Ph.D. Thesis, Cornell Univ.



\end{thebibliography}
\end{document}